\newcommand{\fun}{\pi_1(X(\mathbf{R}), P_{fix})}
\newtheorem*{thma}{Theorem A}
\newtheorem*{thmb}{Theorem B}
\newtheorem*{thmc}{Theorem C}
\newtheorem*{thmIntro}{Theorem 1}
\newtheorem{thm}{Theorem}[section]
\newtheorem{lem}[thm]{Lemma}
\newtheorem{rem}[thm]{Remark}
\newtheorem{prop}[thm]{Proposition}
\newcommand{\cc}{\,}
\newcommand{\bg}{\phantom{ASEFG}}
\title[Entropy of real automorphisms]{Entropy of real rational surface automorphisms: Actions on the Fundamental Groups}
\author{Kyounghee Kim}
\address{Department of Mathematics\\
         Florida State University\\
         Tallahassee, FL 32308}
\email{kim@math.fsu.edu}
\author{Eric P. Klassen}
\address{Department of Mathematics\\
         Florida State University\\
         Tallahassee, FL 32308}
\email{klassen@math.fsu.edu}
\subjclass{}
\keywords{}
\begin{document}

\maketitle

\begin{abstract}
This article discusses a method to compute the induced action on the fundamental group of a real rational surface and provides the induced actions for basic quadratic real automorphisms. Using an invariant set in the fundamental group, we introduce a method to estimate a lower bound of the action's growth rate on the fundamental group. The growth rate of this induced action gives a lower bound for the entropy of real surface automorphism. 
\end{abstract}

\section*{Introduction}
If an automorphism on a complex space preserves a set of real points, it is important to understand how the complex dynamics and the real dynamics are related to each other. In this paper, we consider a family of rational surface automorphisms fixing sets of real points. We compute new lower bounds for the topological entropy of these automorphisms on the corresponding real surfaces. Our main tool is to compute the growth rate of the action of an automorphism on the fundamental group of the surface. This growth rate is known \cite{Bowen} to give a lower bound for the topological entropy. Using ideas from combinatorial group theory, we are able to carry out these computations. In certain cases, our method gives strictly better lower bounds for the topological entropy than those previously computed using, for example, the action of the automorphism on the homology of the surface.

\vspace{1ex}


Let $X(\mathbf{C})$ denote a complex rational surface and let $f:X(\mathbf{C}) \to X(\mathbf{C})$ be a biholomorphism of $X(\mathbf{C})$. The spectral radius $\rho(f^*|_{H^*(X(\mathbf{C}))})$ of the induced linear map $f^*: H^*(X(\mathbf{C})) \to H^*(X(\mathbf{C}))$ on cohomology is related to the topological entropy of $f$, denoted by $h_{top}(f)$, which is an important dynamical invariant. Due to Gromov and Yomdin \cite{Gromov, Yomdin}, we have 
\[ h_{top}(f) \ = \log \, \rho(f^*|_{H^*(X(\mathbf{C}))}).\]  
If $X$ is a compact complex surface that admits an automorphism with positive entropy, Cantat \cite{Cantat:1999} has shown that there are only three possibilities : either $X$ is a torus, a quotient of a K$3$ surface, or a rational surface. Thus, the surfaces admitting automorphisms with positive entropy are quite special. However it has been shown that there are infinite families of rational surfaces that admit automorphisms with positive entropy, as we describe below.

Suppose that $f:X(\mathbf{C}) \to X(\mathbf{C})$ is a rational surface automorphism with positive entropy. Due to a result of Nagata \cite{Nagata, Nagata2, Dolgachev-Ortland}, it is known that $X(\mathbf{C})$ must be equivalent to a blowup of $\mathbb{P}^2(\mathbf{C})$ along a set of points $P \subset \mathbb{P}^2(\mathbf{C})$. In addition, it is well known that $f$ must be conjugate to a composition $f_1 \circ \cdots \circ f_N$, where each $f_j$ is a quadratic map  of the form $f_j = S_j \circ J \circ T_j$ where $S_j$ and $T_j$ are linear and $J$ denotes the Cremona involution 
\[J : \mathbb{P}^2(\mathbf{C})\to \mathbb{P}^2(\mathbf{C}) \ni [x_1:x_2: x_3] \mapsto [x_2 x_3:x_1 x_3: x_1 x_2] \in \mathbb{P}^2(\mathbf{C}). \] 
(See \cite{Cerveau:2008} for further information.)
 We say a quadratic birational map $ f$ is ``\textit{basic}" if $ f= S \circ J \circ T$ for some $S,T \in \text{GL}(3, \mathbf{C})$ and $ f$ is ``\textit{real}" if $S,T \in \text{GL}(3, \mathbf{R})$. Here we focus on rational surfaces with basic real quadratic automorphisms.
 
 \vspace{1ex}
 Each basic quadratic birational map $f$ on $\mathbb{P}^2(\mathbf{C})$ has three exceptional lines $E_i^+, i=1,2,3$ and three points of indeterminacy $e_i^+$. For a basic quadratic birational map, we denote by $n_i \in \mathbf{N} \cup \{ \infty\}$ the length of the orbit of $E_i^+$ and denote by $\sigma \in \Sigma_3$ the permutation determined by the last points (if they exist) of the orbits of exceptional lines: 
 \begin{equation*}
 \begin{aligned}
 & f^{n_i} ( E^+_i \setminus \{ e_1^+, e_2^+, e_3^+\}) = e_{\sigma(i)}^+ , & \\
 &  \text{dim} \, f^{j} ( E^+_i \setminus \{ e_1^+, e_2^+, e_3^+\}) =0 \ \text{for } j = 1,2, \dots, n_i,\ \text{and}& \\
 &   \text{dim} \, f^{n_i+1} ( E^+_i \setminus \{ e_1^+, e_2^+, e_3^+\}) =1,  \quad &\text{if } n_i <\infty \\
   && \\
 & \text{dim} \, f^j ( E^+_i \setminus \{ e_1^+, e_2^+, e_3^+\}) =0 \  \text{for } j = 1,2,3,\dots , \quad &\text{if } n_i = \infty \\
 \end{aligned}
 \end{equation*}
 We call the data given by $(n_1,n_2,n_3)$ and the permutation $\sigma$ the ``\textit{orbit data}" of $ f$. From \cite{McMullen:2002, Bedford-Kim:2004}, we see that if the orbit data associated with a basic quadratic map $ f$ satisfy $n_i < \infty$ for all $i=1,2,3$, then $\check f$ lifts to an automorphism $f$ on a blowup $ X(\mathbf{C}) = B\ell _P \, (\mathbb{P}^2(\mathbf{C}))$ along a finite set of points $P = \{ f^j E_i^+, j = 1, \dots n_i, i = 1,2,3\} \subset \mathbb{P}^2(\mathbf{C})$. If the orbit data of a birational map $\check f$ are given by three positive integers $n_1, n_2, n_3$ with a permutation $\sigma \in \Sigma_3$, we say $ f$ {\em realizes} the orbit data $(n_1, n_2, n_3)$ and $\sigma$. It is still an open question whether every orbit data can be realized. Under the assumption of the existence of an invariant cubic, McMullen \cite{McMullen:2007} and Diller \cite{Diller:2011} gave a systematic way to construct a realization for set of orbit data. Diller's construction in \cite{Diller:2011} provides explicit formulas for those realizations and shows that for each realized orbit data, there are exactly two (up to linear conjugacy) real maps properly fixing a given invariant cubic: a volume increasing map and a volume decreasing map.

\vspace{1ex}
We may consider $\mathbb{P}^2(\mathbf{R})$ as a subset of $\mathbb{P}^2(\mathbf{C})$.
If $X(\mathbf{C})= \text{B}\ell _P (\mathbb{P}^2(\mathbf{C}))$, a blowup of $\mathbb{P}^2(\mathbf{C})$ along a set of points $P \subset \mathbb{P}^2(\mathbf{R})$, then we may consider 
$X(\mathbf{R}) = \text{B}\ell _P(\mathbb{P}^2(\mathbf{R}))$ as the ``\textit{real part}" of $X(\mathbf{C})$.  In this case we know that the entropy of $f_\mathbf{R}$ satisfies 
\[ 0 \le h_{top} (f_\mathbf{R})  \le h_{top} (f).\]
It is of interest to know the relation between two dynamical systems $f:X(\mathbf{C}) \to X(\mathbf{C})$ and $f_\mathbf{R}:X(\mathbf{R}) \to X(\mathbf{R})$. In particular, it is interesting to know if the topological entropy of the real restriction, $h_{top}(f_\mathbf{R})$ is equal to the topological entropy of the complex map, $h_{top}(f)$. Since $h_{top}(f_\mathbf{R}) \le h_{top}(f)$, we say $f_\mathbf{R}$ has \textit{maximal entropy} if equality holds.

\vspace{1ex}

The object of this paper is to study the action $f_{\mathbf{R}*}|_{\pi_1(X(\mathbf{R}))}$ on the fundamental group of $X(\mathbf{R})$. Bowen \cite{Bowen} has considered this and shown that the rate of the growth of  $f_{\mathbf{R}*}|_{\pi_1(X(\mathbf{R}))}$ gives a lower bound of the topological entropy of $f_{\mathbf{R}}$. The growth rate of the action on the fundamental group is defined by \[ \rho (f_{\mathbf{R}*}|_{\pi_1(X(\mathbf{R})}) := \sup_{g \in G} \{ \limsup_{n \to \infty} ( \ell_G (f^n_{\mathbf{R}*} g))^{1/n}\} \]
where $G$ is a set of generators of $\pi_1(X(\mathbf{R}))$, $\ell_G(w)$ is the minimal length among all words representing $w$ with respect to a fixed set of generators of $G$. 

\vspace{1ex}

The spectral radius of the pull-back operator on $H^{1,1}(X(\mathbf{C});\mathbf{C})$ (known as the dynamical degree) is birationally invariant, and it is known that due to Diller and Favre \cite{Diller-Favre:2001} one can always compute the dynamical degree of a birational map on $\mathbb{P}^2(\mathbf{C})$. In \cite{Bedford-Kim:2004}, it is shown that the dynamical degree is determined by the \textit{orbit data} of a birational map. Cantat and Blanc \cite{blancdynamical} showed that a birational map on $\mathbb{P}^2(\mathbf{C})$ is birationally equivalent to a rational surface automorphism if and only if  the dynamical degree is a Salem number.  Computing the induced action of rational surface automorphisms on cohomology classes is now a standard computation. (For a more general setup, an article by Koch and Roeder \cite{Koch-Roeder:2016} is a good source.) We have $\pi_1(X(\mathbf{C})) \equiv \mathbf{Z} / 2 \mathbf{Z}$, but  $\pi_1(X(\mathbf{R})) \not\equiv \mathbf{Z} / 2 \mathbf{Z}$. Thus we have a possibility of a new invariant: the growth rate of $f_{\mathbf{R}*}$ on $\pi_1(X(\mathbf{R}))$. 



\vspace{1ex}
Recent work of Diller and the first author \cite{Diller-Kim} computed the induced action on $H_1(X(\mathbf{R});\mathbf{R})$ for automorphisms obtained from real quadratic birational maps properly fixing an invariant cubic. It has shown that both the forward map and the backward map have the same growth rate of the homology actions:
\begin{thmIntro}[{\cite[Theorem~3.1]{Diller-Kim}}]\label{T:Invhomology} Suppose $f_\mathbf{R}$ is a real diffeomorphism associated a basic quadratic birational map fixing an invariant cubic. Then both induced actions $f_{\mathbf{R}*}|_{H_1(X(\mathbf{R});\mathbf{R})}$ and $f^{-1} _{\mathbf{R}*}|_{H_1(X(\mathbf{R});\mathbf{R})}$ have the same growth rate:
 \[\rho(f_{\mathbf{R}*}|_{H_1(X(\mathbf{R});\mathbf{R})}) = \rho(f^{-1}_{\mathbf{R}*}|_{H_1(X(\mathbf{R});\mathbf{R})}) \]
 where $\rho$ denotes the spectral radius.
 \end{thmIntro}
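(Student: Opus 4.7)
The plan is to prove the equality of spectral radii by constructing a diffeomorphism $\iota$ of $X(\mathbf{R})$ that conjugates $f_\mathbf{R}$ to $f_\mathbf{R}^{-1}$. Any such conjugacy yields $\iota_* \, f_{\mathbf{R}*} \, \iota_*^{-1} = f_{\mathbf{R}*}^{-1}$ on $H_1(X(\mathbf{R}); \mathbf{R})$, so the two linear maps are similar, share identical spectra, and hence identical spectral radii. This is the natural target since for a non-orientable surface the intersection form on $H_1$ with real coefficients is degenerate, and the usual symplectic argument for orientable surfaces is unavailable.

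The natural source of $\iota$ is the group structure on the invariant cubic $C$. After choosing a real flex of $C$ as identity, $C(\mathbf{R})$ becomes a compact abelian Lie group, and in Weierstrass coordinates the inversion $p \mapsto -p$ is induced by a projective linear involution of $\mathbb{P}^2$ fixing $C$. By Diller's classification, the restriction $f_\mathbf{R}|_C$ is translation by some element $t_f$, so $f_\mathbf{R}^{-1}|_C$ is translation by $-t_f$, and inversion manifestly conjugates one to the other on $C$.

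The steps I would follow are: (1) use Diller's explicit formulas to verify that $f_\mathbf{R}|_C$ is a translation and locate its orbits; (2) show that the projective inversion sends the set of blowup centers, i.e., the union of orbits of the $e_i^-$ under $f_\mathbf{R}$, to itself, so that it lifts to a smooth involution of the blowup $X(\mathbf{R})$; (3) globalize the conjugation identity $\iota \circ f_\mathbf{R} \circ \iota^{-1} = f_\mathbf{R}^{-1}$ past $C$, by showing that two birational maps of $\mathbb{P}^2$ fixing a common cubic and agreeing on it must agree away from controlled indeterminacy; (4) pass to homology to conclude.

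The main obstacle is step (2): one must verify that inversion permutes the specific finite set of $n_1 + n_2 + n_3$ blowup points. This reduces to a compatibility condition on the indeterminacy data $\{e_i^+\}$ and $\{e_i^-\}$ of $f_\mathbf{R}$ under the group inversion on $C$. The key tool here is the fact that the Cremona involution restricts to a map of the form $p \mapsto c - p$ on the invariant cubic, for a constant $c$ depending on the cubic's position relative to the coordinate triangle; pre- and post-composing by the linear parts $S,T \in \mathrm{GL}(3,\mathbf{R})$ and using the invariant-cubic hypothesis should then force the orbit of the $e_i^-$ to be symmetric under inversion. Should this geometric approach leave a gap, a fallback is direct computation: produce the matrices for $f_{\mathbf{R}*}$ and $f_{\mathbf{R}*}^{-1}$ on an adapted basis of $H_1(X(\mathbf{R}); \mathbf{R})$ and verify that their characteristic polynomials coincide.
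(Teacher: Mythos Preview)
The paper does not prove this theorem; it is quoted from \cite{Diller-Kim} as background, so there is no proof in the present paper to compare against. That said, your proposed argument has a genuine gap that is worth naming, because it would also fail if you tried to supply the missing proof along these lines.

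Your construction rests on the claim that $f_\mathbf{R}|_C$ is a translation in the group law on the invariant cubic, so that group inversion $p\mapsto -p$ conjugates $f_\mathbf{R}|_C$ to $f_\mathbf{R}^{-1}|_C$. For the cuspidal cubic, which is the case treated throughout this paper, that claim is false. As stated just after the definition of ``properly fixes,'' under the parametrization $\gamma:\mathbf{R}\to C_{\mathrm{reg}}$ one has $f|_{C_{\mathrm{reg}}}:\gamma(\omega)\mapsto\gamma(\delta\omega)$ with $\delta>1$ the determinant. This is a dilation, not a translation, in the additive coordinate on $C_{\mathrm{reg}}\cong\mathbf{R}$. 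The inversion $\omega\mapsto-\omega$ \emph{commutes} with multiplication by $\delta$; it does not intertwine it with multiplication by $\delta^{-1}$. Concretely, the blowup centers all have parameters $\omega<0$ (they lie strictly between $P_{cusp}=\gamma(-\infty)$ and $P_{fix}=\gamma(0)$), so $\omega\mapsto-\omega$ sends them to points with positive parameter, which are not blowup centers at all. Thus your step (2) fails and there is no lift $\iota$ of this involution to $X(\mathbf{R})$. More conceptually, $P_{cusp}$ is attracting for $f_\mathbf{R}$ with real multipliers $\delta^{-2},\delta^{-3}$ and repelling for $f_\mathbf{R}^{-1}$; any smooth conjugacy would have to match these multiplier sets, which it cannot.

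Your fallback, computing the characteristic polynomial of $f_{\mathbf{R}*}$ on $H_1(X(\mathbf{R});\mathbf{R})$ directly and checking it is self-reciprocal, is the approach actually taken in \cite{Diller-Kim}; the formula~(\ref{E:realchar}) quoted in this paper exhibits exactly the palindromic symmetry $\phi(t)=\pm t^{\deg\phi}\phi(1/t)$ that forces the eigenvalues to come in pairs $\lambda,\lambda^{-1}$ and hence gives $\rho(f_{\mathbf{R}*})=\rho(f_{\mathbf{R}*}^{-1})$.
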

It is also shown that infinite families of real diffeomorphisms with maximal entropy exist. There are six real diffeomorphisms that have zero homology growth. Since the growth rate of homology classes is merely a lower bound for the topological entropy, zero exponential growth rate on $H_1(X(\mathbf{R});\mathbf{R})$ does not imply zero entropy.

\vspace{1ex}
Since the homology group $H_1(X(\mathbf{R})$ is the abelianization of the fundamental group $\pi_1(X(\mathbf{R}))$, it is clear that the growth rate of the action on the fundamental group is a better estimate of the topological entropy than the growth rate of the action on the homology group
\[\log \rho( f_{\mathbf{R}*}|_{H_1(X(\mathbf{R}))} ) \ \le\  \log \rho( f_{\mathbf{R}*}|_{\pi_1(X(\mathbf{R}))}) \ \le\  h_{top}(f_\mathbf{R}).\]
The fact that the real surface $X(\mathbf{R})$ is non-orientable makes computation of $f_{\mathbf{R}*}|_{H_1(X(\mathbf{R}))}$ complicated.  Since $\pi_1(X(\mathbf{R}))$ is non-abelian, computing the action $f_{\mathbf{R}*}|_{\pi_1(X(\mathbf{R}))}$ is delicate. To deal with $\pi_1(X(\mathbf{R}))$, there are several items to discuss: how to choose generators for $\pi_1(X(\mathbf{R}))$, how to write $\gamma \in \pi_1(X(\mathbf{R}))$, and how to determine the minimal length of each word in a given set of generators. In this paper, we focus on a family of real rational surfaces and their maps. We find sets of generators for which the induced action $ f_{\mathbf{R}*}|_{\pi_1(X(\mathbf{R}))}$ has interesting yet simple behavior.  The behavior of the induced action $ f_{\mathbf{R}*}|_{\pi_1(X(\mathbf{R}))}$ is very close to substitution rules. It suggests a possibility of the existence of a ``\textit{limit}", $\Gamma = \lim_{n \to \infty} f^n_{\mathbf{R}*} \gamma $ for  $\gamma \in \pi_1(X(\mathbf{R}))$. If the limit exists, it would be exciting to know the connection between $\Gamma$ and the boundary of the repelling basin of the invariant cubic and it would provide a better understanding of the boundary of the repelling basin of the invariant cubic.

\vspace{1ex}

This paper's main contributions are introducing a method to compute the induced action on the fundamental group of real diffeomorphisms. Also, for certain rational surface automorphisms we introduce a method to estimate the growth rate of the induced action on the fundamental group and observe a new pheonomenon of the induced action. For every case we investigated (including ones discussed in this paper), we noticed a very striking phenomenon on the iterations of the induced action on the fundamental group: There is a set $\Gamma$ of conjugacy classes in the fundamental group with the property that after the first few iterations their images can be written as products of positive powers of elements in $\Gamma$. We also noticed that only a limited number of successive relations in elements of $\Gamma$ appear in the iterations. This suggests that there are sets of admissible sequences given by positive products of elements in $\Gamma$ and the length growth on the admissible sequences can be obtained by simply counting the number of elements in $\Gamma$.

\vspace{1ex}

In this paper, we use \textit{reading curves} discussed in \cite{Coh} to compute the induced action on the fundamental group for diffeomorphisms associated with quadratic birational map fixing cusp cubic. This induced action on the fundamental group is not linear. To estimate its growth rate, we adapt the idea in Birman and Series \cite{Birman-Series}. We first find a set of invariant conjugacy classes in the fundamental group, and then connect the restriction to this invariant set to linear actions. 
%
%
With our method we have following results:

\begin{thma} There is a non-empty set $\Lambda_1$ of orbit data such that a real rational surface automorphism $f: X \to X$ associated to orbit data in $\Lambda_1$ has the following properties 
\[0= \log \rho( f_{\mathbf{R}*}|_{H_1(X(\mathbf{R});\mathbf{R})} )=\log \rho( f^{-1}_{\mathbf{R}*}|_{H_1(X(\mathbf{R});\mathbf{R})} ) \lneq \log \rho (f^{-1}_{\mathbf{R*}}|_{\pi_1(X(\mathbf{R}))}) = h_{top} (f_\mathbf{R}) = h_{top} (f). \]
In particular $\Lambda_1$ contains the following orbit data 
\begin{itemize} 
\item $\sigma : 1\mapsto 2\mapsto 3 \mapsto 1$, and $n_1 = 1, n_2 = 3, n_3=9$. 
\item $\sigma : 1\mapsto 2\mapsto 3 \mapsto 1$, and $n_1 = 1, n_2 = 4, n_3=8$.
\end{itemize}
\end{thma}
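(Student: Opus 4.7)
The plan is to exhibit, for each of the two listed orbit data, an explicit real rational surface automorphism $f:X\to X$, to check that its homology growth is zero, and then to bound the fundamental group growth of $f^{-1}$ from below by the dynamical degree $\lambda$. Since Bowen's theorem and the Gromov--Yomdin formula give
\[\log\rho\bigl(f^{-1}_{\mathbf{R}*}|_{\pi_1(X(\mathbf{R}))}\bigr)\le h_{top}(f_\mathbf{R})\le h_{top}(f)=\log\lambda,\]
any such lower bound forces equality throughout the middle chain of inequalities in the theorem.

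Using Diller's construction \cite{Diller:2011}, I would realize each of the orbit data $(1,3,9;\sigma)$ and $(1,4,8;\sigma)$ with $\sigma:1\mapsto 2\mapsto 3\mapsto 1$ by a basic real quadratic map properly fixing an invariant cuspidal cubic, selecting the volume-decreasing representative so that its inverse expands the relevant regions. In both cases the characteristic polynomial of the induced action of $f$ on $H^{1,1}(X(\mathbf{C}))$, computed from the orbit data by the method of \cite{Bedford-Kim:2004}, has a single root $\lambda>1$, a Salem number, and $h_{top}(f)=\log\lambda$. Direct computation of the matrix of $f_{\mathbf{R}*}|_{H_1(X(\mathbf{R});\mathbf{R})}$ in the basis of Theorem 1 of \cite{Diller-Kim} shows its spectral radius equals $1$; by that theorem the backward action on $H_1$ then also has spectral radius $1$, so both homology growth rates vanish.

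Next, I would fix a base point $\fp$ at a real fixed point and use the reading-curve formalism of \cite{Coh} to compute $f^{-1}_{\mathbf{R}*}$ on an explicit generating set of $\pi_1(X(\mathbf{R}),\fp)$, tracking each reading curve through the blow-ups and through $J_\mathbf{R}$ composed with the linear parts of the realization. After a few iterations one identifies a finite invariant set $\Gamma$ of conjugacy classes such that every generator's image under $f^{-n}_{\mathbf{R}*}$ is, up to bounded correction, a positive product of elements of $\Gamma$ with only finitely many admissible adjacencies. Following Birman and Series \cite{Birman-Series}, one argues that such admissible positive words are quasi-geodesic in the word metric, so the growth of $\ell_G(f^{-n}_{\mathbf{R}*}\gamma)$ is controlled from below by the spectral radius of the integer substitution matrix $M$ whose rows and columns are indexed by admissible transitions in $\Gamma$. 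A direct calculation identifies the characteristic polynomial of $M$ with the orbit-data polynomial of $(n_1,n_2,n_3;\sigma)$, so $\rho(M)=\lambda$ and the desired lower bound follows.

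The main obstacle is the quasi-geodesic step. The group $\pi_1(X(\mathbf{R}))$ is a free product involving non-orientable factors, and a priori a long positive product of elements of $\Gamma$ could simplify dramatically. One must check case by case that no pair of admissible consecutive letters in $\Gamma$ produces cancelling syllables, which requires a delicate combinatorial analysis of how reading curves interact at the blow-ups. A further subtlety is that, because the homology growth vanishes, all of the exponential growth is detected only by commutator information in $\pi_1$, so the computation genuinely requires the non-abelian framework and cannot be shortcut through $H_1$. Pinning down $\Gamma$, verifying the admissibility graph, and matching its characteristic polynomial with the orbit-data polynomial must be carried out concretely for both $(1,3,9)$ and $(1,4,8)$, and this case-by-case combinatorial bookkeeping is what ultimately carries the proof.
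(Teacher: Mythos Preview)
Your proposal is essentially the paper's approach: reading curves to compute $f^{-1}_{\mathbf{R}*}$ on generators, an invariant set $\Gamma$ of conjugacy classes with admissible transitions, a linear model whose spectral radius equals the dynamical degree $\lambda$, and the Bowen/Gromov--Yomdin sandwich. Two small technical points: the paper does not invoke a general quasi-geodesic argument but instead arranges that every $\gamma_i\in\Gamma$ ends in a single fixed letter $g_\star$ and contains no other $g_\star^{\pm1}$, which makes every positive product in $\Gamma^+$ automatically $G$-cyclically reduced (so no cancellation analysis is needed); and rather than a single substitution matrix it uses a pair of linear maps $S$ (splitting) and $M$ (merging) with $M\phi[\gamma]_\#=S[\gamma]_\#$, locating $\lambda$ as the value for which $Sv_\lambda=\lambda Mv_\lambda$ has a solution $v_\lambda\in\mathrm{Sp}^+\Gamma$. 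Also, $\pi_1(X(\mathbf{R}))$ is the one-relator surface group $\langle\alpha_1,\dots,\alpha_n\mid\alpha_1^2\cdots\alpha_n^2=1\rangle$, not a free product.
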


\begin{thmb} There is a non-empty set $\Lambda_2$ of orbit data such that a real rational surface automorphism $f: X \to X$ associated to orbit data in $\Lambda_2$ has the following properties 
\[0 < \log \rho( f_{\mathbf{R}*}|_{H_1(X(\mathbf{R});\mathbf{R})} ) =\log \rho( f^{-1}_{\mathbf{R}*}|_{H_1(X(\mathbf{R});\mathbf{R})} ) \lneq \log \rho (f^{-1}_{\mathbf{R*}}|_{\pi_1(X(\mathbf{R}))}) = h_{top} (f_\mathbf{R}) = h_{top} (f). \]
In particular $\Lambda_2$ contains the following orbit data 
\begin{itemize} 
\item $\sigma : 1\mapsto 2\mapsto 3 \mapsto 1$, and $n_1 = 1, n_2 = 4, n_3=5$. 
\item $\sigma : 1\mapsto 2\mapsto 3 \mapsto 1$, and $n_1 = 1, n_2 = 5, n_3=6$.
\item $\sigma : 1\mapsto 2\mapsto 3 \mapsto 1$, and $n_1 = 1, n_2 = 6, n_3=7$.
\end{itemize}
\end{thmb}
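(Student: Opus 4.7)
\medskip
\noindent\textbf{Proof proposal for Theorem \nonzeromax.}

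The plan is to treat the three orbit data sets individually but by a uniform three-step procedure, and then define $\Lambda_2$ to be the collection of all orbit data for which the procedure succeeds. First, for each $(n_1,n_2,n_3,\sigma)$ in the list I would use Diller's explicit construction to write down the volume-increasing real quadratic map $f = S\circ J\circ T$ together with the blow-up data, and then describe $X(\mathbf{R})$ concretely as a connected sum of real projective planes. Using the same presentation as in \cite{Diller-Kim}, I compute $f_{\mathbf{R}*}|_{H_1(X(\mathbf{R});\mathbf{R})}$ and its inverse, and appeal to Theorem 1 (i.e.\ \cite[Thm.~3.1]{Diller-Kim}) to confirm that both spectral radii agree and are strictly greater than $1$; this handles the leftmost equality and the positivity in the statement.

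Next I would pick a basepoint and a set of generators $G$ for $\pi_1(X(\mathbf{R}))$ adapted to the exceptional divisors, and compute $f^{-1}_{\mathbf{R}*}$ on each generator by the reading curve method of \cite{Coh}: every generator is represented by a simple closed curve, and its pull-back is read off from the intersections with a chosen collection of arcs in $X(\mathbf{R})$ together with the linear data of $S$ and $T$. Having the explicit formulas, the next step (following Birman--Series \cite{Birman-Series}) is to exhibit a finite set $\Gamma \subset \pi_1(X(\mathbf{R}))/\text{conj}$ of conjugacy classes such that (i) for $n$ large, every iterate $f^{-n}_{\mathbf{R}*}\gamma$ of a generator $\gamma$ is conjugate to a positive product of elements of $\Gamma$, and (ii) only finitely many two-letter patterns $ab$ with $a,b\in\Gamma$ occur. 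This gives an admissible-word subshift, and the action of $f^{-1}_{\mathbf{R}*}$ on $\Gamma$ is encoded by a non-negative integer substitution matrix $M$. Provided the minimal-length reduction cannot create cancellations across the marked positions (which is the content of Birman--Series's no-backtracking argument in our setting), the word-length growth along iterates is asymptotic to $\|M^n\|$, so $\rho(f^{-1}_{\mathbf{R}*}|_{\pi_1(X(\mathbf{R}))})$ equals the Perron--Frobenius eigenvalue $\rho(M)$.

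Finally, I would verify case by case that $\rho(M)$ coincides with the dynamical degree $\lambda(f)$ determined by the orbit data via \cite{Bedford-Kim:2004}, which by Gromov--Yomdin equals $e^{h_{top}(f)}$. Since $h_{top}(f_{\mathbf{R}})\le h_{top}(f)$ and Bowen's inequality gives $h_{top}(f_{\mathbf{R}})\ge \log\rho(f^{-1}_{\mathbf{R}*}|_{\pi_1})$, the string of equalities $\log\rho(f^{-1}_{\mathbf{R}*}|_{\pi_1})=h_{top}(f_{\mathbf{R}})=h_{top}(f)$ follows automatically. The strict inequality $\log\rho(f_{\mathbf{R}*}|_{H_1})<\log\rho(f^{-1}_{\mathbf{R}*}|_{\pi_1})$ is then a direct numerical check: the dynamical degree associated with the listed orbit data is a Salem number of degree $\ge 4$, while the homology growth rate, computed from Diller--Kim, turns out to be the root of a strictly smaller polynomial (smaller than $\lambda$). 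The three specific orbit data are presented as the examples where these numerics can be carried out completely by hand.

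The hardest step will be (ii) above: showing that in the word representing $f^{-n}_{\mathbf{R}*}\gamma$ no systematic cancellation occurs between blocks corresponding to consecutive elements of $\Gamma$, so that counting occurrences of elements of $\Gamma$ actually computes the reduced word length up to a bounded additive error. This is what converts the combinatorial growth of the substitution $M$ into a genuine lower bound for $\rho(f^{-1}_{\mathbf{R}*}|_{\pi_1})$, and it is the place where the choice of $\Gamma$ (and of the basepoint and generators) has to be made carefully; the adaptation of the Birman--Series no-cancellation lemma to the non-orientable surface $X(\mathbf{R})$ with its blow-up structure is the main technical content of the argument.
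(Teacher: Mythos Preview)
Your proposal follows essentially the same route as the paper: compute $f^{-1}_{\mathbf{R}*}$ on generators via reading curves, find an $f^{-1}_{\mathbf{R}*}$-invariant set of conjugacy classes of admissible positive words in a finite alphabet $\Gamma$, extract the growth rate via linear algebra, and sandwich using Bowen on the bottom and Gromov--Yomdin on the top; the homology part is read off from the explicit characteristic polynomials in \cite{Diller-Kim}.

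There is one technical point where your sketch is over-optimistic and the paper's mechanism differs. You write that the action of $f^{-1}_{\mathbf{R}*}$ on $\Gamma$ is ``encoded by a non-negative integer substitution matrix $M$''. In the paper this is \emph{not} the case: for an individual $\gamma_i\in\Gamma$ one only has $f^{-1}_{\mathbf{R}*}\gamma_i=\mu_i\,\gamma_{i_1}\cdots\gamma_{i_{n_i}}\,\zeta_i$ (up to a fixed conjugation), where the prefix $\mu_i$ and suffix $\zeta_i$ are fragments that do not lie in $\Gamma$; it is only when $\gamma_i$ sits inside an $A$-admissible word that the suffix of one block and the prefix of the next \emph{merge} into an element of $\Gamma$. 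Accordingly the paper uses two linear maps, a Splitting map $S$ and a Merging map $M$ from $\mathrm{Span}\,\Gamma$ to $\mathrm{Span}\,\Delta$ (with $\Delta\supset$ the fragments), satisfying $M\,\phi[\gamma]_\#=S\,[\gamma]_\#$ on $\Gamma_A$; the growth rate bound comes from a vector $v_\lambda\in\mathrm{Sp}^+\Gamma$ with $S v_\lambda=\lambda\,M v_\lambda$. This replaces your single Perron--Frobenius matrix and is exactly what resolves your ``hardest step (ii)'': no-cancellation is forced structurally by choosing every $\gamma_i$ to end in a distinguished letter $g_\star$ and to begin with a letter $\neq g_\star^{\pm1}$, so positive words in $\Gamma$ are automatically $G$-cyclically reduced and length is additive (Lemma~\ref{L:Gplus}). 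With that refinement your outline matches the paper's proof.
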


\begin{thmc}
There is a non-empty set $\Lambda_3$ of orbit data such that a real rational surface automorphism $f: X \to X$ associated to orbit data in $\Lambda_3$ has the following properties 
\[0= \log \rho( f_{\mathbf{R}*}|_{H_1(X(\mathbf{R});\mathbf{R})} )=\log \rho( f^{-1}_{\mathbf{R}*}|_{H_1(X(\mathbf{R});\mathbf{R})} ) \lneq \log \rho (f^{-1}_{\mathbf{R*}}|_{\pi_1(X(\mathbf{R}))}) \le h_{top} (f_\mathbf{R}) \lneq h_{top} (f). \]
In particular $\Lambda_3$ contains the following orbit data 
\begin{itemize} 
\item $\sigma : 1\mapsto 2\mapsto 3 \mapsto 1$, and $n_1 = 2, n_2 = 3, n_3=5$. 
\item $\sigma : 1\mapsto 2\mapsto 3 \mapsto 1$, and $n_1 = 3, n_2 = 4, n_3=5$.
\item $\sigma : 1\mapsto 2\mapsto 3 \mapsto 1$, and $n_1 = 3, n_2 = 4, n_3=6$.
\item $\sigma : 1\mapsto 2\mapsto 3 \mapsto 1$, and $n_1 = 3, n_2 = 5, n_3=5$.
\end{itemize}
\end{thmc}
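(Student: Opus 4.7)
I would treat each of the four listed orbit data separately, following a common template based on Diller's explicit volume-decreasing realization \cite{Diller:2011}. For each case, first write down $f$ and the induced diffeomorphism $f_\mathbf{R}$, identify the topological type of $X(\mathbf{R})$ as a blow-up of $\mathbf{RP}^2$ along the relevant orbit of real points, and fix a generating set for $\pi_1(X(\mathbf{R}))$ adapted to the invariant cusp cubic. Applying the reading-curve construction of \cite{Coh} then yields an explicit substitution-like formula for $f^{-1}_{\mathbf{R}*}$ on these generators, which is the computational backbone of everything that follows.

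The vanishing $\log\rho(f_{\mathbf{R}*}|_{H_1(X(\mathbf{R});\mathbf{R})}) = \log\rho(f^{-1}_{\mathbf{R}*}|_{H_1(X(\mathbf{R});\mathbf{R})}) = 0$ is obtained from \cite[Theorem~3.1]{Diller-Kim}, which reduces the problem to either direction, together with a direct computation: abelianizing the substitution rule above gives an integer matrix whose characteristic polynomial, in each of the four cases, is a product of cyclotomic factors. This is a finite linear-algebra verification once the substitution is in hand.

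For the strict lower bound $0 \lneq \log\rho(f^{-1}_{\mathbf{R}*}|_{\pi_1(X(\mathbf{R}))})$, I would adapt the Birman--Series strategy \cite{Birman-Series}. The plan is to identify a finite set $\Gamma$ of conjugacy classes in $\pi_1(X(\mathbf{R}))$ with the property that, after finitely many iterations, every image of an element of $\Gamma$ under $f^{-1}_{\mathbf{R}*}$ is represented canonically as a positive product of elements of $\Gamma$ drawn from a restricted list of admissible two-letter successions. Word-length growth on this invariant set then reduces to counting along a subshift of finite type whose transfer matrix is explicit; its Perron--Frobenius eigenvalue $\lambda_\Gamma > 1$ provides the desired positive lower bound. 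The middle inequality $\log\lambda_\Gamma \le h_{top}(f_\mathbf{R})$ is then Bowen's theorem.

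The main obstacle, as I see it, is the strict inequality $h_{top}(f_\mathbf{R}) \lneq h_{top}(f)$: unlike the other inequalities in the chain, this demands an independent \emph{upper} bound on $h_{top}(f_\mathbf{R})$, for which neither Gromov--Yomdin nor Bowen's inequality is of direct help. My approach would be to produce, for each listed orbit datum, a train-track or graph-spine representative of $f_\mathbf{R}$ modeled on the dual graph of $\Gamma$, whose topological entropy equals the $\pi_1$-growth rate $\log\lambda_\Gamma$ and which semi-conjugately majorizes $h_{top}(f_\mathbf{R})$. A numerical comparison then shows that this $\lambda_\Gamma$ is strictly smaller than the Salem dynamical degree $\rho(f^*|_{H^{1,1}(X(\mathbf{C}))})$ determined by the orbit data via \cite{Bedford-Kim:2004}, yielding the strict inequality. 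Making the train-track reduction rigorous on a non-orientable surface, and verifying that no additional expansion is hidden off the chosen spine, is where I expect the genuine technical work to lie.
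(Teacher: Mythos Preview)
Your plan for the first three pieces --- the vanishing of the homology growth via the cyclotomic factorization of the characteristic polynomial from \cite{Diller-Kim}, the positive lower bound on the $\pi_1$-growth via an invariant family $\Gamma$ of conjugacy classes in the Birman--Series style, and the inequality $\log\rho(f^{-1}_{\mathbf{R}*}|_{\pi_1})\le h_{top}(f_\mathbf{R})$ from Bowen --- matches the paper's approach essentially verbatim.

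The substantive gap is your treatment of the strict inequality $h_{top}(f_\mathbf{R}) \lneq h_{top}(f)$. You propose to bound $h_{top}(f_\mathbf{R})$ from \emph{above} by constructing a train-track carrier whose transition matrix reproduces $\lambda_\Gamma$, and then comparing $\lambda_\Gamma$ numerically to the Salem dynamical degree. This is not what the paper does, and it is not clear your route can be completed: there is no a priori reason that the $\pi_1$-growth rate computed from $\Gamma$ coincides with $h_{top}(f_\mathbf{R})$, and producing a genuine Markov/train-track model that majorizes the full surface dynamics (rather than just the growth on a chosen invariant set of words) is a serious obstruction you flag but do not resolve. On a non-orientable blow-up with many exceptional curves there is no off-the-shelf Nielsen--Thurston or Bestvina--Handel machinery that guarantees such a carrier exists with the right spectral radius.

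The paper bypasses this entirely by appealing to a criterion of Cantat (building on Bedford--Lyubich--Smillie, de~Th\'elin, Dujardin): $h_{top}(f_\mathbf{R}) = h_{top}(f)$ if and only if every saddle periodic point of $f$ lies either in $X(\mathbf{R})$ or on an $f$-invariant algebraic curve. For each of the four orbit data the paper then (i) shows, via the eigenspace structure of $f_*|_{H^{1,1}}$ and the negative self-intersection of the cubic, that the cuspidal cubic $C$ is the \emph{only} invariant algebraic curve, and (ii) exhibits by direct numerical computation (using Diller's explicit formulas) a pair of complex-conjugate saddle fixed points off $C$. Cantat's criterion then forces $h_{top}(f_\mathbf{R}) \lneq h_{top}(f)$ with no need for any upper entropy estimate on the real side. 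This is the missing idea in your proposal.
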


We organize our paper in the following order. Section \ref{S:Birational} of this paper discusses some background about entropy, homology growth rate, and real rational surface automorphisms. In Section \ref{S:Faction}, we discuss some background about reading curves of generators of a fundamental group and then fix sets of generators and sets of reading curves for real diffeomorphisms associated with real quadratic birational maps. In Section \ref{S:Action}, we show how to compute the action $f^{-1}_{\mathbf{R}*}: \pi_1(X(\mathbf{R}), P_{fix}) \to \pi_1(X(\mathbf{R}),P_{fix})$ with respect to a given set of generators. Section \ref{S:Grate} shows how to estimate the growth rate of the induced action on the fundamental group using an invariant set of words. Section \ref{S:coxeter} shows how to apply our method to the real diffeomorphism associated with the orbit data $1,1,8$ and a cyclic permutation. Section \ref{S:max} gives the proof of Theorem $A$ and Theorem $B$, and Section \ref{S:nonmax} gives the proof of Theorem $C$. In Appendix $A$, we list the induced action on the fundamental group for orbit data $n_1, n_2, n_3$ with a cyclic permutation. Appendix $B$ gives necessary data for each case discussed in Theorem $A$, Theorem $B$, and Theorem $C$.


\section{Real Automorphisms and Quadratic Birational Maps}
\label{S:Birational}
Let $\check f: \mathbb{P}^2(\mathbf{C}) \dasharrow \mathbb{P}^2(\mathbf{C})$ be a quadratic birational map of the form $ \check f = T^- \circ J \circ (T^+)^{-1}$ where $J[x_1:x_2:x_3] = [x_2 x_3:x_1 x_3:x_1 x_2]$ is the Cremona involution and $T^\pm \in \text{Aut} (\mathbb{P}^2(\mathbf{C}))$. This family of birational maps has been studied by several authors \cite{Bedford-Diller-K, BedfordKim:2011, Diller:2011, McMullen:2007,Uehara:2010}. In this section, we introduce the notation and basic properties needed for the following sections.

The Cremona involution $J$ has three points of indeterminacy $\mathcal{I}(J) = \{ e_1=[1:0:0],e_2=[0:1:0],e_3=[0:0:1]\}$ and three exceptional lines $E_i = \{ x_i = 0 \}, i = 1,2,3$ such that 
\[ J: E_i \setminus \mathcal{I}(J) \ \to \ e_i. \] The quadratic birational maps $\check f, \check f^{-1}$ also have three distinct points of indeterminacy and three exceptional lines:
\[ \mathcal{I}(\check f) = \{ p_i^+ = T^+ (e_i), i =1,2,3 \} \qquad \text{and}\qquad \mathcal{I}(\check f^{-1}) = \{ p_i^- = T^- (e_i), i=1,2,3 \}, \]
\[ \text{Exc}(\check f) = \{ E_i^+ = T^+ (E_i), i =1,2,3 \} \qquad \text{and}\qquad \text{Exc}(\check f^{-1}) = \{ E_i^- = T^- (E_i), i=1,2,3 \}. \]
For any curve $V$, let us use $\check f(V)$ for the set theoretic strict transformation $\check f(V \setminus \mathcal{I}(\check f))$. We see that 
\[ \check f(E_i^+) = p_i^- \qquad \text{and} \qquad \check f^{-1} (E_i^-) = p_i^+, \ \ \forall i = 1,2,3 \]

For each $i$, we define $n_i \in \mathbf{Z} \cup \{ \infty\}$ by 
\begin{equation*}
n_i = \left \{ \begin{aligned} &\min \{ n : \text{dim} \check f^n ( E_i^+) \lneq \text{dim}\check f^{n+1} ( E_i^+) \}\\ &\infty \ \ \qquad \text{if } \text{dim} \check f^n ( E_i^+) =0\ \ \ \text{for all } n\ge 1\\\end{aligned} \right.
\end{equation*}
If $n_i <\infty$, we have $\check f^{n_i} (E_i^+) \in \mathcal{I}(\check f) = \{ p_1^+,p_2^+,p_3^+ \}$. It is known that $\check f $ lifts to a rational surface automorphism if and only if $n_1, n_2, n_3$ are finite.  
When $n_i < \infty$ for all $i = 1,2,3$, we can define a permutation $\sigma \in \Sigma_3$ given by \[ \check f^{n_i} (E_i^+) = p_{\sigma(i)}^+, \qquad \text{and thus} \qquad \check f^{-n_i}(E_{\sigma(i)}^-) = p_i^-, \ \ i =1,2,3.\]
The numerical data given by the positive integers $n_1,n_2, n_3$ and the permutation $\sigma$ is called \textit{orbit data}.

\begin{thm}\cite{Bedford-Kim:2004}
Suppose $\check f=T^- \circ J \circ (T^+)^{-1}$ is a quadratic birational map. If for all $i =1,2,3$ $n_i <\infty$ then $\check f$ lifts to an automorphism $f:X(\mathbf{C}) \to X(\mathbf{C})$, where $X(\mathbf{C} )= \text{B}\ell_P \mathbb{P}^2(\mathbf{C})$ is a blowup of $\mathbb{P}^2(\mathbf{C})$ along a finite set of (possibly infinitely near) points $P = \{ p_{i,j}=\check f^j(E_i^+), 1 \le j \le n_i, i =1,2,3 \}$.
Furthermore, the characteristic polynomial of the push-forward operator $f_* : H_2(X(\mathbf{C}),\mathbf{R}) \to H_2(X(\mathbf{C}),\mathbf{R})$ is (explicitly) determined by the orbit data. 
\end{thm}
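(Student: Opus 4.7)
The plan is to construct $X(\mathbf{C})$ as an iterated blowup, verify that $\check f$ resolves to a morphism on it, deduce from a symmetric argument that this morphism is an automorphism, and then read off the matrix of $f_*$ on $H_2$ in the natural basis.

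First, I would construct $X(\mathbf{C})$ as a tower of blowups. For each $i$ and each $j = 1, 2, \dots, n_i$ the point $p_{i,j} = \check f^j(E_i^+)$ may be infinitely near $p_{i,j-1}$, so one performs the blowups in order along each orbit, starting from $p_{i,1} = p_i^-$ and proceeding up through $p_{i,n_i} = p_{\sigma(i)}^+$. Let $\pi : X(\mathbf{C}) \to \mathbb{P}^2(\mathbf{C})$ denote the composition of all these blowups and let $E_{i,j}$ denote the exceptional divisor created by blowing up $p_{i,j}$. A crucial observation is that the three indeterminacy points $p_i^+$ of $\check f$ all appear in $P$: namely, $p_{\sigma^{-1}(i), n_{\sigma^{-1}(i)}} = p_i^+$.

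Second, I would show that the induced rational map $f : X(\mathbf{C}) \dashrightarrow X(\mathbf{C})$ is a morphism by tracking what it does to each divisor. Away from the exceptional locus of $\pi$, $f$ agrees with $\check f$, which is regular except at the three $p_i^+$. Near each $p_i^+$, a short local coordinate computation with $J$ shows that the exceptional divisor $E_{\sigma^{-1}(i), n_{\sigma^{-1}(i)}}$ created over $p_i^+$ maps isomorphically onto the proper transform of $E_i^+$ in the target, resolving all indeterminacy. The remaining exceptional divisors behave combinatorially: the proper transform of $E_i^+$ maps to $E_{i,1}$ (since $\check f$ collapses $E_i^+$ to $p_i^-$, the blowup at $p_i^-$ identifies the $\mathbb{P}^1$ of directions there with $E_i^+$); for $1 \le j < n_i$, the divisor $E_{i,j}$ maps to $E_{i,j+1}$ because $\check f$ is biregular from a neighborhood of $p_{i,j}$ to one of $p_{i,j+1}$; and $E_{i,n_i}$ maps to the proper transform of $E_{\sigma(i)}^+$ via the identification $p_{i,n_i} = p_{\sigma(i)}^+$ together with the local resolution above. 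To upgrade this morphism to an automorphism, I would apply the same construction to $\check f^{-1}$, which is again a basic quadratic map with finite orbit data (permutation $\sigma^{-1}$ and orbit lengths permuted by $\sigma$). The resulting surface is canonically isomorphic to $X(\mathbf{C})$, so the lift of $\check f^{-1}$ provides a regular two-sided inverse to $f$.

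Finally, I would compute $f_*$ on $H_2(X(\mathbf{C}), \mathbf{R})$ in the natural basis consisting of the class $[H]$ of the proper transform of a generic line together with the exceptional classes $[E_{i,j}]$. For generic $T^\pm$, the line $E_{\sigma(i)}^+$ passes through exactly the two indeterminacy points $p_j^+$ and $p_k^+$ with $\{j,k\} = \{1,2,3\} \setminus \{\sigma(i)\}$, giving the proper transform class $[H] - [E_{\sigma^{-1}(j), n_{\sigma^{-1}(j)}}] - [E_{\sigma^{-1}(k), n_{\sigma^{-1}(k)}}]$. The divisor formulas above then translate into the explicit identities $f_*[H] = 2[H] - [E_{1,1}] - [E_{2,1}] - [E_{3,1}]$, $f_*[E_{i,j}] = [E_{i,j+1}]$ for $j < n_i$, and $f_*[E_{i,n_i}] = [H] - [E_{\sigma^{-1}(j), n_{\sigma^{-1}(j)}}] - [E_{\sigma^{-1}(k), n_{\sigma^{-1}(k)}}]$ for $\{j,k\} = \{1,2,3\} \setminus \{\sigma(i)\}$. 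The resulting $(n_1 + n_2 + n_3 + 1) \times (n_1 + n_2 + n_3 + 1)$ integer matrix has entries depending only on $n_1, n_2, n_3$ and $\sigma$, and hence so does its characteristic polynomial. The main obstacle, I expect, is the local coordinate check at the infinitely near points: one must verify that the tower of blowups truly separates all orbit points in the chosen order and that the exceptional divisor over each $p_i^+$ is mapped isomorphically onto the proper transform of $E_i^+$. This requires working with $J$ in explicit affine charts, particularly in the charts where $p_{i,j+1}$ sits on the exceptional divisor of the previous blowup.
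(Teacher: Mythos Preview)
The paper does not supply its own proof of this theorem: it is quoted from \cite{Bedford-Kim:2004} as background and stated without argument. Your sketch is essentially the standard argument found in that reference (and in the related literature such as \cite{Diller:2011, McMullen:2007}): build $X(\mathbf{C})$ as the iterated blowup along the orbits, check in local coordinates that the exceptional divisor over each $p_i^+$ is sent isomorphically to the strict transform of $E_i^+$, use the symmetric construction for $\check f^{-1}$ to obtain a two-sided inverse, and then read off $f_*$ in the basis $[H],[E_{i,j}]$. The formulas you wrote for $f_*[H]$, $f_*[E_{i,j}]$ with $j<n_i$, and $f_*[E_{i,n_i}]$ are exactly those appearing in \cite{Bedford-Kim:2004}, and they visibly depend only on $(n_1,n_2,n_3,\sigma)$, which gives the last assertion about the characteristic polynomial.
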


If the linear maps $T^\pm$ restrict to automorphisms of $\mathbb{P}^2(\mathbf{R})$ and the birational map $\check f=T^- \circ J \circ (T^+)^{-1}$ lifts to an automoprhism $f:X(\mathbf{C}) \to X(\mathbf{C})$ then $f$ restricts to a diffeomorphism $f_\mathbf{R}: X(\mathbf{R}) \to X(\mathbf{R})$. In this article, we say  $\check f$ is \textit{real} if the linear maps $T^\pm$ are real. 

Suppose $C$ is a cuspidal cubic. We say a birational map $f$ {\em properly fixes} a curve $C$ if $f(C) = C$ and all points of indeterminacy $p_i^\pm$ are  regular points of $C$. For a quadratic birational map $f$, having all points of indeterminacy on the set $C_\text{reg}$ of regular points is enough to conclude that $f$ properly fixes $C$. 

\begin{prop}\cite{Diller:2011}
Let $f: \mathbb{P}^2(\mathbf{C}) \dasharrow \mathbb{P}^2(\mathbf{C})$ be a quadratic birational map. $f$ properly fixes $C$ if and only if all points of indeterminacy of $f$ and $f^{-1}$ are in $C_\text{reg}$.
\end{prop}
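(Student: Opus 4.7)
The $(\Rightarrow)$ direction is immediate from the very definition of ``properly fixes.'' For $(\Leftarrow)$, I would assume that every indeterminacy point of $f$ and $f^{-1}$ lies in $C_{\text{reg}}$, and aim to deduce $f(C) = C$.

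My first step would be dimensional: the standard degree formula for a quadratic birational map shows that, because the cubic $C$ passes simply through each of the three base points $p_i^+$, the image $f(C)$ has degree $2 \cdot 3 - 3 = 3$ and is therefore itself a cubic. Next I would locate $f(C)$ by a contraction argument: for each $i$, the exceptional line $E_i^+ = \overline{p_j^+ p_k^+}$ meets $C$ in three points by Bezout, two of which are the base points $p_j^+$ and $p_k^+$; the third intersection point $R_i \in E_i^+ \setminus \mathcal{I}(f)$ is sent by $f$ to $p_i^-$, showing $p_i^- \in f(C)$. A symmetric application of this argument to $f^{-1}$ then forces $f(C)$ to have multiplicity exactly one at each $p_i^-$.

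To upgrade these facts to an actual equality of cubics, I would pass to the resolved setting. Blowing up $\mathbb{P}^2$ at the three points $p_i^+$ produces a surface $X_+$, and similarly $X_-$ from blowing up the $p_i^-$; the map $f$ lifts to a biregular isomorphism $\tilde{f} \colon X_+ \to X_-$. The smoothness hypothesis places the proper transforms $\hat{C}_\pm$ of $C$ in $X_\pm$ into the anticanonical linear systems, since $\hat{C}_\pm = 3H - \sum_i \mathcal{E}_i^\pm = -K_{X_\pm}$. Because $\tilde{f}$ is an isomorphism, it preserves the canonical classes, so $\tilde{f}(\hat{C}_+)$ is an effective anticanonical divisor in $X_-$; pushing forward to $\mathbb{P}^2$ recovers the cubic $f(C)$ passing through the $p_i^-$, consistent with the preceding step.

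The most delicate step---and where I expect the real difficulty---is to promote the containment $\tilde{f}(\hat{C}_+) \in |-K_{X_-}|$ to the equality $\tilde{f}(\hat{C}_+) = \hat{C}_-$. The anticanonical linear system on $X_-$ is six-dimensional (it corresponds to cubics through the three $p_i^-$), so membership alone does not pin down the divisor. My plan would be to exploit the cuspidal structure of $C$: the additive group law $C_{\text{reg}} \cong \mathbb{G}_a$ encodes the six indeterminacy points as parameters in $\mathbb{G}_a$, and the tangent directions of $C$ at the base points---recorded by the intersection points $\hat{C}_\pm \cap \mathcal{E}_i^\pm$---pass under $\tilde{f}$ to jets that must match the jets of $\hat{C}_-$. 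This jet-level compatibility, together with the rigidity forced by the $\mathbb{G}_a$-structure of a cuspidal cubic, singles out $\hat{C}_-$ among all anticanonical divisors of $X_-$ and yields $f(C) = C$ after descending back to $\mathbb{P}^2$.
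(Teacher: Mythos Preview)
The paper does not supply its own proof of this statement; it is quoted from \cite{Diller:2011}, so there is nothing in the paper to compare your argument against directly.

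More to the point, the obstruction you encounter in your last paragraph is not a gap that can be filled: the backward implication, as the proposition is phrased here, is actually false. Take $f=J$ the standard Cremona involution, so that $p_i^+=p_i^-=e_i$, and let $C$ be a cuspidal cubic through $e_1,e_2,e_3$ whose cusp $P$ lies off the coordinate triangle and is not one of the four fixed points $[1{:}{\pm 1}{:}{\pm 1}]$ of $J$. For instance, transport $\{y^2z=x^3\}$ by the linear map carrying the smooth points with parameters $t=1,-1,2$ to $e_1,e_2,e_3$; the cusp then lands at $P=[6{:}{-2}{:}{-1}]$. Since $P$ avoids the coordinate lines, $J$ is a local isomorphism near $P$, so $J(C)$ is again a cuspidal cubic, but its cusp sits at $J(P)=[1{:}{-3}{:}{-6}]\neq P$, and one checks directly that this point does not even lie on $C$. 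Hence $J(C)\neq C$ even though $\mathcal I(J)=\mathcal I(J^{-1})=\{e_1,e_2,e_3\}\subset C_\text{reg}$.

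Your first three steps are entirely correct and isolate exactly what \emph{does} hold in this generality: $f(C)$ is a cubic passing simply through each $p_i^-$, so its proper transform is anticanonical on $X_-$. But no jet-matching or $\mathbb G_a$-rigidity argument can single out $\hat C_-$ inside that six-dimensional linear system from the hypotheses given; in the example above $\tilde f(\hat C_+)$ and $\hat C_-$ are genuinely distinct anticanonical curves. (Note also that $\tilde f$ sends $\mathcal E_i^+$ to the strict transform of the \emph{line} $E_i^-$, not to $\mathcal E_i^-$, so the ``tangent directions at $p_i^+$ map to tangent directions at $p_i^-$'' picture is not what is happening.) The version of this result in \cite{Diller:2011} must carry an additional hypothesis or a different formulation that has been suppressed here; the difficulty you sensed is real and cannot be closed as the proposition stands.
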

When a birational map $f$ fixes a cuspidal cubic, $f$ fixes the set of regular points $C_\text{reg}$. Thus $f$ induces an automorphism on $C_\text{reg} = \{ \gamma(\omega): \omega \in \mathbf{C} \}$. Hence, we can choose a parametrization $\gamma:\mathbf{C}\to {C_\text{reg}}$  and $\delta \ne 0$ such that \[ f|_{C_\text{reg}} : \gamma(\omega) \to \gamma (\delta \omega), \]$p_{fix}=\gamma(0)$ is the unique (finite) fixed point of $f_{C_{reg}}$, and $p_{cusp}= \gamma(\infty)$ is the cusp point of $C$.  We call this $\delta$ the \textit{determinant} of $f$. 

With few exceptions on the orbit data, McMullen \cite{McMullen:2007} and Diller \cite{Diller:2011} showed that there is a real quadratic birational map $\check f$ properly fixing a cuspidal cubic curve $C$ with the given orbit data $n_1, n_2, n_3 < \infty$ and a permutation $\sigma \in \Sigma_3$ such that $\check f$ lifts to an automorphism $f:X(\mathbf{C}) \to X(\mathbf{C})$ such that the entropy of $f$ is given by the log of the determinant of $f$. In this article, we will focus on the orbit data with a cyclic permutation. 

\begin{thm}\label{T:param}\cite{Diller:2011,McMullen:2007}
Suppose $n_1,n_2,n_3 \in \mathbf{Z}_{>0}$ such that $n_1+n_2+n_3 \ge 10$ and \[ \{ n_1,n_2 ,n_3 \} \not\in \{ \{ 1,n,n\}. \{2,2,n\}, \{n,n,n\} | n \in \mathbf{Z}_{>0} \}. \] 
Then there is a unique (up to linear conjugacy) real quadratic birational map $\check f$ properly fixing a cuspidal cubic curve $C$ with the orbit data $n_1,n_2, n_3$ and a cyclic permutation, $\sigma$ such that $\check f$ lifts to an automorphism $f: X(\mathbf{C}) \to X(\mathbf{C})$ where $X(\mathbf{C})$ is a blowup of $\mathbb{P}^2(\mathbf{C})$ along a finite set of points $\{ p_{i,j}, 1\le j \le n_i, 1 \le i \le 3 \}$ such that 
\begin{itemize}
\item the determinant $\delta \ne \pm1$ of $f$ is a real root of the characteristic polynomial of $f_* : H_2(X(\mathbf{C}), \mathbf{R}) \to H_2(X(\mathbf{C}), \mathbf{R})$, and
\item the indeterminate points of $\check f^{-1}$ are given by $p_i^- = \gamma(t_i)$ where \[ t_i = \frac{1+ \delta^{n_j} + \delta^{n_j+ n_k}}{1- \delta^{n_i+n_j+n_k}}, \ \ \ \sigma: i \mapsto j \mapsto k \mapsto i. \]
\end{itemize}
\end{thm}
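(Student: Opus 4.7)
The plan is to leverage the group structure on the smooth locus of the cuspidal cubic $C$ to reduce the construction to solving a linear system whose compatibility condition is the characteristic polynomial of the homology action. Normalize $\gamma : \mathbf{C}\to C_{\mathrm{reg}}$ so that three points $\gamma(a),\gamma(b),\gamma(c)$ are collinear in $\mathbb{P}^2$ iff $a+b+c=0$, with the unique fixed point at $\gamma(0)$ and the cusp at $\gamma(\infty)$. Any biholomorphism of $C$ extending to a Cremona map then acts on $C_{\mathrm{reg}}$ as $\omega\mapsto\delta\omega$ for some $\delta\in\mathbf{C}^*$, and the problem becomes determining $\delta$ together with the parameters $s_i,t_i\in\mathbf{C}$ for $p_i^\pm=\gamma(s_i),\gamma(t_i)$.

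Next, I would extract the defining relations. Because $\check f = T^- \circ J\circ (T^+)^{-1}$ is quadratic, the exceptional line $E_i^+$ passes through $p_j^+$ and $p_k^+$ with $\{i,j,k\}=\{1,2,3\}$, meets $C$ in a third point of parameter $-(s_j+s_k)$, and is contracted by $\check f$ to $p_i^-$. Comparing parameters through the action $\omega\mapsto\delta\omega$ gives
\[ t_i=-\delta(s_j+s_k),\qquad s_i=-\delta^{-1}(t_j+t_k). \]
The orbit datum $\check f^{n_i}(E_i^+)=p_{\sigma(i)}^+$ translates to $s_{\sigma(i)}=\delta^{\,n_i-1}t_i$, and substituting yields a $3\times 3$ homogeneous linear system $s_{\sigma(i)}=-\delta^{\,n_i}(s_j+s_k)$ in $(s_1,s_2,s_3)$. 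A nontrivial solution exists iff the determinant vanishes; a direct comparison with the Bedford--Kim formula identifies this determinant (up to a cyclotomic factor) with the characteristic polynomial of $f_*$ on $H_2(X(\mathbf{C});\mathbf{R})$. Solving the system with $\sigma$ cyclic produces the stated closed form
\[ t_i=\frac{1+\delta^{n_j}+\delta^{n_j+n_k}}{1-\delta^{n_i+n_j+n_k}}. \]

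Finally, three genuine obstacles must be addressed. First, one must show that under the hypotheses $n_1+n_2+n_3\ge 10$ and $\{n_1,n_2,n_3\}\notin\{\{1,n,n\},\{2,2,n\},\{n,n,n\}\}$ the relevant factor of the characteristic polynomial is a Salem polynomial with a unique real root $\delta>1$; the excluded families are precisely those for which the polynomial factors over cyclotomic/reciprocal pieces and no such $\delta\ne\pm 1$ exists (so reality and $\delta\ne\pm 1$ both come for free from Salem structure). Second, and this is the main technical obstacle, admissibility must be checked: one must verify that the six parameters $s_i,t_i$ obtained from the closed form are distinct, nonzero, and finite, so that the six indeterminate points are distinct regular points of $C$, and further that no orbit $\check f^m(E_i^+)$ for $m<n_i$ accidentally lands in $\mathcal I(\check f)$ (forcing the combinatorial lengths to be \emph{exactly} $n_i$). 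Both checks reduce to explicit identities in $\delta$, which Diller's explicit realization in \cite{Diller:2011} handles case by case. Third, uniqueness up to linear conjugacy follows because the parametrization $\gamma$ is pinned down up to the affine rescaling $\omega\mapsto\lambda\omega$, and such a rescaling corresponds to conjugation of $\check f$ by a linear automorphism of $\mathbb{P}^2$; the real structure on $\gamma$ together with $\delta\in\mathbf{R}$ ensures that the normalizing $T^\pm$ can be chosen in $\mathrm{GL}(3,\mathbf{R})$.
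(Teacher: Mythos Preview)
The paper does not prove this theorem; it is stated with citations to \cite{Diller:2011,McMullen:2007} and used as a black box. Your sketch is nonetheless a faithful outline of Diller's construction: exploiting the additive group law on $C_{\mathrm{reg}}$ so that collinearity becomes $a+b+c=0$, deriving the relations $t_i=-\delta(s_j+s_k)$ and $s_{\sigma(i)}=\delta^{n_i-1}t_i$, reducing to a $3\times3$ homogeneous linear system whose determinant recovers the characteristic polynomial, and then solving explicitly for the cyclic $\sigma$. The three ``obstacles'' you flag (Salem property of the polynomial under the stated exclusions, admissibility of the resulting parameters, and uniqueness up to the scaling $\omega\mapsto\lambda\omega$) are exactly the points that require care in the original references, and you correctly identify that the excluded orbit data $\{1,n,n\},\{2,2,n\},\{n,n,n\}$ are those for which the polynomial degenerates.
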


\subsection{Real Automorphisms} Let $\check f: \mathbb{P}^2(\mathbf{C}) \dasharrow \mathbb{P}^2(\mathbf{C})$ be a real basic quadratic birational map that properly fixes the cuspidal cubic curve $C$ with the orbit data consisting of three positive integers $n_1, n_2, n_3$ with $n_1 + n_2 + n_3 \ge 10$ and a cyclic permutation $\sigma: 1\to 2\to 3$ such that the determinant $\delta >1$ and the points $p_{i,j}=\check f^j(E_i^+), 1 \le j \le n_i, i =1,2,3 $ are all distinct points in $\mathbb{P}^2(\mathbf{R})$. Then $\check f$ lifts to an automorphism $f: X(\mathbf{C}) \to X(\mathbf{C})$ with positive entropy where $X(\mathbf{C})$ is a blowup of $\mathbb{P}^2(\mathbf{C})$ along a finite set of points $\{p_{i,j}, 1 \le j \le n_i, i =1,2,3\}$. By restriction, we have a diffeomorphism $f_\mathbf{R} : X(\mathbf{R}) \to X(\mathbf{R})$ of the real slice of $X(\mathbf{C})$. Two fixed points of $f_\mathbf{R}$ are on the cuspidal cubic $C$. One fixed point, $P_{cusp}$ on the cusp is an attracting fixed point with multipliers $\delta^{-2}$ and $\delta^{-3}$. The other fixed point, $P_{fix}$ on $C_\text{regular}$ is a saddle point with multipliers $\delta$ and $\delta^{-(n_1+n_2+n_3-3)}$.
To compute the induced action on the fundamental group, it is convenient to have a repelling fixed point. Since $f_\mathbf{R}$ always has an attracting fixed point $P_{cusp}$ at the cusp of $C$, we will work with the inverse, $f_\mathbf{R}^{-1}$. Under $\check f^{-1}$, we have 
\begin{equation*}
\begin{aligned}
\check f^{-1} \ :\ & E_1^- \to p_1^+= p_{3,n_3} \to p_{3,n_3 -1} \to \cdots \to p_3^- = p_{3,1} \\
& E_2^- \to p_2^+ = p_{1,n_1} \to p_{1,n_1-1} \to \cdots \to p_2^- = p_{1,1} \\
&E_3^-\to p_3^+ = p_{2,n_2} \to p_{2,n_2-1} \to \cdots \to p_3^- = p_{2,1} \\
\end{aligned}
\end{equation*}
Thus the orbit data of $f^{-1}_\mathbf{R}$ are given by $n_3,n_1,n_2$ together with a cyclic permutation $1 \to 3\to 2 \to 1$. 

\vspace{1ex}

Due to Gromov \cite{Gromov} and Yomdin \cite{Yomdin}, it is known that the topological entropy of an automorphism $f$ on a complex projective manifold $X(\mathbf{C})$ is given by the logarithm of the spectral radius of the pushforward action on $H_2(X(\mathbf{C});\mathbf{R})$. 
\[ h_{top}(f) = \log \rho(f_*|_{H_2(X(\mathbf{C}); \mathbf{R})}). \] In this article, we let $\rho$ denote the spectral radius. However, the equality does not hold for real diffeomorphisms. For the induced diffeomorphism $f_\mathbf{R}$ on the real slice $X(\mathbf{R})$, we have 
\[ \log \rho(f_{\mathbf{R}*}|_{H_1(X(\mathbf{R}); \mathbf{R})}) \le h_{top}(f_\mathbf{R}) \le h_{top}(f) = \log \rho(f_*|_{H_2(X(\mathbf{C}); \mathbf{R})}) \]

For automorphisms $f:X(\mathbf{C}) \to X(\mathbf{C})$ associated with the orbit data $n_1,n_2,n_3$ with a permutation $\sigma \in \Sigma_3$, the linear operation $ f_*|_{H^{1,1}(X(\mathbf{C}); \mathbf{R})}$ and its characteristic polynomial are given in \cite{Bedford-Kim:2004, Diller:2011}. The characteristic polynomial of  $ f_*|_{H^{1,1}(X(\mathbf{C}); \mathbf{R})}$ associated to orbit data $n_1, n_2, n_3$ with a cyclic permutation $\sigma: 1\to 2 \to 3 \to 1$ is given by 
\begin{equation}\label{E:compchar}
\chi(t) = t- t^{n_1+n_2+n_3} + (t-1) (t^{n_1}+1) (t^{n_2}+1) (t^{n_3}+1) 
\end{equation}

 Recently Diller and Kim \cite{Diller-Kim} computed the induced action on the homology classes for real diffeomorphisms associated with real quadratic birational maps with orbit data $n_1,n_2,n_3$ with a permutation $\sigma \in \Sigma_3$.
\begin{prop} \cite{Diller-Kim}
Let $f: X(\mathbf{R}) \to X(\mathbf{R})$ be a real diffeomorphism associated the orbit data $n_1 \le n_2 \le n_3$ with a permutation $\sigma: 1 \to 2 \to 3 \to 1$, then the characteristic polynomial of the induced action $f_{\mathbf{R}*} |_{H_1(X(\mathbf{R});\mathbf{R})}$ is given by 
\begin{equation}\label{E:realchar}
\phi(t)\ =\  \frac{1}{t+1} \, \left (  g(t)  - (-t)^{n_1+n_2+n_3+1}\, g\left(\frac{1}{t}\right ) \right),
\end{equation}
where
\begin{equation*} 
\begin{aligned}
g(t) \ =\  &t^{1+n_1+n_2+n_3} + \frac{(-1)^{1+n_1} t^{n_2+n_3}  (1+t^2)}{t-1} + \frac{(-1)^{n_1+n_2+n_3} t^{n_1+n_2} (1+t^2)}{t+1} \\&+ \frac{(-1)^{n_1} t^{n_1+n_3} (1-t+3 t^2 +t^3)}{t^2-1}. \\
\end{aligned}
\end{equation*}
\end{prop}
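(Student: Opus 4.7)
The plan is to explicitly compute the matrix of $f_{\mathbf{R}*}$ on $H_1(X(\mathbf{R});\mathbf{R})$ in a basis adapted to the orbit structure, and then extract the characteristic polynomial by exploiting the cyclic symmetry $\sigma:1\to 2\to 3\to 1$.

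First, I would set up the basis. The real surface $X(\mathbf{R})$ is the real blowup of $\mathbb{P}^2(\mathbf{R})$ at the $N:=n_1+n_2+n_3$ real points $\{p_{i,j}\}$; each such blowup replaces a disc by a M\"obius band, so $X(\mathbf{R})$ is a connected sum of $N+1$ copies of $\mathbb{RP}^2$ and $\dim H_1(X(\mathbf{R});\mathbf{R})=N$. A natural basis is $\{e_{i,j}:1\le i\le 3,\ 1\le j\le n_i\}$, where $e_{i,j}$ is represented by the core circle of the M\"obius band over $p_{i,j}$. The class $\ell$ of a generic line in $\mathbb{P}^2(\mathbf{R})$ can then be expressed as a linear combination of the $e_{i,j}$ via the relation coming from $2\ell=0$ in $H_1(\mathbb{P}^2(\mathbf{R});\mathbf{R})$ together with the proper-transform formula for real blowups.

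Second, I would translate the orbit data into an explicit matrix for $f_{\mathbf{R}*}$. For $j<n_i$, the point $p_{i,j}$ is sent to $p_{i,j+1}$, so $f_{\mathbf{R}*}e_{i,j}$ equals $e_{i,j+1}$ up to a correction reflecting how $f$ acts on $\ell$ and on the proper transforms of the exceptional lines $E_i^{\pm}$. For $j=n_i$, the point $p_{i,n_i}$ is pushed through the indeterminacy locus into the chain over $\sigma(i)$, so $f_{\mathbf{R}*}e_{i,n_i}$ picks up contributions from the $e_{\sigma(i),k}$, together with a sign depending on the parity of $n_i$ that records how many local orientation reversals occur when traversing the chain. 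The result is a block-cyclic matrix with three shift-like blocks of sizes $n_1,n_2,n_3$ permuted according to $\sigma$, together with a low-rank correction coming from the class $\ell$.

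Third, I would compute $\det(tI-M)$ by block expansion across the three chains. Each chain contributes a companion-type factor which collapses, through a geometric-series identity, to a ratio of the form $(t^{n_i}\pm 1)/(t\pm 1)$; the low-rank correction produces three additional summands indexed by pairs of chains, matching the three rational terms in the stated $g(t)$. The outer factor $(t+1)^{-1}$ and the antisymmetric combination $g(t)-(-t)^{N+1}g(1/t)$ then reflect a twisted reciprocity of $M$ that comes from the duality between $f_{\mathbf{R}*}$ and $f_{\mathbf{R}*}^{-1}$ (consistent with the equal growth rates in Theorem~1). The main obstacle will be tracking the signs $(-1)^{n_i}$ and $(-1)^{n_1+n_2+n_3}$ that appear in $g(t)$. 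These arise from the non-orientability of $\mathbb{RP}^2$: each passage of $f$ through the core of a M\"obius band flips local orientation, so the coefficient of $e_{\sigma(i),1}$ in $f_{\mathbf{R}*}e_{i,n_i}$ carries a sign determined by the parity of $n_i$. Tracking these signs consistently across all three chains, and verifying the cancellations that produce the clean denominator $t+1$ and the precise coefficients in the final expression for $\phi(t)$, is where most of the technical work would lie.
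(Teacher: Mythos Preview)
The paper does not contain a proof of this proposition: it is stated with the citation \cite{Diller-Kim} and is simply quoted as a result from that reference. Consequently there is no proof in the present paper against which to compare your proposal.

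That said, your outline is a plausible sketch of the computation that presumably appears in \cite{Diller-Kim}. The ingredients you identify---a basis of $H_1(X(\mathbf{R});\mathbf{R})$ coming from the core circles of the exceptional M\"obius bands, the block-cyclic shift structure induced by the three orbit chains, a low-rank correction from the class of a line, and the parity signs forced by non-orientability---are the right ones. However, as written it remains a plan rather than a proof: the phrases ``up to a correction,'' ``together with a sign depending on the parity,'' and ``a twisted reciprocity of $M$'' hide exactly the computations that determine the specific numerators $1+t^2$ and $1-t+3t^2+t^3$ in $g(t)$, and you have not actually carried out the determinant evaluation. If you intend to supply a self-contained proof rather than defer to \cite{Diller-Kim}, you would need to write down the matrix explicitly (including the precise line-class relation in the $e_{i,j}$ basis) and perform the block expansion in full.
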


When the homology growth rate is equal to the topological entropy of the corresponding automorphism on a complex projective manifold, one can determine that the real diffeomorphism has the maximum possible entropy. 
 
 \begin{thm} \cite{Diller-Kim}
 Suppose $f: X(\mathbf{C}) \to X(\mathbf{C})$ is an automorphism associated to the real quadratic birational map with one of the following orbit data 
 \begin{itemize}
 \item $\sigma: 1 \to 2 \to 3 \to 1$, $n_1= n_2=1$ and $n_3 \ge 8$.
 \item $\sigma: 1 \to 2 \to 3 \to 1$ , $n_1=2$, and $n_2=n_3 \ge 4$.
 \item $\sigma: 1 \leftrightarrow 2$, $n_1=1, n_2=4$ and $n_3 \ge 6$,
 \item $\sigma: 1 \leftrightarrow 2$, $n_1=1, n_2=5$ and $n_3 \ge 4$,
 \item $\sigma: 1 \leftrightarrow 2$, $n_1=1, n_2 \ge 8$ and $n_3 =2$.
 \item $\sigma$ is the identity permutation, $n_1=2, n_2 =3$ and $n_3 \ge 6$,
 \item$\sigma$ is the identity permutation, $n_1=2, n_2 =4$ and $n_3 \ge 5$
 \end{itemize}
 Then the diffeomorphism $f_\mathbf{R}$ on the real slice $X(\mathbf{R})$ has the maximum entropy: 
 \[ h_{top}(f_\mathbf{R}) = h_{top}(f) >0 \] 
 \end{thm}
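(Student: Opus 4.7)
The plan is a spectral-radius squeeze. By Bowen's theorem applied to the first-homology action and Gromov--Yomdin on the complex side,
\begin{equation*}
\log \rho(f_{\mathbf{R}*}|_{H_1(X(\mathbf{R});\mathbf{R})}) \ \le\ h_{top}(f_\mathbf{R}) \ \le\ h_{top}(f) \ =\ \log \rho(f_*|_{H^{1,1}(X(\mathbf{C});\mathbf{R})}).
\end{equation*}
So for each orbit datum in the list it suffices to verify
\begin{equation*}
\rho(f_{\mathbf{R}*}|_{H_1(X(\mathbf{R});\mathbf{R})}) \ =\ \rho(f_*|_{H^{1,1}(X(\mathbf{C});\mathbf{R})}).
\end{equation*}
Positivity of $h_{top}(f)$ is then free: each orbit datum satisfies $n_1+n_2+n_3\ge 10$, which via Theorem~\ref{T:param} forces a Salem root $\delta>1$ of (\ref{E:compchar}), so $h_{top}(f)=\log\delta>0$.

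The core step is to show that $\delta$, the Salem root of the complex characteristic polynomial $\chi(t)$ in (\ref{E:compchar}), is also a root of the real characteristic polynomial $\phi(t)$ in (\ref{E:realchar}). Since $\rho(f_{\mathbf{R}*}|_{H_1})\le\delta$ a priori, producing $\delta$ as a root of $\phi$ automatically makes it the dominant root and closes the squeeze. Concretely, one extracts the irreducible Salem factor $S(t)$ of $\chi(t)$ — the remaining factors are cyclotomic and come from the explicit product $(t-1)(t^{n_1}+1)(t^{n_2}+1)(t^{n_3}+1)$ in (\ref{E:compchar}) — and then displays a factorization $\phi(t)=S(t)R(t)$ with $R(t)$ a product of cyclotomic factors. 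For the finitely many sporadic orbit data (for example the swap $\sigma:1\leftrightarrow 2$ with prescribed small $n_1,n_2$) this is a direct polynomial computation from (\ref{E:realchar}). For each infinite subfamily (e.g.\ $n_1=n_2=1$, $n_3\ge 8$, or $n_1=2$, $n_2=n_3\ge 4$), I would treat the free index $n$ as a parameter, write both $\chi_n(t)$ and $\phi_n(t)$ as rational functions in $t$ and $t^n$ using the closed forms above, and verify a single polynomial identity $\phi_n(t)=S_n(t)R_n(t)$ valid for every $n$ in the stated range.

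The hard part will be the uniform verification for the infinite families. The polynomial $\phi(t)$ is a sum of four rational terms whose numerators mix factors of $t^{n_i}$, $t^{n_i+n_j}$, and $t^{n_1+n_2+n_3}$, and collecting these against the Salem factor of $\chi(t)$ requires careful bookkeeping with denominators $t\pm 1$. A structural simplification is that $X(\mathbf{R})$ is a closed non-orientable surface, so $\phi(t)$ obeys the reciprocal-type symmetry already visible in (\ref{E:realchar}) through the pairing $g(t)-(-t)^{n_1+n_2+n_3+1}g(1/t)$; this restricts $R_n(t)$ to a short list of candidate cyclotomic products. Matching coefficients in the resulting closed form then reduces each infinite family to a finite algebraic check, rather than a separate identity for every value of $n$.
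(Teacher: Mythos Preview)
The paper does not prove this theorem; it is quoted verbatim from \cite{Diller-Kim} and used as background. So there is no ``paper's own proof'' to compare against beyond the indication that the result is obtained in \cite{Diller-Kim} by computing the characteristic polynomial of $f_{\mathbf{R}*}|_{H_1(X(\mathbf{R});\mathbf{R})}$ and matching its spectral radius to that of $f_*|_{H^{1,1}(X(\mathbf{C});\mathbf{R})}$. Your squeeze argument is exactly this strategy, and it is the correct one.

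One genuine gap in your write-up: the closed forms (\ref{E:compchar}) and (\ref{E:realchar}) that you invoke are stated in this paper \emph{only} for the cyclic permutation $\sigma: 1\to 2\to 3\to 1$. Five of the seven families in the theorem involve the transposition $\sigma: 1\leftrightarrow 2$ or the identity permutation, and for those you cannot use (\ref{E:compchar}) or (\ref{E:realchar}) as written. The Diller--Kim paper supplies the analogous characteristic polynomials for each permutation type; your plan goes through once you substitute the appropriate formulas, but as stated your ``uniform verification'' step literally has no input polynomial for the non-cyclic families. Apart from this, the outline---identify the Salem factor $S(t)$ of $\chi(t)$, then exhibit $\phi(t)=S(t)R(t)$ with $R$ cyclotomic, handling each infinite family parametrically in the free index---is sound and matches what one infers the cited argument to be.
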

 
 \begin{figure}
\centering
\subfloat[ Maximum entropy ]{{\includegraphics[width=1.8in]{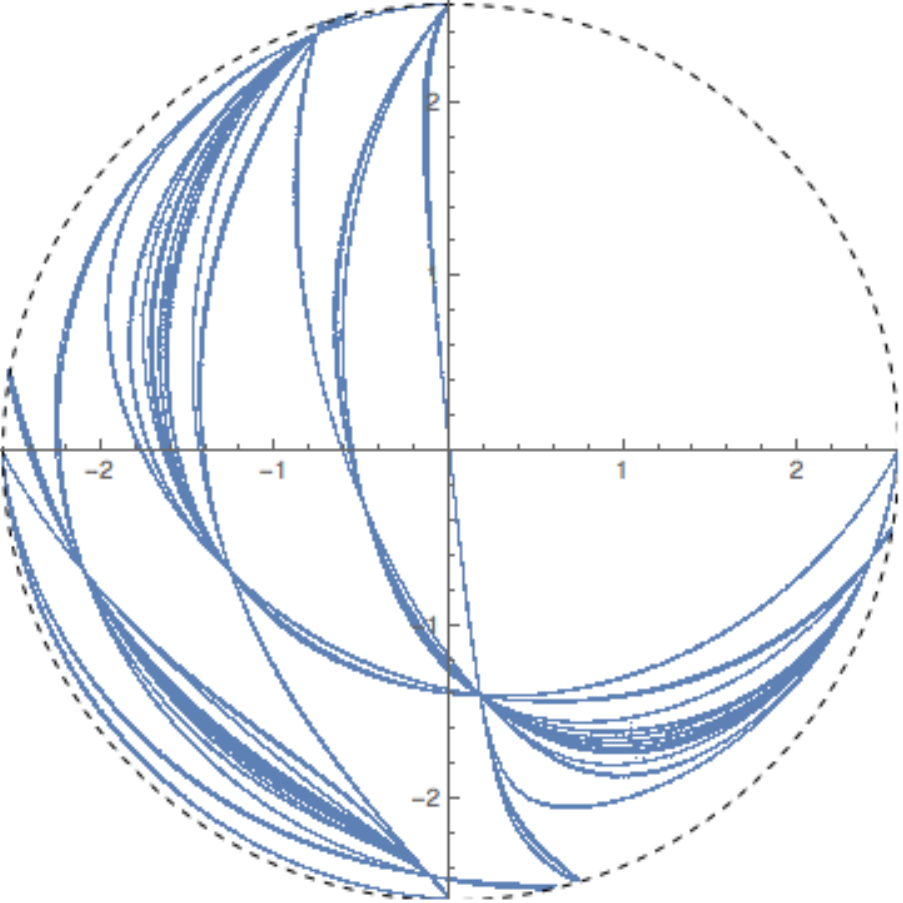} }}
\qquad\qquad
\subfloat[ Homology growth $=0$]{{\includegraphics[width=1.8in]{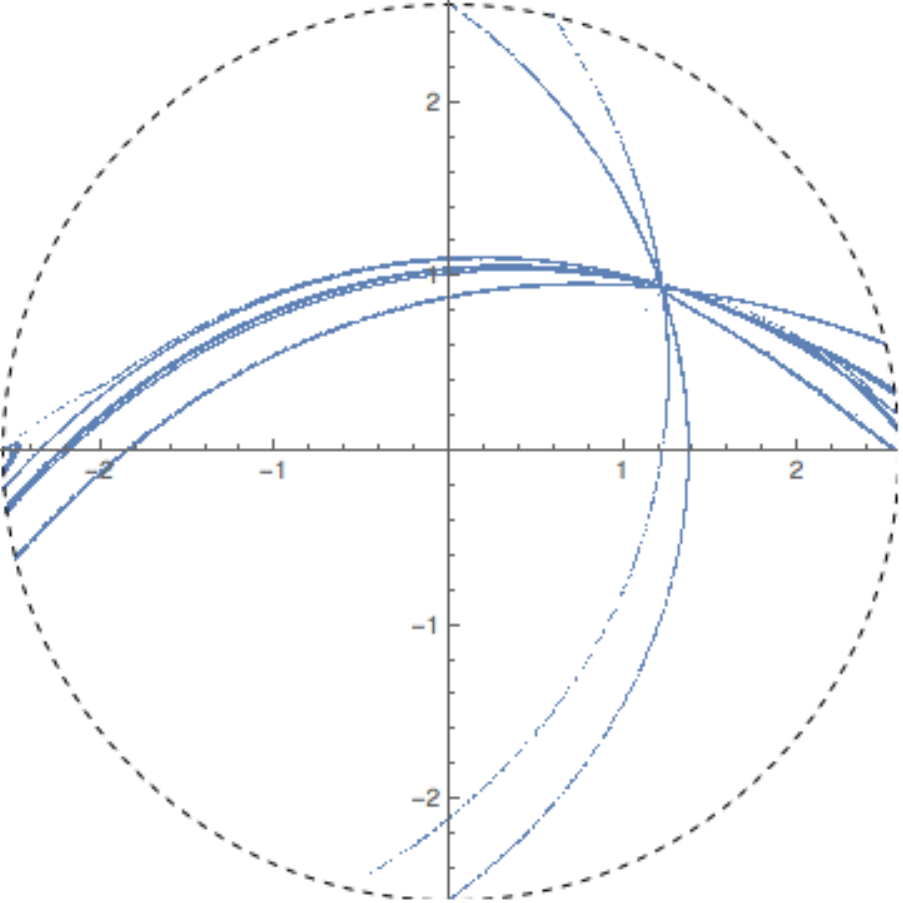} }}
\caption{ The orbit under the real automorphisms. The cusp on the invariant cubic is an attracting fixed point, and the dots represent the images in $\mathbb{P}^2(\mathbf{R})$ under $f_\mathbf{R}^{-50}$ of $600,000$ points evenly distributed on a small circle about the cusp. On the left hand side figure, $f_\mathbf{R}$ is associated to the orbit data $(1,1,8)$ and a cyclic permutation and $f_\mathbf{R}$ has the maximal homology growth. On the right hand side, $f_{\mathbf{R}}$ is associated to the orbit data $(3,4,6)$ and a cyclic permutation and $f_{\mathbf{R}*}$ is periodic with period $60$. }  
\label{fig:homologyCompare}
\end{figure}

There are orbit data such that the topological entropy of the associated real restriction is strictly smaller than the entropy of the complex map, $h_{top}(f_\mathbf{R}) \lneqq h_{top}(f)$. Due to Bedford-Lyubich-Smile \cite{BLS} and Cantat \cite{Cantat:1999, Cantat:2014}, the topological entropy of $f_\mathbf{R}$ can not be maximal if there is a saddle periodic point of $f$ whose stable and unstable manifolds are not algebraic. In \cite{Diller-Kim}, it is shown that an automorphism $f: X(\mathbf{C})\to X(\mathbf{C})$ associated to the orbit data $\sigma:1 \to 2 \to 3 \to 1$ is cyclic, $n_1=n_2=3$, and $n_3 \gg 1$ has complex saddle periodic points. One interesting observation in \cite{Diller-Kim} is that there are orbit data such that the associated automorphism $f$ has 
\[ 0 = \log \rho(f_{\mathbf{R}*}|_{H_1(X(\mathbf{R}); \mathbf{R})}) \le h_{top}(f_\mathbf{R}) \le h_{top}(f), \qquad \text{and } \qquad h_{top}(f)>1\] 

\begin{lem}
 Suppose $f: X(\mathbf{C}) \to X(\mathbf{C})$ is an automorphism associated to the real quadratic birational map with one of the following orbit data. Then $f_{\mathbf{R}*}|_{H_1(X(\mathbf{R}); \mathbf{R})}$ has spectral radius one.
  \begin{itemize}
 \item $\sigma: 1 \to 2 \to 3 \to 1$, $n_1=1,n_2=4, n_3=8$ 
 \item $\sigma: 1 \to 2 \to 3 \to 1$, $n_2=1,n_2=3, n_3=5$ 
  \item $\sigma: 1 \to 2 \to 3 \to 1$, $n_1=3,n_2=4, n_3=5$ 
 \item $\sigma: 1 \to 2 \to 3 \to 1$, $n_1=3,n_2=4, n_3=6$ 
 \item $\sigma: 1 \to 2 \to 3 \to 1$, $n_1=3,n_2=5, n_3=5$ 
 \item $\sigma: 1 \to 2 \to 3 \to 1$, $n_1=1,n_2=3, n_3=9$ 
\end{itemize}
\end{lem}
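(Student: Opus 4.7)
The plan is to verify the claim case by case using the explicit formula \eqref{E:realchar} of the preceding proposition. For each of the six orbit data $(n_1,n_2,n_3)$ listed, the substitution into
\[
\phi(t) \;=\; \frac{1}{t+1}\Bigl(g(t) - (-t)^{n_1+n_2+n_3+1}\, g(1/t)\Bigr)
\]
produces a concrete polynomial (after clearing the apparent poles of $g$ at $t = \pm 1$, which are removable because of the sign conventions in the numerators). The factor $1/(t+1)$ then accounts for the antisymmetry of $g(t) - (-t)^{n_1+n_2+n_3+1}g(1/t)$, so $\phi \in \mathbf{Z}[t]$. I would carry out this substitution for each of the six triples, obtaining a polynomial of degree $\dim H_1(X(\mathbf{R});\mathbf{R})$.

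The next step is to factor each resulting $\phi(t)$ over $\mathbf{Z}[t]$ and identify its irreducible factors with cyclotomic polynomials $\Phi_d(t)$ of small conductor. Because $f_{\mathbf{R}*}$ is induced by a diffeomorphism, its action on the free part of $H_1(X(\mathbf{R});\mathbf{Z})$ is an integer matrix with integer inverse, hence $\phi(t)$ is monic with constant term $\pm 1$. By Kronecker's theorem, a monic integer polynomial whose roots all lie on the closed unit disk and whose constant term is $\pm 1$ factors as a product of cyclotomic polynomials. Thus, showing $\phi(t) = \prod_k \Phi_{d_k}(t)$ for some (possibly repeated) indices $d_k$ immediately yields that every eigenvalue has modulus one, and in particular the spectral radius equals one.

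A convenient way to confirm the cyclotomic factorization is to compute, for each candidate triple, the sequence $\phi(\zeta_d)$ for $d$ ranging over the divisors of a product $\prod (n_i^2 - 1)(n_i+n_j+n_k-1)$ or similar bound on possible orders, and verify that $\phi$ vanishes with the correct multiplicities. Alternatively, I would simply factor $\phi(t)$ numerically and match roots against roots of unity of small order. The arithmetic gets tedious for the larger tuples like $(3,5,5)$ and $(1,3,9)$, where $\phi$ has degree roughly $14$--$16$, but it remains a finite check.

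The main obstacle is bookkeeping: confirming for each of the six orbit data, without arithmetic error, that the integer polynomial produced by the substitution really is cyclotomic rather than merely having a dominant root close to $1$. In principle, a single numerically small root like $1.0001$ would break the claim, so one needs exact arithmetic. This suggests either a careful hand computation presented in tabular form or, more practically, a computer-algebra verification tabulated in the appendix (which the authors indeed announce in Appendix~B). Once the cyclotomic factorizations are in hand, the spectral radius is read off as the modulus of any root, namely $1$, completing the lemma.
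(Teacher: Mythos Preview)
Your approach is correct and essentially matches the paper's. The paper states this lemma without a standalone proof, but the verification you describe---substituting each triple into the formula \eqref{E:realchar} and exhibiting a cyclotomic factorization of $\phi(t)$---is precisely what the authors carry out later: the four cases $(2,3,5)$, $(3,4,5)$, $(3,4,6)$, $(3,5,5)$ are factored explicitly in the proof of Proposition~\ref{P:nonmax}, while for $(1,3,9)$ and $(1,4,8)$ the proofs of Propositions~\ref{P:139} and~\ref{P:148} cite \cite{Diller-Kim} for the stronger statements that $f_{\mathbf{R}*}|_{H_1}$ grows linearly or is periodic of period~$180$, respectively. One small correction: Appendix~B of the paper collects the $\Gamma_A$ data for the fundamental-group computation, not the homology factorizations; those appear in Section~\ref{S:nonmax}. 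Also, your invocation of Kronecker's theorem is unnecessary once you have the cyclotomic factorization in hand---the factorization itself is the proof.
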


The log of homology classes' growth rate is merely a lower bound of the topological entropy. To get a better estimate, one can consider the action $f_{\mathbf{R}*} |_{\pi(X(\mathbf{R}))}$ on the fundamental group of $X(\mathbf{R})$.  Due to Bowen \cite{Bowen}, we have 
\[ \log \rho(f_{\mathbf{R}*}|_{H_1(X(\mathbf{R}); \mathbf{R})}) \le  \log \rho(f_{\mathbf{R}*}|_{\pi_1(X(\mathbf{R}))}) \le h_{top}(f_\mathbf{R}) \le h_{top}(f) = \log \rho (f_*|_{H_2(X(\mathbf{C};\mathbf{R})}).\]


\section{Fundamental Group: Generators and Reading Curves}
\label{S:Faction}
Let $X= \text{B}\ell_P \mathbb{P}^2(\mathbf{R})$ be a real rational surface, where $P=\{p_1,p_2, \dots.p_{n-1}\}$ is a set of $n-1$ distinct points in $\mathbb{P}^2(\mathbf{R})$. Let us denote by $\mathcal{E}_i$ the exceptional curve over $p_i$ for $i = 1,2, \dots,n-1$. Since $X$ is the connected sum of $n$ copies of $\mathbb{P}^2(\mathbf{R})$, we can choose a presentation of $\pi_1(X,x)$ with a fixed base point $x \in X$ such that 
\[ \pi_1(X,x) \ = \langle  \alpha_1, \dots, \alpha_n | \alpha_1^2 \alpha_2^2 \cdots \alpha_n^2=1 \rangle \]
where the generators are represented by simple closed curves based at $x$ that are pairwise disjoint in $X \setminus \{ x \}$; i.e.,
\[ \alpha_i \cap \alpha_j = \{ x\} \qquad \text{for all } \ i \ne j. \] Furthermore, for all $i,j\in\{1, \dots, n-1\}$, \[\alpha_i \cap \mathcal{E}_j = \emptyset \ \ \text{if}\ \ i \ne j\qquad \text{and } \qquad |\alpha_i \cap \mathcal{E}_i|=1. \]

To compute the induced action on the fundamental group, we need to determine the $\pi_1$ class of an arbitrary based path  in the pointed set $(X, x)$. We use the method introduced in Cohen and Lustig \cite{Coh}. Let us briefly describe their method. 

\subsection{Reading curves and the $\pi_1$-class of a given curve}\label{S:ReadingCurves}
If we remove an open disk $\Delta$ from $X \setminus \{x\}$, we see $X \setminus \Delta$ is a disk with $n$ twisted handles attached to its boundary. We can define a set of generators $\{\alpha_1,\dots,\alpha_n\}$ of $\pi_1(X,x)$, where each $\alpha_i$ is the homotopy class of an oriented closed curve traversing exactly one handle. 

\begin{figure}
\centering
\def\svgwidth{3.3truein}
 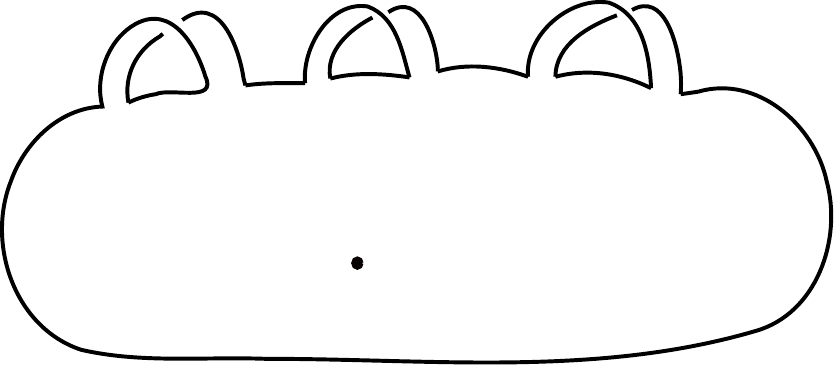
  \def\svgwidth{2truein}
 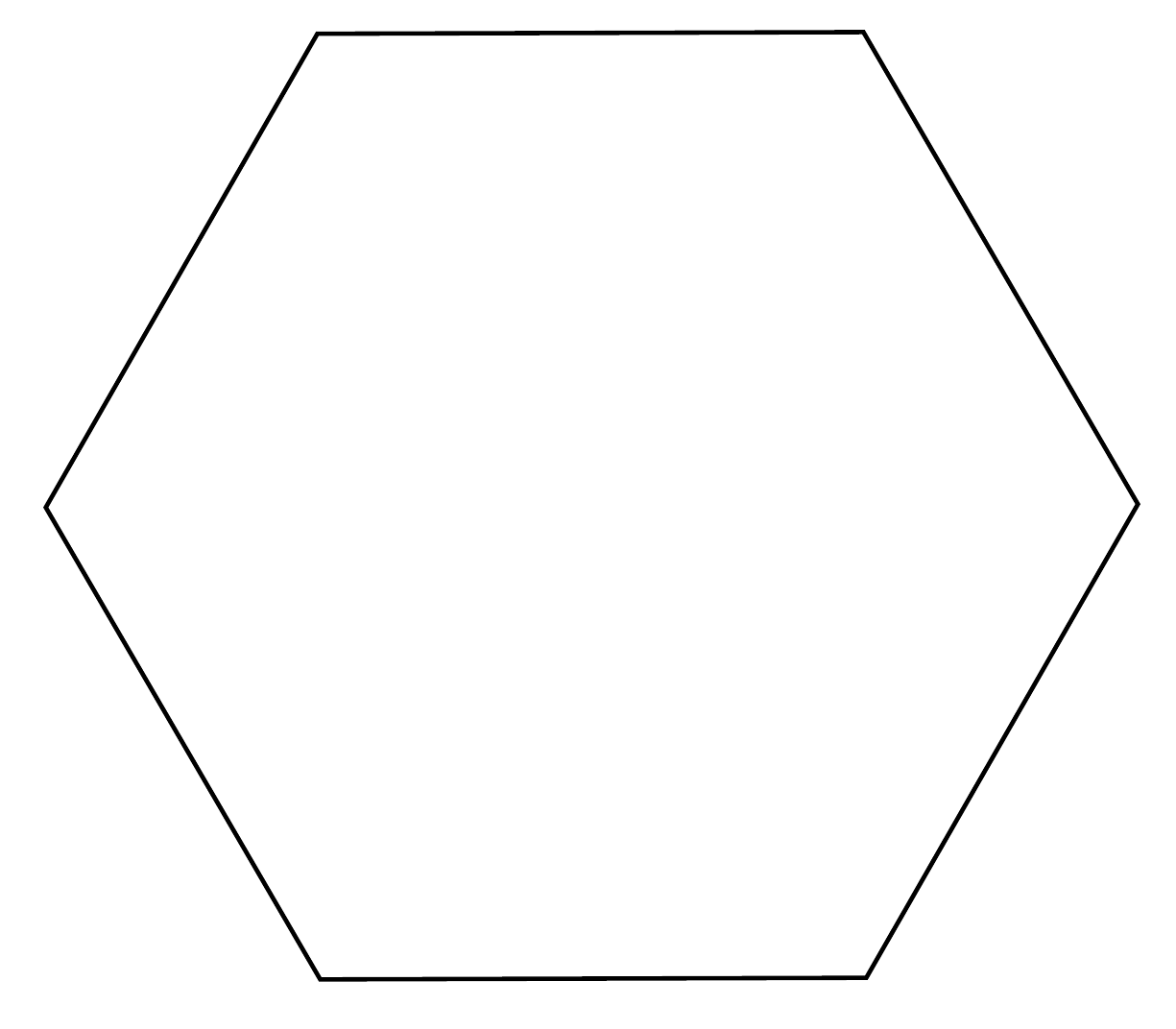

\caption{Generators and Reading curves on $X \equiv \mathbb{P}^2(\mathbf{R})\#\mathbb{P}^2(\mathbf{R})\#\mathbb{P}^2(\mathbf{R})$: The left-hand side is $X \setminus \Delta$ and the right-hand side is a polygon representation of $X$. In both figures, $\alpha_1, \alpha_2, \alpha_3$ are generators of $\pi_1(X,x)$ and $R_{\alpha_1},R_{\alpha_2}, R_{\alpha_3}$ are corresponding reading curves. \label{F:3handles} }
\end{figure}

For each $i$, let $R_{\alpha_i}$ denote a line segment joining the two sides of the boundary of the handle traversed by the generator $\alpha_i$. (See Figure \ref{F:3handles}.) Once we put the removed open disk $\Delta$ back, we can extend each line segment $R_{\alpha_i}$ to a simple closed curve (which we continue to denote by $R_{\alpha_i}$) with base point $\star\in\Delta$ such that 
\begin{itemize}
\item $R_{\alpha_i}$ is a simple closed curve for all $i$,
\item For each $i$, $R_{\alpha_i}$ intersects exactly one generator $\alpha_i$, and 
\item $\{R_{\alpha_i}\}$ are pairwise disjoint on $X \setminus \{ \star\}$.
\end{itemize} 
The curves $\{R_{\alpha_i}\}$ are referred to as {\em reading curves} for the generators $\{\alpha_i\}$.

Given a based oriented simple closed curve $C$ in $(X,x)$, we need to express the element of $\pi_1(X,x)$ represented by $C$ as a product of the generators $\{\alpha_i\}$ and their inverses. The first step is to perturb $C$ by a homotopy relative to its basepoint to make it disjoint from the point $\star$, and transverse to the curves $R_{\alpha_i}$. By compactness, $C$ now intersects $\cup_{i=1}^n R_{\alpha_i}$ in a finite number of points. To identify the element of $\pi_1(X,x)$ represented by $C$, we travel along $C$ in the direction of its orientation, starting and ending at $x$. Each time $C$ crosses $R_{\alpha_i}$, we write $\alpha_i$ if it crosses in the same direction as $\alpha_i$, and $\alpha_i^{-1}$ if it crosses in the opposite direction. This results in a word in the letters $\alpha_i$ and $\alpha_i^{-1}$ which gives the element of $\pi_1(X,x)$ represented by $C$. Note that when we perturbed $C$, we had some freedom. In particular, we could perturb $C$ so that it passes through $\star$ during the perturbation.  It is easy to see that the effect on the word we obtain will be to insert a cyclic permutation of the relator $\alpha_1\alpha_1^{-1}\dots\alpha_n\alpha_n^{-1}$ (or of its inverse) into the original word obtained for $C$. Clearly, this does not affect the element of $\pi_1$ represented by $C$, since in $\pi_1(X,x)$, the relator equals the trivial element.

 In Figure \ref{F:3handles}, we illustrate the generators and the corresponding reading curves of a real rational surface $X$ obtained by blowing up $2$ points. On the left-hand side is showing generators and reading curves in $X \setminus \Delta$ where $\Delta$ is an open disk containing $\star$. On the right hand side we draw a polygon representation of $X$ with the boundary identifications. In both figures, we put $\pm$ sign in the reading curves. If an oriented curve crosses a reading curve $R_{\alpha_i}$ from $+$ to $-$, the corresponding letter would be $\alpha_i$. If the curve crosses $R_{\alpha_i}$ from $-$ to $+$, it would indicate $\alpha_i^{-1}$. In Figure \ref{F:hexagon}, two oriented curves based at $b$ are shown in thick dashed curves. The fundamental group of $X$ with the base point $x$ is given by $\pi_1(X, x) = \langle \alpha_1, \alpha_2, \alpha_3| \alpha_1^2 \alpha_2^2\alpha_3^2=1 \rangle $. On the left-hand side, the curve $C_1$ crosses $R_{\alpha_1}$ from $+$ to $-$ twice and then crosses $R_{\alpha_2}$ from $+$ to $-$. Thus the $\pi_1$ class of $C_1$ is given by $[C_1] = \alpha_1^2\alpha_2^2 = \alpha_3^{-2}$. On the right-hand side, the curve $C_2$ intersects $R_{\alpha_1}$ first from $-$ to $+$ and then intersects $R_{\alpha_3}$ from $-$ to $+$. Thus we have $[C_2] = \alpha_1^{-1} \alpha_3^{-1}$. 

\begin{figure}
\centering
  \def\svgwidth{2.5truein}
 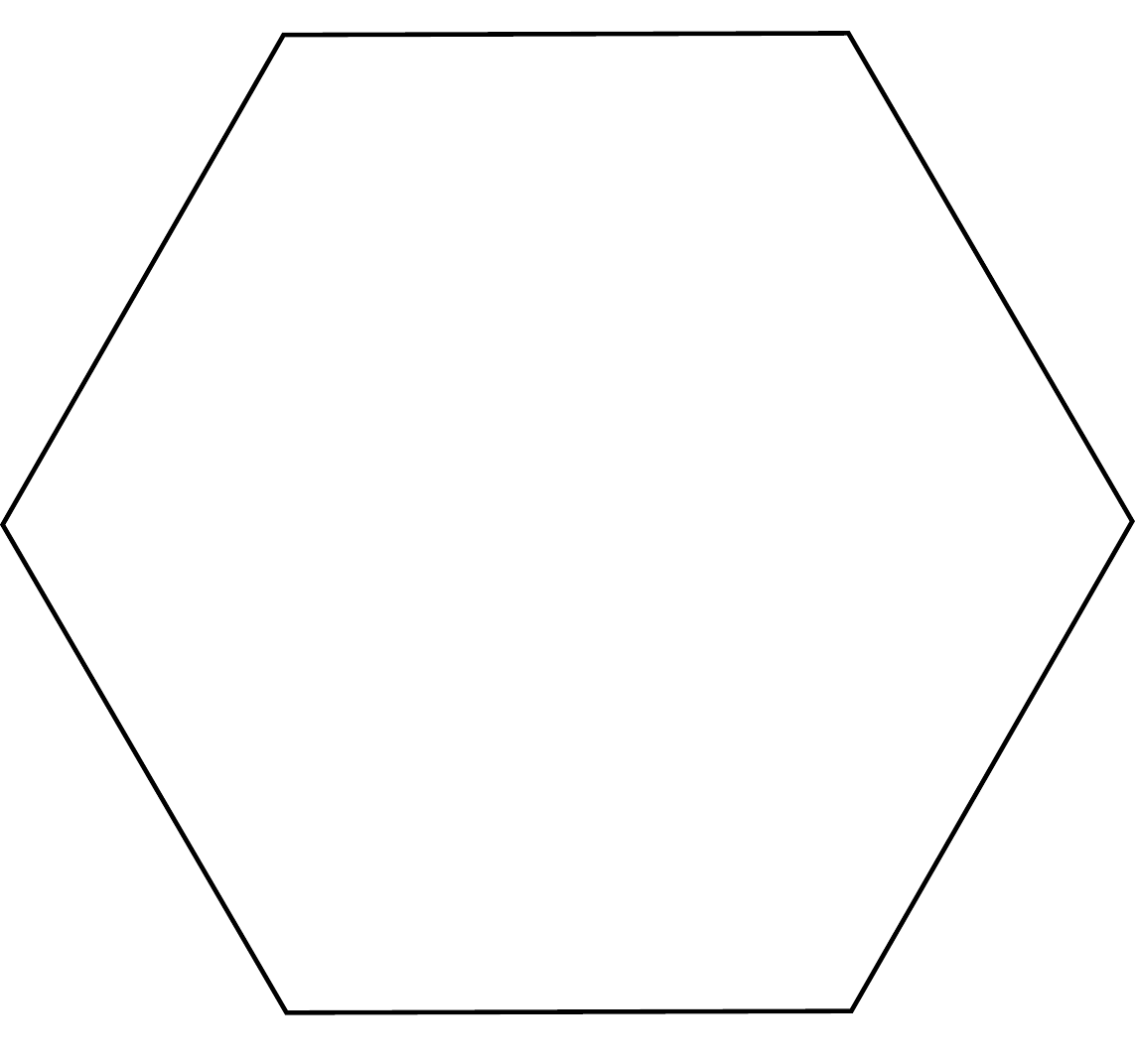\ \ \ \ \ 
  \def\svgwidth{2.5truein}
 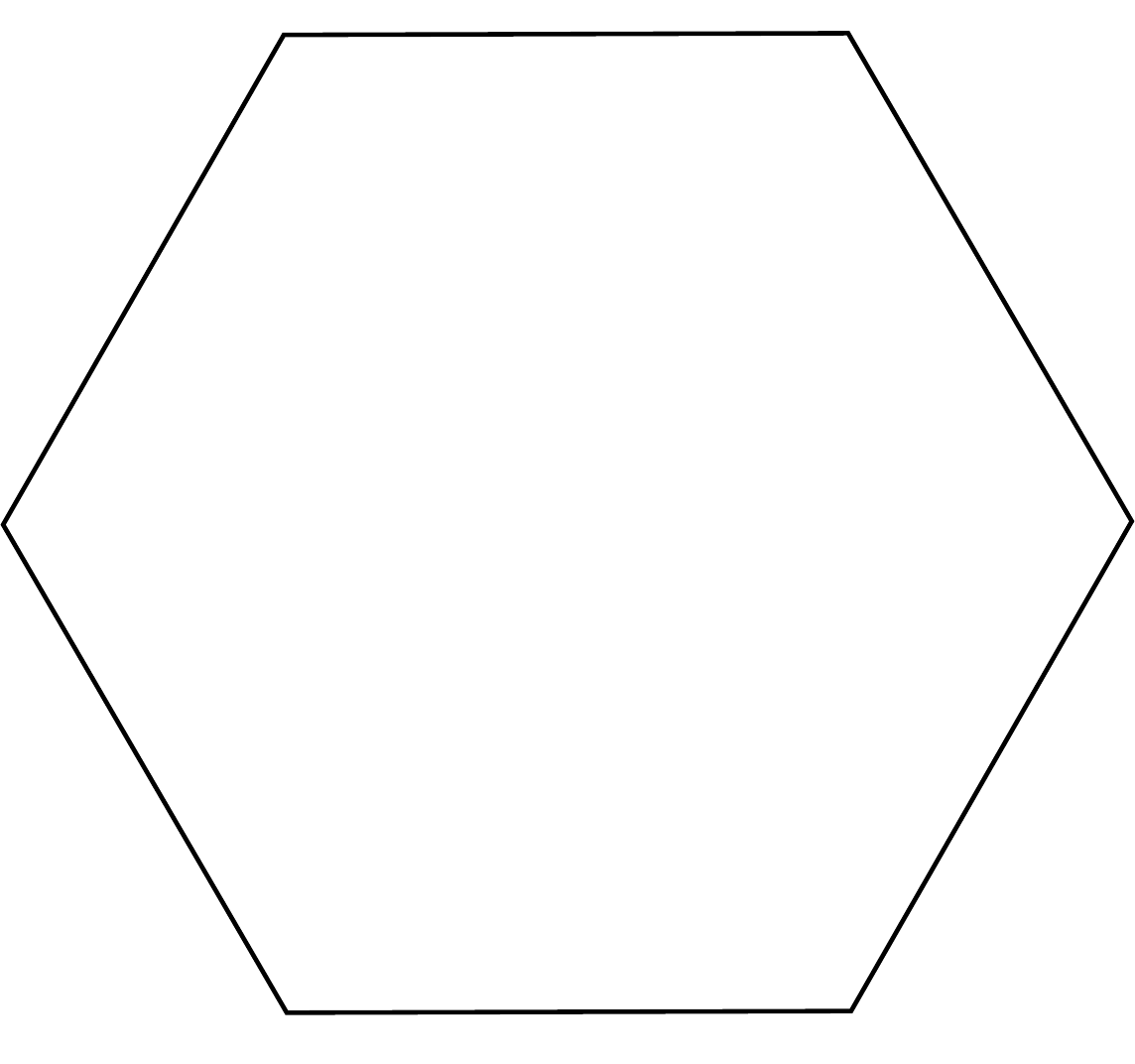

\caption{ A simple closed oriented curve $C$ in $X \equiv \mathbb{P}^2(\mathbf{R})\#\mathbb{P}^2(\mathbf{R})\#\mathbb{P}^2(\mathbf{R})$: On the left-hand side, the $\pi_1$-class of $C_1$ is given by $\alpha_1^2 \alpha_2^2 = \alpha_3^{-2}$. And on the right-hand side the $\pi_1$-class of $C_2$ is given by $\alpha_1^{-1}\alpha_3^{-1} $.   \label{F:hexagon} }
\end{figure}

\subsection{Generators and Reading curves on $X(\mathbf{R})$}\label{SS:gen}
Let $f_\mathbf{R}: X(\mathbf{R}) \to X(\mathbf{R})$ be a diffeomorphism on the real slice $X(\mathbf{R})$ of a blowup $\pi: X(\mathbf{C}) \to \mathbb{P}^2(\mathbf{C})$ associated to a certain basic real birational map $\check f: \mathbb{P}^2(\mathbf{C}) \dasharrow \mathbb{P}^2(\mathbf{C})$ such that (1) $\check f$ properly fixes a cusp cubic $C$, (2) the orbit data is given by three positive integers $n_1, n_2,n_3$ with $n_1+n_2+n_3>10$ and a cyclic permutation $\sigma: 1 \to 2\to 3 \to 1 \in \Sigma_3$, (3) the determinant $\delta >1$, and (4) all center of blowups are distinct points on $C$. The topological entropy of $f$ is given by $\log(\delta)>0$. The purpose of this section is choosing a base point and a set of generators of $\pi_1(X(\mathbf{R}), x)$ such that one can compute the images of generators under the induced action on $\pi_1(X(\mathbf{R}), x)$. 


\begin{figure}
\centering
\def\svgwidth{4.5truein}
\begingroup%
  \makeatletter%
  \providecommand\color[2][]{%
    \errmessage{(Inkscape) Color is used for the text in Inkscape, but the package 'color.sty' is not loaded}%
    \renewcommand\color[2][]{}%
  }%
  \providecommand\transparent[1]{%
    \errmessage{(Inkscape) Transparency is used (non-zero) for the text in Inkscape, but the package 'transparent.sty' is not loaded}%
    \renewcommand\transparent[1]{}%
  }%
  \providecommand\rotatebox[2]{#2}%
  \newcommand*\fsize{\dimexpr\f@size pt\relax}%
  \newcommand*\lineheight[1]{\fontsize{\fsize}{#1\fsize}\selectfont}%
  \ifx\svgwidth\undefined%
    \setlength{\unitlength}{548.64217367bp}%
    \ifx\svgscale\undefined%
      \relax%
    \else%
      \setlength{\unitlength}{\unitlength * \real{\svgscale}}%
    \fi%
  \else%
    \setlength{\unitlength}{\svgwidth}%
  \fi%
  \global\let\svgwidth\undefined%
  \global\let\svgscale\undefined%
  \makeatother%
  \begin{picture}(1,0.43032286)%
    \lineheight{1}%
    \setlength\tabcolsep{0pt}%
    \put(0,0){\includegraphics[width=\unitlength,page=1]{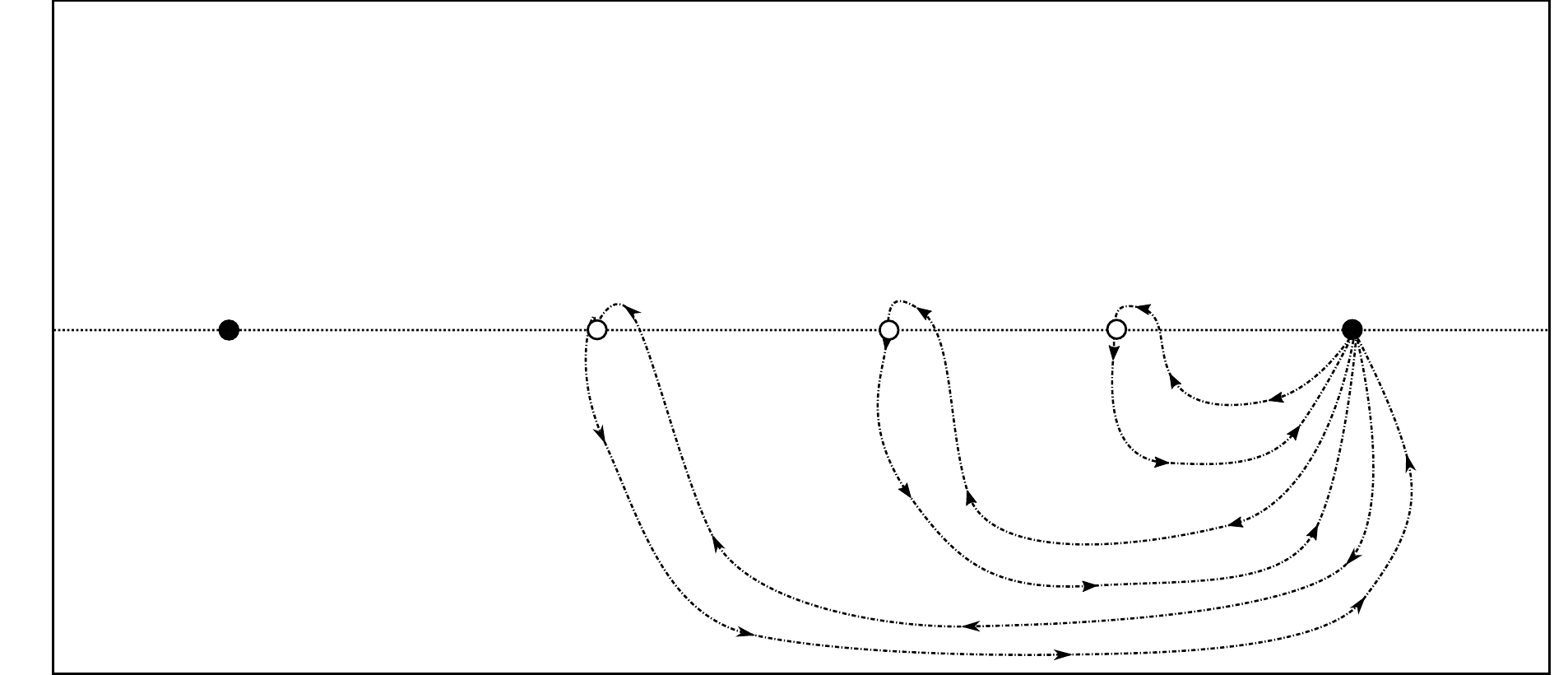}}%
    \put(0.92240175,0.20084721){\makebox(0,0)[lt]{\lineheight{1.25}\smash{\begin{tabular}[t]{l}$\mathcal{C}$\end{tabular}}}}%
    \put(0.8537956,0.23502248){\makebox(0,0)[lt]{\lineheight{1.25}\smash{\begin{tabular}[t]{l}$P_{fix}$\end{tabular}}}}%
    \put(0.12557643,0.19059461){\makebox(0,0)[lt]{\lineheight{1.25}\smash{\begin{tabular}[t]{l}$P_{cusp}$\end{tabular}}}}%
    \put(0.3792715,0.23729937){\makebox(0,0)[lt]{\lineheight{1.25}\smash{\begin{tabular}[t]{l}$a_1$\end{tabular}}}}%
    \put(0.56714631,0.23910858){\makebox(0,0)[lt]{\lineheight{1.25}\smash{\begin{tabular}[t]{l}$b_1$\end{tabular}}}}%
    \put(0.70929902,0.24241766){\makebox(0,0)[lt]{\lineheight{1.25}\smash{\begin{tabular}[t]{l}$c_1$\end{tabular}}}}%
    \put(0,0){\includegraphics[width=\unitlength,page=2]{blowup3V2.pdf}}%
    \put(-0.00121038,0.27928122){\makebox(0,0)[lt]{\lineheight{1.25}\smash{\begin{tabular}[t]{l}$\ell_{\infty}$\end{tabular}}}}%
    \put(0.9933803,0.14984518){\makebox(0,0)[lt]{\lineheight{1.25}\smash{\begin{tabular}[t]{l}$\ell_{\infty}$\end{tabular}}}}%
  \end{picture}%
\endgroup%

\caption{Generators of $\pi_1(X(\mathbf{R}), P_{fix})$. The middle horizontal line represents a cuspidal cubic. Thee generators $a_1,b_1,c_1$ are intersecting exactly one exceptional curves which are given by hollow circles in the middle horizontal line. \label{fig:3BGenerators} }
\end{figure}

\begin{figure}
\centering
 \def\svgwidth{4.5 truein}
 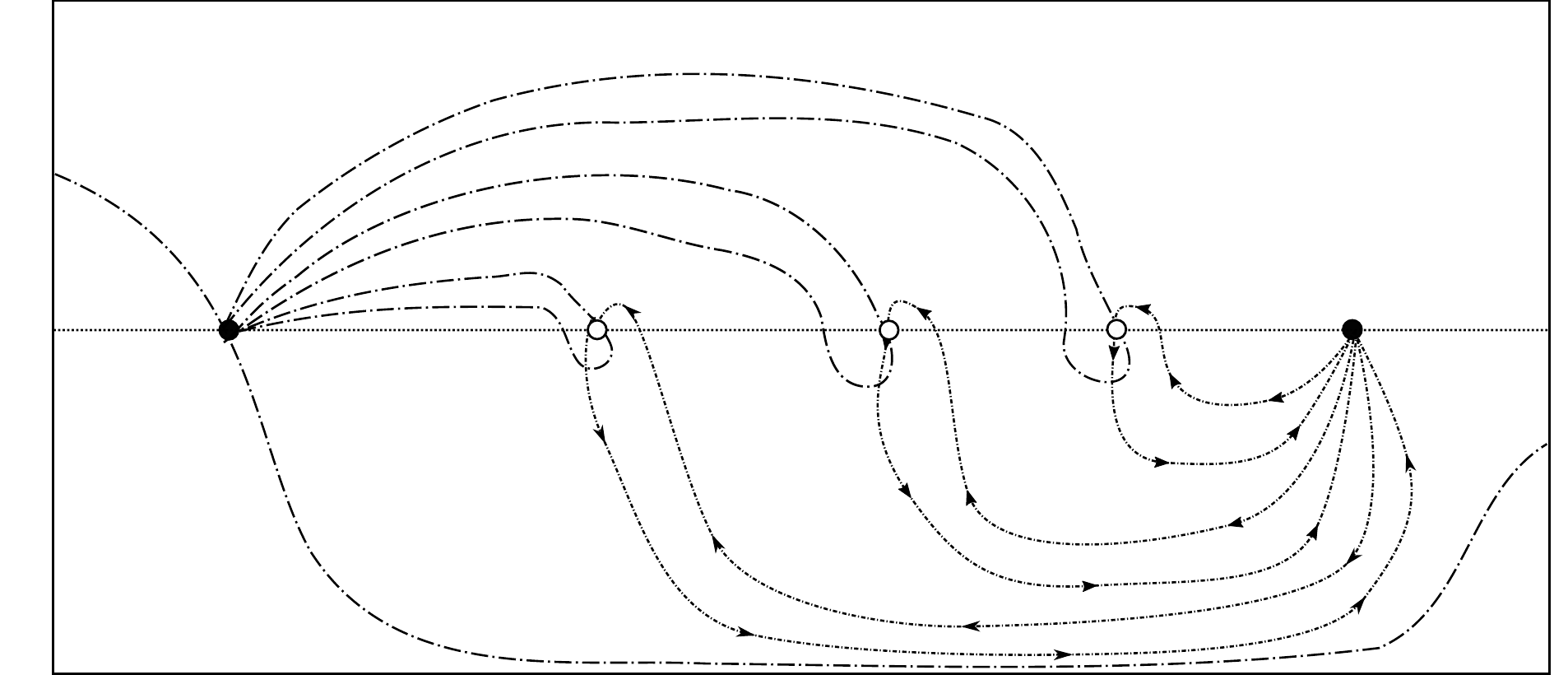
\caption{Generators and Reading Curves: The dashed curves with arrows represent generators in the fundamental group and the densely dashdotted curves with $\pm$ signs represent reading curves. \label{fig:3blowups} }
\end{figure}

To use the repelling fixed point as the base point for the reading curves, we will use $f^{-1}_\mathbf{R}$. To simplify the notation, let us use $\bar a_i = p_{3, n_3+1-i}$, $\bar b_i = p_{1, n_1+1-i}$, $\bar c_i = p_{2, n_2+1-i}$ for both the points in $C$ and the exceptional curves over the associated points. That is, $\bar a_{i+1} = f^{-i} \bar a_1$, $\bar b_{i+1} = f^{-i} \bar b_1$, $\bar c_{i+1} = f^{-i} \bar c_1$ and 
\begin{equation*}
\begin{aligned}
 f^{-1}_\mathbf{R} \ :\ & E_1^- \to \bar a_1 \to \bar a_2 \to \cdots \bar a_{n_3} \to E_3^+ \\
& E_2^- \to \bar b_1 \to \bar b_2 \to \cdots \bar b_{n_1} \to E_1^+ \\
&E_3^- \to \bar c_1 \to \bar c_2 \to \cdots \bar c_{n_2} \to E_2^+ \\
\end{aligned}
\end{equation*}

Recall that we have a parametrization $f_\mathbf{R}|_{C_\text{reg}} : \gamma(\omega) \to \gamma (\delta \omega)$ where $C_\text{reg} = \{ \gamma(t) : t \in \mathbf{R} \}$. For points $p=\gamma(s),q=\gamma(t) \in C_\text{reg}$, we say $p$ precedes $q$ (equivalently, $q$ succeeds $p$) if $s <t$ and we use the notation $p \prec q$. Using the parameterization in Theorem \ref{T:param}, we have $P_{cusp} = \gamma(-\infty)$, $P_{fix} = \gamma(0)$ and all points in the orbit of critical lines $E_i^-$ have  negative parameters in an order determined by the orbit data. 

\subsubsection{Generators of the fundamental group}\label{ss:generators} Let $x=P_{fix}$, a finite fixed point on $C$, be a base point of the fundamental group of $X(\mathbf{R})$. Let us choose closed curves for the set of generators intersecting one and only one exceptional divisor together with a generic curve disjoint from any exceptional divisors. We use $a_i,b_j,c_k$ to denote both the generators and the representing curves intersecting exactly one exceptional curve $\bar a_i,\bar b_j,$ and $\bar c_k$ respectively. Let $e$ be the generator that does not intersect any exceptional curves; that is, $e$ is a generic closed curve in $(X(\mathbf{R}),P_{fix})$. 
 It is convenient to draw $X(\mathbf{R})$ as a rectangle with the invariant cusp cubic curve as the center horizontal line. Since all centers of blowups are located  between two fixed points, let us choose the generators of $\pi_1(X(\mathbf{R}), P_{fix})$ by the fern-leaf shaped closed oriented curves (see Figure \ref{fig:3BGenerators}) in the pointed space $(X(\mathbf{R}),P_{fix})$. A closed curve representing the generator $e$ intersects the invariant cubic only at a regular fixed point and is oriented to the right in Figure \ref{fig:3BGenerators}. All other generators intersect the invariant cubic at two points, the regular fixed point and a point between its center of blowup and the nearest succeeding center of blowup. In a rectangular representation as in Figure \ref{fig:3BGenerators}, these generators are arranged in such a way that if two generators $p$ and $q$ correspond to two points $p=\gamma(s) \prec q=\gamma(t)$ in $C$ then the generator $q$ is above $p$ in the rectangular representation on $X(\mathbf{R})$. In Figure \ref{fig:3BGenerators}, four dashed oriented curves are generators of the fundamental group of a blowup of $\mathbb{P}^2(\mathbf{R})$ along three points $\bar a_1,\bar b_1,\bar c_1$ on a cusp cubic curve. There will be more stacked fern-leaf shaped curves from the fixed point $P_{fix}$ if there are other blowups. 

\subsubsection{Reading Curves}
For the base point of reading curves, we use a repelling fixed point $P_{cusp}$ for $f^{-1}_{\mathbf{R}}$. Due to the arrangement of generators of the fundamental group, a set of reading curves with a base point $P_{cusp}$ can be drawn as a rotation of a set of generators (see Figure \ref{fig:3blowups}). For each generator $p$, we denote by $R_p$ the corresponding reading curve. The reading curve $R_e$ is a curve that intersects the invariant cubic $C$ at $P_{cusp}$ and disjoint from all other generators. For every other generator, the corresponding reading curve intersects the associated exceptional curve, the corresponding generator, and then a point on the invariant cubic between the associate center of blowup and the nearest preceding center of blowup (see Figure \ref{fig:3blowups}). Like the set of generators, the set of reading curves are arranged so that the reading curve for the preceding center is closer to the invariant cubic in the rectangular representation. In Figure \ref{fig:3blowups}, the reading curves are densely dash-dotted curves and are labelled as $R_{\bullet}$ with $\pm$ signs. For any curve in a pointed space $(X(\mathbf{R}), P_{fix}) \setminus \{P_{cusp}\}$, its image under $f_\mathbf{R}^{-1}$ will be disjoint from $\{P_{cusp}\}$. 

\section{The induced action on the Fundamental Group}
\label{S:Action}
Let $f_\mathbf{R}$ be a real diffeomorphism associated to a birational map properly fixing a cusp cubic with an orbit data $n_1, n_2,n_3$ with a cyclic permutation $\sigma:1 \to 2\to 3 \to 1$. The goal of this section is to compute the induced action, $f^{-1}_{\mathbf{R}*}$ on the fundamental group $\pi_1 (X(\mathbf{R}),P_{fix})$ for the orbit data with three positive integers $n_1,n_2,n_3$ and a cyclic permutation. Recall that
\begin{equation*}
\begin{aligned}
 f^{-1}_\mathbf{R} \ :\ & E_1^- \to \bar a_1 \to \bar a_2 \to \cdots \bar a_{n_3} \to E_3^+ \\
& E_2^- \to \bar b_1 \to \bar b_2 \to \cdots \bar b_{n_1} \to E_1^+ \\
&E_3^- \to \bar c_1 \to \bar c_2 \to \cdots \bar c_{n_2} \to E_2^+. \\
\end{aligned}
\end{equation*}
Thus the image of generator under $f^{-1}_\mathbf{R}$ intersects critical lines $E_1^+, E_2^+,E_3^+$ for $\check f$ only if the generator intersects exceptional curves $\bar b_{n_1},\bar c_{n_2},\bar a_{n_3}$ respectively. 
Since each generator in the previous section \ref{S:Faction} is given by an oriented curve starting at $P_{fix}$, one can assign each generator an ordered set of intersections with exceptional curves and an invariant cubic. Using this ordered set, one can get a set of the intersection of a generator's image under $f^{-1}_\mathbf{R}$ together with a set of exceptional curves disjoint from the image. Using these facts, one can compute the image of each generator under $f^{-1}_{\mathbf{R}*} |_{\pi_1 (X(\mathbf{R}),P_{fix})}$.
\begin{figure}
\centering
\def\svgwidth{4.5truein}
 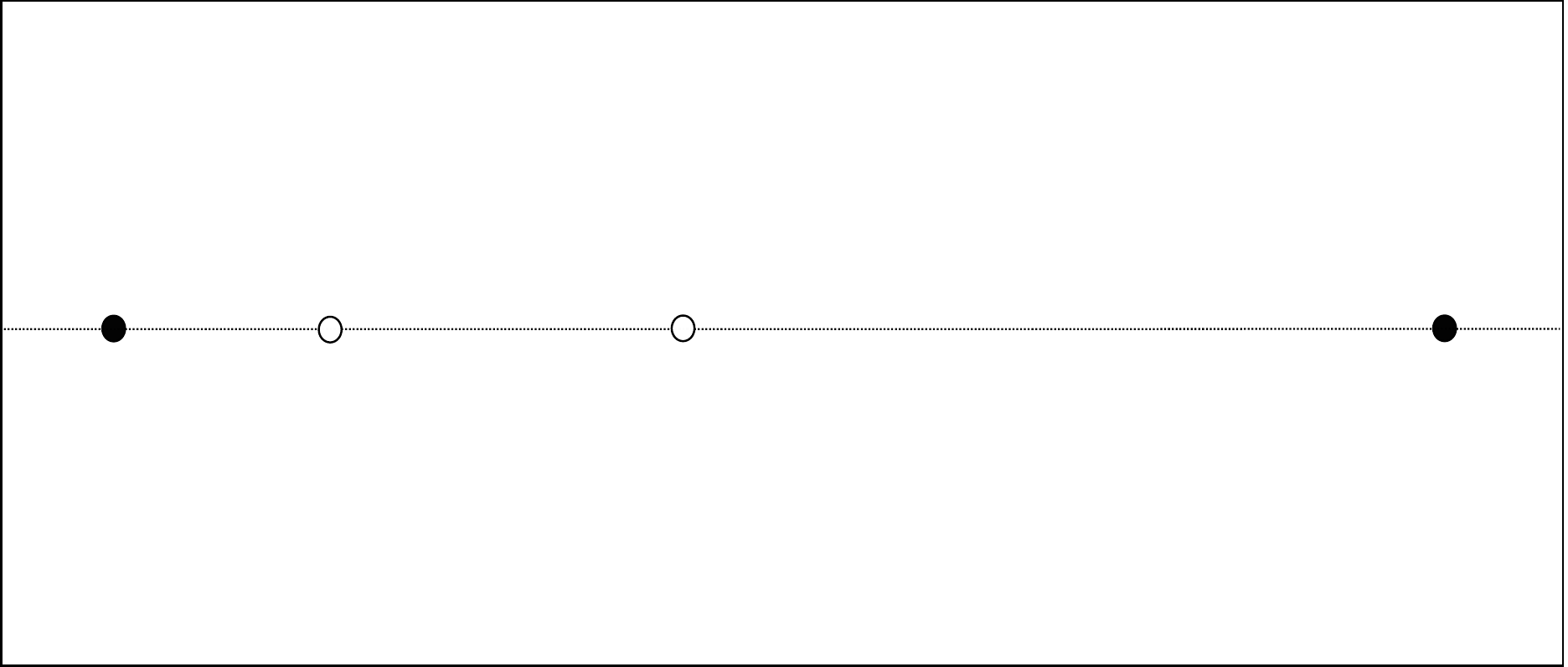
\caption{Generators $a_3, e$ and Exceptional lines, $E_1^-,E_2^-,E_3^-$ for $\check f^{-1}$ with orbit data $1,n,n+1$ and a cyclic permutation. \label{fig:1np1Generator} }
\end{figure}

\subsection{Orbit data $1,n,n+1$ with a cyclic permutation}\label{SS:example} Let us first illustrate the procedure using a real diffeomorphism associated to a birational map properly fixing a cusp cubic with an orbit data $1,n,n+1$, $n\ge 4$ with a cyclic permutation $\sigma:1 \to 2\to 3 \to 1$. Using the parametrization in Theorem \ref{T:param}, we have distinct critical orbits between two fixed points. 
\[ \gamma(-\infty) =P_{cusp} \prec \bar a_1 \prec \bar a_2 \prec \bar c_1 \prec \bar b_1 \prec \bar a_3 \prec \bar c_2 \prec \cdots \prec \bar a_{n+1} \prec \bar c_n \prec P_{fix} = \gamma(0) \]
\subsubsection{Generator $a_3$} In Figure \ref{fig:1np1Generator}, the middle horizontal line represents the cusp cubic curve and hollow circles are critical orbits of a birational map $\check f^{-1}$. Three critical lines $E_1^-,E_2^-,E_3^-$ are given by dashed curves. The dashed arrow curve is a generator $a_3$ of the fundamental group. The way we set up the set of generators in the Section \ref{S:Faction} gives the minimal possible intersections with critical lines. As one can see in Figure \ref{fig:1np1Generator}, the generator $a_3$ is a curve starting at the regular fixed point $P_{fix}$, intersecting $E_3^-$, $E_2^-$, the invariant cubic $C$ , the exceptional curve $\bar a_3$, $E_2^-$, $E_3^-$, and back to the regular fixed point $P_{fix}$. For a closed oriented curve, we will record the important intersections in the following way: \[ a_3 : P_{fix} \to E_3^- \to E_2^- \to C \to \bar a_3 \to E_2^- \to E_3^- \to P_{fix} \]
It follows that the closed oriented curve $f^{-1}_\mathbf{R} a_3$ will intersect exceptional curves and the invariant cubic in the following order
 \[ f^{-1}_\mathbf{R} a_3 : P_{fix} \to \bar c_1 \to \bar b_1 \to C \to \bar a_4 \to \bar b_1 \to \bar c_1 \to P_{fix}. \]
Furthermore, since $a_3$ does not intersect any pre-images of critical lines for $\check f$, the closed curve $f^{-1}_\mathbf{R} a_3$
 will not intersect all three critical lines $E_1^+,E_2^+, E_3^+$ for $\check f$. These critical lines cut $\mathbb{P}^2(\mathbf{R})$ into $4$ subsets. (See Figure \ref{fig:ImageGen}.) Using these critical lines as restrictions, one can obtain the image $f^{-1}_\mathbf{R} a_3$. In Figure \ref{fig:ImageGen}, the curve $f^{-1}_\mathbf{R} a_3$ is given as a thick dashed curve with an arrow. For instance to join the fixed point $P_{fix}$ and the exceptional curve $\bar c_1$, only possible direction is under $E_2^+$ in the bottom half of the rectangle as in the Figure \ref{fig:ImageGen}. Similarly from $\bar c_1$ to $\bar b_1$, the image curve should stay in the region bounded by three critical lines. Following the intersections of $f_\mathbf{R}^{-1}$, one can get the curve (up to a small perturbation) for $f^{-1}_\mathbf{R} a_3$ shown in the Figure \ref{fig:ImageGen}.

The reading curves for four generators $e, a_1, c_1, c_{n+1}$ are shown in the Figure \ref{fig:SReading}. Other generators are similar as $a_1, c_1$ and $c_{n+1}$, that is, they are closed curve starting at the cusp point $P_{cusp}$, intersecting the corresponding exceptional curve, the invariant cubic and back to the cusp point. Reading curves are disjoint on $X(\mathbf{R}) \setminus \{ P_{cusp} \}$. The clockwise direction around the cusp point $P_{cusp}$ is the positive direction and the counter clockwise direction is negative for each reading curves. Since a small circle around $P_{cusp}$ is contractible, one can obtain the unique relator of generators :
\[ e^2 \cc c_n^2 \cc a_{n+1}^2 \cc c_{n-1}^2 \cc a_n^2 \cdots c_2^2 \cc a_3^2 \cc b_1^2 \cc c_1^2 \cc a_2^2 \cc a_1^2 =1 \] To determine the $\pi_1$ class of $f^{-1}_\mathbf{R} a_3$, we follow the curve $f^{-1}_\mathbf{R} a_3$ and check the sequence of intersections of $f^{-1}_\mathbf{R} a_3$ and reading curves. Comparing two Figures \ref{fig:ImageGen} and \ref{fig:SReading}, we see that 
\[ f^{-1}_{\mathbf{R}*} a_3 = e\cc e^{-1} \cc c_1 \cc b_1 \cc a_4^{-1} \cc b_1^{-1} \cc c_1^{-1} \cc e \cc e^{-1} =c_1 \cc b_1 \cc a_4^{-1} \cc b_1^{-1} \cc c_1^{-1} \]

\begin{figure}
\centering
\def\svgwidth{4.5truein}
 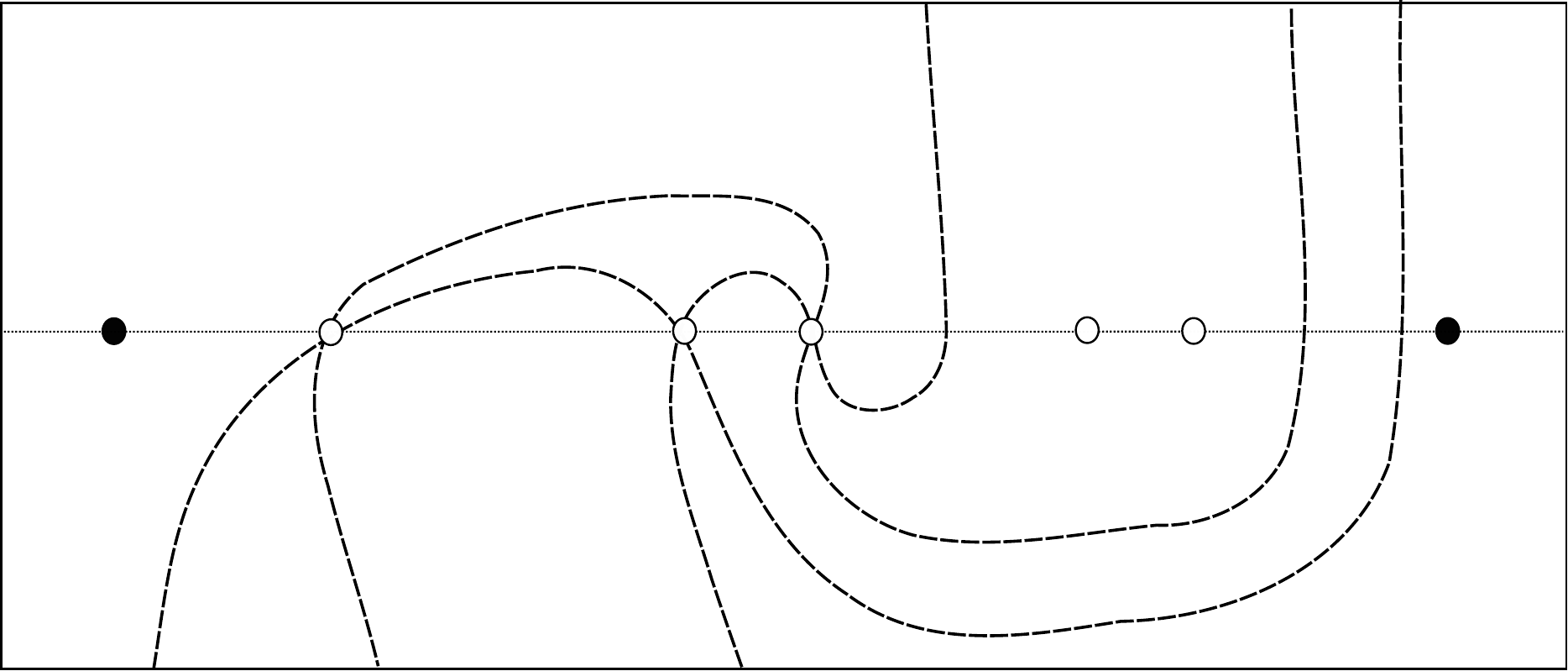
\caption{ The image of the generator $a_3$ under $f^{-1}_\mathbf{R}$. Three dashed curves are critical lines for $\check f$. The gray dashed curve with arrows represents $f^{-1}_\mathbf{R} a_3$ and the dashdotted curve with arrows represents $f^{-1}_\mathbf{R} e$. \label{fig:ImageGen} }
\end{figure}

\begin{figure}
\centering
\def\svgwidth{4.5truein}
 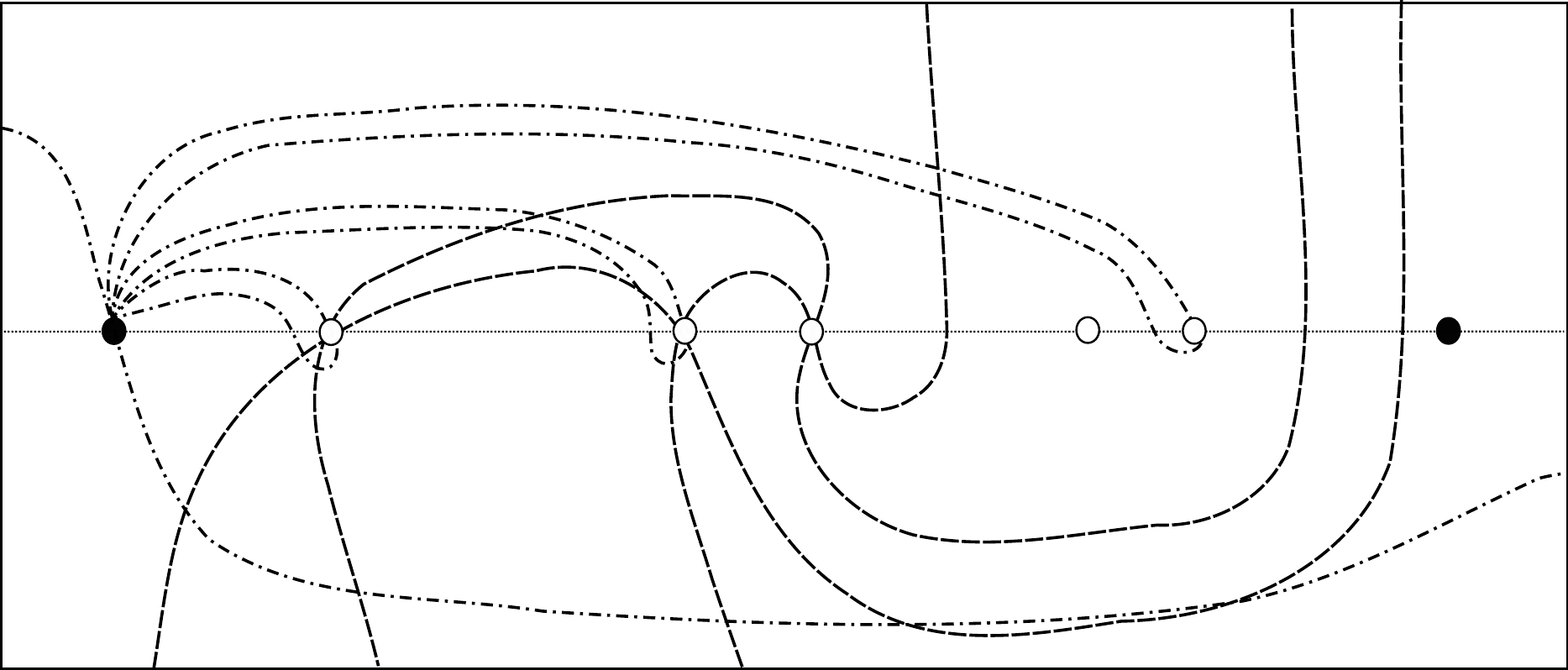
\caption{Reading curves and critical lines for $\check f$. The densely dashdotted curves with $\pm$ signs are reading curves. If a curve crosses a reading curve from $+$ to $-$, then the $\pi_1$ class will pick up the corresponding generator with positive power. The dash cures represent critical lines for $\check f$. \label{fig:SReading} }
\end{figure}

\subsubsection{Generator $e$} In Figure \ref{fig:1np1Generator}, the thick dashdotted curve with arrows represents a generator $e$ and it intersects three critical lines of $\check f^{-1}$ : \[ e: P_{fix} \to E_2^- \to E_3^- \to E_1^- \to P_{fix} \] and thus \[ f^{-1}_\mathbf{R} e : P_{fix} \to \bar b_1 \to \bar c_1 \to \bar a_1 \to P_{fix} .\] Since the oriented curve $e$ intersects the invariant cubic only at the fixed point $P_{fix}$ and $e$ does not intersect any exceptional curves $\bar b_1, \bar a_{n+1}$ and $\bar c_n$, the backward image $f^{-1}_\mathbf{R} e$ does not intersect any of exceptional lines for $\check f^{-1}$, $E_i^+, i=1,2,3$ and $C \setminus \{P_{fix} \}$. In  Figure \ref{fig:ImageGen}, the curve $f^{-1}_\mathbf{R} e$ is shown as the dashdotted curve with arrows. Using the reading curves in the Figure \ref{fig:SReading} and the unique relation on the generators, one see that the $\pi_1$ class of $f^{-1}_\mathbf{R} e$ is written as 
\begin{equation*}
\begin{aligned} f^{-1}_{\mathbf{R}*} e \ &=\ e \cc c_n^2 \cc a_{n+1}^2 \cc \cdots \cc a_3^2 \cc b_1\cc c_1 \cc a_2^2 \cc a_1 \cc e \\
&=\ e^{-1} \cc a_1^{-2} \cc a_2^{-2} \cc c_1^{-2} \cc b_1^{-1} \cc c_1 \cc a_2^2 \cc a_1 \cc e
\end{aligned}
\end{equation*}
Since the number of blowups is $2+2 n \ge 10$, we see that the length of the second expression is shorter than the length of the first. 

\subsection{ Action on the fundamental group} Three exceptional lines for $\check f^{-1}$ intersect the invariant cubic at $6$ points
 \[\{ q_1,\dots,q_6 \} = \{ \bar a_0=f|_C^{-1} \bar a_1,\, \bar b_0=f|_C^{-1} \bar b_1,\, \bar c_0=f|_C^{-1} \bar c_1, \,\bar a_{n_3},\, \bar b_{n_1},\, \bar c_{n_2}\} \] where $ P_{cusp} \prec q_i \prec q_{i+1} \prec P_{fix}$ for all $i =1,2,3,4,5$. For each $i=1,\dots,5$, let us denote $I_i = \{ p \in C | q_i \prec p \prec q_{i+1} \}$ an interval of an invariant cubic cut by exceptional lines for $\check f^{-1}$. To compute the induced action on the fundamental group, one need to determine the order of intersection of each generator and exceptional lines for $\check f^{-1}$. For each generator, these intersections depend on the location of the corresponding base point on the invariant cubic. For example, if the base point is in $I_3$ in Figure \ref{fig:Tinterval}, the corresponding generator can be chosen so that it does not intersect any exceptional lines for $\check f^{-1}$. 
 
Due to the parametrization in Theorem \ref{T:param}, there are four possible cases depending on the orbit data. For the orbit data $n_1 \le n_2 \le n_3$, $n_1+n_2+n_3 \ge 10$ with a cyclic permutation $\sigma : 1\mapsto 2 \mapsto 3 \mapsto 1$, we have 
 \begin{itemize}\addtolength{\itemsep}{1ex}
 \item If $(n_1,n_2,n_3) \in \{ (1,1,n), (1,2,n) : n\in \mathbf{Z} \}$, then $\bar a_0 \prec \bar b_0 \prec \bar b_{n_1} \prec \bar c_0 \prec \bar c_{n_2} \prec \bar a_{n_3}$
 \item If $(n_1,n_2,n_3) \in \{ (1,n,n+1): n \in \mathbf{Z}\}$, then $\bar a_0 \prec \bar c_0 \prec \bar b_{0} \prec \bar b_{n_1} \prec \bar a_{n_3} \prec \bar c_{n_2} $
 \item If either $n_1=1, n_2\ge 3, n_3 \ge n_2 +1$ or $ n_1 \ge 2, n_2 \ne n_3$, then $\bar a_0 \prec \bar b_0 \prec \bar c_{0} \prec \bar b_{n_1} \prec \bar c_{n_2} \prec \bar a_{n_3}$
\item If $n_1 \ge 2, n_2 = n_3$, then $ \bar a_0 \prec \bar c_0 \prec \bar b_{0} \prec \bar b_{n_1} \prec \bar a_{n_3} \prec \bar c_{n_2}.$
 \end{itemize} 
 
Using the location of the base points on the invariant cubic, a set of generators can be selected as a set of simple closed curves in $(X(\mathbf{R}), P_{fix})$ with the minimum number of intersections with exceptional lines for $\check f^{-1}$. For example, if the associated orbit data are $1, n, n+1$ with a cyclic permutation, using the exceptional lines of $\check f^{-1}$ shown in the Figure \ref{fig:Tinterval} we choose the following set of generators with the minimum number of intersections:

\begin{lem}\label{L:1np1Intersection} For an orbit data $1,n,n+1$ with a cylic permutation, both $I_2 $, and $I_5$ do not contain any center of blowups. One can choose a set of curves for generators of the fundamental group such that (1) they are pairwise disjoint in $X(\mathbf{R}) \setminus \{ P_{fix}\}$, (2) each generator intersects at most one exceptional curves, and (3) each generator intersects the invariant cubic and exceptional lines for $\check f^{-1}$ in the following order:
\begin{equation*}
\begin{aligned} 
& e : P_{fix} \to E_2^- \to E_3^- \to E_1^- \to P_{fix} &\\
& a_{n+1} : P_{fix} \to E_3^- \to E_1^- \to C \to \bar a_{n+1} \to E_2^- \to E_3^- \to P_{fix} &\\
& b_1 : P_{fix} \to E_3^- \to E_2^- \to C \to \bar b_1 \to P_{fix} &\\
& c_n: P_{fix} \to C \to E_1^- \to \bar c_n \to E_3^- \to P_{fix} &\\
& w : P_{fix} \to E_2^- \to E_3^- \to C \to \bar w \to E_3^- \to E_2^- \to P_{fix}& \qquad \text{ if } \bar w \in I_1\\
& w: P_{fix} \to C \to \bar w \to P_{fix} & \qquad \text{ if } \bar w \in I_3\\
& w: P_{fix} \to E_3^- \to E_2^- \to C \to \bar w \to E_2^- \to E_3^- \to P_{fix} & \qquad \text{ if } \bar w \in I_4\\
\end{aligned}
\end{equation*}
\end{lem}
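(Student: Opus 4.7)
The plan is to prove the lemma in three stages. First, I would pin down the cyclic order on $C$ of the six points at which the exceptional lines $E_1^-,E_2^-,E_3^-$ meet $C$, using Theorem~\ref{T:param} specialized to $(n_1,n_2,n_3)=(1,n,n+1)$. The parameters of $\bar a_{n_3}=p_3^-,\bar b_{n_1}=p_1^-,\bar c_{n_2}=p_2^-$ are given explicitly with common denominator $1-\delta^{2n+2}$, and the rest of the orbit (including $\bar a_0,\bar b_0,\bar c_0$) is obtained by the scaling rule $f^{-1}|_C\colon\gamma(\omega)\mapsto\gamma(\omega/\delta)$. Pairwise comparisons of these rational expressions (each dominated by its leading power of $\delta$ since $\delta>1$) yield the chain
\[\bar a_0\prec\bar c_0\prec\bar b_0\prec\bar a_1\prec\bar a_2\prec\bar c_1\prec\bar b_1\prec\bar a_3\prec\bar c_2\prec\cdots\prec\bar a_{n+1}\prec\bar c_n,\]
so that $q_1,\ldots,q_6=\bar a_0,\bar c_0,\bar b_0,\bar b_1,\bar a_{n+1},\bar c_n$ in order and $I_2=(\bar c_0,\bar b_0)$, $I_5=(\bar a_{n+1},\bar c_n)$ contain no blowup center. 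This settles the first assertion.

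Second, I would construct each generator as a fern-leaf loop based at $P_{fix}$ and verify its intersection sequence with the $E_i^-$. Viewing $X(\mathbf{R})$ as a rectangle with $C$ on its horizontal midline (as in Figure~\ref{fig:3blowups}), draw the three simple closed curves $E_i^-$ so that each meets $C$ at its two prescribed points. For a center $\bar w\in I_j$, the minimum number of $E_i^-$-crossings of any simple based loop at $P_{fix}$ winding once around the exceptional fiber over $\bar w$ is determined entirely by which $E_i^-$ separate $\bar w$ from $P_{fix}$ along $C$; the case analysis $j\in\{1,3,4\}$ (the only nonempty cases by Stage~1) reproduces the three stated patterns for $w$. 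The bases of $a_{n+1},b_1,c_n$ sit on one of the $q_k$ and require pushing the leaf off a single $E_i^-$ to the correct side, with the choice forced by compatibility with the other leaves. The loop $e$ avoids every exceptional divisor, and its three $E_i^-$-crossings are fixed by the fact that it bounds a disk in $X(\mathbf{R})$ together with the arcs cut out on $C$ by the three $E_i^-$.

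Third, pairwise disjointness in $X(\mathbf{R})\setminus\{P_{fix}\}$ and the at-most-one-exceptional-divisor condition follow from the stacking convention of Section~\ref{ss:generators}: arranging the leaves around $P_{fix}$ in the $\prec$-order on $C$ (as in Figure~\ref{fig:3BGenerators}) keeps them disjoint off the base point, and by construction each leaf winds around exactly one exceptional fiber. The main obstacle I expect is Stage~1, the ordering on $C$: the inequalities are settled by the leading powers of $\delta$ for $n$ large, but some bookkeeping is needed to rule out sign changes from subleading terms across the full range of $n\ge 4$, and likewise to confirm that the three projective intersection points of each line $E_i^-$ with the cubic really produce only two points on the real slice $X(\mathbf{R})$ (the third being absorbed at the cusp or in the blow-up structure). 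Once these are secured, the rest of the argument is a matter of reading the intersection sequences off the figure.
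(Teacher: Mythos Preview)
Your plan is sound and matches what the paper does, only more explicitly: the paper states this lemma without a formal proof, treating it as an immediate consequence of the parametrization in Theorem~\ref{T:param} (which yields the ordering $\bar a_0\prec\bar c_0\prec\bar b_0\prec\bar b_1\prec\bar a_{n+1}\prec\bar c_n$ listed just before the lemma), the fern-leaf construction of generators in Section~\ref{ss:generators}, and Figure~\ref{fig:Tinterval}. Your three stages are exactly the content the paper leaves implicit.

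One small correction to your Stage~1 bookkeeping: each line $E_i^-$ meets the cubic $C$ in three \emph{real} points, not two. The line $E_i^-$ passes through the two indeterminacy points $p_j^-,p_k^-$ (with $\{i,j,k\}=\{1,2,3\}$), and the third intersection with $C$ is the point $\bar a_0$, $\bar b_0$, or $\bar c_0$ (for $i=1,2,3$ respectively). The six distinct points $q_1,\ldots,q_6$ arise because the three $p_i^-$ are each shared by two of the lines, so $3\times 3-3=6$. Nothing is ``absorbed at the cusp''; the third point is simply the one whose image under $\check f^{-1}$ (restricted to $C$) is $\bar a_1$, $\bar b_1$, or $\bar c_1$. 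Once you drop that concern, your Stage~1 goes through by straightforward comparison of the parameters from Theorem~\ref{T:param}, and Stages~2 and~3 are exactly the picture-reading the paper relies on.
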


\begin{figure}
\centering
\def\svgwidth{4.5truein}
 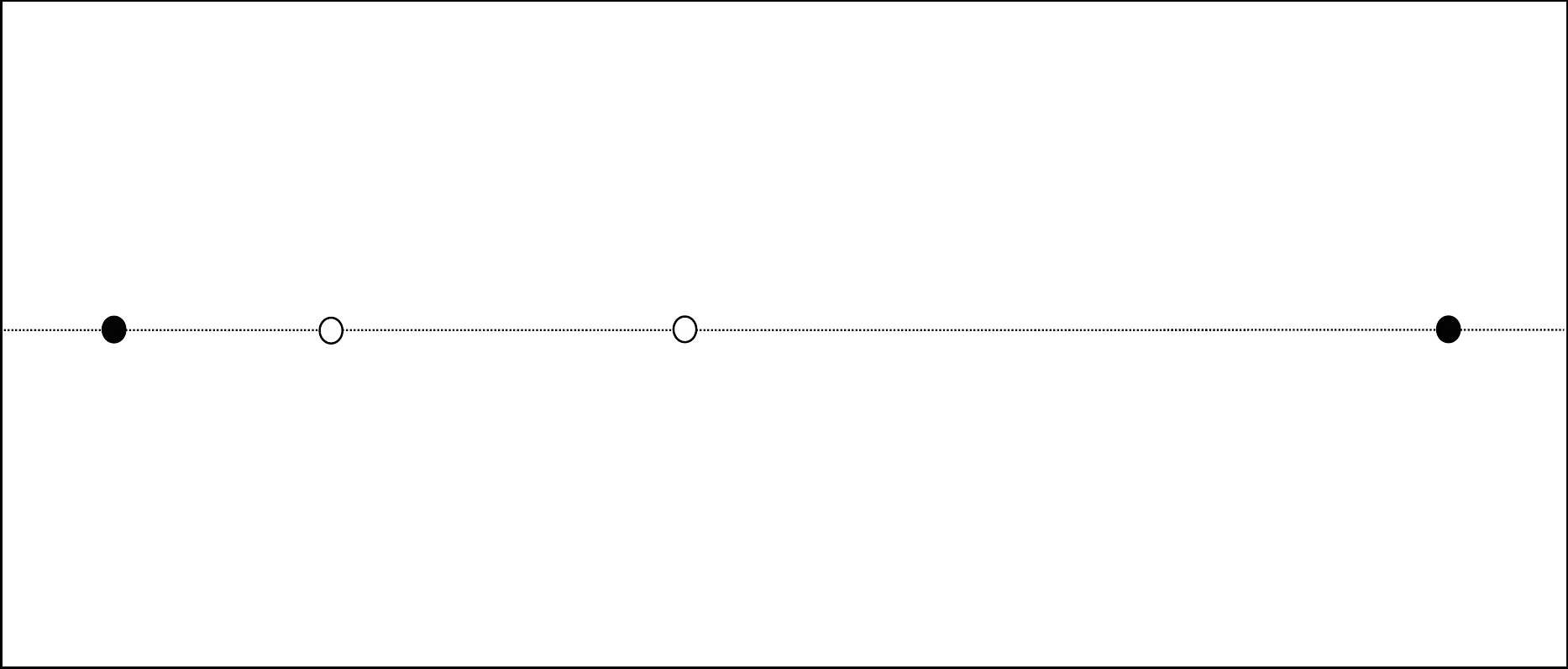
\caption{ Exceptional lines for $\check f^{-1}$ with intervals on the invariant cubic $C$. Each interval determines the order of the intersections of a generator with the exceptional lines. \label{fig:Tinterval} }
\end{figure}

The image of each curve under $f_\mathbf{R}^{-1}$ and the induced action on the fundamental group are determined by the orbits of the exceptional lines for $\check f^\pm$. 

\begin{prop} Let $f_\mathbf{R}:X(\mathbf{R}) \to X(\mathbf{R})$ be a real diffeomorphism associated to the orbit data $1,n,n+1$ and a cyclic permutation. Let $G=\{ e, a_i, b_j, c_k\}$ be a set of generators of the fundamental group of $X(\mathbf{R})$ and 
\[ \pi_1 (X(\mathbf{R}), P_{fix}) = \langle e, a_1, \dots, a_{n+1}, b_1, c_1, \dots, c_n| e^2 \cc c_n^2 \cc a^2_{n+1} \cc \cdots c_2^2 \cc a_3^2 \cc b_1^2 \cc c_1^2 \cc a_2^2 \cc a_1^2 =1\rangle. \]
For each $w=g_1^{\pm 1}\cc g_2^{\pm 1} \cc \cdots g_n^{\pm 1}\in \pi_1(X(\mathbf{R}), P_{fix})$ with $g_1, \dots, g_n \in G$, the image under the induced action $f^{-1}_{\mathbf{R}*} |_{\pi_1(X(\mathbf{R}), P_{fix})}$ is given by
\[ f^{-1}_{\mathbf{R}*} (w=g_1^{\pm 1}\cc g_2 ^{\pm 1}\cc \cdots g_n^{\pm 1}) \ = \ (f^{-1}_{\mathbf{R}*} g_1)^{\pm 1}\cc (f^{-1}_{\mathbf{R}*} g_2)^{\pm 1} \cc \cdots (f^{-1}_{\mathbf{R}*} g_n)^{\pm 1}\]
where 
\begin{equation*}
\begin{aligned}
f^{-1}_{\mathbf{R}*} \ :\ & e &\mapsto\ &e^{-1} \cc a_1^{-2} \cc a_2^{-2} \cc c_1^{-2} \cc b_1^{-1} \cc c_1 \cc a_2^2 \cc a_1 \cc e &\\
& a_{n+1} &\mapsto\ &c_1\cc a_2^2 \cc a_1 \cc e\cc b_1 \cc c_1 &\\ 
& b_1 &\mapsto\ & c_1^{-1} \cc b_1^{-1} \cc c_2^2 \cc a_3^2 \cc b_1^2 \cc c_1^2 \cc a_2^2 \cc a_1^2 \cc e&\\
& c_n &\mapsto\ & e^{-1} \cc a_1^{-1} \cc a_2^{-2} \cc c_1^{-1} &\\
& w_i &\mapsto\ & e^{-1} \cc a_1^{-2}\cc a_2^{-2} \cc c_1^{-2} \cc b_1^{-1} c_1 \cc w_{i+1} \cc c_1^{-1} \cc b_1 \cc c_1^2 \cc a_2^2 \cc a_1^2 \cc e& \text{ if } \bar w_i \in I_1\\
& w_i &\mapsto\ & e^{-1} \cc a_1^{-2}\cc a_2^{-2}\cc \cdots \cc w_{i+1}^{-2} \cc w_{i+1} \cc w_{i+1}^2\cc \cdots \cc a_2^2 \cc a_1^2 \cc e& \text{ if } \bar w_i \in I_2\\
& w_i &\mapsto\ & c_1^{-1} \cc b_1^{-1} \cc w_{i+1} \cc b_1 \cc c_1 & \text{ if } \bar w_i \in I_4\\
\end{aligned}
\end{equation*}
where $w_{i+1}$ is a generator corresponding the exceptional line over $f|_C^{-1} \bar w_i$ and both $a_2^{-2} \cc \cdots \cc w_{i+1}^{-2}$ and $w_{i+1}^2 \cc \cdots \cc a_2^{2}$ are parts of the relator of the group presentation. 
\end{prop}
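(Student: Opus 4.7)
The plan is to verify the formulas one generator at a time, using the reading-curve calculus of Section~\ref{S:ReadingCurves} together with the orbit data of $\check f^{\pm 1}$. The multiplicativity assertion $f^{-1}_{\mathbf{R}*}(g_1^{\pm 1} \cdots g_n^{\pm 1}) = (f^{-1}_{\mathbf{R}*} g_1)^{\pm 1} \cdots (f^{-1}_{\mathbf{R}*} g_n)^{\pm 1}$ requires no argument beyond noting that $f^{-1}_{\mathbf{R}}$ is a diffeomorphism fixing $P_{fix}$, so $f^{-1}_{\mathbf{R}*}$ is a group homomorphism on $\pi_1(X(\mathbf{R}),P_{fix})$.

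For each generator $g$, I start from the intersection sequence recorded in Lemma~\ref{L:1np1Intersection}. Applying $f^{-1}_{\mathbf{R}}$ replaces each crossing of $E_i^-$ by a crossing of its first forward image ($E_1^- \mapsto \bar a_1$, $E_2^- \mapsto \bar b_1$, $E_3^- \mapsto \bar c_1$) and each crossing of an exceptional curve $\bar a_i, \bar b_j, \bar c_k$ by its shift along the orbit, while intersections with the invariant cubic stay on $C$ via $\omega \mapsto \delta\omega$. Because $g$ is chosen so that the representing curve avoids the points of indeterminacy of $\check f$, the image $f^{-1}_\mathbf{R}(g)$ avoids the critical lines $E_1^+, E_2^+, E_3^+$ of $\check f$ entirely; these three lines divide $\mathbb{P}^2(\mathbf{R})$ into four regions, and the requirement that $f^{-1}_\mathbf{R}(g)$ stay within them pins down its homotopy class, as illustrated in Figure~\ref{fig:ImageGen} for $g = a_3$ and $g = e$. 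Once the image curve is drawn, I read off its $\pi_1$-class by recording signed intersections with the reading curves $R_e, R_{a_i}, R_{b_j}, R_{c_k}$ displayed in Figure~\ref{fig:SReading}.

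The computation for each individual generator then reduces to tracing the curve through the picture and reading off letters. The formulas for $e$ and $a_{n+1}$ follow exactly the model computation of Section~\ref{SS:example}; the formulas for $b_1$ and $c_n$ are analogous, with $f^{-1}_\mathbf{R}(b_1)$ wrapping once around $P_{cusp}$ and $f^{-1}_\mathbf{R}(c_n)$ short. For the intermediate generators $w_i$, the three listed cases are distinguished by which of the intervals $I_1, I_3, I_4$ contains $\bar w_i$; each case produces a distinct intersection pattern with $E_1^-, E_2^-, E_3^-$, and hence a distinct prefix/suffix in the resulting word (the intervals $I_2$ and $I_5$ contain no blown-up points in this orbit data, so they contribute nothing new). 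The main obstacle is purely bookkeeping: several of the raw words obtained from the reading curves are longer than their equivalents modulo the relator $e^2 \cc c_n^2 \cc a_{n+1}^2 \cc \cdots \cc a_1^2 = 1$, so one must insert a cyclic conjugate of the relator at the basepoint $P_{cusp}$ to bring each image to the normal form stated. The hypothesis $n_1+n_2+n_3 = 2n+2 \ge 10$ is used precisely to ensure that the stated normal forms are indeed the shortest representatives.
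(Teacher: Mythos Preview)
Your outline matches the paper's proof almost exactly: start from the intersection sequences of Lemma~\ref{L:1np1Intersection}, push them forward under $f^{-1}_{\mathbf{R}}$, use the critical lines $E_i^+$ to constrain the homotopy class of the image curve, read off the word via the reading curves, and simplify with the relator (using $2n+2\ge 10$ to pick the shorter of the two half-relator expressions).

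One point needs correcting. You write that ``the image $f^{-1}_\mathbf{R}(g)$ avoids the critical lines $E_1^+, E_2^+, E_3^+$ of $\check f$ entirely.'' This is true only for generators whose representing curve misses all three exceptional curves $\bar b_{n_1},\bar c_{n_2},\bar a_{n_3}$ (here $\bar b_1,\bar c_n,\bar a_{n+1}$), since these are the $f^{-1}$-preimages of $E_1^+,E_2^+,E_3^+$. The three special generators $b_1,c_n,a_{n+1}$ each cross exactly one of those exceptional curves, so their images cross exactly one of the $E_i^+$; the paper records this explicitly in the intersection list~(\ref{E:intersections}). That single crossing is precisely what distinguishes the formulas for $b_1,c_n,a_{n+1}$ from the generic $I_1,I_3,I_4$ cases, so your blanket statement would obscure the mechanism behind those three formulas. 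Once you amend this, your argument and the paper's are the same.
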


\begin{proof} Using Lemma \ref{L:1np1Intersection}, we get the order of intersection of the image of each generator and exceptional curves of $\check f$.
\begin{equation}\label{E:intersections}
\begin{aligned} 
& f^{-1}_{\mathbf{R}*}e : P_{fix} \to \bar b_1 \to \bar c_1 \to \bar a_1 \to P_{fix} &\\
& f^{-1}_{\mathbf{R}*}a_{n+1} : P_{fix} \to \bar c_1\to \bar a_1 \to C \to E_3^+ \to \bar b_1 \to \bar c_1 \to P_{fix} &\\
& f^{-1}_{\mathbf{R}*}b_1 : P_{fix} \to \bar c_1 \to \bar b_1 \to C \to E_1^+ \to P_{fix} &\\
& f^{-1}_{\mathbf{R}*}c_n: P_{fix} \to C \to \bar a_1 \to E_2^+ \to \bar c_1 \to P_{fix} &\\
&f^{-1}_{\mathbf{R}*} w : P_{fix} \to \bar b_1 \to \bar c_1 \to C \to \bar w \to \bar c_1 \to \bar b_1 \to P_{fix}& \qquad \text{ if } \bar w \in I_1\\
& f^{-1}_{\mathbf{R}*}w: P_{fix} \to C \to \bar w \to P_{fix} & \qquad \text{ if } \bar w \in I_3\\
&f^{-1}_{\mathbf{R}*} w: P_{fix} \to \bar c_1 \to \bar b_1 \to C \to \bar w \to \bar b_1 \to \bar c_1 \to P_{fix} & \qquad \text{ if } \bar w \in I_4\\
\end{aligned}
\end{equation}
To determine a word in $\pi_1(X(\mathbf{R}), P_{fix})$, we locate the backward image curve of each generator in Figure \ref{fig:SReading} using the intersection information in (\ref{E:intersections}). 
For instance, if a curve starts with $P_{fix} \to \bar b_1$, the only way to join $P_{fix}$ with $\bar b_1$ without intersecting exceptional lines $E_i^+$ is to construct a curve such that it starts at $P_{fix}$, proceeds downward, passes the boundary of the rectangle (a line at infinity), moving downward toward $\bar b_1$ with out touching the hump created by $E_3^+$. It follows that this curve will intersect reading curves corresponding to the word $e\cc c_n^2 \cc a_{n+1}^2 \cc c_{n-1}^2 \cc a_n^2 \cdots c_2^2 \cc a_3^2 \cc b_1$. Since the unique relator is $e^2 \cc c_n^2 \cc a^2_{n+1} \cc \cdots c_2^2 \cc a_3^2 \cc b_1^2 \cc c_1^2 \cc a_2^2 \cc a_1^2 =1$, we see that 
\[ e\cc c_n^2 \cc a_{n+1}^2 \cc c_{n-1}^2 \cc a_n^2 \cdots c_2^2 \cc a_3^2 \cc b_1 = e^{-1} \cc a_1^{-2} \cc a_2^{-2} \cc c_1^{-2} \cc b_1^{-1} \] 
To have an automorphisms on $X$ with positive entropy, we need $1+n+n+1 = 2(n+1) \ge 10$. Thus the right hand side word has the smaller length. 
Continuing this procedure for each of the cases in (\ref{E:intersections}) and reducing the length using the relator, we obtain the desired result. 
\end{proof}

To compute the induced action on the fundamental group, we need to go through case by case. For real diffeomorphisms associated with the orbit data $n_1,n_2,n_3$ with a cyclic permutation, there are $6$ different configurations of exceptional lines. Computations for each case are essentially identical. In Appendix \ref{ApendA}, we describe the induced $\pi_1$ action for each $f^{-1}_\mathbf{R}$ associated with the orbit data $n_1\le n_2\le n_3$ and a cyclic permutation. 

\section{The Growth Rate of the $\pi_1$-Action}
\label{S:Grate}
Let $f_\mathbf{R}$ be the real diffeomorphism with determinant $>1$, associated with the orbit data $n_1, n_2, n_3$ and a cyclic permutation. For the rest of this article, let us fix a set of generators for $\pi_1(X(\mathbf{R}, P_{fix}))$, \[G = \{ e, a_i, b_j, c_k, 1 \le i \le n_3, 1 \le j \le n_1, 1 \le k \le n_2\}\] as defined in the Section \ref{S:Action}. Let $G^{-1} = \{ g^{-1} : g \in G\}$ denote the set of inverses of generators in $G$. 
Let $\ell_G (\omega)$ denote the minimal word length of an element $\omega \in \pi_1(X(\mathbf{R}), P_{fix})$ with respect to the set of generators $G$, and let $\ell_G(1) = 0$. Suppose $\omega= g_{i_1} \cc g_{i_2} \cdots g_{i_n} \in \fun$, $ g_{i_j} \in G \cup G^{-1}$ We say that the word $g_{i_1} \cc g_{i_2} \cdots g_{i_n}$ is \textit{reduced} if it has minimal length among all words representing $\omega$, i.e., if $n= \ell_G(\omega)$. The reduced word representing each group element is not uniquely determined. For example, if the relator has even length, then half of the relator is equal to the inverse of the other half, so both of these words are reduced and represent the same element. The growth rate of the induced action $f^{-1}_{\mathbf{R}*}: \pi_1(X(\mathbf{R}), P_{fix}) \to \pi_1(X(\mathbf{R}), P_{fix})$ is defined by 
\[ \rho (f^{-1}_{\mathbf{R}*}) := \sup_{g \in G} \left \{ \ \limsup_{n \to \infty} \, (\ell_G ( f^{-n}_{\mathbf{R}*} g))^{1/n}\ \right \}. \]
Since $f_\mathbf{R}$ is a diffeomorphism, we have $h_{top} (f_\mathbf{R}) = h_{top}(f^{-1}_\mathbf{R})$. The following theorem by Bowen shows that the growth rate of the induced action on the fundamental group gives the lower estimate for the topological entropy.
\begin{thm}\cite{Bowen}
Let $f:M \to M$ be a map of a compact manifold and let $\mu$ be the growth rate of the induced action $f_*$ on $\pi_1(M)$. Then \[ h_{top}(f) \ge \log \mu \]
\end{thm}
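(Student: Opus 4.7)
The plan is to transfer the question to the universal cover and apply the Milnor--Švarc lemma, and then extract an $(n,\epsilon)$-separated set whose cardinality grows like $\mu^n$. Equip $M$ with a Riemannian metric, let $p\colon\widetilde M\to M$ be the universal cover with the pulled-back metric, and fix a continuous lift $\tilde f\colon\widetilde M\to\widetilde M$ of $f$ together with a base point $\tilde x_0$. The deck group $\Gamma\cong\pi_1(M)$ acts freely, properly discontinuously, and cocompactly on $\widetilde M$ by isometries, so the Milnor--Švarc lemma furnishes constants $C_1,C_2>0$ with
\[
C_1\,\ell_G(\gamma)-C_2 \;\le\; d_{\widetilde M}(\tilde x_0,\gamma\tilde x_0) \;\le\; C_1^{-1}\ell_G(\gamma)+C_2,\qquad \gamma\in\Gamma.
\]
With the base lift chosen so that $\tilde f\circ\gamma=f_*(\gamma)\circ\tilde f$ holds on $\Gamma$, induction yields $\tilde f^{\,n}(\gamma\tilde x_0)=f_*^{\,n}(\gamma)\,\tilde f^{\,n}(\tilde x_0)$, and cocompactness transfers the Milnor--Švarc estimate to the shifted base point $\tilde f^{\,n}\tilde x_0$ at the cost of a uniform additive constant.

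Since $G$ is finite and $f_*$ is a homomorphism, the supremum defining $\mu$ is attained on $G$; pick $g\in G$ with $\limsup_n\ell_G(f_*^n g)^{1/n}=\mu$. Choose a geodesic $\tilde\alpha\colon[0,1]\to\widetilde M$ from $\tilde x_0$ to $g\tilde x_0$ of length at most some $L$. Then for every $\eta>0$ and infinitely many $n$ the arc $\tilde f^n\circ\tilde\alpha$ joins $\tilde f^n\tilde x_0$ to $f_*^n(g)\,\tilde f^n\tilde x_0$, and hence has length at least $d_{\widetilde M}(\tilde f^n\tilde x_0,\tilde f^n(g\tilde x_0))\ge c(\mu-\eta)^n$. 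Fix $\epsilon>0$ smaller than half the injectivity radius of $M$ divided by $1+\mathrm{Lip}(\tilde f)$, and select points $\tilde w_1,\dots,\tilde w_{k(n)}$ along this arc that are pairwise at $\widetilde M$-distance at least $3\epsilon$, with $k(n)\ge c'(\mu-\eta)^n$. Write $\tilde w_i=\tilde f^n\tilde\alpha(t_i)$ and set $z_i:=\alpha(t_i)\in M$.

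The key claim is that $\{z_1,\dots,z_{k(n)}\}$ is $(n,\epsilon)$-separated for $f$. Suppose, for contradiction, that for some $i\ne i'$ one has $d_M(f^j z_i,f^j z_{i'})<\epsilon$ for every $0\le j\le n$. Since $p$ is an isometry on $\epsilon$-balls, at each $j$ there is a unique $\gamma_j\in\Gamma$ with $d_{\widetilde M}(\tilde f^j\tilde\alpha(t_i),\gamma_j\tilde f^j\tilde\alpha(t_{i'}))<\epsilon$. The intertwining identity $\tilde f\circ\gamma_j=f_*(\gamma_j)\circ\tilde f$, together with the choice of $\epsilon$ and uniqueness, forces $\gamma_{j+1}=f_*(\gamma_j)$; hence $\gamma_n=f_*^n(\gamma_0)$. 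Because $\tilde\alpha$ is a bounded-length geodesic, no nontrivial deck translate of $\tilde\alpha$ comes within $\epsilon$ of $\tilde\alpha$, and therefore $\gamma_0=1$. Then $\gamma_n=1$ and $d_{\widetilde M}(\tilde w_i,\tilde w_{i'})<\epsilon$, contradicting the $3\epsilon$-separation. Consequently
\[
h_{top}(f) \;\ge\; \limsup_{n\to\infty}\frac{\log N(n,\epsilon)}{n} \;\ge\; \limsup_{n\to\infty}\frac{\log k(n)}{n} \;\ge\; \log(\mu-\eta),
\]
and letting $\eta\downarrow 0$ gives the theorem.

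The principal obstacle is this last step: converting $\widetilde M$-separation of the sampled points $\tilde w_i$ into genuine $(n,\epsilon)$-separation of their $M$-projections $z_i$. One must simultaneously track, at every intermediate iterate $j$, which deck translate $\gamma_j$ witnesses local closeness in $\widetilde M$, and exploit the algebraic constraint $\gamma_{j+1}=f_*(\gamma_j)$ imposed by the fixed lift $\tilde f$. Positivity of the injectivity radius gives uniqueness of $\gamma_j$ once $\epsilon$ is small enough, while the Milnor--Švarc lemma is the essential bridge that converts a combinatorial growth rate on $\pi_1(M)$ into a geometric growth rate on $\widetilde M$; without this conversion, the purely algebraic growth of $f_*$ would have no way of being registered by dynamical invariants such as separated sets.
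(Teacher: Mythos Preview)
The paper does not give its own proof of this statement; it is quoted as Bowen's theorem and used as a black box. So there is nothing to compare against, and the question is simply whether your argument stands on its own.

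Your overall strategy is the standard Bowen--Manning one and is essentially correct, but one step does not go through as written. You assert that ``no nontrivial deck translate of $\tilde\alpha$ comes within $\epsilon$ of $\tilde\alpha$'' and conclude $\gamma_0=1$. This is false: since $\tilde\alpha$ runs from $\tilde x_0$ to $g\tilde x_0$, the translate $g\tilde\alpha$ begins at $g\tilde x_0$, so $g\tilde\alpha$ and $\tilde\alpha$ share a point and $\gamma_0=g$ is perfectly possible (for $t_i$ near $1$ and $t_{i'}$ near $0$). No choice of $\epsilon>0$ avoids this.

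The repair is cheap. By proper discontinuity, the set
\[
S=\{\gamma\in\Gamma:\ d_{\widetilde M}(\gamma\tilde\alpha,\tilde\alpha)<\epsilon\}
\]
is finite and independent of $n$, and every $\gamma_0$ arising at time $0$ lies in $S$. For a non-separated pair $(i,i')$ you then have $\gamma_n=f_*^{\,n}(\gamma_0)$ and $d_{\widetilde M}(\tilde w_i,\gamma_n\tilde w_{i'})<\epsilon$. For fixed $\gamma_0\in S$ and fixed $i'$ the point $\gamma_n\tilde w_{i'}$ is determined, so the $3\epsilon$-separation of the $\tilde w_i$ forces at most one such $i$. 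Hence each vertex in the ``non-separated'' graph on $\{1,\dots,k(n)\}$ has degree at most $|S|$, and a greedy choice produces an $(n,\epsilon)$-separated subset of size at least $k(n)/(|S|+1)$. Since $|S|$ is a constant, the exponential growth rate is unaffected and your final inequality follows.

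A smaller point: in deducing $\gamma_{j+1}=f_*(\gamma_j)$ you implicitly use a Lipschitz bound on $\tilde f$. Bowen's theorem is stated for continuous $f$; replace the Lipschitz estimate by uniform continuity and choose $\epsilon$ so that $d(x,y)<\epsilon$ implies $d(\tilde f x,\tilde f y)$ is less than half the injectivity radius, and the uniqueness of $\gamma_{j+1}$ goes through unchanged.
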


Since the fundamental group of $X(\mathbf{R})$ is non-commutative with one relator, it is hard to compute the growth rate. To overcome this difficulty, we use the conjugacy classes of the fundamental group, which correspond to free homotopy classes of closed curves in $X(\mathbf{R})$. For each $\omega \in \pi_1(X(\mathbf{R}), P_{fix})$, let $\ell (\omega)$ denote the minimal word length of all elements of the conjugacy class $[\omega]$
\[ \ell (\omega) := \min \{ \ell_G (x) : x = y \omega y^{-1}, y \in \pi_1(X(\mathbf{R}), P_{fix}) \}. \] 
We say that a word $w=g_{i_1} \cc g_{i_2} \cc \cdots \cc g_{i_n}$, $g_{i_j} \in G \cup G^{-1}$ for $j=1, \dots n$ is \textit{G-cyclically reduced} if every cyclic permutation of $w$ is a reduced word. In particular, this implies that $g_{i_j} g_{i_{j+1}} \ne 1$ for all $j = 1, \dots n$ modulo $n$. In a free group, the only cyclically reduced conjugates of a cyclically reduced word are cyclic permutations. The unique relator in the group $\fun$ is the ordered product of the squares of generators. The length of the relator is always even, and the half of the relator and the inverse of the other half represent the same element in $\fun$. With a slight modification, we have the following Lemma.

\begin{lem}\label{L:cpconj}
If two $G$-cyclically reduced words represent elements of $\pi_1(X(\mathbf{R}), P_{fix})$ that are in the same conjugacy class, then (after replacing a half relator with the inverse of the other half relator if necessary) one of these words is a cyclic permutation of the other. 
\end{lem}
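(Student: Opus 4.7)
The plan is to apply small-cancellation theory to the one-relator presentation $\pi_1(X(\mathbf{R}),P_{fix}) = \langle G \mid R\rangle$ with $R = g_1^2 g_2^2 \cdots g_n^2$ (indexing the generators as $g_1,\ldots,g_n$), and then to invoke the annular version of Greendlinger's lemma. The first step is to verify that $R$ satisfies $C'(1/6)$. A piece is a common prefix of two distinct elements of the symmetrized set of cyclic rotations of $R^{\pm1}$. Direct inspection shows that every piece has length at most $1$: two distinct cyclic rotations of $R$ (which has the alternating pattern of a doubled letter followed by a transition) disagree within the first two letters, and rotations of $R^{-1}$ use only inverse letters and share no prefix with any rotation of $R$. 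Since $|R|=2n$ with $n=n_1+n_2+n_3+1\ge 11$ in our setting, $C'(1/6)$ holds with ample slack.

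Given G-cyclically reduced conjugate words $w_1, w_2$, I would form a reduced annular van Kampen diagram $D$ with boundary cycles labeled by $w_1$ and $w_2$ and minimize the number of 2-cells. If $D$ has no 2-cells, then $D$ is topologically a cylinder carrying a single cyclic edge-label read off either boundary, so $w_1$ is a cyclic permutation of $w_2$ and we are done. Otherwise, the annular Greendlinger lemma for $C'(1/6)$ presentations (cf.~Lyndon--Schupp, Ch.~V) produces a 2-cell $F$ in $D$ such that $\partial F$ shares more than half its edges with one of the two boundary cycles, say the one labeled $w_1$. The corresponding arc reads a cyclic subword $u$ of $w_1$ with $|u| > n$, and if $uv$ labels $\partial F$ as a cyclic rotation of $R^{\pm1}$, then $u = v^{-1}$ in $\pi_1$ with $|v| < n$.

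From there, the plan is to use the strict inequality $|u|>n$ to strictly shorten $w_1$ in its conjugacy class via the substitution $u\mapsto v^{-1}$, repeating with the new word (freely reducing and passing to a G-cyclically reduced form in the conjugacy class as needed) until we reach a cyclically geodesic representative. The minimal annular diagram for this geodesic representative then has every 2-cell sharing exactly $n$ edges with its adjacent boundary, so each 2-cell corresponds to a half-relator swap, yielding the conclusion up to showing that the minimal diagram has at most one such 2-cell. The main obstacle is this last step: it requires combining the strong piece-length bound (pieces have length $\le 1$) with the large half-relator length $n$ to rule out configurations of two or more half-relator 2-cells coexisting without forcing a free cancellation somewhere on a boundary cycle, which would contradict the G-cyclic reducedness of $w_1$ or $w_2$. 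This combinatorial bookkeeping, driven by the extreme disparity between piece length and $|R|$, is the delicate part; the rest of the argument is a standard application of small-cancellation machinery.
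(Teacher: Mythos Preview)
The paper does not supply a proof of this lemma; it is stated as a ``slight modification'' of the classical free-group conjugacy fact and then used without further argument. Your small-cancellation framework is the right one, and the verification that $R=g_1^2\cdots g_n^2$ satisfies $C'(1/6)$ with pieces of length at most $1$ is correct.

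The genuine gap is the annular Greendlinger step. You assert a $2$-cell whose boundary meets a single boundary cycle in strictly more than $n=|R|/2$ edges and then propose to shorten $w_1$. But $w_1$ is by hypothesis $G$-cyclically reduced, which in this paper means every cyclic rotation is geodesic; in a $C'(1/6)$ group that is precisely the condition that no cyclic subword of length $>n$ occurs in a cyclic conjugate of $R^{\pm1}$. So the existence of such a cell would contradict the hypothesis outright---there is nothing to shorten---and your argument would then force the minimal annular diagram to have no $2$-cells at all, i.e.\ $w_1$ and $w_2$ would be cyclic permutations with no half-relator caveat. That is false: already the half-relator $u=g_1^2\cdots g_5^2 g_6$ (say $n=11$) and $v^{-1}=g_{11}^{-2}\cdots g_7^{-2}g_6^{-1}$ are $G$-cyclically reduced, equal in the group, and not cyclic permutations; squaring this example gives one needing two swaps, and its reduced annular diagram has two cells, each with exactly $n$ edges on each boundary cycle and none with more. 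The disk Greendlinger bound does not transfer to annuli in the form you use. The correct input (still from the annular theory in Lyndon--Schupp, Ch.~V) is that a reduced annular $C'(1/6)$ diagram with cyclically Dehn-reduced boundary labels is a one-layer ladder in which every cell meets both boundary cycles; with pieces of length $\le1$ each cell then carries essentially $n$ edges on each side, and these cells are exactly the half-relator swaps. In particular there may be several, so your proposed endgame ``at most one such $2$-cell'' also needs revision.
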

In \cite{Birman-Series}, Birman and Series considered an automorphism $\phi$ on the fundamental group of a compact surface $M$ and showed there is a family of cyclic words $W$ with respect to a set of generators of $\pi_1(M)$ such that an appropriate restriction of $\phi$ on the positive span of $W$ is linear. To use Whitehead's idea in \cite{whitehead}, Birman and Series construct $W$ such that a group generated by $W$ is free and study the induced map on the positive span $Sp^+(W)$ to show its linearity and the growth rate. To estimate the growth rate of the induced action on the fundamental group, we adapt the idea of Birman and Series \cite{Birman-Series}.

\subsection{Invariant Set of admissible sequences}\label{SS:admissible} Let $\Gamma=\{ \gamma_1, \dots, \gamma_k\} \subset \fun$ be a finite set of reduced words representing different elements in $\pi_1(X(\mathbf{R}), P_{fix})$ with respect to a set of generators $G$ with the following properties:
\begin{itemize}
\item there is $g_\star$ in a symmetric set $ G\cup G^{-1} = \{ g, g^{-1}: g\in G\}$ such that every reduced word $\gamma_i \in \Gamma$ has exactly one $g_\star$ at the end. 
\item $\ell_G(\gamma) = \ell (\gamma) = s \ge 2$ for all $\gamma_i \in \Gamma$
\item Each $\gamma \in \Gamma$ is a $G$-cyclically reduced.
\end{itemize}
It follows that for each $\gamma \in \Gamma$, $\gamma= g_1 \cdots g_s \cc g_\star$, $g_1 \ne g_\star^{-1}$, and $\ell(\gamma) = s+1$.
Let $\Gamma^+$ be a monoid of elements in $\Gamma$ such that each $\gamma \in \Gamma^+$ is written as a product of positive powers of $\gamma_i$'s:
\[ \Gamma^+ = \{ \gamma = \gamma_{i_1}^{\epsilon_1} \cc \dots \gamma_{i_s}^{\epsilon_s}: \gamma_{i_j} \in \Gamma, \text{ and } s, \epsilon_1, \dots \epsilon_s \ge 0\}. \]
For each $\gamma \in \ \Gamma ^+$, let $\ell_\Gamma (\gamma)$ denote the minimal word length in $\Gamma$. Since the unique relator for the set of generator $G$ for the fundamental group of $X(\mathbf{R})$ is given by the product of squares of generators and each $\gamma\in \Gamma^+$ is given by a product of positive powers of $\gamma_i\in \Gamma$, it is clear that there is no relation in $\Gamma^+$. It follows that

\begin{lem}\label{L:Gplus} Every word in $\Gamma ^+$ is $G$-cyclically reduced with respect to a set of generators $G$. Thus for each $\gamma= \gamma_{i_1}\gamma_{i_2} \dots \gamma_{i_n} \in \Gamma ^+$ \[ \ell (\gamma)=\ell_G (\gamma) = \sum_{j=1}^n \ell (\gamma_{i_j}) \quad \text{and} \quad \ell_\Gamma (\gamma)= \sum_{ i =1}^k n_i(\gamma)\] where $n_i (\gamma)$ is the number of $\gamma_i$ in the word $\gamma$.
\end{lem}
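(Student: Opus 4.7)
The plan is to reduce the lemma to a single combinatorial observation about the distinguished letter $g_\star$, and then to invoke Lemma \ref{L:cpconj} to pass from $G$-cyclic reducedness to the length identities.

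First I would record the essential structural consequence of the defining properties of $\Gamma$. Each $\gamma_i = g^{(i)}_1 g^{(i)}_2 \cdots g^{(i)}_{s_i} g_\star$ is $G$-cyclically reduced; in particular, reading the word cyclically one must not see $g_\star g_\star^{-1}$ as consecutive letters, so $g^{(i)}_1 \neq g_\star^{-1}$ for every $i$. Moreover the only occurrence of $g_\star$ in $\gamma_i$ is at the final position. From this I want to extract: in any concatenation $\gamma = \gamma_{i_1}\gamma_{i_2}\cdots\gamma_{i_n}$, the letter $g_\star$ appears in $\gamma$ precisely at the endpoints of the $\gamma_{i_j}$-blocks, and at each interior junction the adjacent pair is $g_\star \cdot g^{(i_{j+1})}_1$, which is not an inverse pair. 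Thus no free cancellation occurs at any junction, and since each $\gamma_{i_j}$ is freely reduced, $\gamma$ is freely reduced of length $\sum_j \ell_G(\gamma_{i_j})$. The identical check applies to the cyclic junction between the last letter of $\gamma_{i_n}$ and the first letter of $\gamma_{i_1}$, so $\gamma$ is freely cyclically reduced.

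The main step is to promote freely cyclically reduced to $G$-cyclically reduced, i.e., to exclude a length-decreasing substitution using the unique relator $R = \alpha_1^2\alpha_2^2\cdots\alpha_N^2$ (of even length $2N$). Such a substitution is possible only if some cyclic rotation of $\gamma$ contains more than $N$ consecutive letters of $R^{\pm 1}$, which in turn would force the letter $g_\star$ to appear in a very rigid pattern (either as part of a square $g_\star^2$ coming from the relator, or at spacings dictated by $R$). I would argue that this pattern is incompatible with the occurrences of $g_\star$ in $\gamma$: those occurrences sit at the block endpoints, separated by the interior of the $\gamma_{i_j}$'s, and any subword of length exceeding $N$ spanning two adjacent blocks would restrict to a subword of length $>\ell(\gamma_{i_j})/2$ or more of some individual $\gamma_{i_j}$ matching half the relator, contradicting the hypothesis that each $\gamma_{i_j}$ is already $G$-cyclically reduced.

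Once $\gamma$ is known to be $G$-cyclically reduced, the length identities are formal: $\ell_G(\gamma)=\sum_j \ell_G(\gamma_{i_j})$ holds by the freely-reduced concatenation count, $\ell(\gamma)=\ell_G(\gamma)$ holds by Lemma \ref{L:cpconj} applied to $\gamma$ itself, and $\ell_G(\gamma_{i_j}) = \ell(\gamma_{i_j})$ holds by hypothesis on $\Gamma$. Finally, the identity $\ell_\Gamma(\gamma)=\sum_i n_i(\gamma)$ follows from the uniqueness of the decomposition of $\gamma$ into $\Gamma$-factors: since the positions of $g_\star$ in $\gamma$ are exactly the right endpoints of those factors, the block decomposition is rigidly determined by the letter sequence of $\gamma$. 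The main obstacle I anticipate is the relator step; formalizing the ``rigid pattern'' argument cleanly will likely require a short case analysis on where $g_\star$ can occur in half of $R$.
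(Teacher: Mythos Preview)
Your proposal is correct and follows essentially the same route as the paper: use that each $\gamma_i$ begins with a letter $\ne g_\star, g_\star^{-1}$ and ends with a single $g_\star$, together with the fact that the unique relator is a product of \emph{squares}, to rule out both free cancellation at block junctions and any relator substitution spanning blocks. The paper's own proof is a two–sentence sketch of exactly this idea (it simply asserts that ``any length reduction, if there are any, must happen in each $\gamma_i$''), so your anticipated case analysis on where $g_\star$ can sit inside a half-relator is already more than the paper supplies; carrying it out as you outlined (a long relator piece in $\gamma$ can contain at most one isolated $g_\star$, forcing it into a single block or a cyclic rotation thereof) completes the argument cleanly.
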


\begin{proof}
Every $\gamma_i \in \Gamma$ starts with a letter $\ne g_\star, g_\star^{-1}$ and ends with a letter $g_\star$. Since the unique relation on a set of generators $G$ is given by a product of squares of elements in $G$, any length reduction, if there are any, must happen in each $\gamma_i$. Each $\gamma_i \in \Gamma$ is reduced with respect to $G$. Thus any element in $\gamma \in \Gamma^+$ is reduced with respect to $G$ and the length $\ell(\gamma) = \ell_G(\gamma)$ depends only on the number of $\gamma_i's$ in $\Gamma$.
\end{proof}

Let $\langle \Gamma^+ \rangle := \{ [\gamma] : \gamma \in \Gamma^+\}$ be a set of conjugacy classes of $\Gamma^+$ in $\pi_1(X(\mathbf{R}), P_{fix})$. Using Lemma \ref{L:cpconj} and Lemma \ref{L:Gplus}, we have that for $\gamma, \gamma' \in \Gamma^+$, $\gamma$ and $\gamma'$ are cyclic permutations of each other with respect to $\gamma_i$'s in $\Gamma$ if and only if the $[\gamma] = [\gamma']$. Also we see that the length $\ell_\Gamma$ is well defined in $\langle \Gamma^+ \rangle$, that is for each $[\gamma] \in \langle \Gamma^+ \rangle$ we have
\[ \ell_\Gamma [\gamma] = \ell_\Gamma (\gamma). \]

\subsubsection{Admissible sequences}\label{SS:admissible}
Let $A$ be a set of ordered $n$-tuples in $\{1, 2, \dots, k\}$. We say $\gamma= \gamma_{i_1} \cdots \gamma_{i_n}\in \Gamma^+$ of length $n$ is \textit{an $A$-admissible $n$ block} if the $n$-tuple of indices $(i_1, \dots, i_n) \in A$. We say $\gamma= \gamma_{i_1} \cdots \gamma_{i_s}$ in $\Gamma^+$ is \textit{cyclically admissible with respect to $A$}, (or simply \textit{$A$-admissible}) if each $n$ blocks in $\gamma$ is an $A$-admissible $n$ block where $n$ blocks taken modulo $s$. Let $\Gamma_A \subset \langle \Gamma^+ \rangle$ be a set of conjugacy classes of (cyclically) $A$-admissible elements in $\Gamma^+$.
\[ \Gamma_A = \{ [\gamma] \in \langle \Gamma^+ \rangle: \gamma= \gamma_{i_1} \cdots \gamma_{i_s}, s\ge n, (i_j, \dots i_{j+n}) \in A \} \]
where the indexes are taken modulo $s = \ell_\Gamma (\gamma)$.

When $[f^{-1}_{\mathbf{R}*} \gamma] \in \Gamma_A$ for every $[\gamma] \in \Gamma_A$, i.e. there is $\gamma' \in \Gamma_A$ such that  $f^{-1}_{\mathbf{R}*} \gamma$ is conjugate to $\gamma'$ in $\pi_1(X(\mathbf{R}), P_{fix})$, we say $\Gamma_A$ is $f^{-1}_{\mathbf{R}*}$-invariant. In this case, let us define a map $\phi$ on $\Gamma_A$ by \[ \phi([\gamma]) = [f^{-1}_{\mathbf{R}*} \gamma], \ \ \ \text{for } [\gamma] \in \Gamma_A.\] 


For each $[\gamma] \in \Gamma_A$, the length $\ell_\Gamma [\gamma]$ only depends on the number of each $\gamma_i \in \Gamma$ in any representative $\gamma \in [\gamma]$. Let $\text{Sp}^+ \Gamma$ be the positive linear span of $\Gamma$ and for each $[\gamma] \in \Gamma_A$ define $[\gamma]_\# \in \text{Sp}^+\Gamma$ by 
\begin{equation}\label{E:counting} [\gamma]_\# = \sum_{i=1}^k n_i \gamma_i \quad \text{where } n_i(\gamma) = \text{the number of } \gamma_i \text{ in } [\gamma].\end{equation}
For instance, if $[\gamma] =[ \gamma_1\gamma_3\gamma_2\gamma_1\gamma_4] \in \Gamma_A$ then $[\gamma]_\# = 2 \gamma_1 + \gamma_2 +\gamma_3+\gamma_4$. Let $\Gamma_A^\# = \{ [\gamma]_\#: [\gamma] \in \Gamma_A\}$. 
If $\Gamma_A$ is $f_{\mathbf{R}*}^{-1}$- invariant, for each $[\gamma] \in \Gamma_A$ we have 
\begin{equation} \label{E:cmap}
 \phi [\gamma]_\# \in \Gamma_A^\# 
\end{equation}

\begin{lem}\label{L:linearity} Suppose $\Gamma_A$ is $f_{\mathbf{R}*}^{-1}$-invariant and $ [ \gamma ] ,[\gamma'] \in \Gamma_A$. 
\begin{itemize}
\item For a positive integer $n$, $[\gamma^n]=[ \gamma \gamma \cdots \gamma ]$, the class of the product of $n$ coplies of $\gamma$ is in $\Gamma_A$ and \[ \phi[\gamma^n]_\# = n \, \phi[\gamma]_\#\]
\item If two representatives $\gamma$ and $\gamma'$ have the same $n-1$ successive indices then $[\gamma \cc \gamma'] \in \Gamma_A$ and \[ \phi[\gamma \cc\gamma']_\# = \phi[\gamma]_\# + \phi[\gamma']_\#\]
\end{itemize}
\end{lem}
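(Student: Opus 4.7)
The plan is to reduce both assertions to two simpler facts: (a) admissibility of concatenations is inherited combinatorially from the admissibility of $\gamma$ and $\gamma'$, so the concatenated classes lie in $\Gamma_A$; and (b) the $\#$-count is additive under concatenation and multiplicative under taking powers in $\Gamma^+$. Combined with the fact that $f^{-1}_{\mathbf{R}*}$ is a group homomorphism and that $\Gamma_A$ is assumed $f^{-1}_{\mathbf{R}*}$-invariant, these two observations will deliver both linearity statements.

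For admissibility, first I would observe that the cyclic $n$-blocks of $\gamma^n$ are exactly those of $\gamma$ (repeated $n$ times around the cycle), so cyclic $A$-admissibility of $\gamma$ transfers directly to $\gamma^n$, giving $[\gamma^n] \in \Gamma_A$. For $\gamma\gamma'$, every $n$-block lying entirely inside one factor is admissible by hypothesis; the assumption that $\gamma$ and $\gamma'$ share the same $n-1$ successive indices is precisely what ensures that the $n$-blocks straddling the two boundaries of the cyclic concatenation coincide with admissible $n$-blocks of one of the factors, whence $[\gamma\gamma'] \in \Gamma_A$.

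For additivity of the count, Lemma \ref{L:Gplus} guarantees that words in $\Gamma^+$ are $G$-cyclically reduced and that $[\cdot]_\#$ is well-defined on conjugacy classes, with $[\gamma\gamma']_\# = [\gamma]_\# + [\gamma']_\#$ and $[\gamma^n]_\# = n[\gamma]_\#$ by direct counting of the symbols $\gamma_i$ appearing. For part 1, since $f^{-1}_{\mathbf{R}*}$ is a homomorphism, $f^{-1}_{\mathbf{R}*}(\gamma^n) = (f^{-1}_{\mathbf{R}*}\gamma)^n$. Choosing a cyclically admissible representative $\delta \in \Gamma^+$ of $\phi[\gamma]$ (which exists by invariance) and writing $f^{-1}_{\mathbf{R}*}\gamma = u\delta u^{-1}$ for some $u \in \pi_1$, we get $(f^{-1}_{\mathbf{R}*}\gamma)^n = u\delta^n u^{-1}$, so $\phi[\gamma^n] = [\delta^n]$ and its $\#$-count is $n[\delta]_\# = n\,\phi[\gamma]_\#$. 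This settles the first assertion.

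The main obstacle lies in part 2, where writing $f^{-1}_{\mathbf{R}*}\gamma' = v\delta' v^{-1}$ analogously yields $f^{-1}_{\mathbf{R}*}(\gamma\gamma') = u\delta u^{-1} v\delta' v^{-1}$, which is not manifestly conjugate to $\delta\delta'$. The overlap hypothesis must do the alignment work: it forces representatives of $[\gamma]$ and $[\gamma']$ to be chosen so that the two conjugators align in the product, equivalently, so that $f^{-1}_{\mathbf{R}*}$ on admissible cyclic words in $\Gamma^+$ behaves like a substitution $\gamma_i \mapsto$ (a fixed word in $\Gamma^+$), consistent with the substitution-rule phenomenon highlighted in the introduction. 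Once this alignment is established, so that $f^{-1}_{\mathbf{R}*}(\gamma\gamma')$ is conjugate to $\delta\delta' \in \Gamma^+$, additivity of the count on $\Gamma^+$ closes the argument via
\[
\phi[\gamma\gamma']_\# \;=\; [\delta\delta']_\# \;=\; [\delta]_\# + [\delta']_\# \;=\; \phi[\gamma]_\# + \phi[\gamma']_\#.
\]
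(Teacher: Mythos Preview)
Your admissibility arguments for both $[\gamma^n]$ and $[\gamma\gamma']$ match the paper's exactly, and your treatment of the first item (powers) is complete and correct, essentially identical to what the paper intends: once $\alpha=f^{-1}_{\mathbf{R}*}\gamma$ is conjugate to $\delta\in\Gamma^+$, then $\alpha^n$ is conjugate to $\delta^n$ and $[\delta^n]_\#=n[\delta]_\#$.

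For the second item you are in fact more careful than the paper. The paper's proof simply writes $\phi[\gamma\gamma']=[f^{-1}_{\mathbf{R}*}\gamma\cdot f^{-1}_{\mathbf{R}*}\gamma']$ and declares that it suffices to check $[\gamma\gamma']\in\Gamma_A$; it does not address why the $\#$-count of this conjugacy class splits as $\phi[\gamma]_\#+\phi[\gamma']_\#$. The obstacle you flag---that $f^{-1}_{\mathbf{R}*}\gamma=u\delta u^{-1}$ and $f^{-1}_{\mathbf{R}*}\gamma'=v\delta' v^{-1}$ need not combine to something conjugate to $\delta\delta'$ when $u\ne v$---is a genuine issue at the level of generality of Section~\ref{SS:admissible}.

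Your proposed resolution is the right one, but its source is not the overlap hypothesis on the \emph{inputs}; rather it is the structural fact made explicit in Section~\ref{SS:sandp} (and visible in every worked example, e.g.\ \eqref{E:coxeter}): there is a single fixed word $\epsilon$ with $f^{-1}_{\mathbf{R}*}\gamma_i=\epsilon^{-1}\omega_i\,\epsilon$ for every $\gamma_i\in\Gamma$, where $\omega_i$ is a product of elements of $\Delta$. Hence for any $\gamma=\gamma_{i_1}\cdots\gamma_{i_m}\in\Gamma^+$ one has $f^{-1}_{\mathbf{R}*}\gamma=\epsilon^{-1}\omega_{i_1}\cdots\omega_{i_m}\epsilon$, so the conjugators automatically align for every concatenation. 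The overlap hypothesis then plays its role at the next stage: it guarantees that the merging of $\omega_{i_1}\cdots\omega_{i_m}\omega_{j_1}\cdots\omega_{j_\ell}$ into a $\Gamma^+$-word agrees, block by block, with the separate mergings of the two factors. With these two ingredients your concluding display is justified.
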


\begin{proof}
Since for $[\gamma], [\gamma'] \in \Gamma_A$,  \[\phi[\gamma\cc \gamma'] = [ f^{-1}_{\mathbf{R}*} \gamma \cc \gamma'] =[ f^{-1}_{\mathbf{R}*} \gamma \cc f^{-1}_{\mathbf{R}*}\gamma'],\]
it is enough to show that $[\gamma^n] \in \Gamma_A$ for the first part and $[\gamma\cc \gamma'] \in \Gamma_A$ for the second. 
If $[\gamma]= [\gamma_{i_1} \cdots \gamma_{i_s}] \in \Gamma_A$, every cyclic $n$-sub indices of $\gamma$ is in $A$. Thus it is clear that \[[\gamma^n] = [\gamma_{i_1} \cdots \gamma_{i_s} \gamma_{i_1} \cdots \gamma_{i_s} \dots \gamma_{i_1} \cdots \gamma_{i_s}] \in \Gamma_A. \]
If $\gamma$ and $\gamma'$ have the same $n-1$ successive indices, we may write \[ \gamma = \gamma_{i_1} \cdots \gamma_{i_{n-1}} \cc \gamma_{j_1} \cdots \gamma_{j_s} \quad \text{and} \quad \gamma' = \gamma_{i_1} \cdots \gamma_{i_{n-1}} \cc \gamma_{k_1} \cdots \gamma_{k_t} \]
where $(j_s,i_1, \dots, i_{n-1}), (k_t, i_1 \dots i_{n-1}) \in A$. It follows that 
\[ \gamma\cc \gamma' = \gamma_{i_1} \cdots \gamma_{i_{n-1}} \cc \gamma_{j_1} \cdots \gamma_{j_s} \cc \gamma_{i_1} \cdots \gamma_{i_{n-1}} \cc \gamma_{k_1} \cdots \gamma_{k_t}\in \Gamma_A\]
\end{proof}

\subsection{Split and Merge}\label{SS:sandp} 
%
Let us continue to assume that $\Gamma_A$ is $f^{-1}_{\mathbf{R}*}$-invariant. Each $\gamma_i \in \Gamma$ is not necessarily in $\Gamma_A$ and thus a priori $f^{-1}_{\mathbf{R}*} \gamma_i \notin \Gamma^+$. 
However for $[\gamma] = [\gamma_{i_1}\cdots \gamma_{i_n}] \in \Gamma_A$, we have $\phi[\gamma] = [f^{-1}_{\mathbf{R}*} \gamma] \in \Gamma_A$, that is, \[ [f^{-1}_{\mathbf{R}*} \gamma]=[f^{-1}_{\mathbf{R}*} \gamma_{i_1} \cc \cdots \cc f^{-1}_{\mathbf{R}*} \gamma_{i_n}] =[\gamma_{j_1} \cdots \gamma_{j_s}]\] for some $A$-admissible $\gamma_{j_1} \cdots \gamma_{j_s}$. Since there is no cancelation between $\gamma_i$'s, if $f^{-1}_{\mathbf{R}*} \gamma_{i_j} \not\in \Gamma^+$ then the product of the last part of $f^{-1}_{\mathbf{R}*} \gamma_{i_j}$ and the first part of $f^{-1}_{\mathbf{R}*} \gamma_{i_{j+1}}$ should be equal to an element in $\Gamma$. Furthermore since the element $g_\star$ only appears as the last letter for each $\gamma_i$, we have 
\[ [ f^{-1}_{\mathbf{R}*} \gamma_i] = [g_{i_{1,1}} \cdots g_{i_{1,n_1}} \cc g_\star \cc g_{i_{2,1}} \cdots g_{i_{2, n_2}} \cc g_\star \cc g_{i_{3,1}} \cdots g_{i_{j-1, n_{j-1}}} \cc g_\star \cc g_{i_{j,1}} \cdots g_{i_{j, n_j}}]\]
where $g_{i_{j,k}} \ne g_\star$ and $g_{i_{j,k}} \in G \cup G^{-1}$ for all $i,j,k$ and the product $ g_{i_{j,1}} \cdots g_{i_{j, n_j}} \cc g_\star$ must be an element of $\Gamma$. 
It follows that for each $\gamma_i \in \Gamma = \{ \gamma_1, \dots, \gamma_k\}$, we can have $ [f^{-1}_{\mathbf{R}*} \gamma_i] = [\mu_i \cc \gamma_{i_1} \cc \cdots \gamma_{i_{n_i}} \cc \zeta_i]$ or $[ f^{-1}_{\mathbf{R}*} \gamma_i] = [\eta_i]$ where $\gamma_{i_1}, \dots, \gamma_{i_{n_i}} \in \Gamma$, and $ \zeta_i, \mu_i, \eta_i$ are reduced elements (possibly an identity) in $\pi_1(X(\mathbf{R}), P_{fix})$ with the following properties:
\begin{itemize}
\item all three $\zeta_i, \eta_i, \mu_i$ are reduced,
\item the letter $g_\star$ doesn't appear in $\zeta_i$ and $\eta_i$, 
\item the letter $g_\star$ only appears as the last letter of $\mu_i$, and
\item $\zeta_i$ does not start with the letter $g_\star^{-1}$.
\end{itemize}
Let us denote $I_1 \subset \{ 1, \dots, k\}$ a set of indices such that
 \[ \text{if } \ i \in I_1,\quad \text{ then} \ \ f^{-1}_{\mathbf{R}*} \gamma_i = \mu_i \cc \gamma_{i_1} \cc \cdots \gamma_{i_{n_i}} \cc \zeta_i, \quad \text{for some }\ \gamma_{i_1}, \dots \gamma_{i_n} \in \Gamma, \zeta_i, \mu_i \in \pi_1(X(\mathbf{R}), P_{fix})\] 
 and use $I_2$ for the set of indices such that \[ \text{if } \quad i \in I_2,\quad \text{ then} \ \ f^{-1}_{\mathbf{R}*} \gamma_i =\eta_i, \qquad \text{for some } \qquad \eta_i \in \pi_1(X(\mathbf{R}), P_{fix}) \] 
If cyclically successive indices $i,j$ appear in some $A$-admissible elements in $\Gamma_A$, to have $f^{-1}_{\mathbf{R}*}$-invariant $\Gamma_A$ one of the following situations should happen
\begin{itemize}\addtolength{\itemsep}{1ex}
\item If both $i, j \in I_1$, then $\zeta_i \cc \mu_j$ must be reduced to an element of $\Gamma$.
\item If $i \in I_1$ and $j \in I_2$, then there are set of indices $ t_1, \dots, t_M$ such that every successive $n$ indices in $i, j, t_1, \dots, t_M$ belongs to $A$, $t_1, \dots t_{M-1} \in I_2$, $t_M \in I_1$, and $\zeta_i \cc \eta_j \cc \eta_{t_1} \dots \eta_{t_{M-1}} \cc \mu_{t_M}$ must be reduced to an element of $\Gamma$. 
\item If $i \in I_2$ and $j \in I_1$, then there are set of indices $ s_1, \dots, s_N$ such that every successive $n$ indices in $ s_1, \dots, s_N,i,j$ belongs to $A$, $s_1 \in I_1$, $s_2 \dots s_N \in I_2$, and $\zeta_{s_1} \cc \eta_{s_2} \dots \eta_{s_N} \cc \eta_{i } \cc \mu_j$ must be reduced to an element of $\Gamma$. 
\item If both $i, j \in I_2$, then there are set of indices $ s_1, \dots, s_M$ and $ t_1, \dots, t_M$ such that successive indices $ s_1, \dots, s_N,i,j,t_1, \dots t_M$ appear in some $A$-admissible words, $s_1, t_M \in I_1$, $s_2, \dots, s_N, t_2 \dots t_M \in I_2$, and $\zeta_{s_1} \cc \eta_{s_2} \dots \eta_{s_N} \cc \eta_{i }\cc \eta_j \cc \eta_{t_1} \dots \eta_{t_{M-1}} \cc \mu_{t_M}$ must be reduced to an element of $\Gamma$. 
\end{itemize}

Recall that $\Gamma$ is a finite set of reduced elements with respect to a set of generators $G$ with the property that there is $g_\star \in G \cup C^{-1}$ such that every $\gamma_j$ has only one $g_\star$ at the end and does not start with $g_\star^{-1}$. Let $\Delta$ be a finite set of reduced elements in $\pi_1(X(\mathbf{R}), P_{fix})$ with respect to a set of generators $G$
\begin{equation*}
\begin{aligned}
& \Gamma = \{ \gamma_1, \dots, \gamma_k\}\\
& \Delta = \bigcup_{i \in I_1} \{ \zeta_i, \gamma_{i_1}, \dots \gamma_{i_{n_i}}, \mu_i\} \cup \bigcup_{i \in I_2} \{ \eta_i\} . \\
\end{aligned}
\end{equation*}
Let $V$ denote a linear span of $\Gamma$ and $W$ denote a linear span of $\Delta$. We define two linear maps $S,M: V \to W$ in the following way
\begin{itemize}
\item \textit{Splitting map} $S:V \to W$:
\begin{equation*}
S: \left\{ \begin{aligned}
&\gamma_i \mapsto \mu_i + \gamma_{i_1} + \cdots+ \gamma_{i_{n_i}} + \zeta_i, \qquad \text{if } \ i \in I_1\\
&\gamma_i \mapsto \eta_i,\qquad \text{if } \ i \in I_2\\ 
\end{aligned} \right.
\end{equation*}
\item \textit{Merging map} $M:V \to W$:
\begin{equation*}
M: \left\{ \begin{aligned}
&\gamma_i \mapsto \gamma_i, \qquad \text{if }\ \gamma_i \in \Delta\\
&\gamma_i \mapsto \zeta_i + \eta_{i_1} + \cdots \eta_{i_s} + \mu_i, \ \ \text{if }\ \ \gamma_i \not\in \Delta, \ \text{and}\ \gamma_i = \zeta_i \cc \eta_{i_1} \cdots \eta_{i_s} \cc \mu_i\\
\end{aligned}\right.
\end{equation*}
\end{itemize}

\begin{lem}
Both a Splitting map, $S$ and a Merging map $M$ maps a positive span of $\Gamma$, $Sp^+ \Gamma \subset V$ to a positive span of $\Delta$, $Sp^+ \Delta \subset W$.
\end{lem}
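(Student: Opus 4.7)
The plan is simply to verify the claim on the generating set $\Gamma$ of $V$ and then extend by linearity, using the fact that $Sp^+\Delta$ is a convex cone closed under addition and multiplication by nonnegative scalars. Since both $S$ and $M$ are defined as linear maps $V \to W$, it suffices to check that $S(\gamma_i) \in Sp^+\Delta$ and $M(\gamma_i) \in Sp^+\Delta$ for every generator $\gamma_i \in \Gamma$; then for any $v = \sum c_i \gamma_i$ with $c_i \ge 0$, linearity gives $S(v) = \sum c_i S(\gamma_i)$ and $M(v) = \sum c_i M(\gamma_i)$, each a nonnegative combination of elements of $Sp^+\Delta$, hence an element of $Sp^+\Delta$.

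For the splitting map $S$, I would argue as follows. If $i \in I_1$, then by definition
\[ S(\gamma_i) = \mu_i + \gamma_{i_1} + \cdots + \gamma_{i_{n_i}} + \zeta_i, \]
and the construction of $\Delta$ at the top of the paragraph preceding the lemma explicitly places $\mu_i,\zeta_i$ and each $\gamma_{i_j}$ in $\Delta$. The coefficients here all equal $1$, so $S(\gamma_i) \in Sp^+\Delta$. If $i \in I_2$, then $S(\gamma_i) = \eta_i$ and $\eta_i \in \Delta$ by the same construction, so again $S(\gamma_i) \in Sp^+\Delta$.

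For the merging map $M$, the argument is analogous. If $\gamma_i \in \Delta$ then $M(\gamma_i) = \gamma_i$ is already a generator of $Sp^+\Delta$. Otherwise, by the decomposition preceding the definition, $\gamma_i$ arises from the reduction of a product of the form $\zeta_i \cc \eta_{i_1} \cdots \eta_{i_s} \cc \mu_i$, and $M$ assigns it the sum $\zeta_i + \eta_{i_1} + \cdots + \eta_{i_s} + \mu_i$; each summand lies in $\Delta$ with coefficient $1$, so $M(\gamma_i) \in Sp^+\Delta$.

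There is no real obstacle beyond bookkeeping; the only subtle point is to confirm that every $\gamma_i \not\in \Delta$ genuinely admits a decomposition of the form $\zeta_i \cc \eta_{i_1} \cdots \eta_{i_s} \cc \mu_i$ with factors drawn from $\Delta$, which is exactly what the case analysis (the four bulleted ``split and merge'' cases preceding the definitions) guarantees under the standing assumption that $\Gamma_A$ is $f^{-1}_{\mathbf{R}*}$-invariant. Once this is in place, the linearity argument closes the proof.
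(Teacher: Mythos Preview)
Your proof is correct and follows essentially the same approach as the paper: the paper's proof simply observes that the image of each $\gamma_i$ under both $S$ and $M$ is a positive (i.e., nonnegative-coefficient) sum of elements of $\Delta$, equivalently that the matrix representations of $S$ and $M$ have nonnegative entries. Your version spells out this observation in more detail but the argument is the same.
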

\begin{proof}
For both $S$ and $M$, the image of $\gamma_i$ is given by a positive sum of elements in $\Delta$. This Lemma follows from the fact that the matrix representations for $S$ and $M$ are given by matrices with positive entries. 
\end{proof}

\begin{lem}\label{L:Grate}
If $\Gamma_A$ is $f^{-1}_{\mathbf{R}*}$-invariant, then for each $[\gamma] \in \Gamma_A$ we have 
\[ M \phi[\gamma]_\# = S [\gamma]_\# \]
\end{lem}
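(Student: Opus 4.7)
The plan is a bookkeeping argument: both sides of the identity count, in $\mathrm{Sp}^+\Delta$, the same multiset of ``pieces'' produced when $f^{-1}_{\mathbf{R}*}$ is applied to a representative of $[\gamma]$, and the difference is only the order in which the pieces are grouped before being tallied.

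First I would fix a representative $\gamma = \gamma_{i_1}\cdots\gamma_{i_n}$ of $[\gamma]\in\Gamma_A$ and set $w_j := f^{-1}_{\mathbf{R}*}\gamma_{i_j}$. By the construction in Section \ref{SS:sandp}, $w_j = \mu_{i_j}\cc\gamma_{i_{j,1}}\cdots\gamma_{i_{j,n_{i_j}}}\cc\zeta_{i_j}$ if $i_j\in I_1$ and $w_j = \eta_{i_j}$ if $i_j\in I_2$. Linearity of $S$ gives
\[ S[\gamma]_\# = \sum_{j=1}^n S(\gamma_{i_j}), \]
which is, by definition of the splitting map, the formal sum of all the distinguished pieces $\mu_{i_j}, \gamma_{i_{j,k}}, \zeta_{i_j}, \eta_{i_j}$ read off from $w_1,\ldots,w_n$ one at a time.

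Second, I would analyze the concatenation $f^{-1}_{\mathbf{R}*}\gamma = w_1 w_2 \cdots w_n$ as a word in $\pi_1(X(\mathbf{R}),P_{fix})$. Since $\Gamma_A$ is $f^{-1}_{\mathbf{R}*}$-invariant, its conjugacy class is represented by some cyclically $A$-admissible $\gamma'_{j_1}\cdots\gamma'_{j_s}\in\Gamma^+$. The four-case analysis at the end of Section \ref{SS:sandp} pinpoints \emph{where} any reduction happens: only at the cyclic boundaries between consecutive $w_j$'s. Each maximal interface segment of the form $\zeta_{i_p}\cc\eta_{i_{p+1}}\cdots\eta_{i_{p+m-1}}\cc\mu_{i_{p+m}}$ (with intermediate indices in $I_2$ and flanking indices in $I_1$) collapses to a single element $\gamma_\ell\in\Gamma$, while the interior letters $\gamma_{i_{j,k}}$ from each $w_j$ with $i_j\in I_1$ pass through unchanged and automatically lie in $\Delta$. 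Thus $\phi[\gamma]_\# = \sum_t \gamma'_{j_t}$ decomposes as a sum of ``interior'' letters (already in $\Delta$) plus ``boundary-merged'' letters (formed from collapsed $\zeta\eta\cdots\mu$ segments and hence outside $\Delta$).

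Third, applying $M$ precisely inverts the collapses just described: each interior $\gamma'_{j_t}\in\Delta$ is fixed by $M$, while each boundary-merged $\gamma'_{j_t}\notin\Delta$ is sent to the sum of pieces $\zeta+\eta+\cdots+\eta+\mu$ from which it was built. Collecting terms recovers exactly the total $\sum_{j=1}^n S(\gamma_{i_j}) = S[\gamma]_\#$, yielding the identity. The main obstacle will be verifying rigorously that the $M$-decomposition of each merged $\gamma'_{j_t}\notin\Delta$ is uniquely the reverse of the boundary collapse — in particular that interior letters of one $w_j$ never interact with letters from an adjacent $w_{j\pm 1}$. This relies on the defining property of $g_\star$: it appears in each $\gamma_i\in\Gamma$ only as the terminal letter, and in each $w_j$ only at the end of the $\mu$ and between successive $\gamma_{i_{j,k}}$'s, so every interior $\gamma$-letter is flanked by $g_\star$'s that cannot cancel against anything outside $w_j$. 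Once this structural isolation is checked, linearity of $S$ and $M$ on $\mathrm{Sp}^+\Gamma$ delivers $M\phi[\gamma]_\# = S[\gamma]_\#$.
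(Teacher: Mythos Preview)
Your proposal is correct and follows essentially the same bookkeeping argument as the paper: both sides tally the same multiset of $\Delta$-pieces, with $S$ counting them as they emerge from each $f^{-1}_{\mathbf{R}*}\gamma_{i_j}$ separately and $M\circ\phi$ counting them after the boundary merges are undone. Your version is in fact more careful than the paper's own proof, which is a two-sentence sketch; in particular, your explicit verification via the $g_\star$ property that interior letters of one $w_j$ cannot interact with adjacent $w_{j\pm 1}$ fills a step the paper leaves implicit.
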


\begin{proof}
Let $[\gamma] \in \Gamma_A$ with $\gamma = \gamma_{i_1} \cc \cdots \gamma_{i_n}$. Then $\phi[\gamma] = [f^{-1}_{\mathbf{R}*} \gamma] = [^{-1}_{\mathbf{R}*} \gamma_{i_1} \cdots f^{-1}_{\mathbf{R}*} \gamma_{i_n}] $
For each $j=1, \dots n$ we have $f^{-1}_{\mathbf{R}*} \gamma_{i_j} = \prod_{\delta_{s_j} \in \Delta} \delta_{s_j}$, which determines $S \gamma_{i_j}$. On the other hand $\phi[\gamma]$ is determined after we combine $\zeta$'s, $\mu$'s and $\eta$'s. The merging map $M$ is decompose each $\gamma \in \Delta$ into sum of $\zeta$'s, $\mu$'s, and $\eta$'s. Thus by counting the number of each element in $\Delta$, we have $M \phi[\gamma]_\# = S [\gamma]_\#$.
\end{proof}
 
 Since the splitting map, $S$ and the merging map, $M$ are linear, it is easier to connect to the growth rate of $[\gamma]_\#$ under the iterations of $\phi$. 

\begin{thm}\label{T:Grate} Let $\Delta$ is a finite set of reduced elements in $G$. Let $V = \text{Span}\, \Gamma$ be a linear span of $\Gamma$ and $W = \text{Span}\, \Delta$ be a linear span of $\Delta$. Suppose there are two one-to-one linear maps $T_i: V \to W$, $i=1,2$ such that both linear maps $T_i ( \text{Sp}^+ \Gamma) = \text{Sp}^+ \Delta$. Also suppose that \[ T_2 \phi [\gamma]_\# \ = T_1[\gamma]_\#,\qquad \text{for all } \gamma \in \Gamma_A \]
where $\phi$ is the induced action $f^{-1}_{\mathbf{R}*}$ on $\pi_1(X(\mathbf{R}))$ and $[\gamma]_\# = \sum_i n_i(\gamma) \gamma_i$. 
If there is $v_\lambda \in \text{Sp}^+\Gamma$ such that $T_1 v_\lambda = \lambda \, T_2 v_\lambda$ for some $\lambda >1$, then the growth rate of the induced action on the fundamental group \[\rho(f^{-1}_{\mathbf{R}*}|_{\pi_1 (X(\mathbf{R}))}) \ge \lambda.\]
\end{thm}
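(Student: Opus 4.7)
The plan is to repackage the semi-conjugacy $T_2 \phi[\gamma]_\# = T_1[\gamma]_\#$ as a genuine linear dynamical system on $V$ and then to read off length growth from its spectral data. Since $T_2 : V \to W$ is one-to-one, the operator $T := T_2^{-1}T_1 : V \to V$ is well-defined, and the given relation rewrites as $\phi[\gamma]_\# = T\,[\gamma]_\#$ for every $[\gamma] \in \Gamma_A$. The $f^{-1}_{\mathbf{R}*}$-invariance of $\Gamma_A$ lets me iterate: $\phi^n[\gamma]_\# = T^n[\gamma]_\#$ for all $n \geq 1$. Moreover, because $T_1, T_2$ both restrict to bijections $\mathrm{Sp}^+\Gamma \to \mathrm{Sp}^+\Delta$, the map $T_2^{-1}$ carries $\mathrm{Sp}^+\Delta$ back into $\mathrm{Sp}^+\Gamma$, so $T$ preserves $\mathrm{Sp}^+\Gamma$, i.e.\ its matrix in the basis $\gamma_1,\dots,\gamma_k$ has nonnegative entries. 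The hypothesis $T_1 v_\lambda = \lambda T_2 v_\lambda$ becomes $T v_\lambda = \lambda v_\lambda$ with $v_\lambda \in \mathrm{Sp}^+\Gamma \setminus \{0\}$ and $\lambda > 1$, so a standard Perron--Frobenius fact (a nonnegative matrix admitting a nonzero nonnegative eigenvector for $\lambda$ has spectral radius at least $\lambda$) forces $\rho(T) \geq \lambda$.

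Next I would convert the spectral picture into a length bound. For any $[\gamma^*] \in \Gamma_A$ with representative $\gamma^* \in \fun$, the inequality $\ell_G(f^{-n}_{\mathbf{R}*}\gamma^*) \geq \ell(\phi^n[\gamma^*])$ holds by definition of conjugacy-class length, and by Lemma \ref{L:Gplus} the right side equals $\sum_i n_i(\phi^n[\gamma^*])\,\ell(\gamma_i)$, which is at least $2\,\ell_\Gamma(\phi^n[\gamma^*])$ since each $\ell(\gamma_i) \geq 2$. Writing $|u|$ for the sum of the coefficients of $u \in \mathrm{Sp}^+\Gamma$ in the basis $\Gamma$, one thus obtains the key estimate $\ell_G(f^{-n}_{\mathbf{R}*}\gamma^*) \geq 2\,|T^n[\gamma^*]_\#|$ for every $[\gamma^*] \in \Gamma_A$.

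The remaining step is to exhibit a single admissible class $[\gamma^*]$ whose $\#$-vector witnesses the $\lambda$-growth of $T$. Here I would use the nonnegativity of $T$: if $[\gamma^*] \in \Gamma_A$ can be chosen with $[\gamma^*]_\# \geq c\,v_\lambda$ coordinate-wise for some $c > 0$, then
\[
T^n[\gamma^*]_\# \;\geq\; c\,T^n v_\lambda \;=\; c\,\lambda^n v_\lambda
\]
componentwise, so $|T^n[\gamma^*]_\#| \geq c\,\lambda^n|v_\lambda|$ and hence $\limsup_n \ell_G(f^{-n}_{\mathbf{R}*}\gamma^*)^{1/n} \geq \lambda$. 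Expressing $\gamma^*$ as a word in $G$ and noting that the generating set $G$ is finite, the triangle inequality for $\ell_G$ gives $\ell_G(f^{-n}_{\mathbf{R}*}\gamma^*) \leq \ell_G(\gamma^*) \cdot \max_{g \in G} \ell_G(f^{-n}_{\mathbf{R}*}g)$, so the growth rate $\sup_{g \in G} \limsup_n \ell_G(f^{-n}_{\mathbf{R}*}g)^{1/n}$ dominates the corresponding $\limsup$ for $\gamma^*$, yielding $\rho(f^{-1}_{\mathbf{R}*}|_{\fun}) \geq \lambda$ as claimed.

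The main obstacle is the final selection step: guaranteeing that an admissible $[\gamma^*] \in \Gamma_A$ exists whose $\#$-vector dominates a positive multiple of $v_\lambda$, i.e.\ whose support contains every coordinate $i$ with $(v_\lambda)_i > 0$. In general one uses Lemma \ref{L:linearity} to splice together admissible blocks -- concatenating classes sharing the required $(n-1)$-tuples of indices, and raising them to positive powers -- to build a $[\gamma^*] \in \Gamma_A$ whose $\#$-vector has arbitrarily large entries in any prescribed coordinate that is reachable by admissible concatenation. Equivalently, after restricting $T$ to the $T$-invariant subcone generated by coordinates appearing in cyclically $A$-admissible words, Perron--Frobenius on that subcone produces the desired $[\gamma^*]$. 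In the concrete cases treated in Sections \ref{S:coxeter}--\ref{S:nonmax} the set $A$ and the alphabet $\Gamma$ are explicit enough that such a $[\gamma^*]$ can simply be written down by hand.
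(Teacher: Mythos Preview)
Your argument is essentially the paper's: form $T = T_2^{-1}T_1$ on $V$, observe $Tv_\lambda = \lambda v_\lambda$, iterate the relation $\phi[\gamma]_\# = T[\gamma]_\#$, and read off $\lambda^n$ growth of $\ell_\Gamma$ for a well-chosen admissible class. The paper phrases the growth step slightly differently---it writes $T^n[\gamma]_\# = c\lambda^n v_\lambda + v_n$ with $v_n \in \mathrm{Sp}^+\Gamma$ rather than your coordinate-wise domination $[\gamma^*]_\# \ge c\,v_\lambda$---but these are two sides of the same coin, and the paper glosses over exactly the selection issue you correctly flag (why the constant $c$ is positive and why the remainder stays nonnegative); your honesty about that obstacle, and your suggestion to resolve it via Lemma~\ref{L:linearity} or by direct construction in the explicit cases, matches how the paper actually proceeds in Sections~\ref{S:coxeter}--\ref{S:nonmax}.
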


\begin{proof}
Since the rank of $T_2 = \text{dim} V$, there is a left inverse $T_{2\, left}^{-1}$. And $v_\lambda$ is an eigenvector of $T_{2\, left}^{-1}T_1$ corresponding to an eigenvalue $\lambda>1$. Because both $T_1$ and $T_2$ are one-to-one, for each $[\gamma] \in \Gamma_A$ and $n\ge 0$ there is $[\gamma' ] \in \Gamma_A$ such that \[ \phi[\gamma']_\# = (T_{2\, left}^{-1}T_1)^n [\gamma]_\#. \] 
Since $T_1 v_\lambda = \lambda \, T_2 v_\lambda$, for any left inverse $T_{2\, left}^{-1}$, $v_\lambda$ is an eigenvector associated with an eigenvalue $\lambda$ for $T_{2\, left}^{-1} T_1$. Since $v_\lambda$ is an eigenvector corresponding an eigenvalue $\lambda>1$, we have a sequence $[\gamma]_n \in \Gamma_A$ such that \[ ([\gamma]_n) _\# = (T_{2\, left}^{-1}T_1)^n [\gamma]_\# = c \lambda^n v_\lambda + v_n\] for some positive constant $c$ and $v_n \in \text{Sp}^+ \Gamma \subset V$. 
It follows that \[\sup_{\gamma \in \Gamma_A} \left\{ \limsup_{n \to \infty} (\ell_\Gamma( \phi^n \gamma))^{1/n} \right \} \ge \lambda \] and therefore \[\rho(f^{-1}_{\mathbf{R}*}|_{\pi_1 (X(\mathbf{R}))}) \ge \lambda.\]
\end{proof}

\begin{rem} In this paper, we are not claiming the existence of an invariant set of conjugacy classes of $A$-admissible words for possible orbit data. For those cases we tried, we found the invariant set $\Gamma$ by iterating a generator for about $4 (n_1+n_2+n_3)$ times. For large $n_1+n_2+n_3$, it was computationally hard to iterate on our laptop computer. 

We also want to point out that we always get the same invariant set $\Gamma$ regardless of the choice of an initial word for iterations for those cases we investigated. 
\end{rem}

\section{Coxeter Case}\label{S:coxeter}

The real diffeomorphism associated to the orbit data $1,1,8$ and a cyclic permutation $\sigma: 1\mapsto 2\mapsto 3 \mapsto 1$ is a geometric realization of a Coxeter element in the Wyle group $E_{10}$. This real diffeomorphism has the maximal possible entropy $=$ the logarithm of the Lehmer's number, where the Lehmer's number is the largest real root of $\chi(t) =t^{10} - t^9-t^7+t^6-t^5+t^4-t^3-t+1$, and is the smallest positive entropy that rational surface automorphisms can have. (See \cite{Diller-Kim, BedfordKim:2011, McMullen:2002, McMullen:2007}.)

Let $f_{\mathbf{R}}:X(\mathbf{R}) \to X(\mathbf{R})$ be the real diffeomorphism associated to the orbit data $1,1,8$ and a cyclic permutation $\sigma: 1\mapsto 2\mapsto 3 \mapsto 1$.  The fundamental group of $\pi_1 (X(\mathbf{R}), P_{fix})$ has $11$ generators with one relation. With a set of generator $G=\{ e, a_1, \dots, a_8,b_1,c_1\}$ defined in Section \ref{ss:generators}, we have $e^2 \cc a_8^2\cc a_7^2\cc a_6^2 \cc a_5^2 \cc c_1^2 \cc a_4^2 \cc a_3^2 \cc b_1^2 \cc a_2^2 \cc a_1^2 =1$. As in the Section \ref{S:Action}, the induced action $f_{\mathbf{R}*}^{-1}|_{\pi_1 (X(\mathbf{R}),P_{fix})}$ is determined by the image of each generator under $f_{\mathbf{R}*}^{-1}|_{\pi_1 (X(\mathbf{R}),P_{fix})} $. 
\begin{equation}\label{E:coxeterAction}
\begin{aligned}
f^{-1}_{\mathbf{R}*} \ :\ &e \mapsto  e \cc a_8^2 \cc a_7^2 \cc a_6^2 \cc a_5^2 \cc c_1 \cc b_1 \cc a_2^2 \cc a_1 \cc e\\
&a_8 \mapsto e^{-1} \cc a_1^{-1} \cc a_2^{-2} \cc b_1^{-1}\\
&a_7  \mapsto b_1 \cc a_2^2 \cc a_1 \cc e \cc a_8^{-1} \cc e^{-1} \cc a_1^{-1} \cc a_2^{-2} \cc b_1^{-1} \\
&a_6  \mapsto b_1 \cc a_2^2 \cc a_1 \cc e \cc a_8^2  \cc a_7^{-1} \cc a_8^{-2} \cc e^{-1} \cc a_1^{-1} \cc a_2^{-2} \cc b_1^{-1} \\
&a_5  \mapsto b_1 \cc a_2^2 \cc a_1 \cc e \cc a_8^2\cc a_7^2  \cc a_6^{-1} \cc a_7^{-2} \cc a_8^{-2} \cc e^{-1} \cc a_1^{-1} \cc a_2^{-2} \cc b_1^{-1} \\
&a_4 \mapsto b_1^{-1} \cc a_3^{-2} \cc a_4^{-2} \cc c_1^{-1} \cc a_5 \cc c_1 \cc a_4^2 \cc a_3^2 \cc b_1\\
&a_3 \mapsto b_1^{-1} \cc a_3^{-2} \cc a_4^{-1}  \cc a_3^2 \cc b_1\\
\end{aligned}
\end{equation} 
\begin{equation*}
\begin{aligned}
&a_2 \mapsto e \cc a_8^2 \cc a_7^2 \cc a_6^2 \cc a_5^2 \cc c_1 \cc a_3 \cc c_1^{-1} \cc a_5^{-2} \cc a_6^{-2} \cc a_7^{-2} \cc a_8^{-2} \cc e^{-1}\\
&a_1 \mapsto e \cc a_8^2 \cc a_7^2 \cc a_6^2 \cc a_5^2 \cc c_1 \cc b_1 \cc a_2^{-1} \cc b_1^{-1} \cc c_1^{-1} \cc a_5^{-2} \cc a_6^{-2} \cc a_7^{-2} \cc a_8^{-2} \cc e^{-1}\\
&b_1 \mapsto b_1^{-1} \cc a_3^{-2} \cc c_1^{-1} \cc a_5^{-2} \cc a_6^{-2} \cc a_7^{-2} \cc a_8^{-2} \cc e^{-1}\\
&c_1 \mapsto b_1 \cc a_2^2 \cc a_1 \cc e \cc a_8^2 \cc a_7^2 \cc a_6^2 \cc c_1 \cc a_4^2 \cc a_3^2 \cc b_1. 
\end{aligned}
\end{equation*}

For the real diffeomorphism associated with the orbit data $1,1,n$ with a cyclic permutation, the induced action $f_{\mathbf{R}*}^{-1}|_{\pi_1 (X(\mathbf{R}),P_{fix})}$ is given in Appendix \ref{A:case1}.

\subsection{$A$-Admissible cyclic words $\Gamma_A$}

Let \[ \Gamma = \{ \gamma_1, \dots, \gamma_{10} \} \quad \text{    and    } \quad \Delta = \{ \gamma_1,\gamma_2,\gamma_4,\gamma_7,\gamma_8,\gamma_9,\gamma_{10},\zeta_1,\zeta_2,\mu_1\}\] where 
\begin{equation*}
\begin{aligned}
& \gamma_1 \ &=\ \ &a_2^{-1} \cc b_1^{-1} \cc a_8^{-1} \cc e^{-1} \\
& \gamma_2 \ &=\ \ &a_2^{-1} \cc b_1^{-1} \cc a_7^{-1} \cc a_8^{-2}  \cc e^{-1} \\
& \gamma_3 \ &=\ \ &b_1^{-1} \cc a_3^{-1} \cc a_6^{-1} \cc a_7^{-2} \cc a_8^{-2}  \cc e^{-1} \\
& \gamma_4 \ &=\ \ &a_1^{-1} \cc a_2^{-2} \cc b_1^{-2} \cc a_3^{-1} \cc a_7^{-1} \cc a_8^{-2}  \cc e^{-1} \\
& \gamma_5 \ &=\ \ &a_2^{-1} \cc b_1^{-2} \cc a_3^{-1} \cc a_6^{-1} \cc a_7^{-2} \cc a_8^{-2}  \cc e^{-1} \\
& \gamma_6 \ &=\ \ &b_1^{-1} \cc a_3^{-1} \cc a_5^{-1} \cc a_6^{-2} \cc a_7^{-2} \cc a_8^{-2}  \cc e^{-1} \\
& \gamma_7 \ &=\ \ &a_1^{-1} \cc a_2^{-2} \cc b_1^{-2} \cc a_3^{-1} \cc a_6^{-1} \cc a_7^{-2} \cc a_8^{-2}  \cc e^{-1} \\
& \gamma_8 \ &=\ \ &a_1^{-1} \cc a_2^{-2} \cc b_1^{-2} \cc a_3^{-2} \cc a_4^{-1} \cc a_5^{-1} \cc a_6^{-2} \cc a_7^{-2} \cc a_8^{-2}  \cc e^{-1} \\
& \gamma_9 \ &=\ \ &a_1^{-1} \cc a_2^{-2} \cc b_1^{-2} \cc a_3^{-2} \cc a_4^{-2} \cc c_1^{-1} \cc. a_5^{-1} \cc a_6^{-2} \cc a_7^{-2} \cc a_8^{-2}  \cc e^{-1} \\
& \gamma_{10} \ &=\ \ &a_1^{-1} \cc a_2^{-2} \cc b_1^{-2} \cc a_3^{-2} \cc a_4^{-1} \cc c_1^{-1} \cc a_5^{-2} \cc a_6^{-2} \cc a_7^{-2} \cc a_8^{-2} \cc e^{-1} \\
& \zeta_{1} \ &=\ \ &a_2\cc a_1\\
& \zeta_{2} \ &=\ \ & b_1\cc a_2^2 \cc a_1 \\ 
& \mu_{1} \ &=\ \ &a_1^{-1} \cc a_2^{-2} \cc b_1^{-2} \cc a_3^{-1} \cc a_5^{-1} \cc a_6^{-2} \cc a_7^{-2} \cc a_8^{-2} \cc e^{-1}\\
\end{aligned}
\end{equation*}
Let us also set the admissible pairs 
\begin{equation*}
\begin{aligned} 
A= \{ (1,4), (1,7), & (1,10),(2,8),(2,10),(3,2),(3,8),(3,9),(4,10), \\  
& (5,2),(5,8),(5,9),(6,1),(6,5),(7,2),(8,1),(9,1),(9,2),(10,3),(10,5),(10,6)\}. \\
\end{aligned}
\end{equation*}
With the unique relator, it is easy to see that any cyclic words in $\Gamma$ is cyclically reduced. Let $\Gamma_A$ be the set of $A$-admissible cyclic words in $\Gamma$. For instance $\gamma_1 \cc \gamma_{10}\cc \gamma_6$ and $ \gamma_2\cc \gamma_{10}\cc \gamma_5\cc \gamma_9 \in \Gamma_A$.

To compute the growth rate of the induced $\pi_1$-action for this real diffeomorphism with the orbit data $1,1,8$ with a cyclic permutation, it is convenient to work with $f_{\mathbf{R}*}^{-2}$. Using the formula given in Appendix \ref{A:case1}, we see that 
\begin{equation}\label{E:coxeter}
\begin{aligned}
&f_{\mathbf{R}*}^{-2} \, \gamma_1 \ &=\ \ & (e^{-1} a_1^{-1}) \cc \gamma_{10}\cc \zeta_2 \cc ( a_1 e) ,\ \ 
&f_{\mathbf{R}*}^{-2} \, \gamma_2 \ &=\ \ & (e^{-1} a_1^{-1}) \cc \gamma_{10}\cc \zeta_1 \cc ( a_1 e) \\
&f_{\mathbf{R}*}^{-2} \, \gamma_3 \ &=\ \ & (e^{-1} a_1^{-1}) \cc \gamma_9\cc \gamma_1 \cc ( a_1 e) ,\ \ 
&f_{\mathbf{R}*}^{-2} \, \gamma_4 \ &=\ \ & (e^{-1} a_1^{-1}) \cc \mu_1\cc \zeta_1 \cc ( a_1 e) \\
&f_{\mathbf{R}*}^{-2} \, \gamma_5 \ &=\ \ & (e^{-1} a_1^{-1}) \cc \gamma_8\cc \gamma_1 \cc ( a_1 e) ,\ \ 
&f_{\mathbf{R}*}^{-2} \, \gamma_6 \ &=\ \ & (e^{-1} a_1^{-1}) \cc \gamma_9\cc \gamma_2 \cc ( a_1 e) \\
&f_{\mathbf{R}*}^{-2} \, \gamma_7 \ &=\ \ & (e^{-1} a_1^{-1}) \cc \mu_1\cc \gamma_1 \cc ( a_1 e) ,\ \ 
&f_{\mathbf{R}*}^{-2} \, \gamma_8 \ &=\ \ & (e^{-1} a_1^{-1}) \cc \gamma_7\cc \gamma_2 \cc ( a_1 e) \\
&f_{\mathbf{R}*}^{-2} \, \gamma_9 \ &=\ \ & (e^{-1} a_1^{-1}) \cc \gamma_4 \cc ( a_1 e) ,\ \ 
&f_{\mathbf{R}*}^{-2} \, \gamma_{10} \ &=\ \ & (e^{-1} a_1^{-1}) \cc \gamma_7 \cc ( a_1 e) \\
\end{aligned}
\end{equation}
Also we have 
\begin{equation}\label{E:comb}
\gamma_3 = \zeta_2 \cc \gamma_7, \quad \gamma_5 = \zeta_1 \cc \gamma_7, \quad \text{and} \quad \gamma_6 = \zeta_2 \cc \mu_1 
\end{equation}
Since $\Gamma_A$ is a set of  $A$-admissible cyclic words, we can ignore the extra factors $e^{-1}\cc a_1^{-1}$ and $a_1 e$. Using the admissible pairs, an $A$-admissible cyclic word in $\Gamma_A$ is mapped to another $A$-admissible cyclic words in $\Gamma_A$ under $f_{\mathbf{R}*}^{-2} $ by removing $\zeta_1,\zeta_2$, and $\mu_1$ using (\ref{E:comb}). In fact, the induced action $f_{\mathbf{R}*}^{-2} $ consists of two separate actions, splitting and merging. The action in (\ref{E:coxeter}) determines splitting action and the identities in (\ref{E:comb}) determine merging action. For example, the cyclically reduced image $[f_{\mathbf{R}*}^{-2} \gamma_1\cc\gamma_{10}\cc \gamma_6 ]$ is given by 
\begin{equation*}
f_{\mathbf{R}*}^{-2}: \left\{ \begin{aligned}
&  \text{Splitting:  } \ &\gamma_1\cc\gamma_{10} \cc \gamma_6  \mapsto (\gamma_{10} \cc \zeta_2) \cc (\gamma_7) \cc (\gamma_9 \gamma_2) \\
&  \text{Merging:  } \ & \gamma_{10} \cc (\zeta_2 \cc \gamma_7) \cc \gamma_9 \gamma_2 \mapsto \gamma_{10} \cc \gamma_3 \cc \gamma_9 \cc \gamma_2
\end{aligned} \right.
\end{equation*}
It follows that \[ [f_{\mathbf{R}*}^{-2} (\gamma_1\cc\gamma_{10}\cc \gamma_6) ] = [ \gamma_{10} \cc \gamma_3 \cc \gamma_9 \cc \gamma_2]. \]
Similarly we have 
\begin{equation*}
f_{\mathbf{R}*}^{-2}: \left\{ \begin{aligned}
&  \text{Splitting:  } \ \ \gamma_{10}\cc\gamma_{3} \cc \gamma_9 \cc \gamma_2   \mapsto (\gamma_7) \cc  (\gamma_{9} \cc \gamma_1 ) \cc (\gamma_4) \cc (\gamma_{10} \cc \zeta_1)  \sim  (\gamma_{9} \cc \gamma_1 ) \cc (\gamma_4) \cc (\gamma_{10}\cc \zeta_1)\cc \gamma_7 \\
&  \text{Merging:  } \ \   \gamma_{9} \cc \gamma_1 \cc \gamma_4 \cc \gamma_{10} \cc(\zeta_1\cc \gamma_7)  \mapsto \gamma_{9} \cc \gamma_1 \cc \gamma_4 \cc \gamma_{10} \cc \gamma_5
\end{aligned} \right.
\end{equation*}
and thus $[f_{\mathbf{R}*}^{-2}(\gamma_{10}\cc\gamma_{3} \cc \gamma_9 \cc \gamma_2)] = [\gamma_{9} \cc \gamma_1 \cc \gamma_4 \cc \gamma_{10} \cc \gamma_5]$.
Merging depends only two words in $\Delta$. Thus by checking each pair in $A$, one can see for each $\gamma \in \Gamma_A$, $f_{\mathbf{R}*}^{-2} \gamma \in \Gamma_A$. 
\begin{lem}
The set of $A$-admissible cyclic words $\Gamma_A$ is invariant under $f_{\mathbf{R}*}^{-2}$, that is $f_{\mathbf{R}*}^{-2}(\Gamma_A) \subset \Gamma_A$.
\end{lem}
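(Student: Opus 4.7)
The plan is to reduce the claim to a finite combinatorial check governed by the splitting identities in (\ref{E:coxeter}) and the merging identities in (\ref{E:comb}). First, I would observe that each $\gamma_i\in\Gamma$ begins with $e^{-1}$ as its rightmost letter removed (i.e.\ each $\gamma_i$ ends in $e^{-1}$) and that the prefix $e^{-1}a_1^{-1}$ and suffix $a_1 e$ appearing in (\ref{E:coxeter}) cancel across successive factors when we form a cyclic product. Hence, given a representative $\gamma_{i_1}\cdots\gamma_{i_s}\in\Gamma_A$, the image under $f_{\mathbf{R}*}^{-2}$ is cyclically equal to the concatenation of the ``interior'' pieces supplied by (\ref{E:coxeter}), namely a cyclic word in the enlarged alphabet $\Delta=\{\gamma_1,\gamma_2,\gamma_4,\gamma_7,\gamma_8,\gamma_9,\gamma_{10},\zeta_1,\zeta_2,\mu_1\}$.

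Next I would invoke the three merging identities $\gamma_3=\zeta_2\cc\gamma_7$, $\gamma_5=\zeta_1\cc\gamma_7$, $\gamma_6=\zeta_2\cc\mu_1$. Inspection of (\ref{E:coxeter}) shows that the letters $\zeta_1,\zeta_2,\mu_1$ can only occur in two positions in the image of a single $\gamma_i$: $\zeta_1$ or $\zeta_2$ as the final factor in the images of $\gamma_1,\gamma_2,\gamma_4$, and $\mu_1$ or $\gamma_7$ as the first factor in the images of $\gamma_4,\gamma_7,\gamma_{10}$. Using the admissible pair list $A$, one checks that whenever $(i_t,i_{t+1})\in A$, the trailing $\zeta_\bullet$ (or $\mu_1$) of $f_{\mathbf{R}*}^{-2}\gamma_{i_t}$ meets exactly the piece ($\gamma_7$ or $\mu_1$) it needs at the start of $f_{\mathbf{R}*}^{-2}\gamma_{i_{t+1}}$ so that (\ref{E:comb}) collapses them into a single element of $\Gamma$. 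After all such collapses are performed, the image is a cyclic word purely in $\Gamma$.

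The decisive step is then to verify, case by case for each of the $21$ pairs $(i,j)\in A$, that the sequence of $\Gamma$-letters produced by splitting $\gamma_i\gamma_j$ via (\ref{E:coxeter}) and merging via (\ref{E:comb}) has the property that every consecutive pair (including the new ones created at the boundary between $f_{\mathbf{R}*}^{-2}\gamma_i$ and $f_{\mathbf{R}*}^{-2}\gamma_j$) again lies in $A$. The examples $(1,10)\mapsto$ producing $\gamma_{10}\cc\gamma_3$ and $(10,6)\mapsto$ producing $\gamma_3\cc\gamma_9$ and $(10,3)\mapsto$ producing $\gamma_5\gamma_9\gamma_1$ worked out in the preceding paragraph illustrate the pattern: both the newly formed adjacencies (e.g.\ $(10,3),(3,9),(9,2)$ for the first example) and the adjacencies internal to a single splitting are already in $A$.

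The only potential obstacle is the bookkeeping of this finite verification: one must be certain that no splitting produces an adjacency outside $A$, and that no merging is left ``incomplete'' (which would leave a stray $\zeta_\bullet$ or $\mu_1$ in the image). This is handled by organizing the $21$ admissible pairs into a lookup table, listing for each $(i,j)\in A$ the pair (tail of $f_{\mathbf{R}*}^{-2}\gamma_i$, head of $f_{\mathbf{R}*}^{-2}\gamma_j$), checking it is one of the three cases in (\ref{E:comb}), and recording the resulting $\Gamma$-string. Once the table is complete, closure of $\Gamma_A$ under $f_{\mathbf{R}*}^{-2}$ follows immediately, since all adjacencies that appear in an image of any cyclic $A$-admissible word are exactly those that appear in the table, and all of them belong to $A$ by inspection.
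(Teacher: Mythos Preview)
Your overall strategy matches the paper's: reduce to a finite combinatorial verification using the splitting data in (\ref{E:coxeter}) and the merging identities in (\ref{E:comb}), and exploit that the conjugating factors $e^{-1}a_1^{-1}$ and $a_1e$ cancel cyclically. That part is fine.

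The gap is in the bookkeeping. You claim that checking the $21$ pairs $(i,j)\in A$ is sufficient, but this fails precisely for $\gamma_{10}$, whose image under $f_{\mathbf{R}*}^{-2}$ is the single letter $\gamma_7$. Because $\gamma_7$ is then absorbed into a merge with the trailing $\zeta_1$ or $\zeta_2$ of the \emph{predecessor} of $\gamma_{10}$, the resulting $\Gamma$-letter ($\gamma_3$ or $\gamma_5$) depends on what comes \emph{before} $\gamma_{10}$; but the adjacency to be checked is between this merged letter and the \emph{head} of the image of what comes \emph{after} $\gamma_{10}$. So the adjacency genuinely depends on a triple $(i,10,j)$, not on any single pair. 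Concretely, applying your pair recipe to $(10,3)$ yields $\gamma_7\gamma_9\gamma_1$, and the pair $(7,9)$ is \emph{not} in $A$; your table would either flag a spurious failure or silently assume information about the predecessor that the pair $(10,3)$ alone cannot supply. Your own illustrative examples already betray this: you write ``$(10,6)\mapsto\gamma_3\gamma_9$'' and ``$(10,3)\mapsto\gamma_5\gamma_9\gamma_1$'', but whether the first letter is $\gamma_3$ or $\gamma_5$ depends on whether the predecessor is $\gamma_1$ (tail $\zeta_2$) or $\gamma_2,\gamma_4$ (tail $\zeta_1$).

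The paper's proof handles exactly this point: it notes that the $\gamma_7$ in $f_{\mathbf{R}*}^{-2}\gamma_{10}$ must be treated as a merge piece, observes that in $A$ the only predecessors of $10$ are $1,2,4$ (so the $\gamma_7$ always merges to $\gamma_3$ or $\gamma_5$), and then explicitly verifies the nine triples $(i,10,j)$ with $i\in\{1,2,4\}$, $j\in\{3,5,6\}$. Once you add this triple check to your table, the argument goes through.
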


\begin{proof}
We use equations in (\ref{E:coxeter}). Again since $\Gamma_A$ is a set of $A$-admissible cyclic words, either we consider the conjugation $(a_1 \cc e ) f^{-2}_{\mathbf{R}*} (e^{-1} \cc a_1^{-1})$ or we may simply ignore the extra factors $e^{-1} \cc a_1^{-1}$ and $a_1 \cc e$. To check that $\Gamma_A$ is  $f_{\mathbf{R}*}^{-2}$-invariant, it is sufficient to check each pair in $A$. 
For each $(i,j) \in A$, the image of $\gamma_i \cc \gamma_j$ under  $f_{\mathbf{R}*}^{-2}$ is written in the form $\zeta_i \cc\gamma_{t_1}\cdots \gamma_{t_m} \cc \mu_j$. Since $\zeta_i$ and $\mu_j$ would be combined to create some $\gamma_s \in \Gamma$, we need to see if successive pairs of indices in $\gamma_{t_1}\cdots \gamma_{t_m} $ are in $A$, that is, we need to check wether $(t_i,t_{i+1}) \in A$ for all $i=1, \dots m-1$. 
\begin{itemize}
\item for $(1,4) \in A$, we have $f_{\mathbf{R}*}^{-2} \gamma_1 \cc \gamma_4 =\gamma_{10} \cc \zeta_2 \cc \mu_1 \cc \zeta_1 = \gamma_{10} \cc \gamma_{6} \cc \zeta_1$  and $(10,6) \in A$.
\item for $(1,7) \in A$, we have $f_{\mathbf{R}*}^{-2} \gamma_1 \cc \gamma_7 = \gamma_{10} \cc \zeta_2 \cc \mu_1 \cc \gamma_1 = \gamma_{10} \cc \gamma_{6} \cc \gamma_1$, and  both $(10,6), (6,1) \in A$
\item for $(1,10) \in A$, we have $f_{\mathbf{R}*}^{-2} \gamma_1 \cc \gamma_{10} = \gamma_{10} \cc \zeta_2 \cc  \gamma_7 = \gamma_{10} \cc \gamma_{3} $, and  both $(10,3) \in A$
\item for $(2,8) \in A$, we have $f_{\mathbf{R}*}^{-2} \gamma_2 \cc \gamma_{8} = \gamma_{10} \cc \zeta_1 \cc  \gamma_7\cc \gamma_2 = \gamma_{10} \cc \gamma_{5} \cc \gamma_2 $, and  both $(10,5),(5,2) \in A$
\end{itemize}
and so on. Notice that $\gamma_7$ in the image of $\gamma_{10}$ should be treated as $\mu_{10}$. From $A$, we see that $\gamma_{10}$ is always followed by $\gamma_1, \gamma_2$ or $\gamma_4$.
Thus we need to keep that in mind that $\gamma_7$ in $f_{\mathbf{R}*}^{-2} \gamma_{10}$ will be joined with $\zeta_1$ or $\zeta_2$ and merged into $\gamma_5$ or $\gamma_3$. 
\begin{itemize}
\item for $(1,10,3) \in A$,  $f_{\mathbf{R}*}^{-2} \gamma_1 \cc \gamma_{10} \cc \gamma_3  =\gamma_{10} \cc \zeta_2 \cc \gamma_7 \cc \gamma_9 \cc \gamma_1 = \gamma_{10} \cc \gamma_3 \cc \gamma_9 \cc \gamma_1$  and $(10,3),(3,9),(9,1)\in A$.
\item for $(2,10,3) \in A$,  $f_{\mathbf{R}*}^{-2} \gamma_2 \cc \gamma_{10} \cc \gamma_3  =\gamma_{10} \cc \zeta_1 \cc \gamma_7 \cc \gamma_9 \cc \gamma_1 = \gamma_{10} \cc \gamma_5 \cc \gamma_9 \cc \gamma_1$  and $(10,5),(5,9),(9,1)\in A$.
\item for $(4,10,3) \in A$,  $f_{\mathbf{R}*}^{-2} \gamma_4 \cc \gamma_{10} \cc \gamma_3  =\mu_1 \cc \zeta_1 \cc \gamma_7 \cc \gamma_9 \cc \gamma_1 =\mu_1 \cc \gamma_5 \cc \gamma_9 \cc \gamma_1$  and $(5,9),(9,1)\in A$.
\item for $(1,10,5) \in A$,  $f_{\mathbf{R}*}^{-2} \gamma_1 \cc \gamma_{10} \cc \gamma_5  =\gamma_{10} \cc \zeta_2 \cc \gamma_7 \cc \gamma_8 \cc \gamma_1 = \gamma_{10} \cc \gamma_3 \cc \gamma_8\cc \gamma_1$  and $(10,3),(3,8),(8,1)\in A$.
\item for $(2,10,5) \in A$,  $f_{\mathbf{R}*}^{-2} \gamma_2 \cc \gamma_{10} \cc \gamma_5  =\gamma_{10} \cc \zeta_1 \cc \gamma_7 \cc \gamma_8 \cc \gamma_1 = \gamma_{10} \cc \gamma_5 \cc \gamma_8 \cc \gamma_1$  and $(10,5),(5,8),(8,1)\in A$.
\item for $(4,10,5) \in A$,  $f_{\mathbf{R}*}^{-2} \gamma_4 \cc \gamma_{10} \cc \gamma_5  =\mu_1 \cc \zeta_1 \cc \gamma_7 \cc \gamma_8 \cc \gamma_1 =\mu_1 \cc \gamma_5 \cc \gamma_8 \cc  \gamma_1$  and $(5,8),(8,1)\in A$.
\item for $(1,10,6) \in A$,  $f_{\mathbf{R}*}^{-2} \gamma_1 \cc \gamma_{10} \cc \gamma_6  =\gamma_{10} \cc \zeta_2 \cc \gamma_7 \cc \gamma_9 \cc \gamma_2 = \gamma_{10} \cc \gamma_3 \cc \gamma_9 \cc \gamma_2$  and $(10,3),(3,9),(9,2)\in A$.
\item for $(2,10,6) \in A$,  $f_{\mathbf{R}*}^{-2} \gamma_2 \cc \gamma_{10} \cc \gamma_6  =\gamma_{10} \cc \zeta_1 \cc \gamma_7 \cc \gamma_9 \cc \gamma_2 = \gamma_{10} \cc \gamma_5 \cc \gamma_9 \cc \gamma_2$  and $(10,5),(5,9),(9,2)\in A$.
\item for $(4,10,6) \in A$,  $f_{\mathbf{R}*}^{-2} \gamma_4 \cc \gamma_{10} \cc \gamma_6  =\mu_1 \cc \zeta_1 \cc \gamma_7 \cc \gamma_9 \cc \gamma_2 =\mu_1 \cc \gamma_5 \cc \gamma_9 \cc \gamma_2$  and $(5,9),(9,2)\in A$.
\end{itemize}
By checking all pairs in $A$, we conclude that $\Gamma_A$ is $f_{\mathbf{R}*}^{-2}$-invariant. 
\end{proof}

\subsection{Growth Rate Estimation}
Let $\phi [\gamma] :=[ f_{\mathbf{R}*}^{-2}|_{\Gamma_A}\, (\gamma)]$ denote the induced action on $\Gamma_A$.  We estimate the growth rate of the induced action $f_{\mathbf{R}*}^{-2}$ on $\pi_1( X(\mathbf{R}))$ by the growth rate of $\phi$.  Since there are no relations in $\Gamma$, the length $\ell_\Gamma([\gamma])$  depends only on the number of each $\gamma_i$ in $[\gamma] \in \Gamma_A$.  To compute the length $\ell_\Gamma$, we associate $\gamma\in \Gamma_A$ with an element $\gamma_\#$ in the positive linear span $\text{Sp}^+ \Gamma$ of $\Gamma$ and use $[\gamma]_\#$ and $(\phi([\gamma]))_\#$ defined in (\ref{E:counting}), and (\ref{E:cmap}) .
Thus the equation $[f_{\mathbf{R}*}^{-2} (\gamma_1\cc\gamma_{10}\cc \gamma_6) ] = [ \gamma_{10} \cc \gamma_3 \cc \gamma_9 \cc \gamma_2]$ tells us  
\[ [\gamma_1 \gamma_{10} \gamma_6]_\# =  \gamma_1 + \gamma_6 + \gamma_{10}\ \ \  \text{   and   } \ \ \ \phi[\gamma_1\gamma_{10}\gamma_6]_\#=  \gamma_2 + \gamma_3 +\gamma_9 + \gamma_{10} \]

Let $V = Span \Gamma$ be a linear span of the ordered set  $\Gamma$ and $W= Span \Delta$ be a linear span of the ordered set $\Delta$. 
Using equations (\ref{E:coxeter}), let  $S: V \to W$ be a linear maps such that $S\gamma_1 = \gamma_{10} + \zeta_2$, $S\gamma_2 = \gamma_1+ \gamma_9$, etc. Also with equations (\ref{E:comb}), we define a linear map $M: V \to W$  by $M: \gamma_3 \mapsto  \gamma_7 + \zeta_2$, $M:  \gamma_5 \mapsto \gamma+ \zeta_1$, $M: \gamma_6 \mapsto \zeta_2 + \mu_1$ and $M: \gamma_i \mapsto \gamma_i$ for all other $\gamma_i$'s. 
\[ S = \begin{bmatrix} \ 0&\ 0&\ 1&\ 0&\ 1&\ 0&\ 1&\ 0&\ 0&\ 0 \\  \ 0&\ 0&\ 0&\ 0&\ 0&\ 1&\ 0&\ 1&\ 0&\ 0 \\  \ 0&\ 0&\ 0&\ 0&\ 0&\ 0&\ 0&\ 0&\ 1&\ 0 \\ 
\ 0&\ 0&\ 0&\ 0&\ 0&\ 0&\ 0&\ 1&\ 0&\ 1 \\   \ 0&\ 0&\ 0&\ 0&\ 1&\ 0&\ 0&\ 0&\ 0&\ 0 \\  \ 0&\ 0&\ 1&\ 0&\ 0&\ 1&\ 0&\ 0&\ 0&\ 0 \\ 
 \ 1&\ 1&\ 0&\ 0&\ 0&\ 0&\ 0&\ 0&\ 0&\ 0 \\   \ 0&\ 1&\ 0&\ 1&\ 0&\ 0&\ 0&\ 0&\ 0&\ 0 \\   \ 1&\ 0&\ 0&\ 0&\ 0&\ 0&\ 0&\ 0&\ 0&\ 0 \\ 
  \ 0&\ 0&\ 0&\ 1&\ 0&\ 0&\ 1&\ 0&\ 0&\ 0 \\  \end{bmatrix}, 
  M= \begin{bmatrix}  \ 1&\ 0&\ 0&\ 0&\ 0&\ 0&\ 0&\ 0&\ 0&\ 0 \\   \ 0&\ 1&\ 0&\ 0&\ 0&\ 0&\ 0&\ 0&\ 0&\ 0 \\   \ 0&\ 0&\ 0&\ 1&\ 0&\ 0&\ 0&\ 0&\ 0&\ 0 \\ 
   \ 0&\ 0&\ 1&\ 0&\ 1&\ 0&\ 1&\ 0&\ 0&\ 0 \\  \ 0&\ 0&\ 0&\ 0&\ 0&\ 0&\ 0&\ 1&\ 0&\ 0 \\  \ 0&\ 0&\ 0&\ 0&\ 0&\ 0&\ 0&\ 0&\ 1&\ 0 \\ 
    \ 0&\ 0&\ 0&\ 0&\ 0&\ 0&\ 0&\ 0&\ 0&\ 1 \\  \ 0&\ 0&\ 0&\ 0&\ 1&\ 0&\ 0&\ 0&\ 0&\ 0 \\  \ 0&\ 0&\ 1&\ 0&\ 0&\ 1&\ 0&\ 0&\ 0&\ 0 \\ 
     \ 0&\ 0&\ 0&\ 0&\ 0&\ 1&\ 0&\ 0&\ 0&\ 0 \\ \end{bmatrix} \]
     
 It follows that both $S$ and $M$ map the positive span $\text{Sp}^+ \Gamma $ to $\text{Sp}^+\Delta$ and 
 \begin{equation}\label{E:CoxeterSM} M \circ \phi [\gamma]_\# \ = \ S(\gamma_\#), \qquad \text{for all   } \gamma  \in \Gamma_A 
 \end{equation}
Both $S$ and $M$ are invertible and we see that the characteristic polynomial of $M^{-1} S$ is given by \[ \chi (t) = t^{10} -t^9-t^7+t^6-t^5+t^4-t^3-t+1\]. The largest real eigenvalue is approximatly $1.38314 $, the square of the Lehmer's number and  a corresponding eigenvector $v_\lambda \in \text{Sp}^+\Gamma$
\[ v_\lambda \approx (0.769,0.614,0.178,0.290,0.654,0.376,0.232,0.473,0.402,1). \]
This largest real eignvalue is the unique eigenvalue $\lambda >1$ such that $\log \lambda = h_{top} f^2$, the maximum possible topological entropy. 

\begin{prop}
The growth rate of the induced action on $\pi_1(X(\mathbf{R}))$ is given by the Lehmer's number $=\sqrt{\lambda} >1$.
\[ \log \rho( f_{\mathbf{R}*}|_{H_1(X(\mathbf{R});\mathbf{R})} )=\log \rho( f^{-1}_{\mathbf{R}*}|_{H_1(X(\mathbf{R});\mathbf{R})} )  = \log \rho (f^{-1}_{\mathbf{R*}}|_{\pi_1(X(\mathbf{R}))}) = h_{top} (f_\mathbf{R}) = h_{top} (f). \]
\end{prop}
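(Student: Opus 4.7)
The plan is to prove the chain of equalities by sandwiching: I will establish the lower bound $\log L$ (where $L$ denotes Lehmer's number, the largest real root of $t^{10}-t^9-t^7+t^6-t^5+t^4-t^3-t+1$) on $\log \rho(f^{-1}_{\mathbf{R}*}|_{\pi_1(X(\mathbf{R}))})$ by applying Theorem \ref{T:Grate} to the splitting/merging pair $(S,M)$ constructed above, and then invoke Bowen's inequality together with the Diller--Kim maximal-entropy theorem to force all five quantities in the statement to coincide.

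First, I would verify the hypotheses of Theorem \ref{T:Grate} for the triple $(T_1,T_2,\phi)=(S,M,f^{-2}_{\mathbf{R}*})$. The $10\times 10$ matrices $S$ and $M$ displayed above are both invertible (by direct determinant computation), hence injective, and their nonnegative entries ensure that each sends $\text{Sp}^+\Gamma$ into $\text{Sp}^+\Delta$. The compatibility identity $M\phi[\gamma]_\# = S[\gamma]_\#$ for every $[\gamma]\in\Gamma_A$ is exactly equation (\ref{E:CoxeterSM}), which follows from the splitting relations (\ref{E:coxeter}) and the merging relations (\ref{E:comb}) together with the $f^{-2}_{\mathbf{R}*}$-invariance of $\Gamma_A$ already established. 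The eigenvector $v_\lambda$ of $M^{-1}S$ associated with the largest real eigenvalue $\lambda=L^2\approx 1.38314$ has all positive coordinates (as computed numerically), so $v_\lambda \in \text{Sp}^+\Gamma$. Theorem \ref{T:Grate} therefore yields $\rho(f^{-2}_{\mathbf{R}*}|_{\pi_1(X(\mathbf{R}))}) \geq \lambda$, and since the growth exponent of $f^{-2}$ on $\pi_1$ is the square of that of $f^{-1}$, we obtain
\[
\rho(f^{-1}_{\mathbf{R}*}|_{\pi_1(X(\mathbf{R}))}) \;\geq\; \sqrt{\lambda} \;=\; L.
\]

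To close the sandwich from above, I would combine Bowen's theorem with $h_{top}(f_\mathbf{R}) = h_{top}(f^{-1}_\mathbf{R})$ to get
\[
\log \rho(f^{-1}_{\mathbf{R}*}|_{\pi_1(X(\mathbf{R}))}) \;\leq\; h_{top}(f_\mathbf{R}) \;\leq\; h_{top}(f) \;=\; \log L,
\]
where the final equality comes from Gromov--Yomdin applied to the characteristic polynomial (\ref{E:compchar}) evaluated at $(n_1,n_2,n_3)=(1,1,8)$, whose Salem factor is the Lehmer polynomial. Combined with the lower bound, this forces $\log \rho(f^{-1}_{\mathbf{R}*}|_{\pi_1}) = h_{top}(f_\mathbf{R}) = h_{top}(f) = \log L$. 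For the homology terms, the Diller--Kim maximal-entropy theorem quoted in Section \ref{S:Birational} (applicable to the orbit data $(1,1,n\geq 8)$) establishes $h_{top}(f_\mathbf{R}) = h_{top}(f)$ precisely because $\log \rho(f_{\mathbf{R}*}|_{H_1(X(\mathbf{R});\mathbf{R})}) = \log L$; the forward/backward symmetry of Theorem 1 then gives $\log \rho(f_{\mathbf{R}*}|_{H_1}) = \log \rho(f^{-1}_{\mathbf{R}*}|_{H_1})$, completing the chain.

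The main obstacle is the positivity of the Perron-type eigenvector $v_\lambda$: without $v_\lambda \in \text{Sp}^+\Gamma$, Theorem \ref{T:Grate} cannot be invoked to produce a lower bound on the growth rate. Here the positivity is read off from the explicit numerical coordinates, but conceptually it could also be established by verifying that the nonnegative matrix $M^{-1}S$ is irreducible (or that some power of it is primitive) and applying the classical Perron--Frobenius theorem. A secondary technical point is that we work with $f^{-2}_{\mathbf{R}*}$ rather than $f^{-1}_{\mathbf{R}*}$, because the splitting/merging decomposition becomes clean only after two iterations; the passage back to $f^{-1}_{\mathbf{R}*}$ is then routine since the growth exponent satisfies $\rho(g^2_*)=\rho(g_*)^2$ directly from its definition.
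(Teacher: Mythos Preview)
Your proposal is correct and follows essentially the same approach as the paper: both invoke Theorem~\ref{T:Grate} with the splitting/merging pair $(S,M)$ and the positive eigenvector $v_\lambda$ to obtain the lower bound $\rho(f^{-1}_{\mathbf{R}*}|_{\pi_1})\ge\sqrt{\lambda}$, and then close the sandwich via Bowen's inequality and Gromov--Yomdin. Your write-up is in fact more explicit than the paper's, spelling out the upper bound and the appeal to the Diller--Kim maximal-entropy theorem for the homology equalities, whereas the paper simply says the proposition follows from Theorem~\ref{T:Grate} and records the growth estimate.
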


\begin{proof}
This proposition follows from Theorem \ref{T:Grate}. 
For each $\gamma \in \Gamma$, $\gamma_\# \in \text{Sp}^+ \Gamma$ and we have $(M^{-1} S)^n \gamma_\#  = c \lambda^n v_\lambda + v_n$ for some positive constant $c$ and $v_n \in \text{Sp}^+ \Gamma \subset V$. Furthermore, using (\ref{E:CoxeterSM}) we see  $(M^{-1} S)^n \gamma_\# \in \Gamma_A^\#$ for all $n$. Sincet $\ell_\Gamma (\gamma)=  \gamma_\# [1 \,1 \cdots\, 1]^t$ for $[\gamma] \in \Gamma_A$, we have
\[\sup_{\gamma \in \Gamma_A} \left\{ \limsup_{n \to \infty} (\ell_\Gamma( \phi^n \gamma))^{1/n}  \right \} \ge \lambda \]
Since $\phi$ is induced map from $f_{\mathbf{R}*}^{-2}$, we see that the growth rate $\rho (f^{-1}_{\mathbf{R*}}|_{\pi_1(X(\mathbf{R}))}) \ge \sqrt{\lambda}$.
\end{proof}

\section{Maximum Entropy}\label{S:max}
In this section, we prove two main theorems. Here we present $5$ different orbit data. Two of them give real diffeomorphisms such that the growth rate of the homology classes of $X(\mathbf{R})$  is not exponential and three other cases are associated with real diffeomorphisms with non-maximal possible topological entropy. For each case, a set, $G$ of generators of the fundamental group and the action on the fundamental group $f^{-1}_{\mathbf{R}*}|_{\pi(X(\mathbf{R}), P_{fix})}$ with respect to $G$ are given in the Appendix \ref{A:case3} and \ref{A:case4}. In Appendix \ref{ApendB}, there are the list of sets $\Gamma, \Delta$ of reduced words, Admissible $n$-tuples, Splitting and Merging maps $S,M$, the vector $v_\lambda \in \Gamma^+$ such that $M \lambda v_\lambda = S v_\lambda$  together with $\lambda$. For every case, the procedure is essentially identical :(1) Compute the induced action on the fundamental group, (2) Find the invariant set of equivalent classes $\Gamma_A$ by constructing a set of (non-cyclically) reduced words $\Gamma$ and a set of admissible $n$-pairs $A$, (3) Constructing a Splitting map $S$ and a Merging map $M$, (4) Compute a vector $v_\lambda$ such that $\lambda M v_\lambda = S v_\lambda$ with $\lambda>1$. For this and the next sections, we give a brief sketch for each case. Let us start this section with a remark on the Splitting map,$S$ and Merging map, $M$ in the Appendix \ref{ApendB}. The formulas for the images of $\gamma_i$ under $f^{-1}_{\mathbf{R}*}$ and combinations of $\delta_i$'s to get $\gamma_j$ can be obtained using the Splitting and Merging map in the Appendix \ref{ApendB}. In the Section \ref{S:coxeter}, the formulas in (\ref{E:coxeter}, \ref{E:comb}) are used to show that $\Gamma_A$ is $f^{-1}_{\mathbf{R}*}$-invariant.

\begin{rem} For each cases in the Appendix \ref{ApendB}, there is a reduced (non-cyclic) word $\epsilon$ such that 
\[ \text{If } \quad  S\cc \gamma_i = \omega_{i_1} + \cdots +\omega_{i_{n_i}}, \quad \text{then } \ \   f^{-1}_{\mathbf{R}*} \gamma_ i = \epsilon \cc \omega_{i_1} \cdots \omega_{i_{n_i}} \cc \epsilon^{-1}.\]
Also 
\[ \text{If } \quad M \cc \gamma_i = \delta_{i_1} + \cdots +\delta_{i_{m_i}}, \quad \text{then }\ \   \gamma_ i = \delta_{i_1} \cc \cdots \delta_{i_{m_i}} \in  \pi_1 (X(\mathbf{R}), P_{fix}) \]
\end{rem}

\subsection{$1,3,9$ with a cyclic permutation: zero exponential homology growth rate  but maximal topological entropy}\label{SS:139}
Let us first consider a real diffeomrohpism $f_{\mathbf{R}}$ associated with the orbit data $1,3,9$ with a cyclic permutation. As in the Section \ref{SS:gen}, we choose a set $G$ of generators for the fundamental group of $X(\mathbf{R})$ such that $G=\{ e, a_1, \dots, a_9, b_1, c_1, c_2, c_3 \}$ with one relation \[ e^2 \cc a_9^2 \cc a_8^2 \cc a_7^2 \cc a_6^2 \cc a_5^2 \cc c_3^2 \cc a_4^2 \cc c_2^2 \cc a_3^2 \cc c_1^2 \cc b_1^2 \cc a_2^2 \cc a_1^2 = 1.\] Let  $\epsilon = b_1 \cc c_1^2 \cc a_2^2 \cc a_1^2 \cc e$ be an element in $\pi_1(X(\mathbf{R}),P_{fix})$.   Using the closed form of the induced action on the fundamental group given in Appendix \ref{A:case4} and data given in Appendix \ref{B:139}, 
we see that for each $\gamma_i \in \Gamma$ defined in Appendix \ref{B:139}, \[ f^{-1}_{\mathbf{R}*} \gamma_ i = \epsilon^{-1} \cc \omega_{i_1} \cdots \omega_{i_{n_i}} \cc \epsilon \] where $\omega_{i_j} \in \Delta$ for all $j=1, \dots, n_i$,and  from the splitting map defined in Appendix \ref{B:139}  we see \[S \gamma_i = \omega_{i_1} + \cdots + \omega_{i_{n_i}}.\] 
Also using the unique relation on generators, we have for each $\gamma_i \in \Gamma$, \[ \gamma_ i = \delta_{i_1} \cc \cdots \delta_{i_{m_i}} \ \  \text{as elements in } \ \   \pi_1 (X(\mathbf{R}), P_{fix})\quad \text{ where  } M \gamma_i = \delta_{i_1} + \cdots +\delta_{i_{m_i}}.\] 
For each $\gamma =\gamma_{i_1} \cc \gamma_{i_2} \cdots \gamma_{i_n} \in \Gamma_A$, we have \[ [f^{-1}_{\mathbf{R}*} \gamma] = [f^{-1}_{\mathbf{R}*} \gamma_{i_1} \cc f^{-1}_{\mathbf{R}*} \gamma_{i_2} \cdots f^{-1}_{\mathbf{R}*} \gamma_{i_n}] .\]
Thus we may ignore the extra factor $\epsilon$ and $\epsilon^{-1}$. To see $\Gamma_A$ is $f^{-1}_{\mathbf{R}*}$-invariant, we need to check each quadruple in $A$. Here we show two examples:
\begin{itemize}
\item $(1,15,17,22) \in A$ : $f^{-1}_{\mathbf{R}*} \gamma_1 \cc \gamma_{15} \cc \gamma_{17} \cc \gamma_{22} = \eta_1 \cc \mu_1 \cc \gamma_{12} \cc \gamma_3 \cc \zeta_5 \cc \mu_5 \cc \zeta_2 \cc \mu_1 \cc \gamma_{12} \cc \zeta_4$. Since $\gamma_{23} = \zeta_5 \cc \mu_5$ and $\gamma_6 = \zeta_2 \cc \mu_1$, we have \[ f^{-1}_{\mathbf{R}*} \gamma_1 \cc \gamma_{15} \cc \gamma_{17} \cc \gamma_{22}  = \eta_1 \cc \mu_1 \cc \gamma_{12} \cc \gamma_3 \cc \gamma_{23} \cc \gamma_6 \cc \gamma_{12} \cc \zeta_4\] and two quadruples $(12,3,23,6), (3,23,6,12) \in A$.
\item $(12,3,2,17) \in A$: $f^{-1}_{\mathbf{R}*} \gamma_{12} \cc \gamma_{3} \cc \gamma_{2} \cc \gamma_{17} =\mu_4 \cc \zeta_1\cc   \eta_3 \cc \mu_1 \cc \zeta_6 \cc \mu_5 \cc \zeta_2$. Since $\gamma_{10} = \zeta_1 \cc \eta_3 \cc \mu_1$ and $\gamma_{25} = \zeta_6 \cc \mu_5$, it follows that \[ f^{-1}_{\mathbf{R}*} \gamma_{12} \cc \gamma_{3} \cc \gamma_{2} \cc \gamma_{17} = \mu_4 \gamma_{10} \cc \gamma_{25} \cc \zeta_2.  \] To see this quadruple $(12,3,2,17)$ generates another quadruple in $A$, we need to join other quadruples in $A$. There are three cases:
\begin{itemize}
\item $(9,12,3,2)$ and $(12,3,2,17)$ :  $f^{-1}_{\mathbf{R}*} \gamma_9 \cc \gamma_{12} \cc \gamma_{3} \cc \gamma_{2} \cc \gamma_{17} =\mu_1 \cc \gamma_{17} \cc \zeta_5 \cc \mu_4 \gamma_{10} \cc \gamma_{25} \cc \zeta_2$. Since $\gamma_{22} = \zeta_5 \cc \mu_4$, we have \[ f^{-1}_{\mathbf{R}*} \gamma_9 \cc \gamma_{12} \cc \gamma_{3} \cc \gamma_{2} \cc \gamma_{17} = \mu_1 \cc \gamma_{17} \cc \gamma_{22} \cc \gamma_{10} \cc \gamma_{25} \cc \zeta_2  \quad \text{and }(17,22,10,25) \in A \] 
\item $(10,12,3,2)$ and $(12,3,2,17)$ :  $f^{-1}_{\mathbf{R}*} \gamma_{10} \cc \gamma_{12} \cc \gamma_{3} \cc \gamma_{2} \cc \gamma_{17} =\mu_1 \cc \gamma_{20} \cc \zeta_5 \cc \mu_4 \gamma_{10} \cc \gamma_{25} \cc \zeta_2$. Again since $\gamma_{22} = \zeta_5 \cc \mu_4$, we have \[ f^{-1}_{\mathbf{R}*} \gamma_9 \cc \gamma_{12} \cc \gamma_{3} \cc \gamma_{2} \cc \gamma_{17} = \mu_1 \cc \gamma_{20} \cc \gamma_{22} \cc \gamma_{10} \cc \gamma_{25} \cc \zeta_2 \quad \text{and }(20,22,10,25) \in A  \] 
\item $(12,3,2,17)$ and $(3,2,17,22)$ :  $f^{-1}_{\mathbf{R}*}  \gamma_{12} \cc \gamma_{3} \cc \gamma_{2} \cc \gamma_{17}\cc \gamma_{22}  =  \mu_4 \gamma_{10} \cc \gamma_{25} \cc \zeta_2 \cc \mu_1 \cc \gamma_{12} \cc \zeta_4$. Since $\gamma_{6} = \zeta_2 \cc \mu_1$, we have \[ f^{-1}_{\mathbf{R}*}  \gamma_{12} \cc \gamma_{3} \cc \gamma_{2} \cc \gamma_{17}\cc \gamma_{22} = \mu_4 \cc \gamma_{10} \cc \gamma_{25} \cc \gamma_6 \cc \gamma_{12} \cc \zeta_4 \quad \text{and }(10,25,6,12) \in A \] 
\end{itemize}
\end{itemize}
By checking each individual element (joining couple of elements if necessary) in $A$, we see $\Gamma_A$ is $f^{-1}_{\mathbf{R}*}$-invariant.
\begin{lem} The equivalent classes of $A$-admissible cyclic words $\Gamma_A$ is invariant under $f^{-1}_{\mathbf{R}*}$. \end{lem}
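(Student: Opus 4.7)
The plan is to verify $f^{-1}_{\mathbf{R}*}$-invariance of $\Gamma_A$ by an exhaustive, but mechanical, case analysis on the admissible quadruples in $A$, following exactly the template illustrated in the two worked examples immediately preceding the statement and paralleling the argument of Section~\ref{S:coxeter} for the Coxeter case. First I would formalize the observation used in both examples: for any $[\gamma] = [\gamma_{i_1} \cdots \gamma_{i_n}] \in \Gamma_A$, the identity $f^{-1}_{\mathbf{R}*} \gamma_i = \epsilon^{-1} \, \omega_{i,1}\cdots\omega_{i,n_i} \, \epsilon$ combined with the cyclic nature of conjugacy classes means every adjacent pair $\epsilon \cdot \epsilon^{-1}$ cancels. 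Hence the class $[f^{-1}_{\mathbf{R}*}\gamma]$ is represented by the concatenation $(S\gamma_{i_1})(S\gamma_{i_2})\cdots (S\gamma_{i_n})$ read as a word in the alphabet $\Delta$, and the problem reduces to showing that this word re-groups, via the merging identities packaged in $M$, into a cyclic word in $\Gamma$ all of whose consecutive quadruples lie in $A$.

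The second step is purely bookkeeping. For each $(i,j) \in A$, the splitting output of $\gamma_i$ ends with letters of the form $\zeta_i$ (or bare $\eta_i$ if $i \in I_2$), and $S\gamma_j$ begins with $\mu_j$ (or $\eta_j$); the junction produced is a short string $\zeta_i \cdot \mu_j$, $\zeta_i \cdot \eta \cdots \eta \cdot \mu_j$, etc.\ in $\Delta$. Using the merging identities $\gamma_s = \delta_{s,1}\cdots \delta_{s,m_s}$ read off from the matrix $M$ in Appendix~\ref{B:139}, each such junction is identified with a unique $\gamma_s \in \Gamma$ (this is exactly the role of the identities $\gamma_{23} = \zeta_5\mu_5$, $\gamma_6 = \zeta_2\mu_1$, $\gamma_{10} = \zeta_1\eta_3\mu_1$, $\gamma_{25} = \zeta_6\mu_5$, $\gamma_{22}=\zeta_5\mu_4$ used in the two worked examples). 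Once this identification is complete the image has been expressed as a cyclic word in $\Gamma$, and it remains only to inspect its consecutive quadruples and check them against the list $A$.

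The main obstacle is that a single quadruple $(i_1,i_2,i_3,i_4) \in A$ generally does not determine the merged image uniquely: the leftmost residual $\mu$ and rightmost residual $\zeta$ can only be absorbed after looking at the neighboring quadruples on either side. This is precisely what happens for $(12,3,2,17)$ in the second worked example, where the output $\mu_4\,\gamma_{10}\,\gamma_{25}\,\zeta_2$ has to be completed by the predecessor (forcing cases $(9,12,3,2)$ and $(10,12,3,2)$) and the successor (forcing $(3,2,17,22)$), producing three distinct admissible quadruples among the new indices. The correct combinatorial object to enumerate is therefore not $A$ itself but the set of \emph{extended contexts} of length $5$ or $6$ in $A$ that are forced by cyclic admissibility; each such context reduces to a single symbolic computation of the form already illustrated.

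With the enumeration so organized, one proceeds through every extended context in $A$ in turn, applies the splitting data, performs the mergings from $M$, and reads off the consecutive quadruples of the resulting word. In every case they land in $A$, yielding $f^{-1}_{\mathbf{R}*}(\Gamma_A) \subset \Gamma_A$. The calculation is tedious but formally identical in each case and invokes nothing beyond the explicit tables for $S$, $M$, and $A$ listed in Appendix~\ref{B:139}; no further theoretical input is required.
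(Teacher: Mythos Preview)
Your proposal is correct and takes essentially the same approach as the paper: the paper's proof consists precisely of the two worked examples you cite followed by the sentence ``By checking each individual element (joining couple of elements if necessary) in $A$, we see $\Gamma_A$ is $f^{-1}_{\mathbf{R}*}$-invariant,'' with no further argument. Your description of the procedure---cancelling the conjugating $\epsilon$-factors cyclically, splitting via $S$, merging via $M$, and enlarging quadruples to extended contexts when the residual $\mu$'s and $\zeta$'s require it---is in fact a more careful articulation of the method than the paper itself provides.
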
 

\begin{prop}\label{P:139} For a real diffeomorphism, $f_\mathbf{R}$ associated with the orbit data $1,3,9$ with a cyclic permutation $\sigma : 1\to 2 \to 3 \to 1$, the growth rate of the induced action on $\pi_1(X(\mathbf{R}))$ is the largest real zero of $t^6-t^4-t^3-t^2+1$ and 
\[ 0=\log \rho( f_{\mathbf{R}*}|_{H_1(X(\mathbf{R});\mathbf{R})} )=\log \rho( f^{-1}_{\mathbf{R}*}|_{H_1(X(\mathbf{R});\mathbf{R})} )  \lneq \log \rho (f^{-1}_{\mathbf{R*}}|_{\pi_1(X(\mathbf{R}))}) = h_{top} (f_\mathbf{R}) = h_{top} (f), \]
that is, $f_\mathbf{R}$ has the maximal possible entropy. 
\end{prop}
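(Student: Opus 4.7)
The plan has three pieces: the invariance just established, the abstract growth-rate mechanism of Theorem \ref{T:Grate}, and a comparison with the topological entropy of $f$ on $X(\mathbf{C})$.

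Having verified that $\Gamma_A$ is $f^{-1}_{\mathbf{R}*}$-invariant, Lemma \ref{L:Grate} supplies the identity $M\,\phi[\gamma]_\# = S\,[\gamma]_\#$ for every $[\gamma] \in \Gamma_A$, where $S$ and $M$ are the splitting and merging maps listed in Appendix \ref{B:139}. Both $S$ and $M$ are one-to-one and carry $\mathrm{Sp}^+\Gamma$ into $\mathrm{Sp}^+\Delta$. The vector $v_\lambda$ tabulated in Appendix \ref{B:139} has strictly positive entries (so $v_\lambda \in \mathrm{Sp}^+\Gamma$) and satisfies $S v_\lambda = \lambda\, M v_\lambda$ with $\lambda>1$ equal to the largest real root of $t^6 - t^4 - t^3 - t^2 + 1$. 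Theorem \ref{T:Grate} then yields
\[
\log \rho\bigl(f^{-1}_{\mathbf{R}*}|_{\pi_1(X(\mathbf{R}))}\bigr) \;\geq\; \log \lambda.
\]

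To close the inequalities I combine Bowen's estimate with Gromov--Yomdin,
\[
\log \rho\bigl(f^{-1}_{\mathbf{R}*}|_{\pi_1(X(\mathbf{R}))}\bigr) \leq h_{top}(f_\mathbf{R}) \leq h_{top}(f) = \log \rho\bigl(f_*|_{H_2(X(\mathbf{C});\mathbf{R})}\bigr),
\]
and then verify by direct factorization of the polynomial $\chi(t)$ in (\ref{E:compchar}) with $(n_1,n_2,n_3) = (1,3,9)$ that its unique non\-cyclotomic irreducible factor is exactly $t^6 - t^4 - t^3 - t^2 + 1$. This gives $\log \rho(f_*|_{H_2(X(\mathbf{C});\mathbf{R})}) = \log \lambda$, so the sandwich collapses to equality and $\log \rho(f^{-1}_{\mathbf{R}*}|_{\pi_1}) = h_{top}(f_\mathbf{R}) = h_{top}(f) = \log \lambda$.

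The vanishing of the homology growth is an input rather than something to reprove: the lemma recalled in Section \ref{S:Birational} lists $(1,3,9)$ among the orbit data for which $f_{\mathbf{R}*}|_{H_1(X(\mathbf{R});\mathbf{R})}$ has spectral radius one, and Theorem 1 (Diller--Kim) gives the same for $f^{-1}_{\mathbf{R}*}$. Because $\lambda>1$, the strict inequality $\log \rho(f_{\mathbf{R}*}|_{H_1}) < \log \rho(f^{-1}_{\mathbf{R}*}|_{\pi_1})$ is immediate. The main obstacle is not conceptual but combinatorial: verifying by case analysis (as in Section \ref{S:coxeter}) that the admissible-pair set $A$ really is preserved under the splitting/merging described by the tables in Appendix \ref{B:139}, and then checking that the dominant eigenvalue of $M^{-1}S$ coincides with the Salem root of $\chi(t)$ for $(1,3,9)$; once these data are in hand the entropy identification is forced by the sandwich above.
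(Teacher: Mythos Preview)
Your proposal is correct and follows essentially the same route as the paper: invoke the invariance of $\Gamma_A$ together with the data in Appendix~\ref{B:139} to apply Theorem~\ref{T:Grate} for the lower bound $\rho(f^{-1}_{\mathbf{R}*}|_{\pi_1}) \ge \lambda$, then sandwich with Bowen and Gromov--Yomdin using the identification of $h_{top}(f)$ with the largest root of $t^6-t^4-t^3-t^2+1$, and cite Diller--Kim for the vanishing homology growth. The paper's proof is terser (it simply cites \cite{Bedford-Kim:2004, Diller:2011} for the Salem polynomial rather than factoring $\chi(t)$ from (\ref{E:compchar}) explicitly), but the logic is identical.
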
 

\begin{proof} In \cite{Diller-Kim}, it is shown that he induced action on $H_1(X(\mathbf{R});\mathbf{R})$ grows linearly and thus $0=\log \rho( f_{\mathbf{R}*}|_{H_1(X(\mathbf{R});\mathbf{R})} )=\log \rho( f^{-1}_{\mathbf{R}*}|_{H_1(X(\mathbf{R});\mathbf{R})} ) $. On the other hand the topological entropy of $f:X \to X$ is given by the logarithm of the largest real root of the Salem polynomial $\chi(t) = t^6-t^4-t^3-t^2+1$. (See \cite{Bedford-Kim:2004, Diller:2011}.) Applying the Theorem \ref{T:Grate} together with the vector $v_\lambda$ given in the Appendix \ref{B:139}, we see that the growth rate of the induced action on the fundamental group $ \rho (f^{-1}_{\mathbf{R*}}|_{\pi_1(X(\mathbf{R}))})  \ge \lambda$ where $\lambda$ is given by the largest real root of $\chi(t)$. It follows that a real diffeomosphism $f_\mathbf{R}$ associated with the orbit data $1,3,9$ with a cyclic permutation has the maximal possible entropy. 
\end{proof}

\subsection{$1,4,8$ with a cyclic permutation: zero exponential homology growth rate  but maximal topological entropy}\label{SS:148}
Let $f_\mathbf{R} : X(\mathbf{R}) \to X(\mathbf{R})$ be a real diffeomorphism associated with the orbit data $1,4,8$ and a cyclic permutation. For this case, the induced action on the fundamental group is given in Appendix \ref{A:case4} and all other data ($\Gamma,\Delta$, Splitting and Merging Maps, admissible $n$-blocks, $v_\lambda$, and $\lambda$) are listed in Appendix \ref{B:148}.
The fundamental group of $X(\mathbf(R))$ with respect to a set of generators $G = \{ e, a_1, \dots, a_8, b_1, c_1, \dots, c_4 \} $ has one relation \[ e^2 \cc a_8^2 \cc a_7^2 \cc a_6^2 \cc c_4^2 \cc a_5^2 \cc c_3^2 \cc a_4^2 \cc c_2^2 \cc a_3^2 
\cc c_1^2 \cc b_1^2 \cc a_2^2 \cc a_1^2 \cc = \cc 1.\]
Let $\epsilon = b_1 \cc c_1^2 \cc a_2^2 \cc a_1^2 \cc e \in \pi_1(X(\mathbf{R}),P_{fix})$, then we have 
\[ f^{-1}_{\mathbf{R}*} \gamma_i = \epsilon^{-1} \omega_{i_1} \cdots \omega_{i_{n_i}} \epsilon, \quad \text{where } \omega_{i_j} \in \Delta, S\gamma_i = \omega_{i_1} +\cdots +\omega_{i_{n_i}} \]
and 
\[ \gamma_ i = \delta_{i_1} \cc \cdots \delta_{i_{m_i}} \ \  \text{as elements in } \ \   \pi_1 (X(\mathbf{R}), P_{fix})\quad \text{ where  } M \gamma_i = \delta_{i_1} + \cdots +\delta_{i_{m_i}}.\] 
Repeating the argument for the previous case, we have 
\begin{lem}
The equivalent classes of $A$-admissible cyclic words $\Gamma_A$ is invariant under $f^{-1}_{\mathbf{R}*}$.
\end{lem}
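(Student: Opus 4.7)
The plan is to mirror the argument used for the $1,3,9$ case in Section \ref{SS:139}. The Appendix supplies the concrete data (the sets $\Gamma$, $\Delta$, the admissible block set $A$, and the splitting/merging maps $S,M$) for orbit data $1,4,8$; the lemma is then a finite, purely combinatorial verification built on top of the formula for $f^{-1}_{\mathbf{R}*}$ computed in Section \ref{S:Action} and listed in Appendix \ref{A:case4}.

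First I would record the ``conjugation-by-$\epsilon$'' structure: with $\epsilon = b_1 \cc c_1^2 \cc a_2^2 \cc a_1^2 \cc e$, direct substitution into the closed-form action from Appendix \ref{A:case4} shows that for each $\gamma_i \in \Gamma$,
\[ f^{-1}_{\mathbf{R}*}\gamma_i \;=\; \epsilon^{-1}\,\omega_{i_1}\cdots\omega_{i_{n_i}}\,\epsilon, \qquad S\gamma_i \;=\; \omega_{i_1}+\cdots+\omega_{i_{n_i}}, \quad \omega_{i_j}\in\Delta, \]
and moreover every $\gamma_i$ factors in $\pi_1(X(\mathbf{R}),P_{fix})$ as the product of the $\delta$-letters that appear with coefficient $1$ in $M\gamma_i$. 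Granting these two identities, for any cyclic word $\gamma=\gamma_{i_1}\cdots\gamma_{i_n}\in\Gamma_A$ the conjugating factors $\epsilon^{\pm 1}$ telescope inside the conjugacy class, so
\[ [f^{-1}_{\mathbf{R}*}\gamma] \;=\; [\,f^{-1}_{\mathbf{R}*}\gamma_{i_1}\cdots f^{-1}_{\mathbf{R}*}\gamma_{i_n}\,], \]
and the task reduces to checking that, after applying $S$ block by block and then merging adjacent $\zeta$'s, $\mu$'s and $\eta$'s into elements of $\Gamma$ via $M$, the resulting cyclic sequence of indices still lies in $A$.

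Next I would carry out the combinatorial verification exactly as in Section \ref{SS:139}. For each $n$-tuple $(i_1,\dots,i_n)\in A$ in the list of Appendix \ref{B:148}, I would expand $S\gamma_{i_1}\cdots S\gamma_{i_n}$, identify the boundary groupings $\zeta_{j} \cc \eta_{\cdots}\cc \mu_{k}$ and recognize each such grouping as the $M$-preimage of some $\gamma_s\in\Gamma$; reading off the indices $s$ that appear yields a new cyclic block whose $n$-subblocks must be checked against $A$. Because some of these groupings cross the block boundary and depend on the \emph{following} block (as with $\gamma_{10}$ in the $1,3,9$ computation), in the ambiguous cases I would also verify the appropriate $(n{+}1)$-tuples $(i_0,i_1,\dots,i_n)$ and $(i_1,\dots,i_n,i_{n+1})$ in $A$, confirming that every admissible extension produces an admissible image.

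The main obstacle will not be conceptual but bookkeeping: the orbit data $1,4,8$ yields a larger set $\Gamma$ and a more intricate admissible set $A$ than in the $1,3,9$ case, so the number of cases and the length of each expansion grow noticeably. The key verification to get right is the boundary-merging step between consecutive blocks, where a factor such as $\zeta_i$ at the end of one $S\gamma_{i_j}$ must combine cleanly with $\mu_k$ (possibly after passing through several $\eta$-letters) at the start of the next to yield exactly one $\gamma_s\in\Gamma$ with no leftover letters. Once that is checked exhaustively against the tables in Appendix \ref{B:148}, $f^{-1}_{\mathbf{R}*}$-invariance of $\Gamma_A$ follows, setting up the subsequent application of Theorem \ref{T:Grate} in the style of Proposition \ref{P:139}.
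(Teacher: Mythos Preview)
Your proposal is correct and takes essentially the same approach as the paper: reduce via the $\epsilon$-conjugation to a splitting/merging check on cyclic words, then verify each admissible tuple in $A$ (here triples, as given in Appendix \ref{B:148}), extending to neighboring tuples when the boundary merge is ambiguous. The paper's proof does exactly this, illustrating with the exceptional triple $(8,1,13)$ and its admissible extensions on both sides.
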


\begin{proof}
Since $\epsilon$ and $\epsilon^{-1}$ will cancelled out, we ignore $\epsilon$ factor in $f^{-1}_{\mathbf{R}*}$. To get the desired result, we have to check each triple in $A$. Here we give one exceptional example. A triple $(8,1,13) \in A$ and $f^{-1}_{\mathbf{R}*} \gamma_8 \cc \gamma_1 \cc \gamma_{13} = \eta_1 \cc \mu_1 \cc \zeta_8 \cc \mu_4 \cc \zeta_1$. Since $\gamma_{21} = \zeta_8 \cc \mu_4$, we have  $f^{-1}_{\mathbf{R}*} \gamma_8 \cc \gamma_1 \cc \gamma_{13} = \eta_1 \cc \mu_1 \cc \gamma_{21} \cc \zeta_1$. It is not clear whether this triple will be mapped to a combination of triples in $A$. Thus we need to join other triples in $A$. 
\begin{itemize}
\item $(23,10,8),(10,8,1)$ and $(8,1,13)$ : $f^{-1}_{\mathbf{R}*} \gamma_{23} \cc \gamma_{10} \cc \gamma_8 \cc \gamma_1 \cc \gamma_{13}  = \mu_9 \cc \gamma_9 \cc \zeta_1 \cc \mu_6 \cc \zeta_1 \cc \eta_1 \cc \mu_1 \cc \gamma_{21} \cc \zeta_1$. Since $\gamma_{18} = \zeta_1\cc \mu_6$, and $\gamma_{20} = \zeta_1\cc \eta_1 \cc \mu_1$, we have \[ f^{-1}_{\mathbf{R}*} \gamma_{23} \cc \gamma_{10} \cc \gamma_8 \cc \gamma_1 \cc \gamma_{13}  = \mu_9 \cc \gamma_9 \cc \gamma_{18} \cc \gamma_{20} \cc \gamma_{21} \zeta_1, \qquad (9,18,20),(18,20,21) \in A\]
\item $(2,12,8),(12,8,1)$ and $(8,1,13)$ : $f^{-1}_{\mathbf{R}*} \gamma_{2} \cc \gamma_{12} \cc \gamma_8 \cc \gamma_1 \cc \gamma_{13}  = \mu_1 \cc \zeta_9 \cc \mu_7 \cc \zeta_1 \cc \eta_1 \cc \mu_1 \cc \gamma_{21} \cc \zeta_1$. Since $\gamma_{25} = \zeta_9\cc \mu_7$, and $\gamma_{20} = \zeta_1\cc \eta_1 \cc \mu_1$, we have \[ f^{-1}_{\mathbf{R}*} \gamma_{2} \cc \gamma_{13} \cc \gamma_8 \cc \gamma_1 \cc \gamma_{13}  = \mu_1 \cc \gamma_{25} \cc \gamma_{20} \cc \gamma_{21} \zeta_1, \qquad (25,20,21) \in A\]
\item $(23,8,1)$ and $(8,1,13)$ : $f^{-1}_{\mathbf{R}*} \gamma_{23}  \cc \gamma_8 \cc \gamma_1 \cc \gamma_{13}  = \mu_9 \cc \gamma_9 \cc \zeta_1 \cc \eta_1 \cc \mu_1 \cc \gamma_{21} \cc \zeta_1$. Since  $\gamma_{20} = \zeta_1\cc \eta_1 \cc \mu_1$, we have \[ f^{-1}_{\mathbf{R}*} \gamma_{23} \cc \gamma_8 \cc \gamma_1 \cc \gamma_{13}  = \mu_9 \cc \gamma_9  \cc \gamma_{20} \cc \gamma_{21} \zeta_1, \qquad (9,20,21) \in A\]
\end{itemize}
Also, we see 
\begin{itemize}
\item $(8,1,13),(1,13,3),(13,3,12) \in A $ gives  $ f^{-1}_{\mathbf{R}*}\gamma_8 \cc \gamma_1 \cc \gamma_{13}  \cc \gamma_3 \cc \gamma_{12}  = \eta_1 \cc \mu_1 \cc \gamma_{21} \cc \gamma_1 \cc \gamma_{26} \cc \zeta_1$
\item $(8,1,13),(1,13,6),(13,6,12) \in A$ gives  $ f^{-1}_{\mathbf{R}*}\gamma_8 \cc \gamma_1 \cc \gamma_{13}  \cc \gamma_6 \cc \gamma_{12}  = \eta_1 \cc \mu_1 \cc \gamma_{21} \cc \gamma_1 \cc \gamma_{27} \cc \zeta_1$
\item $(8,1,13),(1,13,6),(13,6,21) \in A$ gives  $ f^{-1}_{\mathbf{R}*}\gamma_8 \cc \gamma_1 \cc \gamma_{13}  \cc \gamma_6 \cc \gamma_{21}  = \eta_1 \cc \mu_1 \cc \gamma_{21} \cc \gamma_1 \cc \gamma_{28} \cc \zeta_4$
\item $(8,1,13),(1,13,6),(13,6,24) \in A$ gives  $ f^{-1}_{\mathbf{R}*}\gamma_8 \cc \gamma_1 \cc \gamma_{13}  \cc \gamma_6 \cc \gamma_{24}  = \eta_1 \cc \mu_1 \cc \gamma_{21} \cc \gamma_1 \cc \gamma_{28}\cc \gamma_{15} \cc \zeta_1$
\item $(8,1,13),(1,13,7) \in A$ gives  $ f^{-1}_{\mathbf{R}*}\gamma_8 \cc \gamma_1 \cc \gamma_{13}  \cc \gamma_7 = \eta_1 \cc \mu_1 \cc \gamma_{21} \cc \gamma_1 \cc \gamma_{17} \cc \zeta_6$
\item $(8,1,13),(1,13,11) \in A$ gives  $ f^{-1}_{\mathbf{R}*}\gamma_8 \cc \gamma_1 \cc \gamma_{13}  \cc \gamma_{11} = \eta_1 \cc \mu_1 \cc \gamma_{21} \cc \gamma_1 \cc \gamma_{13} \cc \zeta_2$
\end{itemize}
Similarly, by check each triple in $A$, we have this Lemma. 
\end{proof}

With the homology growth computed in \cite{Diller-Kim}, topological entropy of $f:X(\mathbf{C}) \to X(\mathbf{C})$ computed in several articles including \cite{Bedford-Kim:2004, Diller:2011}, and applying the Theorem \ref{T:Grate} with $v_\lambda \in Sp^+ \Gamma $ given in the Appendix \ref{B:148}, we have the following result: 
\begin{prop}\label{P:148} For a real diffeomorphism, $f_\mathbf{R}$ associated with the orbit data $1,4,8$ with a cyclic permutation $\sigma : 1\to 2 \to 3 \to 1$, the growth rate of the induced action on $\pi_1(X(\mathbf{R}))$ is  $\lambda \approx 1.45799 >1$ where $\lambda$ is the largest real zero of $ t^8-t^6-t^5-t^3-t^2+1$ and 
\[ 0=\log \rho( f_{\mathbf{R}*}|_{H_1(X(\mathbf{R});\mathbf{R})} )=\log \rho( f^{-1}_{\mathbf{R}*}|_{H_1(X(\mathbf{R});\mathbf{R})} )  \lneq \log \rho (f^{-1}_{\mathbf{R*}}|_{\pi_1(X(\mathbf{R}))}) = h_{top} (f_\mathbf{R}) = h_{top} (f), \]
that is, $f_\mathbf{R}$ has the maximal possible entropy. 
\end{prop}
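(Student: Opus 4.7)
My plan is to follow exactly the template established in Proposition \ref{P:139} for the $1,3,9$ case, since the structural setup for $1,4,8$ is parallel and the required data (the set $\Gamma$, the set $\Delta$, the admissible $n$-tuples $A$, the splitting map $S$, the merging map $M$, and the positive eigenvector $v_\lambda$) are all tabulated in Appendix \ref{B:148}. The closed-form induced action on $\pi_1(X(\mathbf{R}), P_{fix})$ for this orbit data is listed in Appendix \ref{A:case4}.

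First, I would handle the two ``easy'' equalities in the chain. The vanishing of $\log\rho(f_{\mathbf{R}*}|_{H_1(X(\mathbf{R});\mathbf{R})})$ and of $\log\rho(f^{-1}_{\mathbf{R}*}|_{H_1(X(\mathbf{R});\mathbf{R})})$ is immediate from \cite{Diller-Kim}: one plugs $n_1=1,n_2=4,n_3=8$ into the polynomial $\phi(t)$ of equation (\ref{E:realchar}) and observes that all roots have modulus one. The equality $h_{top}(f) = \log\lambda$, where $\lambda$ is the largest real zero of $t^8-t^6-t^5-t^3-t^2+1$, comes from specializing the characteristic polynomial in (\ref{E:compchar}) to these orbit data and applying Gromov--Yomdin; in particular $\lambda\approx 1.45799$ and $\lambda>1$.

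The content of the proposition lies in the middle inequality $\log\rho(f^{-1}_{\mathbf{R}*}|_{\pi_1(X(\mathbf{R}))}) \geq \log\lambda$, which forces equality throughout by the Bowen sandwich $\log\rho(f^{-1}_{\mathbf{R}*}|_{\pi_1}) \leq h_{top}(f_\mathbf{R}) \leq h_{top}(f)$. To prove this inequality I invoke Theorem \ref{T:Grate}. The just-established Lemma gives $f^{-1}_{\mathbf{R}*}$-invariance of the set $\Gamma_A$ of conjugacy classes of cyclically $A$-admissible words in $\Gamma^+$. Lemma \ref{L:Grate} then yields the relation $M\phi[\gamma]_\# = S[\gamma]_\#$ on $\text{Sp}^+\Gamma \subset V$. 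From Appendix \ref{B:148} one reads the explicit matrices $S,M$ and verifies by direct linear algebra that (i) both $S,M$ are injective linear maps $V \to W$ sending $\text{Sp}^+\Gamma$ into $\text{Sp}^+\Delta$, and (ii) the listed vector $v_\lambda \in \text{Sp}^+\Gamma$ satisfies $S v_\lambda = \lambda M v_\lambda$ for the claimed $\lambda$. With these hypotheses verified, Theorem \ref{T:Grate} delivers $\rho(f^{-1}_{\mathbf{R}*}|_{\pi_1(X(\mathbf{R}))}) \geq \lambda$.

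The genuine work, already done, was identifying the finite set $\Gamma$ of reduced words with common terminal letter $g_\star$, the admissible block set $A$, and the ``split/merge'' decomposition so that the image $f^{-1}_{\mathbf{R}*}\gamma_i$ has the shape $\epsilon^{-1}(\omega_{i_1}\cdots\omega_{i_{n_i}})\epsilon$ with $\omega_{i_j}\in\Delta$ and each $\gamma_i$ factors through $\Delta$ as dictated by $M$; this is exactly where one has to do case-by-case bookkeeping over the triples in $A$, as illustrated in the proof of the invariance lemma. I expect the main obstacle in a reader's verification to be precisely this combinatorial check that every cyclically $A$-admissible word produces, after splitting and merging, another cyclically $A$-admissible word --- the argument is not hard but requires examining each triple in $A$ and, in borderline cases, concatenating with adjacent admissible triples to absorb residual $\zeta_i,\mu_j,\eta_k$ factors into the prescribed $\gamma_s$. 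Once that is in hand, the conclusion $h_{top}(f_\mathbf{R}) = h_{top}(f)$ follows formally from the Bowen sandwich, completing the proof.
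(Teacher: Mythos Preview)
Your proposal is correct and follows essentially the same approach as the paper's own proof: cite \cite{Diller-Kim} for the vanishing homology growth, cite \cite{Bedford-Kim:2004, Diller:2011} for $h_{top}(f)=\log\lambda$, and then apply Theorem \ref{T:Grate} with the data $v_\lambda$ from Appendix \ref{B:148} (using the already-proved invariance lemma and Lemma \ref{L:Grate}) to obtain $\rho(f^{-1}_{\mathbf{R}*}|_{\pi_1})\ge\lambda$, so that the Bowen sandwich forces equality throughout. The paper's proof is slightly terser (it notes the $H_1$ action is periodic with period $180$ rather than citing the root-modulus condition), but the logic and ingredients are identical.
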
 

\begin{proof} In \cite{Diller-Kim}, it is shown that he induced action on $H_1(X(\mathbf{R});\mathbf{R})$ is periodic with period $180$ and thus $0=\log \rho( f_{\mathbf{R}*}|_{H_1(X(\mathbf{R});\mathbf{R})} )=\log \rho( f^{-1}_{\mathbf{R}*}|_{H_1(X(\mathbf{R});\mathbf{R})} ) $. On the other hand the topological entropy of $f:X \to X$ is given by the logarithm of the largest real root of the Salem polynomial $\chi(t) = t^8-t^6-t^5-t^3-t^2+1$. (See \cite{Bedford-Kim:2004, Diller:2011}.) Applying the Theorem \ref{T:Grate} together with the vector $v_\lambda$ given in the Appendix \ref{B:148}, we see that the growth rate of the induced action on the fundamental group $ \rho (f^{-1}_{\mathbf{R*}}|_{\pi_1(X(\mathbf{R}))})  \ge \lambda$ where $\lambda$ is given by the largest real root of $\chi(t)$. It follows that a real diffeomosphism $f_\mathbf{R}$ associated with the orbit data $1,4,8$ with a cyclic permutation has the maximal possible entropy. 
\end{proof}

\begin{proof}[Proof of Theorem A]
Proposition \ref{P:139} and Proposition \ref{P:148} give existence of orbit data such that the associated rational surface automorphisms $f:X(\mathbf{C}) \to X(\mathbf{C})$ and the restriction $f_\mathbf{R}$ on the real slice $X(\mathbf{R})$ has the following properties  :  
\[ 0=\log \rho( f_{\mathbf{R}*}|_{H_1(X(\mathbf{R});\mathbf{R})} )=\log \rho( f^{-1}_{\mathbf{R}*}|_{H_1(X(\mathbf{R});\mathbf{R})} )  \lneq \log \rho (f^{-1}_{\mathbf{R*}}|_{\pi_1(X(\mathbf{R}))}) = h_{top} (f_\mathbf{R}) = h_{top} (f). \]
Such orbit data include
\begin{itemize}
\item $n_1 =1, n_2=3, n_3=9$ with a cyclic permutation. 
\item  $n_1 =1, n_2=4, n_3=8$ with a cyclic permutation. 
\end{itemize}
\end{proof}

\subsection{Non-maximal homology growth rate but maximal $\pi_1$ growth rate and the maximal topological entropy: $1,4,5$ with a cyclic permutation, $1,5,6$ with a cyclic permutation, and $1,6,7$ with a cyclic permutation}\label{SS:145} Let us consider three orbit data : (1) $1,4,5$ with a cyclic permutation, (2) $1,5,6$ with a cyclic permutation, and (3) $1,6,7$ with a cyclic permutation. For each orbit data, the necessary information to apply Theorem \ref{T:Grate} is given in Appendix \ref{B:145}, \ref{B:156} and \ref{B:167} respectively. As in the previous two Subsections \ref{SS:139} and \ref{SS:148}, we see that 
 with $\epsilon = b_1 \cc c_1^2 \cc a_2^2 \cc a_1^2 \cc e \in \pi_1(X(\mathbf{R}),P_{fix})$ 
\[ f^{-1}_{\mathbf{R}*} \gamma_i = \epsilon^{-1} \omega_{i_1} \cdots \omega_{i_{n_i}} \epsilon, \quad \text{where } \omega_{i_j} \in \Delta, S\gamma_i = \omega_{i_1} +\cdots +\omega_{i_{n_i}} \]
and 
\[ \gamma_ i = \delta_{i_1} \cc \cdots \delta_{i_{m_i}} \ \  \text{as elements in } \ \   \pi_1 (X(\mathbf{R}), P_{fix})\quad \text{ where  } M \gamma_i = \delta_{i_1} + \cdots +\delta_{i_{m_i}}.\] 
Also by checking each admissible tuples in $A$ given in Appendix \ref{B:145}, \ref{B:156} and \ref{B:167},, we show that $\Gamma_A$ is $f^{-1}_{\mathbf{R}*}$-invariant for each orbit data.
\begin{lem}
For a real diffeomorphism, $f_\mathbf{R}$ associated with one of the orbit data :  $1,4,5$ with a cyclic permutation, $1,5,6$ with a cyclic permutation, and $1,6,7$ with a cyclic permutation $\sigma : 1\to 2 \to 3 \to 1$, the set of cyclically $A$-admissible cyclic words $\Gamma_A$ is $f^{-1}_{\mathbf{R}*}$-invariant. 
\end{lem}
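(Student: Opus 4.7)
The plan is to follow the template established in Subsections \ref{SS:139} and \ref{SS:148} verbatim, applying it in parallel to each of the three orbit data $(1,4,5)$, $(1,5,6)$, and $(1,6,7)$. For each case the induced action on the fundamental group is read off from the closed-form formulas in Appendix \ref{A:case4}, and the data $\Gamma$, $\Delta$, the admissible tuples in $A$, and the splitting and merging matrices $S$, $M$ are recorded in Appendices \ref{B:145}, \ref{B:156}, and \ref{B:167} respectively. The first step is to verify, using the explicit formulas for $f^{-1}_{\mathbf{R}*}$, that with $\epsilon = b_1 c_1^2 a_2^2 a_1^2 e$ we have
\[ f^{-1}_{\mathbf{R}*} \gamma_i \;=\; \epsilon^{-1}\,\omega_{i_1} \cdots \omega_{i_{n_i}}\,\epsilon, \qquad S\gamma_i = \omega_{i_1}+\cdots+\omega_{i_{n_i}}, \]
and, using the unique relator in $\pi_1(X(\mathbf{R}),P_{fix})$, that
\[ \gamma_i \;=\; \delta_{i_1} \cdots \delta_{i_{m_i}} \quad\text{in}\quad \pi_1(X(\mathbf{R}),P_{fix}), \qquad M\gamma_i = \delta_{i_1}+\cdots+\delta_{i_{m_i}}. \]
Both identities are checked once, by direct comparison between the closed-form $\pi_1$-action and the matrices $S,M$ tabulated in Appendix B.

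Having established these two identities, the conjugating factors $\epsilon^{\pm 1}$ telescope under concatenation, so for any cyclic word $\gamma = \gamma_{i_1}\cdots \gamma_{i_s}$ representing an element of $\Gamma_A$, the class $[f^{-1}_{\mathbf{R}*}\gamma]$ equals the class of the raw concatenation $\omega_{i_1,1}\cdots\omega_{i_1,n_{i_1}}\cdots\omega_{i_s,1}\cdots\omega_{i_s,n_{i_s}}$. It then remains to show that this concatenated word, after applying the merging identities to each junction $\omega_{i_j,n_{i_j}}\cdot\omega_{i_{j+1},1}$, produces a word expressible as a product of elements of $\Gamma$ whose consecutive tuples of indices all lie in $A$. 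As in the previous subsections, this is a purely combinatorial verification: one enumerates each tuple $(i_1,\dots,i_n)\in A$, applies $S$ to the corresponding block, and checks that the result can be reassembled via $M^{-1}$ into a product of $\gamma_j$'s whose successive $n$-blocks are again in $A$.

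The main step — and the only place that any real work happens — is the junction analysis. When a junction $\omega\cdot\omega'$ fails to equal a single $\delta$, one must join adjacent admissible tuples (as was done for the triple $(8,1,13)$ in Subsection \ref{SS:148}, where the outputs of three overlapping triples had to be examined together) to produce a longer block whose merged form lies in $\Gamma_A$. The principal obstacle is therefore not conceptual but bookkeeping: the larger orbit data produce admissible sets $A$ of substantial size, so the verification is a finite but tedious case-check of each element of $A$, plus the handful of overlapping concatenations it participates in. For each of the three orbit data this is carried out using the specific data in Appendices \ref{B:145}, \ref{B:156}, and \ref{B:167}; the argument is structurally identical to that of the $(1,3,9)$ and $(1,4,8)$ cases and produces no new phenomena.
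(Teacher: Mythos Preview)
Your approach is correct and essentially identical to the paper's own treatment: verify the conjugation identity for $f^{-1}_{\mathbf{R}*}\gamma_i$ via the splitting map $S$, verify the merging identities via $M$, cancel the telescoping $\epsilon^{\pm 1}$ factors, and then run the finite combinatorial check on the admissible tuples in $A$. The paper does exactly this, stating the lemma after noting that the check has been carried out using the data in Appendices \ref{B:145}, \ref{B:156}, \ref{B:167}.

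One correction, however: you cite Appendix \ref{A:case4} for the induced $\pi_1$-action, but the three orbit data $(1,4,5)$, $(1,5,6)$, $(1,6,7)$ all satisfy $n_1=1$ and $n_2 = n_3-1$, which places them in Case~3 (Appendix \ref{A:case3}), not Case~4. The relator and the closed-form formulas for $f^{-1}_{\mathbf{R}*}$ differ between these cases (note, e.g., the order of $b_1^2$ and $c_1^2$ in the relator, and the different images of $c_{n_2}$ and $a_{n_3}$), so carrying out the splitting/merging verification with the Case~4 formulas would not succeed. With that appendix reference corrected, your plan matches the paper's proof.
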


 Using this together with $v_\lambda$ and $\lambda >1$ given in Appendix \ref{B:145}, \ref{B:156} and \ref{B:167}, we have 

\begin{prop}\label{P:145} For a real diffeomorphism, $f_\mathbf{R}$ associated with one of the orbit data :  $1,4,5$ with a cyclic permutation, $1,5,6$ with a cyclic permutation, and $1,6,7$ with a cyclic permutation $\sigma : 1\to 2 \to 3 \to 1$, the topological entropy is maximal. In fact  
\[ 0 \lneq \log \rho( f_{\mathbf{R}*}|_{H_1(X(\mathbf{R});\mathbf{R})} )=\log \rho( f^{-1}_{\mathbf{R}*}|_{H_1(X(\mathbf{R});\mathbf{R})} )  \lneq \log \rho (f^{-1}_{\mathbf{R*}}|_{\pi_1(X(\mathbf{R}))}) = h_{top} (f_\mathbf{R}) = h_{top} (f), \]
\end{prop}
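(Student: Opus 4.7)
The plan is to treat the three orbit data $(1,4,5), (1,5,6), (1,6,7)$ uniformly by running the same pipeline that was used for Propositions \ref{P:139} and \ref{P:148}, with the only conceptual difference being that now the homology growth rate is strictly greater than $1$ (rather than equal to $1$). First I would invoke \cite{Diller-Kim}, specifically Theorem~1 and its accompanying computation of the characteristic polynomial (\ref{E:realchar}), to extract the spectral radius of $f_{\mathbf{R}*}|_{H_1(X(\mathbf{R});\mathbf{R})}$ for each of the three orbit data, verifying in each case that it is strictly greater than one and strictly less than the dynamical degree. Simultaneously, I would read off $h_{top}(f) = \log \rho(f_*|_{H^{1,1}(X(\mathbf{C});\mathbf{R})})$ from the characteristic polynomial (\ref{E:compchar}), which for $\sigma$ cyclic is the largest root of
\[
\chi(t) \;=\; t - t^{n_1+n_2+n_3} + (t-1)(t^{n_1}+1)(t^{n_2}+1)(t^{n_3}+1).
\]

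Next I would verify, case by case, that $\Gamma_A$ as listed in Appendices \ref{B:145}, \ref{B:156} and \ref{B:167} is $f^{-1}_{\mathbf{R}*}$-invariant. The verification is mechanical but long: exactly as in \S\ref{SS:139} and \S\ref{SS:148}, for each admissible $n$-tuple $(i_1,\dots,i_n) \in A$ one computes $f^{-1}_{\mathbf{R}*}(\gamma_{i_1}\cdots \gamma_{i_n})$ using the closed-form induced action on $\pi_1(X(\mathbf{R}),P_{fix})$ (Appendix \ref{A:case4}), strips the outer conjugating factor $\epsilon = b_1 c_1^2 a_2^2 a_1^2 e$, regroups the resulting $\zeta_i, \eta_i, \mu_i$ pieces via the merging identities encoded in the matrix $M$ of Appendix~\ref{ApendB}, and checks that the resulting indices form a concatenation of admissible tuples. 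Whenever an outer piece of a boundary-$\gamma_i$ is only completed after composing with a neighbouring admissible tuple, one must verify the combination along each possible completion in $A$, as was illustrated explicitly in the proof given for $(1,4,8)$.

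With $\Gamma_A$ invariant, the splitting map $S$ and merging map $M$ from Appendix \ref{ApendB} satisfy the identity $M \circ \phi [\gamma]_\# = S [\gamma]_\#$ of Lemma \ref{L:Grate} for every $[\gamma] \in \Gamma_A$. I would then apply Theorem \ref{T:Grate}: the vectors $v_\lambda \in \mathrm{Sp}^+\Gamma$ and eigenvalues $\lambda > 1$ given in Appendices \ref{B:145}, \ref{B:156}, \ref{B:167} satisfy $S v_\lambda = \lambda M v_\lambda$, and a direct inspection shows that in each of the three cases $\lambda$ coincides with the largest real root of the Salem polynomial $\chi(t)$ above, i.e.\ $\log \lambda = h_{top}(f)$. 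Combining this with Bowen's inequality $\log \rho(f^{-1}_{\mathbf{R}*}|_{\pi_1}) \le h_{top}(f_{\mathbf{R}}) \le h_{top}(f)$ collapses the entire chain of inequalities to equalities, yielding the strict ordering claimed in the proposition.

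The main obstacle will be the bookkeeping in the invariance check for $\Gamma_A$: as the orbit-data sizes grow, so do $|\Gamma|, |\Delta|$, and the admissible tuples $A$, and, as in the $(1,4,8)$ case, several admissible triples or quadruples are only mapped into $\Gamma_A$ after being combined with their neighbours, forcing a branching case analysis. Everything else, including the identification of $\lambda$ with the dynamical degree and the final entropy comparison, is a mechanical application of Theorem \ref{T:Grate} and the known polynomial identities from \cite{Bedford-Kim:2004, Diller:2011, Diller-Kim}.
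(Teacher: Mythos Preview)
Your proposal is correct and follows essentially the same approach as the paper: compute the homology and cohomology characteristic polynomials from (\ref{E:realchar}) and (\ref{E:compchar}), verify invariance of $\Gamma_A$ from the appendix data, and then apply Theorem~\ref{T:Grate} with Lemma~\ref{L:Grate} and the supplied $v_\lambda$ to pin the $\pi_1$ growth rate at the dynamical degree. One slip to fix: the orbit data $(1,4,5)$, $(1,5,6)$, $(1,6,7)$ all satisfy $n_2 = n_3 - 1$, so the induced $\pi_1$-action comes from Appendix~\ref{A:case3}, not Appendix~\ref{A:case4} as you wrote; the relator and hence the concrete words $\gamma_i$, $\zeta_i$, $\mu_i$ differ between these two cases, so citing the wrong one would make the invariance check fail.
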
 

\begin{proof}
For  a real diffeomorphism $f_\mathbf{R}$ associated with the orbit data $1,4,5$ and a cyclic permutation, the topological entropy of $f:X(\mathbf{C}) \to X(\mathbf{C})$ is the logarithm of the largest real root $\lambda \approx 1.29349$ of $\chi(t) =(t-1)( t^{10}-t^8-t^7+t^5-t^3-t^2+1)$ given in (\ref{E:compchar}). On the other hand, using the formula (\ref{E:realchar}) we see that the exponential growth rate of the induced action on the homology classes is given by the largest absolute value of the roots $\approx 1.12571$ of the roots of a symmetric polynomial $\phi(t) = t^{10} + t^8 +t^7+t^5+t^3 +t^2+1$. Applying Theorem \ref{T:Grate} together with Lemma \ref{L:Grate} using $\Gamma, \Delta, S, M$ and $v_\lambda$ in Appendix \ref{B:145}, we conclude that the growth rate of the induced action on the fundamental group  $\rho (f^{-1}_{\mathbf{R*}}|_{\pi_1(X(\mathbf{R}))})$ is given by the largest possible value $\lambda \approx 1.29349$, the largest real root of $\chi(t)= t^{10}-t^8-t^7+t^5-t^3-t^2+1$. It follows that this proposition holds for the orbit data $1,4,5$ with a cyclic permutation. 

With the orbit data $1,5,6$ with a cyclic permutation, the characteristic polynomial of $f_*|_{H^{1,1}(X(\mathbf{C});\mathbf{R})}$ is given by $\chi(t)= (t-1) (t^{12}-t^{1-} -t^9-t^8+t^6-t^4-t^3-t^2+1)$ and its largest real root $\lambda \approx 1.46048$. The characteristic polynomial of $f_{\mathbf{R}*}|_{H_1(X(\mathbf{R});\mathbf{R})}$ is $\phi(t) = t^{12}+t^{10}+t^9+t^8+t^6+t^4+t^3+t^2+1$  and thus the largest absolute value of the roots is approximately $1.22125$. Again using data in Appendix \ref{B:156}, we apply Theorem \ref{T:Grate} together with Lemma \ref{L:Grate}  to conclude that the growth rate of the induced action on the fundamental group is maximal. 

Similarly, for the orbit data $1,6,7$ with a cyclic permutation we have $\chi(t) = (t-1) (t^4+t^3+t^2+t+1) (t^{10}-t^9-t^8+t^5-t^2-t+1)$ with the largest real root $\lambda \approx 1.53293$ and $\phi(t) = t^{14}+t^{12} +t^{11}+t^{10}+t^9+t^7+t^5+t^4+t^3+t^2+1$ with the spectral radius of $f_{\mathbf{R}*}|_{H_1(X(\mathbf{R});\mathbf{R})} \approx 1.21342$. Again using data in Appendix \ref{B:156}, we apply Theorem \ref{T:Grate} together with Lemma \ref{L:Grate}  to conclude that the growth rate of the induced action on the fundamental group is maximal.  
\end{proof}

\begin{proof}[Proof of Theorem B]
Proposition \ref{P:145} gives existence of orbit data such that the associated rational surface automorphisms $f:X(\mathbf{C}) \to X(\mathbf{C})$ and the restriction $f_\mathbf{R}$ on the real slice $X(\mathbf{R})$ has the following properties  :  
\[ 0\lneq \log \rho( f_{\mathbf{R}*}|_{H_1(X(\mathbf{R});\mathbf{R})} )=\log \rho( f^{-1}_{\mathbf{R}*}|_{H_1(X(\mathbf{R});\mathbf{R})} )  \lneq \log \rho (f^{-1}_{\mathbf{R*}}|_{\pi_1(X(\mathbf{R}))}) = h_{top} (f_\mathbf{R}) = h_{top} (f). \]
Such orbit data include
\begin{itemize}
\item $n_1 =1, n_2=4, n_3=5$ with a cyclic permutation. 
\item  $n_1 =1, n_2=5, n_3=6$ with a cyclic permutation. 
\item  $n_1 =1, n_2=6, n_3=7$ with a cyclic permutation. 
\end{itemize}
\end{proof}



\section{Non-maximum Entropy}\label{S:nonmax}
Based on the work of Bedford, Lyubich and Smilie \cite{BLS:dist, BLS}, Cantat \cite{Cantat:1999}, DeThelin \cite{deThelin} and Dujardin \cite{Dujardin} gives a criterion for real automorphism with maximal entropy.
\begin{thm}[{\cite[Corollary~8.3]{Cantat:2014}}]\label{T:cantat}
Let $f:X(\mathbf{C}) \to X(\mathbf{C})$ be a rational surface automorphism. If the restriction on the real slice $f_\mathbf{R}: X(\mathbf{R}) \to X(\mathbf{R})$ is a diffeomorphism, then $h_{top}(f_\mathbf{R}) = h_{top}(f)$ if and only if all saddle periodic points of $f$ are contained in $X(\mathbf{R})$ or in $f$-invariant algebraic curves. 
\end{thm}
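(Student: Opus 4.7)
The plan is to reduce the equivalence to two deep ingredients from complex dynamics: uniqueness of the measure of maximal entropy $\mu_f$ for the positive-entropy automorphism $f$ (Cantat), and the equidistribution of saddle periodic points towards $\mu_f$ (Bedford--Lyubich--Smillie in the H\'enon setting; DeThelin and Dujardin in the present rational-surface setting). The real structure enters through the antiholomorphic involution $\iota:X(\mathbf{C})\to X(\mathbf{C})$ whose fixed locus is $X(\mathbf{R})$. Since $f$ is real, $f\circ\iota=\iota\circ f$, so uniqueness of $\mu_f$ forces $\iota_*\mu_f=\mu_f$.

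For the direction $(\Leftarrow)$, I would assume every saddle periodic point lies in $X(\mathbf{R})\cup\bigcup_j C_j$ with the $C_j$ being $f$-invariant algebraic curves. By equidistribution, $\mathrm{supp}(\mu_f)\subset X(\mathbf{R})\cup\bigcup_j C_j$. The restriction of $f$ to any $C_j$ is one-dimensional and hence has zero topological entropy, so in the ergodic decomposition of $\mu_f$ the mass concentrated on $\bigcup_j C_j$ contributes no entropy. Therefore $\mu_f$ restricts to an $f_\mathbf{R}$-invariant probability measure on $X(\mathbf{R})$ of metric entropy $\htop(f)$, and the variational principle gives $\htop(f_\mathbf{R})\ge \htop(f)$; combined with the obvious reverse inequality $\htop(f_\mathbf{R})\le\htop(f)$ this is the desired equality.

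For the direction $(\Rightarrow)$, I would argue contrapositively. Suppose there is a saddle periodic point $p\notin X(\mathbf{R})$ that does not lie on any $f$-invariant algebraic curve. Then the unstable manifold $W^u(p)$ is an injectively immersed copy of $\mathbf{C}$ whose closure is not algebraic, and by the slicing calculus of Bedford--Smillie and Dujardin its iterates under $f$ equidistribute, after suitable normalization, to the Green current $T^+$. Consequently the orbit of $p$ contributes an ergodic component of $\mu_f$ whose support meets the open set $X(\mathbf{C})\setminus X(\mathbf{R})$; the $\iota$-invariance of $\mu_f$ pairs this with the conjugate component. Neither component sees $f_\mathbf{R}$, and a comparison of metric entropies forces $\htop(f_\mathbf{R})<h_{\mu_f}(f)=\htop(f)$.

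The main obstacle is the quantitative geometric step in $(\Rightarrow)$: one must show that off-$X(\mathbf{R})$ saddle orbits genuinely carry positive $\mu_f$-mass away from $X(\mathbf{R})$, and that this mass contributes to the metric entropy. This requires the full pluripotential framework for rational surface automorphisms developed by Cantat and Dujardin, namely the Green currents $T^\pm$, their laminar structure, and positivity of the wedge product $T^+\wedge T^-$ against unstable disks. A secondary delicate point is ruling out the possibility that $W^u(p)$ closes up onto an $f$-invariant algebraic curve without $p$ itself lying on one; this is handled by the classification of $f$-invariant algebraic curves on rational surfaces with positive entropy, which bounds both their number and their degree.
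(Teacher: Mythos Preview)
The paper does not prove this theorem at all: it is quoted as Corollary~8.3 of \cite{Cantat:2014}, attributed to work of Bedford--Lyubich--Smillie, Cantat, DeTh\'elin and Dujardin, and used as a black box in Section~\ref{S:nonmax}. So there is no ``paper's own proof'' to compare your proposal against.

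That said, your sketch is broadly in line with how the argument goes in the cited literature, with one imprecision worth flagging. In the $(\Rightarrow)$ direction you speak of the orbit of a non-real saddle $p$ contributing ``an ergodic component of $\mu_f$'', but $\mu_f$ is itself ergodic (indeed the unique measure of maximal entropy), so it has no nontrivial ergodic decomposition. The actual mechanism is cleaner: if $h_{top}(f_\mathbf{R})=h_{top}(f)$, the variational principle and uniqueness of $\mu_f$ force $\mathrm{supp}(\mu_f)\subset X(\mathbf{R})$; the substantive input from Dujardin/BLS is then that every saddle periodic point \emph{not lying on an $f$-invariant algebraic curve} already belongs to $\mathrm{supp}(\mu_f)$ (via the homoclinic class and laminar structure of $T^\pm$), which immediately places it in $X(\mathbf{R})$. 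Your $(\Leftarrow)$ direction is essentially correct once you note that $\mu_f$, being ergodic of positive entropy, must assign mass zero to each invariant curve, so its support lies in $X(\mathbf{R})$ up to a null set.
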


In this section, we show that if an automorphism $f:X(\mathbf{C}) \to X(\mathbf{C})$ is associated to a real birational map $\check f$ with one of the following orbit data $2,3,5, \sigma$, $3,4,5, \sigma$, $3,4,6, \sigma$, and $3,5,5, \sigma$ where $\sigma:1 \to 2\to 3 \to 1$ is a cyclic permutation, then $0\lneq h_{top}(f_\mathbf{R})  \lneq h_{top}(f)$, in fact 
\[ 0= \log \rho( f_{\mathbf{R}*}|_{H_1(X(\mathbf{R});\mathbf{R})} )=\log \rho( f^{-1}_{\mathbf{R}*}|_{H_1(X(\mathbf{R});\mathbf{R})} )  \lneq \log \rho (f^{-1}_{\mathbf{R*}}|_{\pi_1(X(\mathbf{R}))}) \le  h_{top} (f_\mathbf{R}) \lneq h_{top} (f). \]

\begin{lem}\label{L:uniqueC}
Let $\check f$ be a real birational map fixing a cusp cubic $C$ associated with one of the following orbit data $2,3,5, \sigma$, $3,4,5, \sigma$, $3,4,6, \sigma$, and $3,5,5, \sigma$ where $\sigma:1 \to 2\to 3 \to 1$ is a cyclic permutation.The only invariant algebraic curve for the assoicated automrophism $f: X(\mathbf{C}) \to X(\mathbf{C})$ is the invariant cubic $C$ and the only fixed points on $C$ are $P_{cusp}$ and $P_{fix}$. 
\end{lem}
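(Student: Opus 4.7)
The lemma has two parts: (i) the only fixed points of $f$ on $C$ are $P_{cusp}$ and $P_{fix}$, and (ii) the only $f$-invariant algebraic curve is $C$ itself. For (i), by Theorem~\ref{T:param} the restriction $f|_{C_{\text{reg}}}$ is conjugate to the multiplication $\gamma(\omega)\mapsto\gamma(\delta\omega)$ with $\delta\ne 1$, whose only fixed point is $\omega=0$, corresponding to $P_{fix}$. The cusp $P_{cusp}$ is the unique singular point of $C$ and is therefore preserved by the biholomorphism $f$. This exhausts the fixed set on $C$.

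For (ii), let $D$ be any irreducible $f$-periodic curve on $X(\mathbf{C})$ of period $k\ge 1$, and set $D':=\sum_{i=0}^{k-1} f^i D$. Then $f_*[D']=[D']$, so it suffices to show that the $1$-eigenspace of $f_*$ on $H^{1,1}(X(\mathbf{C});\mathbf{R})$ is one-dimensional and spanned by $[C]$. Using equation~(\ref{E:compchar}), $\chi(1)=0$; differentiating via the Leibniz rule on the factor $(t-1)$ and evaluating at $t=1$ gives
\[
\chi'(1)\,=\,1-(n_1+n_2+n_3)+(1^{n_1}+1)(1^{n_2}+1)(1^{n_3}+1)\,=\,9-(n_1+n_2+n_3),
\]
which equals $-1,-3,-4,-4$ for the orbit data $(2,3,5),(3,4,5),(3,4,6),(3,5,5)$ respectively, and is in particular nonzero. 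Hence $t=1$ is a simple root of $\chi$, and since $f_*[C]=[C]$, the $1$-eigenspace of $f_*$ is exactly $\mathbf{R}[C]$.

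It follows that $[D']=c[C]$ for some $c>0$ (by effectivity of $D'$ and positivity of $[C]$). Using $f_*[C]=[C]$, one has $[f^iD]\cdot[C]=[D]\cdot[C]$ for every $i$, so $k\,[D]\cdot[C]=c\,[C]^2$. Because $C$ is a cuspidal cubic through all centres of blow-up, $[C]=-K_X=3H-\sum_{i,j}[\mathcal{E}_{i,j}]$, and therefore $[C]^2=9-(n_1+n_2+n_3)$, which is negative in each of the four cases. Thus $[D]\cdot[C]<0$. If $D\ne C$ as subsets, then $D$ and $C$ are distinct irreducible curves on a smooth surface and $D\cdot C\ge 0$, a contradiction. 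Hence $D=C$.

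The main obstacle is the intersection-theoretic final step, in particular the passage from periodic to strictly invariant classes; the averaging trick $D\mapsto D'$ handles this, after which the negativity $[C]^2<0$ (a direct consequence of $n_1+n_2+n_3\ge 10$) finishes the argument with no room for an auxiliary invariant curve.
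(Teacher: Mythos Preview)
Your proof is correct and follows essentially the same approach as the paper: show that the $1$-eigenspace of $f_*$ on $H^{1,1}$ is one-dimensional and spanned by $[C]$, then use $[C]^2=9-(n_1+n_2+n_3)<0$ together with the nonnegativity of intersections of distinct irreducible curves to force any invariant curve to coincide with $C$; the fixed-point statement on $C$ is handled identically via the parametrisation $t\mapsto\delta t$. Your write-up is in fact slightly more explicit than the paper's in two respects: you compute $\chi'(1)=9-(n_1+n_2+n_3)$ directly to verify simplicity of the root $t=1$, and you extend the argument to rule out \emph{periodic} (not just invariant) curves via the averaging $D\mapsto D'=\sum_{i=0}^{k-1}f^iD$, which is a natural strengthening but not a different method.
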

\begin{proof}
Here we are repeating the argument of Lemma 4.3 in \cite{Diller-Kim}.
The cohomology class of an invariant curve must be an eigenvector of $f_*|_{H^{1,1}(X(\mathbf{C});\mathbf{R})}$ associated with eigenvalue $1$. This linear action in the general setup is computed in \cite{Bedford-Kim:2004, Diller:2011} and its characteristic polynomial is given in (\ref{E:compchar}). For all cases, eigenvalue $1$ has multiplicity $1$ and  the class of invariant cubic $C$, $[C]$ is a corresponding eigenvector. The the class of any algebraic invariant curve should be a multiple of $[C]$. Since all the blowup centers are on the invariant cubic $C$, we have $C \cdot C = 3^2 - (\text{\,the number of blowups\,})$. It follows that for each case, the self-intersection $C\cdot C < 0$ thus as a divisor any algebraic invariant curve must be a multiple of $C$. 
Now if a fixed point of $f: X(\mathbf{C}) \to X(\mathbf{C})$ is on the invariant cubic $C$, it must be either a cusp point or a fixed point of the restriction $f|_{C_\text{regular}}$ on a set of regular points on $C$. In \cite{Diller:2011}, it is shown that under an appropriate parametrization, $f|_{C_\text{regular}} : t \mapsto \delta t$ where $\delta$ is the determinant of $\check f$ and there is only one fixed point for $f|_{C_\text{regular}}$. It follows that there are exactly two fixed points on $C$ and we have two fixed points $P_{fix}$ and $P_{cusp}$ on $C$. Thus any other fixed point will be in the complement of the invariant cubic $C$. 
\end{proof}

\begin{lem}\label{L:cpxsaddle}
Let $\check f$ be a real birational map fixing a cusp cubic $C$ associated with one of the following orbit data $2,3,5, \sigma$, $3,4,5, \sigma$, $3,4,6, \sigma$, and $3,5,5, \sigma$ where $\sigma:1 \to 2\to 3 \to 1$ is a cyclic permutation. Then the assoicated automrophism $f: X(\mathbf{C}) \to X(\mathbf{C})$ has two fixed complex saddle fixed points outside the invariant cubic $C$. 
\end{lem}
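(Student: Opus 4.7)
The plan is to make the birational map $\check f$ explicit for each of the four listed orbit data and then to locate its fixed points in $\mathbb{P}^2(\mathbf{C})$ directly, verifying that at least one complex-conjugate pair lies off the invariant cubic $C$ and is hyperbolic of saddle type. By Lemma \ref{L:uniqueC}, any fixed point of $f$ other than $P_{cusp}$ and $P_{fix}$ lies off $C$; and since $\check f$ has real coefficients, non-real fixed points automatically come in complex-conjugate pairs. So it suffices, for each orbit data, to exhibit a single non-real fixed point of $\check f$ in $\mathbb{P}^2(\mathbf{C})\setminus C$ at which the differential has eigenvalues of moduli $>1$ and $<1$.

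The first step is the explicit realization. Using Theorem \ref{T:param}, I would compute $\delta$ as the largest real root of the characteristic polynomial $\chi(t)$ in (\ref{E:compchar}), solve for the parameters $t_i$ of the indeterminacy points $p_i^- = \gamma(t_i)$, and determine each $p_i^+$ from the orbit relations $\check f^{-n_i}(p_{\sigma(i)}^-) = p_i^-$ together with the fact that $\check f$ acts on $C_{\mathrm{reg}}$ by $\gamma(\omega) \mapsto \gamma(\delta \omega)$. The real linear maps $T^\pm \in \operatorname{GL}(3,\mathbf{R})$ are then determined up to scale by $T^\pm(e_j) = p_j^\pm$, producing an explicit quadratic formula $\check f = T^- \circ J \circ (T^+)^{-1}$ with real coefficients lying in the number field $\mathbf{Q}(\delta)$.

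The second step is to solve the polynomial system $\check f(x) = x$ in $\mathbb{P}^2$. Because $\check f|_C$ has only the fixed points $P_{fix}$ and $P_{cusp}$ on $C$, one can factor out the corresponding ideal and obtain a residual system of small degree whose solutions are precisely the off-$C$ fixed points. For each such root $p$ one computes the Jacobian $d\check f_p$ from the explicit formulas and reads off its eigenvalues. As a cross-check, the total count of fixed points weighted by local indices $(1-\alpha_p)(1-\beta_p)$ must match the Lefschetz number $L(f) = 2 + \operatorname{tr}(f^*|_{H^{1,1}(X(\mathbf{C}))})$ obtained from (\ref{E:compchar}), after subtracting the contributions from $P_{fix}$ (eigenvalues $\delta, \delta^{-(n_1+n_2+n_3-3)}$) and from $P_{cusp}$ (eigenvalues $\delta^{-2}, \delta^{-3}$); this both rules out missing solutions and quickly produces a lower bound on the number of off-$C$ fixed points.

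The main obstacle is computational rather than conceptual: the coefficients of $\check f$ live in $\mathbf{Q}(\delta)$ and the degree of $\chi(t)$ ranges from $11$ to $14$ for the listed orbit data, so the elimination is tractable by hand only in the smallest case and otherwise requires symbolic computation. Once the residual polynomial is in hand, its discriminant witnesses the existence of a non-real conjugate pair of fixed points, and at any such $p$ the saddle condition reduces to the numerical inequality $|\alpha| > 1 > |\beta|$ for the two eigenvalues of $d\check f_p$, which can be read off from $\operatorname{tr}(d\check f_p)$ and $\det(d\check f_p)$. Producing one such pair in each of the four cases establishes the lemma; in conjunction with Lemma \ref{L:uniqueC} and Theorem \ref{T:cantat}, this will feed into the proof of Theorem~C in the following section.
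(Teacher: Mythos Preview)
Your proposal is correct and follows essentially the same approach as the paper: both proofs realize $\check f$ explicitly via Diller's formulas, solve for the fixed points directly, and verify numerically that a complex-conjugate pair off $C$ has multipliers on opposite sides of the unit circle. The paper simply records the numerical coordinates and multipliers for each of the four cases, while you spell out more of the setup and add a Lefschetz-number consistency check that the paper omits; otherwise the arguments coincide.
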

\begin{proof}
By conjugating a linear map if necessary, we may assume that a real map $\check f = L \circ J$ whith an automorphism $L \in \text{Aut} (\mathbb{P}^2(\mathbf{C}))$. With the formulas given in \cite{Diller:2011}, we have an explicit formula for $\check f$ with given orbit data. Using those formulas, we can directly
compute the fixed points and their multipliers for each case and find two saddle complex fixed points with an identification $(x,y) \leftrightarrow [1:x:y] \in \mathbb{P}^2(\mathbf{C})$:
\begin{itemize}
\item $2,3,5, \sigma$ : $(6.26467\pm 0.544382 i, 4.89414 \pm 0.88372 i)$ with multipliers $1.03623, 0.714322$
\item $3,4,5, \sigma$:  $(1.90652\pm 0.19067 i, 1.35067 \pm 0.0205975 i)$ with multipliers $1.02849, 0.578415$
\item $3,4,6, \sigma$:  $(2.06731 \pm 0.221777 i, 1.51188 \pm 0.0328373 i)$ with multipliers $1.0315,0.562958$
\item $3,5,5, \sigma$: $(1.84572 \pm 0.247174 i, 1.78388\pm 0.236327 i)$ with multipliers $1.79925,0.970414$
\end{itemize}
Due to the previous Lemma, these saddle points are not on the invariant algebraic curve. 
\end{proof}

\begin{prop}\label{P:nonmax}
Let $\check f$ be a real birational map fixing a cusp cubic $C$ associated with one of the following orbit data $2,3,5, \sigma$, $3,4,5, \sigma$, $3,4,6, \sigma$, and $3,5,5, \sigma$ where $\sigma:1 \to 2\to 3 \to 1$ is a cyclic permutation.Then the assoicated real diffeomorphism $f_\mathbf{R}: X(\mathbf{R}) \to X(\mathbf{R})$ has positive non-maximal entropy: 
\[ 0=  \log \rho( f_{\mathbf{R}*}|_{H_1(X(\mathbf{R});\mathbf{R})} )=\log \rho( f^{-1}_{\mathbf{R}*}|_{H_1(X(\mathbf{R});\mathbf{R})} )  \lneq \log \rho (f^{-1}_{\mathbf{R*}}|_{\pi_1(X(\mathbf{R}))}) \le  h_{top} (f_\mathbf{R}) \lneq h_{top} (f). \]
\end{prop}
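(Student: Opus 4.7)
The plan is to establish the four assertions in the displayed chain one at a time, reusing the machinery developed in Sections \ref{S:Action}--\ref{S:Grate} together with Lemmas \ref{L:uniqueC} and \ref{L:cpxsaddle}. The leftmost equality follows immediately: each of the four orbit data listed appears in the Lemma stated before Section \ref{S:Action}, which asserts that $f_{\mathbf{R}*}|_{H_1(X(\mathbf{R});\mathbf{R})}$ has spectral radius one. Taking logarithms and noting that the same conclusion applies to $f^{-1}_\mathbf{R}$ (since $f_\mathbf{R}$ is a diffeomorphism of a compact manifold) gives the zero on the left. The middle inequality $\log\rho(f^{-1}_{\mathbf{R}*}|_{\pi_1(X(\mathbf{R}))}) \le h_{top}(f_\mathbf{R}) = h_{top}(f_\mathbf{R}^{-1})$ is Bowen's theorem recalled in Section \ref{S:Birational}.

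For the strict inequality $0 \lneq \log\rho(f^{-1}_{\mathbf{R}*}|_{\pi_1(X(\mathbf{R}))})$, the plan is to imitate the arguments of Sections \ref{S:coxeter} and \ref{S:max}. For each of the four orbit data I would use the closed-form expression for the induced action on $\pi_1(X(\mathbf{R}),P_{fix})$ derived in Section \ref{S:Action} (the relevant formulas for $n_1,n_2,n_3$ are recorded in Appendix \ref{ApendA}), together with the explicit sets $\Gamma$, $\Delta$, the admissible tuples $A$, and the splitting and merging maps $S,M$ that are tabulated in Appendix \ref{ApendB} for precisely these four orbit data. The invariant-set identity $M \circ \phi [\gamma]_\# = S[\gamma]_\#$ of Lemma \ref{L:Grate} then reduces the growth-rate question to computing the spectral radius of $M^{-1} S$ on the positive span $\mathrm{Sp}^+ \Gamma$, and Theorem \ref{T:Grate} applied to the eigenvector $v_\lambda$ with $\lambda > 1$ listed in Appendix \ref{ApendB} gives $\rho(f^{-1}_{\mathbf{R}*}|_{\pi_1}) \ge \lambda > 1$.

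The main obstacle, and the step I expect to absorb most of the work, is the verification that $\Gamma_A$ is $f^{-1}_{\mathbf{R}*}$-invariant for each of the four orbit data. As in Sections \ref{SS:139} and \ref{SS:148}, the verification proceeds by hand: for each tuple $(i_1,\ldots,i_n) \in A$ one computes $f^{-1}_{\mathbf{R}*}(\gamma_{i_1} \cdots \gamma_{i_n})$ using the action formulas, observes that the ``head'' $\epsilon$ and ``tail'' $\epsilon^{-1}$ introduced by each $f^{-1}_{\mathbf{R}*}\gamma_{i_j}$ cancel within a conjugacy class, and then uses the merging relations listed in $M$ to combine the leftover $\zeta$'s, $\mu$'s, and $\eta$'s into elements of $\Gamma$ along the interior boundaries. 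What must be checked is that the resulting sequence of indices again consists of tuples in $A$; occasionally, as in the $(8,1,13)$ example in Section \ref{SS:148}, a single tuple must be concatenated with its neighbors before a decisive merge can be identified, so the case analysis is genuinely longer than one tuple at a time. Once this bookkeeping is complete for all four orbit data, the lower bound $\lambda > 1$ follows mechanically from Theorem \ref{T:Grate}.

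For the rightmost strict inequality $h_{top}(f_\mathbf{R}) \lneq h_{top}(f)$, the plan is to apply Cantat's criterion (Theorem \ref{T:cantat}). By Lemma \ref{L:uniqueC}, for each of the four orbit data the only $f$-invariant algebraic curve is the cuspidal cubic $C$, and the only fixed points on $C$ are $P_{cusp}$ and $P_{fix}$, both of which are real. By Lemma \ref{L:cpxsaddle}, however, the map $f$ admits a pair of genuinely complex (non-real) saddle fixed points lying outside $C$. Since these saddle periodic points are neither in $X(\mathbf{R})$ nor on any $f$-invariant algebraic curve, Theorem \ref{T:cantat} forces $h_{top}(f_\mathbf{R}) < h_{top}(f)$. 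Combining the four bounds yields the chain of (in)equalities in the statement.
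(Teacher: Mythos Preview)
Your proposal is correct and follows essentially the same route as the paper: invoke Lemmas \ref{L:uniqueC}, \ref{L:cpxsaddle} and Theorem \ref{T:cantat} for the rightmost strict inequality, use the Appendix \ref{ApendA} formulas together with the $\Gamma,\Delta,A,S,M,v_\lambda$ data in Appendix \ref{ApendB} and Theorem \ref{T:Grate} for the strict $\pi_1$ lower bound, and read off the zero homology growth. The only cosmetic difference is that for $\log\rho(f_{\mathbf{R}*}|_{H_1})=0$ you cite the earlier lemma listing these orbit data, whereas the paper re-derives it by writing out the factored characteristic polynomials $\phi(t)$ from \eqref{E:realchar} and observing that all roots lie on the unit circle; either justification is fine.
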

\begin{proof}
Combining Lemma \ref{L:uniqueC}, Lemma \ref{L:cpxsaddle} and Theorem \ref{T:cantat}, we see that $h_{top} (f_\mathbf{R}) \lneq h_{top}(f)$. Using (\ref{E:realchar}), we have the characteristic polynomial $\phi(t)$ of the induced action on $H_1(X(\mathbf{R});\mathbf{R})$
\begin{itemize}
\item $2,3,5, \sigma$ : $\phi(t) = (t^4-t^2+1) (t^6-t^5+t^4-t^3+t^2-t+1) =(t^6+1)(t^7+1)/((t+1)(t^2+1)) $
\item $3,4,5, \sigma$:  $\phi(t) = (t^6+t^3+1) (t^6-t^5+t^4-t^3+t^2-t+1) =(t^9-1)(t^7+1)/((t+1)(t^3-1)) $
\item $3,4,6, \sigma$:  $\phi(t) =-(t^3+1) (t^5-1) (t^6+1)/(t+1) $
\item $3,5,5, \sigma$:  $\phi(t) =-(t^3+1) (t^4+1) (t^7-1)/(t+1) $
\end{itemize}
Thus for each case, the growth rate of homology classes is equal to zero. On the other hand, from the action on the fundamental group given in Appendix \ref{A:case5} for the first three cases and Appendix \ref{A:case6} for the last case  together with $A$ admissible cyclic words in Appendix \ref{B:235} -- \ref{B:355}, we compute the lower estimate of the growth rate on the induced action on the fundamental group. 
Let $\epsilon_1 = b_1 \cc c_1^2 \cc a_2^2 \cc a_1^2 \cc e$, $\epsilon_2 = c_1 \cc b_1^2 \cc a_1^2 \cc e$ and $\epsilon_3 = b_1 \cc c_1^2 \cc a_1^2 \cc e$. We have 
\begin{itemize}
\item $2,3,5, \sigma$ : $ f^{-1}_{\mathbf{R}*} \gamma_i = \epsilon_1^{-1} \omega_{i_1} \cdots \omega_{i_{n_i}} \epsilon_1$ if $S\gamma_i = \omega_{i_1} +\cdots +\omega_{i_{n_i}}$ and 
\[\rho (f^{-1}_{\mathbf{R*}}|_{\pi_1(X(\mathbf{R}))}) \ge 1.27034 >0\]
\item $3,4,5, \sigma$:  $ f^{-1}_{\mathbf{R}*} \gamma_i = \epsilon_2^{-1} \omega_{i_1} \cdots \omega_{i_{n_i}} \epsilon_2$ if $S\gamma_i = \omega_{i_1} +\cdots +\omega_{i_{n_i}}$ and \[\rho (f^{-1}_{\mathbf{R*}}|_{\pi_1(X(\mathbf{R}))}) \ge 1.54325 >0\]
\item $3,4,6, \sigma$:  $ f^{-1}_{\mathbf{R}*} \gamma_i = \epsilon_2^{-1} \omega_{i_1} \cdots \omega_{i_{n_i}} \epsilon_2$ if $S\gamma_i = \omega_{i_1} +\cdots +\omega_{i_{n_i}}$ and \[\rho (f^{-1}_{\mathbf{R*}}|_{\pi_1(X(\mathbf{R}))}) \ge 1.55943 >0\]
\item $3,5,5, \sigma$: $ f^{-1}_{\mathbf{R}*} \gamma_i = \epsilon_3^{-1} \omega_{i_1} \cdots \omega_{i_{n_i}} \epsilon_3$ if $S\gamma_i = \omega_{i_1} +\cdots +\omega_{i_{n_i}}$ and \[\rho (f^{-1}_{\mathbf{R*}}|_{\pi_1(X(\mathbf{R}))}) \ge 1.58235 >0\]
\end{itemize}
\end{proof}

Theorem C is immediate consequence of Theorem \ref{T:cantat} and Proposition \ref{P:nonmax}.


\appendix
\section{$\pi_1$ actions for quadratic real automorphisms }\label{ApendA}


%
%
%
%

The fundamental group $\pi_1 (X(\mathbf{R}))$ with respect to a set of generators $\{e, a_i, b_j, c_k, 1 \le i \le n_3, 1 \le j \le n_1, 1 \le k \le n_2\}$ is a one relator group  \[ \pi_1 (X(\mathbf{R})) = \langle e, a_i, b_j, c_k \rangle /R \]
where the relator $R$ is a cyclic word given by an ordered product of squares of generators: \[ R\ = e^2 \cdot \prod_{i= 1}^ N x_i^2, \ \ \ x_i \in \{ a_i, b_j, c_k\}, N= n_1+n_2+n_3.\]
For an ordered product in a part of the relator $R$, we use  $Q(x_i, x_j) = \prod_{k=i}^j x_k^2$  if $i\le j$ and $Q(x_i,x_j) =1$ if $i>j$.

\subsection{Case 1:  $n_1=1, n_2 =1, n_3 \ge 8 $} \label{A:case1}

The relator of this case is given by \[ R= e^2 \cc a_{n_3}^2 \cc a_{n_3-1}^2 \cc \cdots a_6^2 \cc a_5^2 \cc c_1^2 \cc a_4^2 \cc a_3^2 \cc b_1^2 \cc a_2^2 \cc a_1^2 \]
\vspace{1ex}
The induced action $f^{-1}_{\mathbf{R} *}$ is given by 

\begin{equation*} 
\begin{aligned}
& e\ \ &\mapsto \ \ &\left\{ \begin{aligned} & e \cc a_8^2 \cc a_7^2 \cc a_6^2 \cc a_5^2 \cc c_1 \cc b_1 \cc a_2^2 \cc a_1 \cc e \ \ & \text{   if   }n_3 =8\\
& e^{-1} \cc a_1^{-2} \cc a_2^{-2} \cc b_1^{-2} \cc a_3^{-2}\cc a_4^{-2} \cc c_1^{-1} \cc b_1 \cc a_2^2 \cc a_1 \cc e \ \ & \text{   if   }n_3 \ge 9\\
\end{aligned} \right.\\
& c_1\ \ &\mapsto \ \ &\left\{ \begin{aligned} & b_1 \cc a_2^2 \cc a_1 \cc e \cc Q(a_{n_3}, a_6) \cc c_1 \cc a_4^2 \cc a_3^2 \cc b_1 \ \ & \text{   if   }n_3 \le 12 \\
& b_1 \cc a_2^2 \cc a_1 \cc e^{-1} \cc Q(a_5, a_1)^{-1} \cc c_1 \cc a_4^2 \cc a_3^2 \cc b_1 \ \ \ \ \ \ \ \ \ & \text{   if   }n_3 > 12 \\
\end{aligned} \right.\\
& b_1\ \ &\mapsto \ \ &\left\{ \begin{aligned} & b_1^{-1} \cc a_3^{-2} \cc c_1^{-1} \cc Q(a_{n_3}, a_5)^{-1}\cc e^{-1} \ \ & \text{   if   }n_3 =8 \\
& b_1^{-1} \cc a_3^{-2} \cc c_1 \cc a_4^2 \cc a_3^2 \cc b_1^2 \cc a_2^2\cc a_1^2 \cc e \ \ \ \ \ \ \ \  \ \  \ \ \ \ \ \ \ \ \ \ & \text{   if   }n_3 \ge 9 \\
\end{aligned} \right.\\
& a_1\ \ &\mapsto \ \ &\left\{ \begin{aligned} & e \cc Q(a_{n_3}, a_5) \cc c_1 \cc b_1 \cc a_2^{-1} \cc b_1^{-1} \cc c_1 ^{-1} \cc Q(a_{n_3}, a_5)^{-1} \cc e^{-1} \ \ & \text{   if   }n_3 =8 \\
& e^{-1} \cc Q(a_4, a_1)^{-1}  \cc c_1^{-1} \cc b_1 \cc a_2^{-1} \cc b_1^{-1} \cc c_1 \cc Q(a_4, a_1)  \cc e  & \text{   if   }n_3 \ge 9 \\
\end{aligned} \right.\\
& a_2\ \ &\mapsto \ \ &\left\{ \begin{aligned} & e \cc Q(a_{n_3}, a_5) \cc c_1 \cc a_3 \cc c_1 ^{-1} \cc Q(a_{n_3}, a_5)^{-1} \cc e^{-1} \ \ & \text{   if   }n_3 =8 \\
& e^{-1} \cc Q(a_4, a_1)^{-1}  \cc c_1^{-1} \cc a_3  \cc c_1 \cc Q(a_4, a_1)  \cc e  & \text{   if   }n_3 \ge 9 \\
\end{aligned} \right.\\
&a_{3}\ \ &\mapsto \ \ &\ \  b_1^{-1} \cc a_3^{-2} \cc a_4^{-1} \cc a_3^2 \cc b_1\\
&a_{4}\ \ &\mapsto \ \ &\ \  b_1^{-1} \cc a_3^{-2} \cc a_4^{-2} \cc c_1^{-1} \cc a_5 \cc c_1 \cc a_4^2  \cc a_3^2 \cc b_1\\
& a_i \ \ &\mapsto \ \ &\left\{ \begin{aligned} & b_1 \cc a_2^2 \cc a_1  \cc e \cc Q(a_{n_3}, a_{i+2})  \cc a_{i+1}^{-1} \cc Q( a_{n_3}, a_{i+2})^{-1} \cc e^{-1}\cc a_1^{-1} \cc a_2^{-2} \cc b_1^{-1} \\  & \hspace{6cm} \text{   if   }\ \ \ n_3 -3 \le 2 i \le 2 (n_3 -1) \\   
& b_1 \cc a_2^2 \cc a_1  \cc e^{-1} \cc Q(a_i, a_1)^{-1}   \cc a_{i+1}^{-1} \cc Q( a_i , a_1)^{-1} \cc e\cc a_1^{-1} \cc a_2^{-2} \cc b_1^{-1}  \\  & \hspace{6cm} \text{   if   }\ \ \ 10 \le 2 i < n_3 -3 \\
\end{aligned} \right.\\
&a_{n_3}\ \ &\mapsto \ \ &\ \  e^{-1} \cc a_1^{-1} \cc a_2^{-2} \cc b_1^{-1}\\
\end{aligned}
\end{equation*}

\subsection{Case 2:  $n_1=1, n_2 =2, n_3 \ge 7 $}\label{A:case2}

The relator of this case is given by \[ R= e^2 \cc a_{n_3}^2 \cc a_{n_3-1}^2 \cc \cdots a_5^2 \cc c_2^2 \cc a_4^2 \cc c_1^2 \cc a_3^2 \cc b_1^2 \cc a_2^2 \cc a_1^2 \]

\vspace{1ex}
The induced action $f^{-1}_{\mathbf{R} *}$ is given by

\begin{equation*} 
\begin{aligned}
&e\ \ &\mapsto \ \ &\ \  e^{-1} \cc a_1^{-2} \cc a_2^{-2} \cc b_1^{-2} \cc a_3^{-2} \cc c_1^{-1} \cc b_1 \cc a_2^2 \cc a_1 \cc e\\
&c_1\ \ &\mapsto \ \ &\ \  b^{-1} \cc a_3^{-2}  \cc c_1^{-1} \cc c_2 \cc c_1 \cc a_3^2 \cc b_1 \\
& c_2 \ \ &\mapsto \ \ &\left\{ \begin{aligned} & b_1 \cc a_2^2 \cc a_1 \cc e \cc Q( a_{n_3}, a_6) \cc c_1 \cc a_3^2 \cc b_1  \ \ & \text{   if   }n_3 \le 13\\
& b_1 \cc a_2^2 \cc a_1 \cc e^{-1} \cc Q(a_5, a_1)^{-1} \cc c_1 \cc a_3^2 \cc b_1 \ \ & \text{   if   }n_3 \ge 14\\
\end{aligned} \right.\\
&b_1\ \ &\mapsto \ \ &\ \   b^{-1} \cc a_3^{-2}  \cc c_1 \cc a_3^2 \cc b_1^2 \cc a_2^2 \cc a_1^2 \cc e \\
&a_1\ \ &\mapsto \ \ &\ \   e^{-1} \cc a_1^{-2} \cc a_2^{-2} \cc b_1^{-2} \cc a_3^{-2} \cc c_1^{-1} \cc b_1 \cc a_2^{-1} \cc b_1^{-1} \cc c_1 \cc a_3^2 \cc b_1^2 \cc a_2^2 \cc a_1^2 \cc e \\ 
&a_2\ \ &\mapsto \ \ &\ \   e^{-1} \cc a_1^{-2} \cc a_2^{-2} \cc b_1^{-2} \cc a_3^{-2} \cc c_1^{-1} \cc a_3 \cc c_1 \cc a_3^2 \cc b_1^2 \cc a_2^2 \cc a_1^2 \cc e \\ 
&a_3\ \ &\mapsto \ \ &\ \  b^{-1} \cc a_3^{-2}  \cc c_1^{-1} \cc a_4 \cc c_1 \cc a_3^2 \cc b_1 \\
&a_4\ \ &\mapsto \ \ &\ \  b^{-1} \cc a_3^{-2}  \cc c_1^{-1} \cc a_5 \cc c_1 \cc a_3^2 \cc b_1 \\
\end{aligned}
\end{equation*}
\begin{equation*} 
\begin{aligned}
& a_i \ \ &\mapsto \ \ &\left\{ \begin{aligned} & b_1 \cc a_2^2 \cc a_1  \cc e \cc Q(a_{n_3}, a_{i+2})  \cc a_{i+1}^{-1} \cc Q( a_{n_3}, a_{i+2})^{-1} \cc e^{-1}\cc a_1^{-1} \cc a_2^{-2} \cc b_1^{-1} \\  & \hspace{6cm} \text{   if   }\ \ \ n_3 -4 \le 2 i \le 2 (n_3 -1) \\   
& b_1 \cc a_2^2 \cc a_1  \cc e^{-1} \cc Q(a_i, a_1)^{-1}   \cc a_{i+1}^{-1} \cc Q( a_i , a_1)^{-1} \cc e\cc a_1^{-1} \cc a_2^{-2} \cc b_1^{-1}  \\  & \hspace{6cm} \text{   if   } \ \ \ 10 \le 2 i < n_3 -4 \\
\end{aligned} \right.\\
&a_{n_3}\ \ &\mapsto \ \ &\ \  e^{-1} \cc a_1^{-1} \cc a_2^{-2} \cc b_1^{-1}\\
\end{aligned}
\end{equation*}


\subsection{Case 3:  $n_1=1, n_2 = n_3-1 $}\label{A:case3}

The relator of this case is given by \[ R= e^2 \cdot c_{n_2}^2 \cc a_{n_3}^2 \cc c_{n_2-1}^2 \cc a_{n_3-1}^2 \cc \cdots c_2^2 \cc a_3^2 \cc b_1^2 \cc c_1^2 \cc a_2^2 \cc a_1^2 \]

\vspace{1ex}
The induced action $f^{-1}_{\mathbf{R} *}$ is given by 

\begin{equation*} 
\begin{aligned}
&e\ \ &\mapsto \ \ &\ \  e^{-1} \cc a_1^{-2} \cc a_2^{-2} \cc c_1^{-2} \cc b_1^{-1} \cc c_1 \cc a_2^2 \cc a_1 \cc e\\ 
& c_1 \ \ &\mapsto \ \ &\left\{ \begin{aligned} & e^{-1} \cc Q(a_3, a_1)^{-1} \cc c_2^{-1} \cc Q(a_3, a_1) \cc e \ \ & \text{   if   }\ \ n_2 \ge 5\\
& e \cc Q(c_3, a_4) \cc c_2^{-1} \cc Q(c_3, a_4)^{-1} \cc e^{-1} \ \ & \text{   if   }\ \ n_2 = 4\\
\end{aligned} \right.\\
&c_{i}\ \ &\mapsto \ \ &\ \  c_1^{-1} \cc b_1^{-1} \cc c_{i+1} \cc b_1 \cc c_1 \ \ \ \ \ \ \ \ \ \ \ \text{   if   } \ \  2 \le i \le n_2 -1\\
&c_{n_2}\ \ &\mapsto \ \ &\ \  e^{-1} \cc a_1^{-1} \cc a_2^{-2} \cc c_1^{-1}\\
& b_1 \ \ &\mapsto \ \ &\left\{ \begin{aligned} & c_1^{-1} \cc b_1^{-1}\cc c_2^2 \cc a_3^2 \cc b_1^2 \cc c_1^2 \cc a_2^2 \cc a_1^2 \cc e  \ \ & \text{   if   }\ \ n_2 \ge 5\\
& c_1^{-1} \cc b_1^{-1}\cc a_4^{-2} \cc c_3^{-2} \cc a_5^{-2} \cc c_4^{-2} \cc e^{-1} \ \ & \text{   if   }\ \ n_2 = 4\\
\end{aligned} \right.\\
&a_1\ \ &\mapsto \ \ &\ \  e^{-1} \cc Q( c_1, a_1)^{-1} \cc b_1^{-1} \cc c_1 \cc a_2^{-1}\cc c_1^{-1} \cc b_1 \cc Q(c_1, a_1)\cc e \\
&a_2\ \ &\mapsto \ \ &\ \  e^{-1} \cc Q( b_1, a_1)^{-1} \cc a_3^{-1} \cc Q(b_1, a_1)\cc e \\
&a_{i}\ \ &\mapsto \ \ &\ \  c_1^{-1} \cc b_1^{-1} \cc a_{i+1} \cc b_1 \cc c_1 \ \ \ \ \ \ \ \ \ \ \ \text{   if   } \ \  3 \le i \le n_2 =n_3 -1\\
&a_{n_3}\ \ &\mapsto \ \ &\ \  c_1 \cc a_2^2 \cc a_1 \cc e \cc b_1 \cc c_1 \\
\end{aligned}
\end{equation*}


\subsection{Case 4:  $n_1=1, n_2 < n_3-1 $}\label{A:case4} 

The relator of this case is given by \[ R= e^2 \cdot a_{n_3}^2 \cc a_{n_3-1}^2 \cdots a_{n_2+2}^2  \cc c_{n_2}^2 \cc a_{n_2+1}^2 \cc c_{n_2-1}^2 \cc \cdots a_4^2 \cc c_2^2 \cc a_3^2 \cc c_1^2 \cc b_1^2 \cc a_2^2 \cc a_1^2\]

\vspace{1ex}
The induced action $f^{-1}_{\mathbf{R} *}$ is given by 

\begin{equation*} 
\begin{aligned}
&e\ \ &\mapsto \ \ &\ \  e^{-1} \cc a_1^{-2} \cc a_2^{-2} \cc b_1^{-2} \cc c_1^{-1} \cc b_1 \cc a_2^2 \cc a_1 \cc e\\
&c_{i}\ \ &\mapsto \ \ &\ \  b_1^{-1} \cc c_1^{-1} \cc c_{i+1} \cc c_1 \cc b_1 \ \ \ \ \ \ \ \ \ \ \ \text{   if   } \ \  1 \le i \le n_2  - 1\\
& c_{n_2} \ \ &\mapsto \ \ &\left\{ \begin{aligned} & b_1 \cc a_2^2 \cc a_1 \cc e \cc Q(a_{n_3}, a_{n_2+3}) \cc c_1 \cc b_1 \ \ & \text{   if   }\ \ n_3 \le 3 n_2 + 5\\
& b_1 \cc a_2^2 \cc a_1 \cc e^{-1}  \cc Q(a_{n_2+2}, a_{1})^{-1}  \cc c_1 \cc b_1 \ \ & \text{   if   }\ \ n_3 > 3 n_2  +5 \\
\end{aligned} \right.\\
&b_1\ \ &\mapsto \ \ &\ \  b_1^{-1}\cc c_1^{-1} \cc a_3^2 \cc c_1^2 \cc b_1^2 \cc a_2^2\cc a_1^2 \cc e\\
&a_1\ \ &\mapsto \ \ &\ \  e^{-1}\cc a_1^{-2} \cc a_2^{-2} \cc b_1^{-2} \cc c_1^{-1} \cc b_1 \cc a_2^{-1} \cc b_1^{-1} \cc c_1 \cc b_1^2 \cc a_2^2 \cc a_1^2 \cc e\\
&a_2\ \ &\mapsto \ \ &\ \  e^{-1}\cc a_1^{-2} \cc a_2^{-2} \cc b_1^{-2} \cc c_1^{-2}  \cc a_3^{-1} \cc c_1^2 \cc b_1^2 \cc a_2^2 \cc a_1^2 \cc e\\
&a_{j}\ \ &\mapsto \ \ &\ \  b_1^{-1} \cc c_1^{-1} \cc a_{j+1} \cc c_1 \cc b_1 \ \ \ \ \ \ \ \ \ \ \ \text{   if   } \ \  3 \le j \le n_2  +1\\
& a_i \ \ &\mapsto \ \ &\left\{ \begin{aligned} & b_1 \cc a_2^2 \cc a_1 \cc e^{-1} \cc Q(a_i, a_1)^{-1} \cc a_{i+1}^{-1} \cc Q(a_i, a_1) \cc e \cc a_1^{-1} \cc a_2^{-2} \cc b_1^{-1} \\  & \hspace{6cm} \text{   if   }\ \ \ 2 (n_2 +2) \le 2 i < n_3 -n_2 \\   
&  b_1 \cc a_2^2 \cc a_1 \cc e \cc Q(a_{n_3}, a_{i+2}) \cc a_{i+1}^{-1} \cc Q(a_{n_3},a_{i+2})^{-1} \cc e^{-1} \cc a_1^{-1} \cc a_2^{-2} \cc b_1^{-1}   \\  & \hspace{6cm} \text{   if   } \ \ \ 2i  \ge n_3-n_2 \text{, and }\   n_2+2 \le i \le n_3 -1 \\
\end{aligned} \right.\\
\end{aligned}
\end{equation*}
\begin{equation*} 
\begin{aligned}
&a_{n_3}\ \ &\mapsto \ \ &\ \  e^{-1} \cc a_1^{-1} \cc a_2^{-2} \cc b_1^{-1}\\
\end{aligned}
\end{equation*}

\vspace{1ex}

\subsection{Case 5: $2 \le n_1 \le n_2 < n_3$}\label{A:case5}

The relator of this case is given by \[ R=e^2\cdot  a_{n_3}^2 \cc a_{n_3-1}^2 \cc \cdots a_{n_2+1}^2 \cc c_{n_2}^2 \cc a_{n_2}^2 \cc c_{n_2-1}^2 \cc \cdots a_{n_1+1}^2  \cc c_{n_1}^2 \cc b_{n_1}^2 \cc a_{n_1}^2 \cc \cdots c_2^2 \cc b_2^2 \cc a_2^2 \cc c_1^2 \cc b_1^2\cc a_1^2\]

\vspace{1ex}
The induced action $f^{-1}_{\mathbf{R} *}$ is given by

\begin{equation*} 
\begin{aligned}
&e\ \ &\mapsto \ \ &\ \  e^{-1} \cc a_1^{-2} \cc  b_1^{-2} \cc c_1^{-1} \cc b_1  \cc a_1 \cc e\\ 
& c_i \ \ &\mapsto \ \ &\left\{ \begin{aligned} &  e^{-1}  \cc Q(b_{i+1},a_1)^{-1} \cc c_{i+1}^{-1}  \cc Q(b_{i+1},a_1) \cc e \\  & \hspace{3cm} \text{   if   }\ \ \ 1 \le i \le n_1-1 \ \text{ and }\ 6i+5 \le n_1+n_2+n_3 \\   
&  e \cc Q(a_{n_3}, a_{i+2}) \cc c_{i+1}^{-1} \cc Q(a_{n_3}, a_{i+2})^{-1} \cc e^{-1}   \\  & \hspace{3cm} \text{   if   } \ \ \ 1 \le i \le n_1-1 \ \text{ and }\ 6i+5 >  n_1+n_2+n_3  \\
\end{aligned} \right.\\
&c_j\ \ &\mapsto \ \ &\ \  b_1^{-1} \cc c_1^{-1} \cc c_{j+1}\cc c_1 \cc b_1 \hspace{4cm} \text{  if  }\ \ n_1 \le j \le n_2-1 \\
& c_{n_2} \ \ &\mapsto \ \ &\left\{ \begin{aligned} & b_1 \cc a_1 \cc e  \cc Q(a_{n_3}, a_{n_2+2}) \cc c_1 \cc b_1 \  &\text{   if   }\ \ \ n_3 \le n_1 + 3 n_2 +2  \\   
&   b_1 \cc a_1 \cc e^{-1}  \cc Q(a_{n_2+1}, a_{1})^{-1} \cc c_1 \cc b_1   \ \hspace{0.9cm} &\text{   if   } \ \ \ n_3 > n_1 + 3 n_2 +2\\
\end{aligned} \right.\\
& b_i \ \ &\mapsto \ \ &\left\{ \begin{aligned} &  e^{-1}  \cc Q(a_{i+1},a_1)^{-1} \cc b_{i+1}^{-1}  \cc Q(a_{i+1},a_1) \cc e \\  & \hspace{3cm} \text{   if   }\ \ \ 1 \le i \le n_1-1 \ \text{ and }\ 6i+3 \le n_1+n_2+n_3 \\   
&  e \cc Q(a_{n_3}, c_{i+1}) \cc b_{i+1}^{-1} \cc Q(a_{n_3}, c_{i+1})^{-1} \cc e^{-1}   \\  & \hspace{3cm} \text{   if   } \ \ \ 1 \le i \le n_1-1 \ \text{ and }\ 6i+3 >  n_1+n_2+n_3  \\
\end{aligned} \right.\\
& b_{n_1} \ \ &\mapsto \ \ &\left\{ \begin{aligned} & b_1^{-1} \cc c_1^{-1} \cc Q(a_{n_3}, c_{n_1+1})^{-1} \cc e^{-1} \  &\text{   if   }\ \ \ 5 n_1 +2 \ge n_2 + n_3  \\   
&  b_1^{-1} \cc c_1^{-1} \cc Q(a_{n_1+1}, a_1) \cc e   \ \hspace{2.7cm} &\text{   if   } \ \ \ 5 n_1 +2 < n_2 + n_3 \\
\end{aligned} \right.\\
& a_i \ \ &\mapsto \ \ &\left\{ \begin{aligned} &  e^{-1}  \cc Q(c_{i},a_1)^{-1} \cc a_{i+1}^{-1}  \cc Q(c_{i},a_1) \cc e \\  & \hspace{3cm} \text{   if   }\ \ \ 1 \le i \le n_1-1 \ \text{ and }\ 6i+1 \le n_1+n_2+n_3 \\   
&  e \cc Q(a_{n_3}, b_{i+1}) \cc a_{i+1}^{-1} \cc Q(a_{n_3}, b_{i+1})^{-1} \cc e^{-1}   \\  & \hspace{3cm} \text{   if   } \ \ \ 1 \le i \le n_1-1 \ \text{ and }\ 6i+1 >  n_1+n_2+n_3  \\
\end{aligned} \right.\\
& a_{n_1} \ \ &\mapsto \ \ &\left\{ \begin{aligned} & e^{-1}  \cc Q(c_{n_1},a_1)^{-1} \cc a_{n_1+1}^{-1}  \cc Q(c_{n_1},a_1) \cc e \  &\text{   if   }\ \ \ 5 n_1  \le  n_2 + n_3  \\   
&   e \cc Q(a_{n_3}, c_{n_1+1}) \cc a_{n_1+1}^{-1} \cc Q(a_{n_3}, c_{n_1+1})^{-1} \cc e^{-1}     \ \hspace{2.7cm} &\text{   if   } \ \ \ 5 n_1 >  n_2 + n_3 \\
\end{aligned} \right.\\
&a_j\ \ &\mapsto \ \ &\ \  b_1^{-1} \cc c_1^{-1} \cc a_{j+1}\cc c_1 \cc b_1 \hspace{4cm} \text{  if  }\ \ n_1+1 \le j \le n_2 \\
& a_k \ \ &\mapsto \ \ &\left\{ \begin{aligned} & b_1 \cc a_1 \cc e \cc Q(a_{n_3}, a_{k+2}) \cc a_{k+1}^{-1} \cc Q(a_{n_3}, a_{k+2})^{-1} \cc e^{-1} \cc a_1^{-1} \cc b_1^{-1} \\  & \hspace{6cm} \text{   if   }\ \ \ 2k \ge n_3 - n_1-n_2 -1 \text{ and } k \ge n_2+1 \\   
&  b_1 \cc a_1 \cc e^{-1} \cc Q(a_{1}, a_{k})^{-1} \cc a_{k+1}^{-1} \cc Q(a_{k}, a_{1}) \cc e \cc a_1^{-1} \cc b_1^{-1}   \\  & \hspace{6cm} \text{   if   } \ \ \ 2 n_2 +2 \le 2k < n_3 - n_1-n_2 -1 \\
\end{aligned} \right.\\
&a_{n_3}\ \ &\mapsto \ \ &\ \  e^{-1} \cc a_1^{-1} \cc b_1^{-1}\\
\end{aligned}
\end{equation*}


\subsection{Case 6: $2 \le n_1 < n_2 = n_3$}\label{A:case6}

The relator of this case is given by \[ R=e^2\cdot  c_{n_2}^2 \cc a_{n_2}^2 \cc c_{n_2-1}^2 \cc \cdots  \cc c_{n_1+1}^2\cc a_{n_1+1}^2  \cc b_{n_1}^2 \cc c_{n_1}^2\cc a_{n_1}^2 \cc \cdots b_2^2 \cc c_2^2 \cc a_2^2 \cc b_1^2 \cc c_1^2\cc a_1^2\]

\vspace{1ex}
The induced action $f^{-1}_{\mathbf{R} *}$ is given by

\begin{equation*} 
\begin{aligned}
&e\ \ &\mapsto \ \ &\ \  e^{-1} \cc a_1^{-2} \cc  c_1^{-2} \cc b_1^{-1} \cc c_1  \cc a_1 \cc e\\ 
& c_i \ \ &\mapsto \ \ &\left\{ \begin{aligned} &  e^{-1}  \cc Q(a_{i+1},a_1)^{-1} \cc c_{i+1}^{-1}  \cc Q(a_{i+1},a_1) \cc e \\  & \hspace{3cm} \text{   if   }\ \ \ 1 \le i \le n_1-1 \ \text{ and }\ 6i+3 \le n_1+2 n_2 \\   
&  e \cc Q(c_{n_2}, b_{i+1}) \cc c_{i+1}^{-1} \cc Q(c_{n_2}, b_{i+1})^{-1} \cc e^{-1}   \\  & \hspace{3cm} \text{   if   } \ \ \ 1 \le i \le n_1-1 \ \text{ and }\ 6i+3 >  n_1+2 n_2  \\
\end{aligned} \right.\\
& c_{n_1} \ \ &\mapsto \ \ &\left\{ \begin{aligned} &  e^{-1}  \cc Q(a_{n_1+1},a_1)^{-1} \cc c_{n_1+1}^{-1}  \cc Q(a_{n_1+1},a_1) \cc e \\  & \hspace{5cm} \text{   if   }\ \ \ 5 n_1+3 \le 2 n_2 \\   
&  e \cc Q(c_{n_2}, a_{n_1+2}) \cc c_{n_1+1}^{-1} \cc Q(c_{n_2}, a_{n_1+2})^{-1} \cc e^{-1}   \\  & \hspace{5cm} \text{   if   } \ \ \ 5 n_1+3 >  2 n_2  \\
\end{aligned} \right.\\
&c_j\ \ &\mapsto \ \ &\ \  c_1^{-1} \cc b_1^{-1} \cc c_{j+1}\cc b_1 \cc c_1 \hspace{4cm} \text{  if  }\ \ n_1+1 \le j \le n_2-1 \\
& c_{n_2} \ \ &\mapsto \ \ &\ \  e^{-1} \cc a_1^{-1} \cc c_1^{-1} \\
& b_i \ \ &\mapsto \ \ &\left\{ \begin{aligned} &  e^{-1}  \cc Q(c_{i+1},a_1)^{-1} \cc b_{i+1}^{-1}  \cc Q(c_{i+1},a_1) \cc e \\  & \hspace{3cm} \text{   if   }\ \ \ 1 \le i \le n_1-1 \ \text{ and }\ 6i+5 \le n_1+2 n_2 \\   
&  e \cc Q(c_{n_2}, a_{i+2}) \cc b_{i+1}^{-1} \cc Q(c_{n_2}, a_{i+2})^{-1} \cc e^{-1}   \\  & \hspace{3cm} \text{   if   } \ \ \ 1 \le i \le n_1-1 \ \text{ and }\ 6i+5 >  n_1+2 n_2  \\
\end{aligned} \right.\\
& b_{n_1} \ \ &\mapsto \ \ &\left\{ \begin{aligned} & c_1^{-1} \cc b_1^{-1} \cc Q(c_{n_2}, a_{n_1+2})^{-1} \cc e^{-1} \  &\text{   if   }\ \ \ 5 n_1 +3 \ge 2 n_2   \\   
&  c_1^{-1} \cc b_1^{-1} \cc Q(c_{n_1+1}, a_1) \cc e   \ \hspace{2.7cm} &\text{   if   } \ \ \ 5 n_1 +3 < 2 n_2  \\
\end{aligned} \right.\\
& a_i \ \ &\mapsto \ \ &\left\{ \begin{aligned} &  e^{-1}  \cc Q(b_{i},a_1)^{-1} \cc a_{i+1}^{-1}  \cc Q(b_{i},a_1) \cc e \\  & \hspace{3cm} \text{   if   }\ \ \ 1 \le i \le n_1-1 \ \text{ and }\ 6i+1 \le n_1+2 n_2 \\   
&  e \cc Q(c_{n_2}, c_{i+1}) \cc a_{i+1}^{-1} \cc Q(c_{n_2}, c_{i+1})^{-1} \cc e^{-1}   \\  & \hspace{3cm} \text{   if   } \ \ \ 1 \le i \le n_1-1 \ \text{ and }\ 6i+1 >  n_1+2 n_2  \\
\end{aligned} \right.\\
& a_{n_1} \ \ &\mapsto \ \ &\left\{ \begin{aligned} & e^{-1}  \cc Q(b_{n_1},a_1)^{-1} \cc a_{n_1+1}^{-1}  \cc Q(b_{n_1},a_1) \cc e \  &\text{   if   }\ \ \ 5 n_1  \le 2  n_2   \\   
&   e \cc Q(c_{n_2}, c_{n_1+1}) \cc a_{n_1+1}^{-1} \cc Q(c_{n_2}, c_{n_1+1})^{-1} \cc e^{-1}     \ \hspace{2.7cm} &\text{   if   } \ \ \ 5 n_1 >  2 n_2  \\
\end{aligned} \right.\\
&a_j\ \ &\mapsto \ \ &\ \  c_1^{-1} \cc b_1^{-1} \cc a_{j+1}\cc b_1 \cc c_1 \hspace{4cm} \text{  if  }\ \ n_1+1 \le j \le n_2-1 \\
&a_{n_2}\ \ &\mapsto \ \ &\ \ c_1\cc a_1\cc e \cc b_1 \cc c_1\\
\end{aligned}
\end{equation*}

 \section{Invariant $\Gamma_A$ Sets}\label{ApendB}
 For each case discussed in this article, we list the invariant set of admissible cyclic words, splitting and merging maps $S,M$, and a vector $v_\lambda$ such that $S v_\lambda = \lambda M v_\lambda$ together with $\lambda>1$. 

\subsection{Orbit data $1,3,9$ with a cyclic permutation}\label{B:139}
Let $f_\mathbf{R}: X(\mathbf{R}) \to X(\mathbf{R})$ be a real diffeomorphism associated with the orbit data $1,3,9$ and a cyclic permutation. 
Let \[\Gamma = \{ \gamma_1, \dots, \gamma_{28} \},\ \Delta = \{ \gamma_1, \gamma_3,\gamma_4,\gamma_5,\gamma_{11},\gamma_{12},\gamma_{13},\gamma_{17},\gamma_{20},\gamma_{21}, \zeta_1,\dots,\zeta_{10}, \mu_1, \dots, \mu_6, \eta_1, \dots \eta_4\}\]
be two ordered sets of non-cyclic words in $\pi_1( X(\mathbf{R}),P_{fix})$ where 
\begin{equation*}
\begin{aligned}
\gamma_1  \cc &=\cc a_2^{-1}\cc b_1^{-1}\cc a_7^{-1}\cc a_8^{-2}\cc a_9^{-2}\cc e^{-1}, \quad
&\gamma_2 \cc&=\cc a_1^{-1}\cc a_2^{-2} \cc b_1^{-2} \cc c_1^{-1} \cc b_1\cc a_2 \cc e^{-1} \\
\gamma_3 \cc&=\cc a_2^{-1} \cc b_1^{-1} \cc a_6^{-1} \cc a_7^{-2} \cc a_8^{-2} \cc a_9^{-2} \cc e^{-1}, \quad 
&\gamma_4 \cc&=\cc a_1^{-1} \cc a_2^{-2} \cc b_1^{-2} \cc c_1^{-2} \cc a_3^{-1} \cc c_1 \cc b_1 \cc e^{-1} \\
\gamma_5 \cc&=\cc a_2^{-1} \cc b_1^{-1} \cc a_5^{-1} \cc a_6^{-2} \cc a_7^{-2} \cc a_8^{-2} \cc a_9^{-2} \cc e^{-1}, \quad
& \gamma_6 \cc&=\cc a_1^{-1} \cc a_2^{-2} \cc b_1^{-2} \cc c_1^{-2} \cc a_3^{-1} \cc c_1 \cc b_1^2 \cc a_2 \cc e^{-1} \\
\end{aligned}
\end{equation*}
\begin{equation*} 
\begin{aligned}
&\gamma_7 \cc=\cc a_1\cc a_2^{-2} \cc b_1^{-2} \cc c_1^{-2} \cc a_3^{-1} \cc c_1 \cc b_1^2 \cc a_2^2 \cc a_1 \cc e \cc a_9 \cc b_1 \cc a_2 \cc e^{-1}, \\
& \gamma_8 \cc=\cc a_2^{-2} \cc b_1^{-1} \cc c_1\cc b_1^2 \cc a_2^2 \cc a_1 \cc e\cc a_9^2 \cc a_8^2 \cc a_7^2 \cc a_6 \cc b_1 \cc a_2 \cc e^{-1}\\
&\gamma_9\cc = \cc a_1^{-1} \cc a_2^{-2} \cc b_1^{-2} \cc c_1^{-2} \cc a_3^{-1} \cc c_1 \cc b_1^2 \cc a_2^2 \cc a_1 \cc e \cc a_9^2 \cc a_8\cc b_1 \cc a_2 \cc e^{-1}\\ 
&\gamma_{10} \cc  = \cc a_1^{-1} \cc a_2^{-2} \cc b_1^{-2} \cc c_1^{-2} \cc a_3^{-1} \cc c_1 \cc b_1^2 \cc a_2^2 \cc a_1 \cc e\cc a_9^2 \cc a_8 \cc a_7 \cc b_1 \cc a_2 \cc e^{-1} \\
&\gamma_{11} \cc = \cc a_1^{-1} \cc a_2^{-2} \cc b_1^{-2} \cc c_1^{-2} \cc a_3^{-2} \cc c_2^{-1} \cc a_4^{-1} \cc c_3^{-2} \cc a_5^{-2} \cc a_6^{-2} \cc a_7^{-2} \cc a_8^{-2} \cc a_9^{-2} \cc e^{-1} \\
&\gamma_{12} \cc = \cc a_1^{-2} \cc a_2^{-2} \cc b_1^{-2} \cc c_1^{-2} \cc a_3^{-2} \cc c_2^{-1} \cc a_4^{-1} \cc c_3^{-2} \cc a_5^{-2} \cc a_6^{-2} \cc a_7^{-2} \cc a_8^{-2} \cc a_9^{-2} \cc e^{-1} \\
&\gamma_{13} \cc = \cc a_1^{-2} \cc a_2^{-2} \cc b_1^{-2} \cc c_1^{-2} \cc a_3^{-2} \cc c_2^{-2} \cc a_4^{-1} \cc c_3^{-1} \cc a_5^{-2} \cc a_6^{-2} \cc a_7^{-2} \cc a_8^{-2} \cc a_9^{-2} \cc e^{-1} \\
&\gamma_{14} \cc = \cc a_1^{-1} \cc a_2^{-2} \cc b_1^{-2} \cc c_1^{-2} \cc a_3^{-1}\cc c_1\cc b_1^2 \cc a_2^2 \cc a_1 \cc e \cc a_9^2 \cc a_8^2 \cc a_7^2 \cc a_6 \cc b_1 \cc a_2 \cc e^{-1} \\
&\gamma_{15} \cc = \cc a_1^{-1} \cc a_2^{-2} \cc b_1^{-2} \cc c_1^{-1} \cc a_3\cc c_1^2\cc b_1^2 \cc a_2^2 \cc a_1 \cc e \cc a_9^2 \cc a_8^2 \cc a_7^2 \cc a_6^2 \cc a_5 \cc b_1 \cc a_2 \cc e^{-1} \\
&\gamma_{16} \cc = \cc a_1^{-1} \cc a_2^{-2} \cc b_1^{-2} \cc c_1^{-2} \cc a_3^{-1}\cc c_2 \cc a_3^2 \cc c_1^2 \cc b_1^2 \cc a_2^2 \cc a_1^2 \cc e \cc a_2^{-1} \cc b_1^{-1} \cc a_7^{-1} \cc a_8^{-2} \cc a_9^{-2} \cc e^{-1} \\
&\gamma_{17} \cc = \cc a_1^{-2} \cc a_2^{-2} \cc b_1^{-2} \cc c_1^{-2} \cc a_3^{-2} \cc c_2^{-2} \cc a_4^{-1}\cc c_2 \cc a_3^2 \cc c_1^2 \cc b_1^2 \cc a_2^2 \cc a_1^2 \cc e \cc a_2^{-1} \cc b_1^{-1} \cc a_9^{-1} \cc e^{-1} \\
&\gamma_{18} \cc = \cc a_1^{-2} \cc a_2^{-2} \cc b_1^{-2} \cc c_1^{-2} \cc a_3^{-2} \cc c_2^{-1} \cc a_3 \cc c_1^2 \cc b_1^2 \cc a_2^2 \cc a_1 \cc e \cc a_9^2 \cc a_8^2 \cc a_7^2 \cc a_6 \cc b_1 \cc a_2 \cc e^{-1} \\
&\gamma_{19} \cc=\cc a_1^{-1} \cc a_2^{-2} \cc b_1^{-2} \cc c_1^{-1} \cc b_1 \cc a_2\cc e \cc a_9^2 \cc a_8^2 \cc a_7^2 \cc a_6^2 \cc a_5^2 \cc c_3 \cc a_5^{-1} \cc a_6^{-2} \cc a_7^{-2} \cc a_8^{-2} \cc a_9^{-2} \cc e^{-1} \\
&\gamma_{20} \cc = \cc a_1^{-2} \cc a_2^{-2} \cc b_1^{-2} \cc c_1^{-2} \cc a_3^{-2} \cc c_2^{-2} \cc a_4^{-1}\cc c_2 \cc a_3^2 \cc c_1^2 \cc b_1^2 \cc a_2^2 \cc a_1^2 \cc e \cc a_2^{-1} \cc b_1^{-1} \cc a_8^{-1}\cc a_9^{-2} \cc e^{-1} \\
&\gamma_{21} \cc = \cc a_1^{-2} \cc a_2^{-2} \cc b_1^{-2} \cc c_1^{-2} \cc a_3^{-2} \cc c_2^{-2} \cc a_4^{-1}\cc c_2 \cc a_3^2 \cc c_1^2 \cc b_1^2 \cc a_2^2 \cc a_1^2 \cc e \cc a_2^{-1} \cc b_1^{-1} \cc a_7^{-1} \cc a_8^{-2}\cc a_9^{-2} \cc e^{-1} \\
& \gamma_{22} \cc= \cc a_1^{-1} \cc a_2^{-2} \cc b_1^{-2} \cc c_1^{-1} \cc a_3\cc c_1^2\cc b_1^2 \cc a_2^2 \cc a_1 \cc e \cc a_9^2 \cc a_8^2 \cc a_7^2 \cc a_6^2 \cc a_5^2 \cc c_3 \cc a_5^{-1} \cc a_6^{-2} \cc a_7^{-2} \cc a_8^{-2} \cc a_9^{-2} \cc e^{-1}\\
& \gamma_{23} \cc= \cc a_1^{-1} \cc a_2^{-2} \cc b_1^{-2} \cc c_1^{-1} \cc a_3\cc c_1^2\cc b_1^2 \cc a_2^2 \cc a_1 \cc e \cc a_9^2 \cc a_8^2 \cc a_7^2 \cc a_6^2 \cc a_5 \cc c_3^{-1} \cc a_5^{-2} \cc a_6^{-2} \cc a_7^{-2} \cc a_8^{-2} \cc a_9^{-2} \cc e^{-1}\\
& \gamma_{24} \cc= \cc a_1^{-1} \cc a_2^{-2} \cc b_1^{-2} \cc c_1^{-1} \cc a_3\cc c_1^2\cc b_1^2 \cc a_2^2 \cc a_1 \cc e \cc a_9^2 \cc a_8^2 \cc a_7^2 \cc a_6^2 \cc a_5^2 \cc c_3 \cc a_4^{-1} \cc a_3^{-2} \cc a_5^{-2} \cc a_6^{-2} \cc a_7^{-2} \cc a_8^{-2} \cc a_9^{-2} \cc e^{-1}\\
& \gamma_{25} \cc= \cc a_1^{-2} \cc a_2^{-2} \cc b_1^{-2} \cc c_1^{-2} \cc a_3^{-2}\cc c_2^{-1} \cc a_3\cc c_1^2\cc b_1^2 \cc a_2^2 \cc a_1 \cc e \cc a_9^2 \cc a_8^2 \cc a_7^2 \cc a_6^2 \cc a_5 \cc c_3^{-1} \cc a_5^{-2} \cc a_6^{-2} \cc a_7^{-2} \cc a_8^{-2} \cc a_9^{-2} \cc e^{-1}\\
& \gamma_{26} \cc= \cc a_1^{-2} \cc a_2^{-2} \cc b_1^{-2} \cc c_1^{-1} \cc a_3^{-2}\cc c_2^{-2} \cc a_4^{-1} \cc c_2 \cc a_3^2 \cc c_1^2\cc b_1^2 \cc a_2^2 \cc a_1 \cc e \cc a_9^2 \cc a_8^2 \cc a_7^2 \cc a_6^2 \cc a_5^2 \cc c_3 \cc  a_5^{-1} \cc a_6^{-2} \cc a_7^{-2} \cc a_8^{-2} \cc a_9^{-2} \cc e^{-1}\\
&\gamma_{27} \cc = \cc a_1^{-2} \cc a_2^{-2} \cc b_1^{-2} \cc c_1^{-2} \cc a_3^{-2} \cc c_2^{-2} \cc a_4^{-1} \cc c_2 \cc a_3^2 \cc c_1^2 \cc b_1^2 \cc a_2^2 \cc a_1^2 \cc e \cc b_1^{-1} \cc c_1^{-1} \cc a_3 \cc c_1^2 \cc b_1^2 \cc a_2^2 \cc a_1\cc e \\
& \hspace{3cm} \cc a_9^2 \cc a_8 \cc a_7^2 \cc a_6^2 \cc a_5^2 \cc c_3 \cc a_5^{-1} \cc a_6^{-2} \cc a_7^{-2} \cc a_8^{-2} \cc a_9^{-2} \cc e^{-1} \\
&\gamma_{28} \cc = \cc a_1^{-2} \cc a_2^{-2} \cc b_1^{-2} \cc c_1^{-2} \cc a_3^{-2} \cc c_2^{-2} \cc a_4^{-1} \cc c_2 \cc a_3^2 \cc c_1^2 \cc b_1^2 \cc a_2^2 \cc a_1^2 \cc e \cc a_2^{-1} \cc b_1^{-2} \cc c_1^{-1} \cc a_3 \cc c_1^2 \cc b_1^2 \cc a_2^2 \cc a_1\cc e \\
& \hspace{3cm} \cc a_9^2 \cc a_8 \cc a_7^2 \cc a_6^2 \cc a_5^2 \cc c_3 \cc a_5^{-1} \cc a_6^{-2} \cc a_7^{-2} \cc a_8^{-2} \cc a_9^{-2} \cc e^{-1} \\
\end{aligned}
\end{equation*}
and 
\begin{equation*}
\begin{aligned}
& \zeta_1 \cc = \cc a_1^{-1} \cc a_2^{-2} \cc c_1^{-2} \cc b_1 \\
& \zeta_2 \cc= \cc a_1^{-1} \cc a_2^{-2} \cc b_1^{-2} \cc c_1^{-2} \cc a_3^{-1} \cc c_1 \cc b_1 \cc c_1 \cc b_1^2 \cc c_1^{-2} \cc b_1^{-1} \\
& \zeta_3 \cc= \cc a_1^{-1} \cc a_2^{-2} \cc b_1^{-2} \cc c_1^{-2} \cc a_3^{-1} \cc c_1 \cc b_1^2 \cc a_2^2 \cc a_1 \cc e \cc a_9 \cc c_1 \cc b_1^2 \cc c_1^{-2} \cc b_1^{-1}\\
&\zeta_4 \cc=\cc a_2^{-1} \cc b_1^{-1} \cc c_1 \cc b_1^2 \cc a_2^2 \cc a_1 \cc e \cc a_9^2 \cc a_8^2 \cc a_7^2 \cc a_6 \cc c_1 \cc b_1^2 \cc c_1^{-2} \cc b_1^{-1}\\ 
&\zeta_5 \cc=\cc a_1^{-1} \cc a_2^{-2} \cc b_1^{-2} \cc c_1^{-1} \cc a_3 \cc c_1^2 \cc b_1^2 \cc a_2^2 \cc a_1 \cc a_2^{-2} \cc b_1^{-1} \cc c_1 \cc b_1^2 \cc c_1^{-2} \cc b_1^{-1} \\
&\zeta_6\cc = \cc a_1^{-2} \cc a_2^{-2} \cc b_1^{-2} \cc c_1^{-2} \cc a_3^{-2} \cc c_2^{-1} \cc a_3 \cc c_1^2 \cc b_1^2 \cc a_2^2 \cc a_1 \cc a_2^{-2} \cc b_1^{-1} \cc c_1 \cc b_1^2 \cc c_1^{-2} \cc b_1^{-1}\\
&\zeta_7 \cc = \cc a_1^{-2} \cc a_2^{-2} \cc b_1^{-2} \cc c_1^{-2} \cc a_3^{-2} \cc c_2^{-2}\cc a_4^{-1} \cc c_2  \cc a_3^2 \cc c_1^2 \cc b_1^2 \cc a_2^2 \cc a_1 \cc a_2^{-2} \cc b_1^{-1} \cc c_1 \cc b_1^2 \cc c_1^{-2} \cc b_1^{-1}\\
&\zeta_8 \cc = \cc a_1^{-2} \cc a_2^{-2} \cc b_1^{-2} \cc c_1^{-2} \cc a_3^{-2} \cc c_2^{-1}  \cc a_3 \cc c_1^2 \cc b_1^2 \cc a_2^2 \cc a_1\cc e \cc a_9^2 \cc a_8^2 \cc a_7^2 \cc a_6 \cc c_1 \cc b_1^2 \cc c_1^{-2} \cc b_1^{-1}\\
&\zeta_9 \cc = \cc a_1^{-2} \cc a_2^{-2} \cc b_1^{-2} \cc c_1^{-2} \cc a_3^{-2} \cc c_2^{-2}\cc a_4^{-1} \cc c_2  \cc a_3^2 \cc c_1^2 \cc b_1^2 \cc a_2^2 \cc a_1^2\cc e  \cc b_1^{-1} \cc c_1^{-1} \cc a_3 \cc c_1^2 \cc b_1^2 \cc a_2^2 \cc a_1 \cc a_2^{-2} \cc b_1^{-1} \cc c_1 \cc b_1^2 \cc c_1^{-2} \cc b_1^{-1}\\
&\zeta_{10} \cc = \cc a_1^{-2} \cc a_2^{-2} \cc b_1^{-2} \cc c_1^{-2} \cc a_3^{-2} \cc c_2^{-2}\cc a_4^{-1} \cc c_2  \cc a_3^2 \cc c_1^2 \cc b_1^2 \cc a_2^2 \cc a_1^2\cc e \cc a_2^{-1} \cc \cc b_1^{-2} \cc c_1^{-1} \cc a_3 \cc c_1^2 \cc b_1^2 \cc a_2^2 \cc a_1 \cc a_2^{-2} \cc b_1^{-1} \cc c_1 \cc b_1^2 \cc c_1^{-2} \cc b_1^{-1}\\
\end{aligned}
\end{equation*}
\begin{equation*} 
\begin{aligned}
& \mu_1 \cc=\cc b_1 \cc c_1^2 \cc b_1^{-2} \cc c_1^{-1} \cc b_1 \cc a_2 \cc e^{-1} \\
& \mu_2 \cc = \cc  b_1 \cc c_1^2 \cc b_1^{-2} \cc c_1^{-2}\cc a_3^{-1} \cc c_2 \cc a_3^2 \cc c_1^2  \cc b_1^2 \cc a_2^2\cc a_1^2 \cc e\cc a_2^{-1} \cc b_1^{-1} \cc a_7^{-1} \cc a_8^{-2} \cc a_9^{-2}  \cc e^{-1} \\
& \mu_3 \cc = \cc  b_1 \cc c_1^2 \cc b_1^{-2} \cc c_1^{-1} \cc b_1 \cc a_2 \cc e \cc a_9^2 \cc a_8^2 \cc a_7^2 \cc a_6^2 \cc a_5^2 \cc c_3 \cc a_5^{-1} \cc a_6^{-2} \cc a_7^{-2} \cc a_8^{-2} \cc a_9^{-2} \cc e^{-1}\\
& \mu_4 \cc = \cc  b_1 \cc c_1^2 \cc b_1^{-2} \cc c_1^{-1} \cc b_1 \cc a_2^2 \cc e \cc a_9^2 \cc a_8^2 \cc a_7^2 \cc a_6^2 \cc a_5^2 \cc c_3 \cc a_5^{-1} \cc a_6^{-2} \cc a_7^{-2} \cc a_8^{-2} \cc a_9^{-2} \cc e^{-1}\\
& \mu_5 \cc = \cc  b_1 \cc c_1^2 \cc b_1^{-2} \cc c_1^{-1} \cc b_1 \cc a_2^2 \cc e \cc a_9^2 \cc a_8^2 \cc a_7^2 \cc a_6^2 \cc a_5 \cc c_3^{-1} \cc a_5^{-2} \cc a_6^{-2} \cc a_7^{-2} \cc a_8^{-2} \cc a_9^{-2} \cc e^{-1}\\
& \mu_6 \cc = \cc  b_1 \cc c_1^2 \cc b_1^{-2} \cc c_1^{-1} \cc b_1 \cc a_2^2 \cc e \cc a_9^2 \cc a_8^2 \cc a_7^2 \cc a_6^2 \cc a_5^2 \cc c_3\cc a_4^{-1} \cc c_3^{-2}  \cc a_5^{-2} \cc a_6^{-2} \cc a_7^{-2} \cc a_8^{-2} \cc a_9^{-2} \cc e^{-1}\\
&\eta_1\cc = \cc b_1 \cc c_1^2 \cc b_1^{-2} \cc c_1^{-2} \cc a_3^{-1} \cc c_1 \cc b_1^2 \cc a_2^2 \cc a_1 \cc e \cc a_9^2 \cc a_8 \cc c_1 \cc b_1^2 \cc c_1^{-2} \cc b_1^{-1} \\
&\eta_2\cc = \cc b_1 \cc c_1^2 \cc b_1^{-2} \cc c_1^{-1}  \cc b_1 \cc a_2^2  \cc e \cc a_9^2 \cc a_8^2 \cc a_7^2 \cc a_6^2 \cc a_5 \cc c_1 \cc b_1^2 \cc c_1^{-2} \cc b_1^{-1} \\
&\eta_3\cc = \cc b_1 \cc c_1^2 \cc b_1^{-2} \cc c_1^{-2}\cc a_3^{-1}\cc c_1  \cc b_1^2 \cc a_2^2\cc a_1   \cc e \cc a_9^2 \cc a_8^2 \cc a_7 \cc c_1 \cc b_1^2 \cc c_1^{-2} \cc b_1^{-1} \\
&\eta_4\cc = \cc b_1 \cc c_1^2 \cc b_1^{-2} \cc c_1^{-2}\cc a_3^{-1}\cc c_1  \cc b_1^2 \cc a_2^2\cc a_1   \cc e \cc a_9^2 \cc a_8^2 \cc a_7^2 \cc a_6 \cc c_1 \cc b_1^2 \cc c_1^{-2} \cc b_1^{-1} \\
\end{aligned}
\end{equation*}
The admissible quadruples are
\begin{equation*}
\begin{aligned}
A= &\{ (1,15,17,22),(2,17,22,10),(2,26,16,15),(2,26,16,23),(2,27,16,23),(2,28,10,12),\\
&(2,28,14,17),(2,28,14,20),(2,28,16,23),(3,2,17,22),(3,2,26,16),(3,2,27,16),(3,2,28,10),\\
&(3,2,28,14),(3,2,28,16),(3,16,15,17),(3,19,16,15),(3,23,6,12),(4,12,8,13),(4,12,8,20),\\
&(4,12,8,21),(5,9,12,3),(5,10,13,9),(5,10,18,13 ),(5,10,25,8),(5,10,25,11),(6,12,8,20),\\
&(7,12,3,23),(7,12,5,9),(7,12,5,10),(7,12,8,20),(8,13,9,12),(8,20,22,10),(8,20,24,3),\\
&(8,21,23,6),(8,21,23,7),(9,12,3,2),(9,12,3,23),(10,12,3,2),(10,13,9,12),(10,18,13,9),\\
&(10,25,4,12),(10,25,6,12),(10,25,8,13),(10,25,11,8),(11,3,2,17),(11,3,2,27),(11,3,2,28),\\
&(11,3,23,6),(11,5,9,12),(11,5,10,13),(11,5,10,18),(11,5,10,25),(11,8,13,9),(11,8,20,22),\\
&(11,8,20,24),(11,8,21,23),(12,3,2,17),(12,3,2,27),(12,3,2,28),(12,3,23,6),(12,5,9,12),\\
&(12,5,10,13),(12,5,10,18),(12,5,10,25),(12,8,13,9),(12,8,20,22),(12,8,20,24),\\
&(12,8,21,23),(13,9,12,3),(14,17,22,10),(14,20,15,17),(14,20,24,1),(14,20,24,3),\\
&(15,17,22,10),(16,15,17,22),(16,23,7,12),(16,23,9,12),(17,22,10,12),(17,22,10,25),\\
&(18,13,9,12),(19,16,15,17),(20,15,17,22),(20,22,10,25),(20,24,1,15),(20,24,3,2),\\
&(20,24,3,16),(20,24,3,19),(21,23,6,12),(21,23,7,12),(22,10,12,3),(22,10,25,4),\\
&(22,10,25,6),(22,10,25,11),(23,6,12,8),(23,7,12,3),(23,7,12,5),(23,7,12,8),\\
&(23,9,12,3),(24,1,15,17),(24,3,2,26),(24,3,2,27),(24,3,16,15),(24,3,19,16),(25,4,12,8),\\
&(25,6,12,8),(25,8,13,9),(25,11,8,13),(26,16,15,17),(26,16,23,7),(26,16,23,9 ),\\
&(27,16,23,7),(28,10,12,3),(28,14,17,22),(28,14,20,15),(28,14,20,24),(28,16,23,7)\} 
\end{aligned}
\end{equation*}
Let $V= Span \, \Gamma$ and $W= Span\, \Delta$. The splitting map $S: V\to W$ is given by 
\begin{equation*}
S : \left\{ \begin{aligned} &\gamma_1 \mapsto \eta_1,\quad \gamma_2 \mapsto \mu_1 + \zeta_6, \quad \gamma_3 \mapsto \eta_3,\quad \gamma_4 \mapsto \mu_1 + \zeta_7, \quad \gamma_5 \mapsto \eta_4,\\
&\gamma_6 \mapsto \mu_1 + \zeta_9, \quad \gamma_7 \mapsto \mu_1 + \zeta_{10},\quad \gamma_8 \mapsto \mu_2 + \zeta_5,\quad \gamma_9 \mapsto \mu_1+ \gamma_{17} + \zeta_5, \\
&\gamma_{10} \mapsto \mu_1 +\gamma_{20}+ \zeta_5, \quad \gamma_{11} \mapsto \mu_3 + \zeta_{1},\quad \gamma_{12} \mapsto \mu_4 + \zeta_1,\quad \gamma_{13} \mapsto \eta_2,\\
&\gamma_{14} \mapsto \mu_1 +\gamma_{21}+ \zeta_5, \quad \gamma_{15} \mapsto \mu_1 + \gamma_{12} + \gamma_3+\zeta_{5}, \quad \gamma_{16} \mapsto \mu_1+\gamma_{13} + \zeta_1+\eta_1,\\
&\gamma_{17} \mapsto \mu_5+\zeta_2,\quad \gamma_{18} \mapsto \mu_6 +\gamma_{1}+ \zeta_5, \quad \gamma_{19} \mapsto \mu_1 + \zeta_{8},\quad \gamma_{20} \mapsto \mu_5 + \zeta_3,\\
&\gamma_{21} \mapsto \mu_5+ \zeta_1+\eta_1,\quad \gamma_{22} \mapsto \mu_1 +\gamma_{12}+ \zeta_4, \quad \gamma_{23} \mapsto \mu_1+\gamma_{12} + \gamma_3 + \zeta_{1},\\
&\gamma_{24} \mapsto \mu_1+\gamma_{12}+ \gamma_5 + \zeta_1, \quad \gamma_{25} \mapsto \mu_6 +\gamma_{3}+ \zeta_1, \quad \gamma_{26} \mapsto \mu_5 + \zeta_{4},\\
&\gamma_{27} \mapsto \mu_5+\gamma_{11}+  \zeta_4,\quad \gamma_{28} \mapsto \mu_5 +\gamma_4+\gamma_{12}+ \zeta_4.
\end{aligned} \right.
\end{equation*}
The merging map $M:V \to W$ is given by
\begin{equation*}
M: \left\{ \begin{aligned} &\gamma_1 \mapsto \gamma_1, \quad \gamma_2 \mapsto \zeta_1+ \mu_1, \quad \gamma_3 \mapsto \gamma_3, \quad \gamma_4 \mapsto \gamma_4, \quad \gamma_5 \mapsto \gamma_5, \quad \gamma_6 \mapsto \zeta_2 + \mu_1, \quad \gamma_7 \mapsto \zeta_3 + \mu_1 \\
&\gamma_8 \mapsto \zeta_4 + \mu_1, \quad \gamma_9 \mapsto \zeta_1+\eta_1 + \mu_1,\quad \gamma_{10} \mapsto \zeta_1+ \eta_3 + \mu_1, \quad \gamma_{11} \mapsto \gamma_{11}, \quad \gamma_{12} \mapsto \gamma_{12}, \quad \gamma_{13} \mapsto \gamma_{13}\\
& \gamma_{14} \mapsto \zeta_1 + \eta_4 + \mu_1, \quad \gamma_{15} \mapsto \zeta_5 + \eta_2 + \mu_1, \quad \gamma_{16} \mapsto \zeta_1 + \mu_2, \quad \gamma_{17} \mapsto \gamma_{17}, \quad \gamma_{18} \mapsto \zeta_8 + \mu_1\\
 &\gamma_{19} \mapsto \zeta_1 + \mu_3, \quad \gamma_{20} \mapsto \gamma_{20}, \quad \gamma_{21} \mapsto \gamma_{21}, \quad \gamma_{22} \mapsto \zeta_5 + \mu_4, \quad \gamma_{23} \mapsto \zeta_5 + \mu_5, \quad \gamma_{24} \mapsto \zeta_5+ \mu_6\\
 &\gamma_{25} \mapsto \zeta_6+ \mu_5, \quad \gamma_{26} \mapsto \zeta_7 + \mu_4, \quad \gamma_{27} \mapsto \zeta_9 + \mu_4, \quad \gamma_{28} \mapsto \zeta_{10} + \mu_4 \\
\end{aligned}
\right.
\end{equation*}
There is a $v_\lambda \in V$ such that $Sv_\lambda = \lambda Mv_\lambda$ where $\lambda\approx 1.40127$ is the largest real root of $\chi(t) = t^6-t^4-t^3-t^2+1$
and 
\begin{equation*}
\begin{aligned}
 v_\lambda \approx ( & 0.185,2.492,3.856,0.714,1.038,1,1.401,2.856,1.964,2.751,0.509,5.607,1.454,0.741,1.038 \\
 &2.038,1.401,0.259,0.363,1.964,0.529,1.778,2.586,1.454,1.778,0.509,0.714,1)\\
 \end{aligned}
 \end{equation*}

\subsection{Orbitdata $1,4,8$ with a cyclic permutation}\label{B:148}
Let $f_\mathbf{R}: X(\mathbf{R}) \to X(\mathbf{R})$ be a real diffeomorphism associated with the orbit data $1,4,8$ and a cyclic permutation. 
Let \[\Gamma = \{ \gamma_1, \dots, \gamma_{28} \},\ \Delta = \{ \gamma_2, \gamma_8,\gamma_9,\gamma_{10}, \gamma_{12},\gamma_{13}, \gamma_{15}, \gamma_{17}, \gamma_{19}, \zeta_1,\dots,\zeta_{11}, \mu_1, \dots, \mu_{9}, \eta_1\} \]
be two ordered sets of non-cyclic words in $\pi_1( X(\mathbf{R}),P_{fix})$ where
\begin{equation*}
\begin{aligned}
&\gamma_1 = a_1^{-1} \cc a_2^{-2} \cc b_1^{-2} \cc c_1^{-1} \cc b_1 \cc a_2 \cc e^{-1}, \quad \quad \gamma_2 = a_1^{-1} \cc a_2^{-2} \cc b_1^{-2} \cc c_1^{-2}\cc a_3^{-1} \cc c_1 \cc b_1 \cc e^{-1},\\
&\gamma_3 = a_1^{-1} \cc a_2^{-2} \cc b_1^{-2} \cc c_1^{-2}\cc a_3^{-1} \cc c_1 \cc b_1^2\cc a_2 \cc e^{-1},\quad \quad \gamma_4 = a_2^{-1} \cc b_1^{-1} \cc c_1 \cc b_1^2 \cc a_2^2 \cc a_1 \cc e \cc a_8^2 \cc a_7 \cc b_1 \cc a_2 \cc e^{-1},\\
&\gamma_5 = a_2^{-1} \cc b_1^{-1} \cc c_1 \cc b_1^2 \cc a_2^2 \cc a_1 \cc e \cc a_8^2 \cc a_7^2 \cc a_6 \cc b_1 \cc a_2 \cc e^{-1}, \qquad \gamma_6 = a_1^{-1} \cc a_2^{-2} \cc b_1^{-2} \cc c_1^{-2}\cc a_3^{-1} \cc c_1 \cc b_1^2\cc a_2^2 \cc a_1 \cc e \cc a_8 \cc b_1 \cc a_2 \cc e^{-1},\\
&\gamma_7 = a_1^{-1} \cc a_2^{-2} \cc b_1^{-2} \cc c_1^{-2}\cc a_3^{-1} \cc c_1 \cc b_1^2\cc a_2^2 \cc a_1 \cc e \cc a_8^2 \cc a_7 \cc b_1 \cc a_2 \cc e^{-1},\\
&\gamma_8 = a_2^{-1} \cc b_1^{-1} \cc c_1 \cc b_1^2 \cc a_2^2 \cc a_1 \cc e \cc a_8^2 \cc a_7^2 \cc a_6^2 \cc c_4 \cc a_6^{-1} \cc a_7^{-2} \cc a_8^{-2} \cc e^{-1},\\
&\gamma_9 = a_2^{-1} \cc b_1^{-1} \cc c_1 \cc b_1^2 \cc a_2^2 \cc a_1 \cc e \cc a_8^2 \cc a_7^2 \cc a_6 \cc c_4^{-1} \cc a_6^{-2} \cc a_7^{-2} \cc a_8^{-2} \cc e^{-1},\\
&\gamma_{10} = a_1^{-1} \cc a_2^{-2} \cc b_1^{-2} \cc c_1^{-2} \cc a_3^{-2} \cc c_2^{-1} \cc a_4^{-1} \cc c_3^{-2} \cc a_5^{-2} \cc c_4^{-2} \cc a_6^{-2} \cc a_7^{-2} \cc a_8^{-2} \cc e^{-1},\\
&\gamma_{11}= a_1^{-1} \cc a_2^{-2} \cc b_1^{-2} \cc c_1^{-2}\cc a_3^{-1} \cc c_2\cc a_3^2 \cc c_1^2 \cc b_1^2\cc a_2^2 \cc a_1^2 \cc e  \cc a_2^{-1} \cc b_1^{-1} \cc a_8^{-1} \cc e^{-1},\\
&\gamma_{12} = a_1^{-2} \cc a_2^{-2} \cc b_1^{-2} \cc c_1^{-2} \cc a_3^{-2} \cc c_2^{-1} \cc a_4^{-1} \cc c_3^{-2} \cc a_5^{-2} \cc c_4^{-2} \cc a_6^{-2} \cc a_7^{-2} \cc a_8^{-2} \cc e^{-1},\\
&\gamma_{13} = a_1^{-2} \cc a_2^{-2} \cc b_1^{-2} \cc c_1^{-2} \cc a_3^{-2} \cc c_2^{-2} \cc a_4^{-1} \cc c_3^{-1} \cc a_5^{-2} \cc c_4^{-2} \cc a_6^{-2} \cc a_7^{-2} \cc a_8^{-2} \cc e^{-1},\\
&\gamma_{14}= a_1^{-1} \cc a_2^{-2} \cc b_1^{-2} \cc c_1^{-2}\cc a_3^{-1} \cc c_2\cc a_3^2 \cc c_1^2 \cc b_1^2\cc a_2^2 \cc a_1^2 \cc e  \cc a_2^{-1} \cc b_1^{-1} \cc a_7^{-1} \cc a_8^{-2} \cc e^{-1},\\
&\gamma_{15} = a_2^{-1} \cc b_1^{-1} \cc c_1 \cc b_1^2 \cc a_2^2 \cc a_1 \cc e \cc a_8^2 \cc a_7^2 \cc a_6\cc c_4 \cc a_5^{-1}  \cc c_4^{-2} \cc a_6^{-2} \cc a_7^{-2} \cc a_8^{-2} \cc e^{-1},\\
&\gamma_{16}= a_1^{-1} \cc a_2^{-2} \cc b_1^{-2} \cc c_1^{-2}\cc a_3^{-1} \cc c_2\cc a_3^2 \cc c_1^2 \cc b_1^2\cc a_2^2 \cc a_1^2 \cc e  \cc a_2^{-1} \cc b_1^{-1}\cc a_6^{-1} \cc a_7^{-2} \cc a_8^{-2} \cc e^{-1},\\
&\gamma_{17}= a_1^{-2} \cc a_2^{-2} \cc b_1^{-2} \cc c_1^{-2}\cc a_3^{-2} \cc c_2^{-2}\cc a_4^{-1} \cc c_2 \cc a_3^2 \cc c_1^2 \cc b_1^2\cc a_2^2 \cc a_1^2 \cc e  \cc a_2^{-1} \cc b_1^{-1}\cc a_8^{-1} \cc e^{-1},\\
&\gamma_{18}= a_1^{-1} \cc a_2^{-2} \cc b_1^{-2} \cc c_1^{-1} \cc b_1 \cc a_2 \cc e  \cc a_8^2 \cc a_7^2 \cc a_6^2 \cc c_4^2 \cc a_5^2 \cc c_3 \cc a_5^{-1} \cc c_4^{-2} \cc a_6^{-2} \cc a_7^{-2} \cc a_8^{-2} \cc e^{-1},\\
&\gamma_{19}= a_1^{-2} \cc a_2^{-2} \cc b_1^{-2} \cc c_1^{-2}\cc a_3^{-2} \cc c_2^{-1} \cc a_3 \cc c_1^2 \cc b_1^2\cc a_2^2 \cc a_1 \cc e \cc a_8^2 \cc a_7^2 \cc a_6^2 \cc c_4 \cc a_6^{-1} \cc a_7^{-2} \cc a_8^{-2} \cc e^{-1},\\
&\gamma_{20} = a_1^{-1} \cc a_2^{-2} \cc b_1^{-2} \cc c_1^{-2}\cc a_3^{-1} \cc c_2 \cc a_3^2 \cc c_1^2 \cc b_1^2 \cc a_2^2 \cc a_1^2 \cc e \cc a_2^{-1} \cc b_1^{-1} \cc c_1 \cc b_1^2 \cc a_2^2\cc a_1 \cc e \cc a_8^2 \cc a_7 \cc b_1 \cc a_2 \cc e^{-1},\\
&\gamma_{21}= a_1^{-2} \cc a_2^{-2} \cc b_1^{-2} \cc c_1^{-2}\cc a_3^{-2} \cc c_2^{-1} \cc a_3 \cc c_1^2 \cc b_1^2\cc a_2^2 \cc a_1 \cc e \cc a_8^2 \cc a_7^2 \cc a_6^2 \cc c_4^2 \cc a_5 \cc c_4^{-1} \cc a_6^{-2} \cc a_7^{-2} \cc a_8^{-2} \cc e^{-1},\\
&\gamma_{22}= a_1^{-1} \cc a_2^{-2} \cc b_1^{-2} \cc c_1^{-1} \cc a_3 \cc c_1^2 \cc b_1^2 \cc a_2\cc a_1 \cc e  \cc a_8^2 \cc a_7^2 \cc a_6^2 \cc c_4^2 \cc a_5^2 \cc c_3\cc a_4^{-1} \cc c_3^{-2} \cc a_5^{-2} \cc c_4^{-2} \cc a_6^{-2} \cc a_7^{-2} \cc a_8^{-2} \cc e^{-1},\\
&\gamma_{23}= a_1^{-2} \cc a_2^{-2} \cc b_1^{-2} \cc c_1^{-2}\cc a_3^{-2} \cc c_2^{-1} \cc a_3 \cc c_1^2 \cc b_1^2\cc a_2^2 \cc a_1 \cc e \cc a_8^2 \cc a_7^2 \cc a_6^2 \cc c_4^2 \cc a_5 \cc c_3^{-1} \cc a_5^{-2} \cc c_4^{-2} \cc a_6^{-2} \cc a_7^{-2} \cc a_8^{-2} \cc e^{-1},\\
\end{aligned}
\end{equation*}
\begin{equation*} 
\begin{aligned}
&\gamma_{24}= a_1^{-2} \cc a_2^{-2} \cc b_1^{-2} \cc c_1^{-2}\cc a_3^{-2} \cc c_2^{-1} \cc a_3 \cc c_1^2 \cc b_1^2\cc a_2^2 \cc a_1 \cc e \cc a_8^2 \cc a_7^2 \cc a_6^2 \cc c_4^2 \cc a_5^2 \cc c_3 \cc a_4^{-1}  \cc c_3^{-2} \cc a_5^{-2} \cc c_4^{-2} \cc a_6^{-2} \cc a_7^{-2} \cc a_8^{-2} \cc e^{-1},\\
&\gamma_{25}= a_1^{-2} \cc a_2^{-2} \cc b_1^{-2} \cc c_1^{-2}\cc a_3^{-2} \cc c_2^{-1}  \cc a_4^{-1}  \cc c_2 \cc a_3^2 \cc c_1^2 \cc b_1^2\cc a_2^2 \cc a_1 \cc e \cc a_8^2 \cc a_7^2 \cc a_6^2 \cc c_4^2 \cc a_5^2 \cc c_3 \cc  a_5^{-1} \cc c_4^{-2} \cc a_6^{-2} \cc a_7^{-2} \cc a_8^{-2} \cc e^{-1},\\
&\gamma_{26} = a_1^{-2} \cc a_2^{-2} \cc b_1^{-2} \cc c_1^{-2}\cc a_3^{-2}\cc c_2^{-2} \cc a_4^{-1} \cc c_2 \cc a_3^2 \cc c_1^2 \cc b_1^2 \cc a_2^2 \cc a_1^2 \cc e  \cc b_1^{-1} \cc c_1^{-1} \cc a_3 \cc c_1^2 \cc b_1^2 \cc a_2^2\cc a_1 \cc e\\
& \phantom{ABCDE}  \cc a_8^2 \cc a_7^2 \cc a_6^2 \cc c_4^2 \cc a_5^2 \cc c_3 \cc a_5^{-1} \cc c_4^{-2} \cc a_6^{-2} \cc a_7^{-2} \cc a_8^{-2} \cc e^{-1},\\
&\gamma_{27} = a_1^{-2} \cc a_2^{-2} \cc b_1^{-2} \cc c_1^{-2}\cc a_3^{-2}\cc c_2^{-2} \cc a_4^{-1} \cc c_2 \cc a_3^2 \cc c_1^2 \cc b_1^2 \cc a_2^2 \cc a_1^2 \cc e\cc a_2^{-1}   \cc b_1^{-2} \cc c_1^{-1} \cc a_3 \cc c_1^2 \cc b_1^2 \cc a_2^2\cc a_1 \cc e\\
& \phantom{ABCDE}  \cc a_8^2 \cc a_7^2 \cc a_6^2 \cc c_4^2 \cc a_5^2 \cc c_3 \cc a_5^{-1} \cc c_4^{-2} \cc a_6^{-2} \cc a_7^{-2} \cc a_8^{-2} \cc e^{-1},\\
&\gamma_{28} = a_1^{-2} \cc a_2^{-2} \cc b_1^{-2} \cc c_1^{-2}\cc a_3^{-2}\cc c_2^{-2} \cc a_4^{-1} \cc c_2 \cc a_3^2 \cc c_1^2 \cc b_1^2 \cc a_2^2 \cc a_1^2 \cc e\cc a_2^{-1}   \cc b_1^{-2} \cc c_1^{-1} \cc a_3 \cc c_1^2 \cc b_1^2 \cc a_2^2\cc a_1 \cc e\\
& \phantom{ABCDE}  \cc a_8^2 \cc a_7^2 \cc a_6^2 \cc c_4^2 \cc a_5^2 \cc c_3\cc a_4^{-1} \cc c_3^{-2} \cc a_5^{-2} \cc c_4^{-2} \cc a_6^{-2} \cc a_7^{-2} \cc a_8^{-2} \cc e^{-1},\\
\end{aligned}
\end{equation*}
and
\begin{equation*}
\begin{aligned}
&\zeta_1 = a_1^{-1} \cc a_2^{-2} \cc c_1^{-2} \cc b_1, \qquad \quad \zeta_2 =  a_1^{-1} \cc a_2^{-2} \cc b_1^{-2} \cc c_1^{-2} \cc a_3^{-1} \cc c_1 \cc b_1 \cc c_1 \cc b_1^2 \cc c_1^{-2} \cc b_1^{-1},\\
&\zeta_3 = a_2^{-1} \cc b_1^{-1} \cc c_1 \cc b_1^2 \cc a_2^2 \cc a_1 \cc e \cc a_8^2 \cc a_7 \cc c_1 \cc b_1^2 \cc c_1^{-2} \cc b_1^{-1}, \quad \zeta_4 = a_2^{-1} \cc b_1^{-1} \cc c_1 \cc b_1^2 \cc a_2^2 \cc a_1 \cc e \cc a_8^2 \cc a_7^2 \cc a_6 \cc c_1 \cc b_1^2 \cc c_1^{-2} \cc b_1^{-1},\\
&\zeta_5= a_1^{-1} \cc a_2^{-2} \cc b_1^{-2} \cc c_1^{-2} \cc a_3^{-1} \cc c_1 \cc b_1^2 \cc a_2^2 \cc a_1 \cc e \cc a_8 \cc c_1 \cc b_1^2 \cc c_1^{-2} \cc b_1^{-1},\\
&\zeta_6= a_1^{-1} \cc a_2^{-2} \cc b_1^{-2} \cc c_1^{-1} \cc a_3 \cc c_1^2 \cc b_1^2 \cc a_2^2 \cc a_1 \cc a_2^{-2} \cc b_1^{-1} \cc c_1 \cc b_1^2 \cc c_1^{-2} \cc b_1^{-1},\\
&\zeta_7= a_1^{-1} \cc a_2^{-2} \cc b_1^{-2} \cc c_1^{-2} \cc a_3^{-1} \cc c_1 \cc b_1^2 \cc a_2^2 \cc a_1 \cc e \cc a_8^2 \cc a_7 \cc c_1 \cc b_1^2 \cc c_1^{-2} \cc b_1^{-1},\\
&\zeta_8= a_1^{-2} \cc a_2^{-2} \cc b_1^{-2} \cc c_1^{-2}\cc a_3^{-2} \cc c_2^{-1} \cc a_3 \cc c_1^2 \cc b_1^2 \cc a_2^2 \cc a_1 \cc a_2^{-2} \cc b_1^{-1} \cc c_1 \cc b_1^2 \cc c_1^{-2} \cc b_1^{-1},\\
&\zeta_9= a_1^{-2} \cc a_2^{-2} \cc b_1^{-2} \cc c_1^{-2}\cc a_3^{-2} \cc c_2^{-2}\cc a_4^{-1} \cc c_2 \cc a_3^2 \cc c_1^2 \cc b_1^2 \cc a_2^2 \cc a_1 \cc a_2^{-2} \cc b_1^{-1} \cc c_1 \cc b_1^2 \cc c_1^{-2} \cc b_1^{-1},\\
&\zeta_{10}= a_1^{-2} \cc a_2^{-2} \cc b_1^{-2} \cc c_1^{-2}\cc a_3^{-2} \cc c_2^{-2}\cc a_4^{-1} \cc c_2 \cc a_3^2 \cc c_1^2 \cc b_1^2 \cc a_2^2\cc a_1^2 \cc e \cc b_1^{-1} \cc c_1^{-1} \cc a_3 \cc c_1^2 \cc b_1^2 \cc a_2^2 \cc a_1 \cc a_2^{-2} \cc b_1^{-1} \cc c_1 \cc b_1^2 \cc c_1^{-2} \cc b_1^{-1},\\
&\zeta_{11}= a_1^{-2} \cc a_2^{-2} \cc b_1^{-2} \cc c_1^{-2}\cc a_3^{-2} \cc c_2^{-2}\cc a_4^{-1} \cc c_2 \cc a_3^2 \cc c_1^2 \cc b_1^2 \cc a_2^2\cc a_1^2 \cc e \cc a_2^{-1}\cc b_1^{-2} \cc c_1^{-1} \cc a_3 \cc c_1^2 \cc b_1^2 \cc a_2^2 \cc a_1 \cc a_2^{-2} \cc b_1^{-1} \cc c_1 \cc b_1^2 \cc c_1^{-2} \cc b_1^{-1},\\
&\mu_1 = b_1 \cc c_1^2 \cc b_1^{-2} \cc c_1^{-1} \cc b_1 \cc a_2 \cc e^{-1},\\
&\mu_2 = b_1 \cc c_1^2 \cc b_1^{-2} \cc c_1^{-2}\cc a_3^{-1} \cc c_2 \cc a_3^2 \cc c_1^2 \cc b_1^2 \cc a_2^2 \cc a_1^2 \cc e \cc a_2^{-1} \cc b_1^{-1} \cc a_8^{-1} \cc e^{-1},\\
&\mu_3 = b_1 \cc c_1^2 \cc b_1^{-2} \cc c_1^{-2}\cc a_3^{-1} \cc c_2 \cc a_3^2 \cc c_1^2 \cc b_1^2 \cc a_2^2 \cc a_1^2 \cc e \cc a_2^{-1} \cc b_1^{-1}\cc a_7^{-1} \cc a_8^{-2} \cc e^{-1},\\
&\mu_4= b_1 \cc c_1^2 \cc b_1^{-2} \cc c_1^{-1} \cc b_1 \cc a_2^2 \cc e \cc a_8^2 \cc a_7^2 \cc a_6^2 \cc c_4^2 \cc a_5 \cc c_4^{-1} \cc a_6^{-2} \cc a_7^{-2} \cc a_8^{-2} \cc e^{-1},\\
&\mu_5 = b_1 \cc c_1^2 \cc b_1^{-2} \cc c_1^{-2}\cc a_3^{-1} \cc c_2 \cc a_3^2 \cc c_1^2 \cc b_1^2 \cc a_2^2 \cc a_1^2 \cc e \cc a_2^{-1} \cc b_1^{-1}\cc a_6^{-1} \cc a_7^{-2} \cc a_8^{-2} \cc e^{-1},\\
&\mu_6= b_1 \cc c_1^2 \cc b_1^{-2} \cc c_1^{-1} \cc b_1 \cc a_2 \cc e \cc a_8^2 \cc a_7^2 \cc a_6^2 \cc c_4^2 \cc a_5^2 \cc c_3 \cc a_5^{-1} \cc c_4^{-2} \cc a_6^{-2} \cc a_7^{-2} \cc a_8^{-2} \cc e^{-1},\\
&\mu_7= b_1 \cc c_1^2 \cc b_1^{-2} \cc c_1^{-1} \cc b_1 \cc a_2^2 \cc e \cc a_8^2 \cc a_7^2 \cc a_6^2 \cc c_4^2 \cc a_5^2 \cc c_3 \cc a_5^{-1} \cc c_4^{-2} \cc a_6^{-2} \cc a_7^{-2} \cc a_8^{-2} \cc e^{-1},\\
&\mu_8= b_1 \cc c_1^2 \cc b_1^{-2} \cc c_1^{-1} \cc b_1 \cc a_2^2 \cc e \cc a_8^2 \cc a_7^2 \cc a_6^2 \cc c_4^2 \cc a_5 \cc c_3^{-1} \cc a_5^{-2} \cc c_4^{-2} \cc a_6^{-2} \cc a_7^{-2} \cc a_8^{-2} \cc e^{-1},\\
&\mu_9= b_1 \cc c_1^2 \cc b_1^{-2} \cc c_1^{-1} \cc b_1 \cc a_2^2 \cc e \cc a_8^2 \cc a_7^2 \cc a_6^2 \cc c_4^2 \cc a_5^2\cc c_3 \cc a_4^{-1} \cc c_3^{-2} \cc a_5^{-2} \cc c_4^{-2} \cc a_6^{-2} \cc a_7^{-2} \cc a_8^{-2} \cc e^{-1},\\
&\eta_1 = b_1 \cc c_1^2 \cc b_1^{-2} \cc c_1^{-2} \cc a_3^{-1} \cc c_2 \cc a_3^2 \cc c_1^2 \cc b_1^2 \cc a_2^2 \cc a_1^2 \cc e \cc a_2^{-1} \cc b_1^{-1} \cc c_1 \cc b_1^2 \cc a_2^2 \cc a_1 \cc e \cc a_8^2 \cc a_7 \cc c_1 \cc b_1^2 \cc c_1^{-2} \cc b_1^{-1}.
\end{aligned}
\end{equation*}
The admissible triples are 
\begin{equation*}
\begin{aligned}
A=& \{ (1,13,3),(1,13,6),(1,13,7),(1,13,11),(1,17,22),(1,19,1),(1,23,8),(1,23,10),\\
&(1,25,16),(1,25,20),(1,26,16),(1,27,16),(1,28,5),(1,28,15),(2,12,8),(2,12,15),(3,12,15),\\
&(4,21,1),(5,21,1),(5,23,2),(5,23,3),(5,23,10),(6,12,15),(6,21,1),(6,24,4),(6,24,9),\\
&(7,21,1),(8,1,13),(9,1,25),(9,1,26),(9,18,20),(9,20,21),(10,8,1),(10,15,14),(10,15,16),\\
&(11,22,5),(12,8,1),(12,15,14),(12,15,16),(13,3,12),(13,6,12),(13,6,21),(13,6,24),\\
&(13,7,21),(13,11,22),(14,1,13),(14,1,19),(14,1,23),(14,22,5),(14,22,9),(15,11,22),\\
&(15,14,1),(15,14,22),(15,16,1),(16,1,13),(17,22,5),(18,20,21),(19,1,13),(20,21,1),\\
\end{aligned}
\end{equation*}
\begin{equation*} 
\begin{aligned}
&(21,1,13),(21,1,17),(21,1,26),(21,1,27),(21,1,28),(22,5,21),(22,5,23),(22,9,1),(22,9,18),\\
&(23,2,12),(23,3,12),(23,8,1),(23,10,8),(24,4,21),(24,9,18),(24,9,20),(25,16,1),(25,20,21),\\
&(26,16,1),(27,16,1),(28,5,21),(28,15,11),(28,15,14)\}
\end{aligned}
\end{equation*}
Let $V= Span \, \Gamma$ and $W= Span\, \Delta$. The splitting map $S: V\to W$ is given by 
\begin{equation*}
S: \left\{ \begin{aligned} &\gamma_1 \mapsto \mu_1 + \zeta_8, \quad \gamma_2 \mapsto \mu_1 + \zeta_9, \quad \gamma_3 \mapsto \mu_1 + \zeta_{10}, \quad \gamma_4 \mapsto \mu_2 + \zeta_6, \quad \gamma_5\mapsto \mu_3 + \zeta_6, \quad \gamma_6 \mapsto \mu_1 + \zeta_{11},\\
&\gamma_7 \mapsto \mu_1 + \gamma_{17} + \zeta_6,\quad \gamma_8 \to \eta_1,\quad \gamma_9 \mapsto \mu_3 + \zeta_1,\quad \gamma_{10} \mapsto \mu_6+ \zeta_1,\quad \gamma_{11} \mapsto \mu_1 + \gamma_{13} + \zeta_2,\\
&\gamma_{12} \mapsto \mu_7+ \zeta_1,\quad \gamma_{13} \mapsto \mu_4 + \zeta_1,\quad \gamma_{14} \mapsto \mu_1+ \gamma_{13} + \zeta_5,\quad \gamma_{15} \mapsto \mu_5 + \zeta_1,\quad \gamma_{16} \mapsto \mu_1 + \gamma_{13} + \zeta_7,\\
&\gamma_{17} \mapsto \mu_8 + \zeta_2,\quad \gamma_{18} \mapsto \mu_1 + \gamma_{19} + \zeta_1,\quad \gamma_{19} \mapsto \mu_9 + \zeta_3,\quad \gamma_{20} \mapsto \mu_1 + \gamma_{13} + \zeta_1 + \mu_2 + \zeta_6,\\
&\gamma_{21} \mapsto \mu_9 + \zeta_4,\quad \gamma_{22} \mapsto \mu_1 + \gamma_{12} + \gamma_{15} + \zeta_1,\quad \gamma_{23} \mapsto \mu_9 + \gamma_9 + \zeta_1,\quad \gamma_{24} \mapsto \mu_9 + \gamma_{15} + \zeta_1,\\
&\gamma_{25} \mapsto \mu_8 + \gamma_8 + \zeta_1,\quad \gamma_{26} \mapsto \mu_8 + \gamma_{10} + \gamma_8 + \zeta_1,\quad \gamma_{27} \mapsto \mu_8 + \gamma_2 + \gamma_{12} + \gamma_8 + \zeta_1,\\
&\gamma_{28} \mapsto \mu_8 + \gamma_2 + \gamma_{12} + \gamma_{15} + \zeta_1.
 \end{aligned}\right.
\end{equation*}
and the merging map $M:V \to W$ is
\begin{equation*}
M: \left\{ \begin{aligned} &\gamma_1 \mapsto \zeta_1 + \mu_1,\quad \gamma_2 \mapsto \gamma_2,\quad \gamma_3 \mapsto \zeta_2 + \mu_1,\quad \gamma_4 \mapsto \zeta_3 + \mu_1,\quad \gamma_5 \mapsto \zeta_4 + \mu_1,\quad \gamma_6 \mapsto \zeta_5 + \mu_1,\\
&\gamma_7  \mapsto \zeta_7 + \mu_1,\quad \gamma_8 \mapsto \gamma_8,\quad \gamma_9 \mapsto \gamma_9,\quad \gamma_{10} \mapsto \gamma_{10},\quad \gamma_{11} \mapsto \zeta_1+ \mu_2,\quad \gamma_{12} \mapsto \gamma_{12},\\
&\gamma_{13} \mapsto \gamma_{13},\quad \gamma_{14} \mapsto \zeta_1 + \mu_3,\quad \gamma_{15} \mapsto \gamma_{15},\quad \gamma_{16} \mapsto \zeta_1+ \mu_5,\quad \gamma_{17} \mapsto \gamma_{17},\quad \gamma_{18} \mapsto \zeta_1 + \mu_6,\\
&\gamma_{19} \mapsto \gamma_{19},\quad \gamma_{20} \mapsto \zeta_1 + \eta_1 + \mu_1,\quad \gamma_{21} \mapsto \zeta_8+ \mu_4,\quad \gamma_{22} \mapsto \zeta_6+ \mu_9,\quad \gamma_{23} \mapsto \zeta_8+ \mu_8,\\
&\gamma_{24} \mapsto \zeta_8+ \mu_9,\quad \gamma_{25} \mapsto \zeta_9 + \mu_7,\quad \gamma_{26} \mapsto \zeta_{10} + \mu_7,\quad \gamma_{27} \mapsto \zeta_{11} + \mu_7,\quad \gamma_{28} \mapsto \zeta_{11}+ \mu_9.
\end{aligned}\right.
\end{equation*}
There is a $v_\lambda \in V$ such that $Sv_\lambda = \lambda Mv_\lambda$ where $\lambda\approx 1.45799$ is the largest real root of $\chi(t) =t^8-t^6-t^5-t^3-t^2+1$
and
\begin{equation*}
\begin{aligned}
 v_\lambda \approx ( &16.455,1.527,2.226,0.338,3.800,3.246,2.607,2.607,3.099  \\
 &1.047,1.458,5.541,8.079,4.732,5.541,3.800,1.788,0.718\\
 &0.493,1.788,5.541,5.852,4.519,1.226,1.047,1.527,1.226,1 )\\
 \end{aligned}
 \end{equation*}

\subsection{Orbit data $1,4,5$ with a cyclic permutation}\label{B:145}
Let $f_\mathbf{R}: X(\mathbf{R}) \to X(\mathbf{R})$ be a real diffeomorphism associated with the orbit data $1,4,5$ and a cyclic permutation. 
Let \[\Gamma = \{ \gamma_1, \dots, \gamma_{21} \},\ \Delta = \{ \gamma_1, \gamma_2,\gamma_3,\gamma_4, \gamma_{12},\gamma_{13}, \zeta_1,\zeta_2,\zeta_3, \mu_1, \dots, \mu_{11}, \eta_1\} \]
be two ordered sets of non-cyclic words in $\pi_1( X(\mathbf{R}),P_{fix})$ where 
\begin{equation*}
\begin{aligned}
& \gamma_1 = a_2^{-1} \cc c_1^{-1} \cc c_4^{-1} \cc e^{-1}, \qquad \qquad \gamma_2 = a_2^{-1} \cc c_1^{-1} \cc b_1\cc c_1^2 \cc a_2^2 \cc a_1 \cc e \cc c_4\cc a_5^{-1} \cc c_4^{-2} \cc e^{-1},  \\
&\gamma_3 = c_1^{-1} \cc b_1^{-1} \cc a_3 \cc b_1^2 \cc c_1^2 \cc a_2^2 \cc a_1 \cc e \cc c_4^2 \cc a_5 \cc c_4^{-1} \cc e^{-1}, \quad \gamma_4 = c_1 \cc a_2 \cc e \cc c_4^2 \cc a_5^2 \cc c_3^2 \cc a_4 \cc c_3^{-1} \cc a_5^{-2} \cc c_4^{-2} \cc e^{-1}, \\
&\gamma_5 = a_1^{-1} \cc a_2^{-2} \cc c_1^{-2} \cc b_1^{-2} \cc a_3^{-2} \cc c_2^{-1} \cc a_4^{-1} \cc c_3^{-2} \cc a_5^{-2} \cc c_4^{-2} \cc e^{-1}, \quad \gamma_6=a_1^{-1} \cc a_2^{-2} \cc c_1^{-2} \cc b_1^{-2} \cc a_3^{-1} \cc c_2^{-1} \cc a_4^{-2} \cc c_3^{-2} \cc a_5^{-2} \cc c_4^{-2} \cc e^{-1}\\
&\gamma_7 = a_1^{-1} \cc a_2^{-2} \cc c_1^{-2} \cc b_1^{-1} \cc c_1 \cc a_2 \cc e \cc c_4^2 \cc a_5^2 \cc c_3 \cc a_5^{-1} \cc c_4^{-2} \cc e^{-1}, \quad \gamma_8 = a_2^{-1} \cc c_1^{-1} \cc b_1 \cc c_1^2 \cc a_2 \cc e \cc c_4^2 \cc a_5^2 \cc c_3^2 \cc a_4 \cc c_3^{-1} \cc a_5^{-2} \cc c_4^{-2} \cc e^{-1}, \\
&\gamma_9 = a_2^{-1} \cc c_1^{-2} \cc b_1^{-1} \cc c_1 \cc a_2 \cc e \cc c_4^2 \cc a_5^2 \cc c_3^2 \cc a_4 \cc c_3^{-1} \cc a_5^{-2} \cc c_4^{-2} \cc e^{-1}, \\
&\gamma_{10} = a_1^{-1} \cc a_2^{-2} \cc c_1^{-2} \cc b_1^{-1} \cc c_1 \cc a_2 \cc e \cc c_4^2 \cc a_5^2 \cc c_3 \cc a_4^{-1} \cc c_3^{-2} \cc a_5^{-2} \cc c_4^{-2} \cc e^{-1},\\
&\gamma_{11} = a_1^{-1} \cc a_2^{-2} \cc c_1^{-2} \cc b_1^{-2} \cc a_3^{-2} \cc c_2^{-1} \cc a_3 \cc b_1^2 \cc c_1^2 \cc a_2^2 \cc a_1 \cc e \cc c_4^2 \cc a_5 \cc c_4^{-1} \cc e^{-1},\\
&\gamma_{12} = c_1^{-1} \cc b_1^{-1} \cc a_3 \cc b_1^2 \cc c_1^2 \cc a_2^2 \cc a_1 \cc e \cc c_4^2 \cc a_5^2 \cc c_3 \cc a_4^{-1} \cc c_3^{-2} \cc a_5^{-2} \cc c_4^{-2} \cc e^{-1},\\
&\gamma_{13} = a_2^{-1} \cc c_1^{-1} \cc e \cc c_4^2 \cc a_5^2 \cc c_3^2 \cc a_4^2 \cc c_2 \cc a_3 \cc b_1^2 \cc c_1^2 \cc a_2^2 \cc a_1 \cc e \cc c_4^2 \cc a_5 \cc c_4^{-1} \cc e^{-1},\\
\end{aligned}
\end{equation*}
\begin{equation*} 
\begin{aligned}
&\gamma_{14} = a_2^{-1} \cc c_1^{-1} \cc b_1 \cc c_1^2 \cc a_2 \cc e \cc c_4^2 \cc a_5^2 \cc c_3^2 \cc a_4^2 \cc c_2 \cc a_3 \cc b_1^2 \cc c_1^2 \cc a_2^2 \cc a_1 \cc e \cc c_4 \cc a_5^{-1} \cc c_4^{-2} \cc e^{-1},\\
&\gamma_{15} = a_1^{-1} \cc a_2^{-2} \cc c_1^{-2} \cc b_1^{-2} \cc a_3^{-1} \cc c_2 \cc a_3^2 \cc b_1^2 \cc c_1^2 \cc a_2^2 \cc a_1 \cc e \cc c_4^2 \cc a_5^2 \cc c_3 \cc a_4^{-1} \cc c_3^{-2} \cc a_5^{-2} \cc c_4^{-2} \cc e^{-1},\\
&\gamma_{16} = a_1^{-1} \cc a_2^{-2} \cc c_1^{-2} \cc b_1^{-2} \cc a_3^{-1} \cc b_1 \cc c_1 \cc e \cc c_4^2 \cc a_5^2 \cc c_3^2 \cc a_4^2 \cc c_2 \cc a_3 \cc b_1^2 \cc c_1^2 \cc a_2^2 \cc a_1 \cc e \cc c_4^2 \cc a_5 \cc c_4^{-1} \cc e^{-1},\\
&\gamma_{17} = a_2^{-1} \cc c_1^{-1} \cc b_1 \cc c_1^2 \cc a_2 \cc e \cc c_4^2 \cc a_5^2 \cc c_3^2 \cc a_4^2 \cc c_2 \cc a_3 \cc b_1^2 \cc c_1^2 \cc a_2^2 \cc a_1 \cc e \cc c_4^2 \cc a_5 \cc c_3^{-1} \cc a_5^{-2} \cc c_4^{-2} \cc e^{-1},\\
&\gamma_{18} =  a_1^{-1} \cc a_2^{-2} \cc c_1^{-2} \cc b_1^{-2} \cc a_3^{-1} \cc b_1 \cc c_1 \cc e \cc c_4^2 \cc a_5^2 \cc c_3^2 \cc a_4^2 \cc c_2 \cc a_3 \cc b_1^2 \cc c_1^2 \cc a_2^2 \cc a_1 \cc e \cc c_4^2 \cc a_5\cc c_3^{-1} \cc a_5^{-2}  \cc c_4^{-2} \cc e^{-1},\\
&\gamma_{19} = a_2^{-1} \cc c_1^{-2} \cc b_1^{-1}  \cc c_1 \cc a_2 \cc e \cc c_4^2 \cc a_5^2 \cc c_3^2 \cc a_4 \cc c_2 \cc a_3^2 \cc b_1^2 \cc c_1^2 \cc a_2^2 \cc a_1 \cc e \cc c_4^2 \cc a_5^2 \cc c_3 \cc a_4^{-1} \cc c_3^{-2} \cc a_5^{-2}  \cc c_4^{-2} \cc e^{-1},\\
&\gamma_{20} = a_1^{-1} \cc a_2^{-2} \cc c_1^{-2} \cc b_1^{-1} \cc c_1 \cc a_2 \cc e \cc c_4^2 \cc a_5^2 \cc c_3^2 \cc a_4 \cc c_2 \cc a_3^2 \cc b_1^2 \cc c_1^2 \cc a_2^2 \cc a_1 \cc e \cc c_4^2 \cc a_5^2 \cc c_3 \cc a_4^{-1} \cc c_3^{-2} \cc a_5^{-2} \cc c_4^{-2} \cc e^{-1},\\
&\gamma_{21} = a_2^{-1}\cc c_1^{-1} \cc b_1 \cc c_1^2 \cc a_2 \cc e \cc c_4^2 \cc a_5^2 \cc c_3^2 \cc a_4^2 \cc c_2 \cc a_3 \cc b_1^2 \cc c_1^2 \cc a_2^2 \cc a_1 \cc e \cc c_4 \cc c_1 \cc a_2 \cc e \cc c_4^2 \cc a_5^2 \cc c_3^2 \cc a_4 \cc c_3^{-1} \cc a_5^{-2} \cc c_4^{-2} \cc e^{-1}
\end{aligned}
\end{equation*}
and
\begin{equation*}
\begin{aligned}
&\zeta_1 = a_2^{-1} \cc c_1 \cc b_1 \cc c_1, \quad \zeta_2 = a_2^{-1} \cc c_1^{-2} \cc b_1^{-1}, \quad \zeta_3 = a_1^{-1} \cc a_2^{-2} \cc c_1^{-2} \cc b_1^{-1},\\
&\mu_1 = b_1^{-1}\cc a_3^{-2}\cc c_2^{-1} \cc a_4^{-1} \cc c_3^{-2} \cc a_5^{-2} \cc c_4^{-2} \cc e^{-1}, \quad \mu_2 = b_1^{-1}\cc a_3^{-1}\cc c_2^{-1} \cc a_4^{-2} \cc c_3^{-2} \cc a_5^{-2} \cc c_4^{-2} \cc e^{-1},\\
&\mu_3 = c_1 \cc a_2 \cc e \cc c_4^2\cc a_5^2 \cc c_3 \cc a_5^{-1} \cc c_4^{-2} \cc e^{-1}, \quad \mu_4 = c_1\cc a_2 \cc e \cc c_4^2 \cc a_5^2 \cc c_3 \cc a_4^{-1} \cc c_3^{-2} \cc a_5^{-2} \cc c_4^{-2} \cc e^{-1},\\
&\mu_5= b_1^{-1} \cc a_3^{-2} \cc c_2^{-1} \cc a_3 \cc b_1^2 \cc c_1^2 \cc a_2^2\cc a_1\cc e \cc c_4^2 \cc a_5 \cc c_4^{-1} \cc e^{-1},\\
& \mu_6= b_1^{-1} \cc a_3^{-1} \cc c_2 \cc a_3^2 \cc b_1^2 \cc c_1^2 \cc a_2^2 \cc a_1 \cc  e\cc c_4^2 \cc a_5^2 \cc c_3 \cc a_4^{-1} \cc c_3^{-2} \cc a_5^{-2} \cc c_4^{-2} \cc e^{-1}, \\
&\mu_7=c_1 \cc a_2 \cc e \cc c_4^2 \cc a_5^2 \cc c_3^2 \cc a_4^2 \cc c_2 \cc a_3 \cc b_1^2 \cc c_1^2 \cc a_2^2 \cc a_1 \cc e \cc c_4 \cc a_5^{-1} \cc c_4^{-2} \cc e^{-1},\\
&\mu_8= b_1^{-1} \cc a_3^{-1}\cc b_1 \cc c_1  \cc  e\cc c_4^2 \cc a_5^2 \cc c_3^2 \cc a_4^2 \cc c_2 \cc a_3 \cc b_1^2 \cc c_1^2 \cc a_2^2 \cc a_1 \cc e \cc c_4^2 \cc a_5 \cc c_4^{-1} \cc e^{-1}, \\
&\mu_9 = c_1 \cc a_2 \cc e \cc c_4^2 \cc a_5^2 \cc c_3^2 \cc a_4^2 \cc c_2 \cc a_3 \cc b_1^2 \cc c_1^2 \cc a_2^2 \cc a_1 \cc e \cc c_4^2 \cc a_5 \cc c_3^{-1} \cc a_5^{-2} \cc c_4^{-2} \cc e^{-1},\\
&\mu_{10} = b_1^{-1} \cc a_3^{-1}\cc b_1 \cc c_1  \cc  e\cc c_4^2 \cc a_5^2 \cc c_3^2 \cc a_4^2 \cc c_2 \cc a_3 \cc b_1^2 \cc c_1^2 \cc a_2^2 \cc a_1 \cc e \cc c_4^2 \cc a_5\cc c_3^{-1} \cc a_5^{-2}  \cc c_4^{-2} \cc e^{-1}, \\
&\mu_{11}= c_1 \cc a_2 \cc e \cc c_4^2 \cc a_5^2 \cc c_3^2 \cc a_4 \cc c_2 \cc a_3^2 \cc b_1^2 \cc c_1^2 \cc a_2^2 \cc a_1 \cc e \cc c_4^2 \cc a_5^2 \cc c_3 \cc a_4^{-1} \cc c_3^{-2} \cc a_5^{-2} \cc c_4^{-2} \cc e^{-1},\\
&\eta_1= c_1 \cc a_2 \cc e \cc c_4^2 \cc a_5^2 \cc c_3^2 \cc a_4^2 \cc c_2 \cc a_3 \cc b_1^2 \cc c_1^2 \cc a_2^2 \cc a_1\cc e \cc c_4.
\end{aligned}
\end{equation*}
The admissible quadruples are 
\begin{equation*}
\begin{aligned}
A= &\{ (1,6,9,5),(1,6,9,11),(2,6,3,20),(2,6,4,16),(2,6,12,1),(2,6,12,9),(2,6,12,13),\\
&(2,15,17,2),(2,15,21,16),(2,16,7,6),( 3,20,21,16),(4,16,6,9),(4,16,7,6),(4,16,10,17),\\
&(5,2,6,4),(6,2,15,17),(5,2,15,21),(5,2,16,7),(5,17,2,6),(6,3,20,21),(6,4,16,7),\\
&(6,4,16,10),(6,9,2,6),(6,9,5,2),(6,9,5,17),(6,9,11,20),(6,12,1,6),(6,12,9,5),\\
&(6,12,13,7),(6,12,13,20),(6,19,21,16),(7,6,9,2),(7,6,9,11),(7,6,12,1),(7,6,9,21),\\
&(8,16,7,6),(9,2,6,12),(9,5,2,6),(9,5,2,15),(9,5,2,16),(9,5,17,2),(9,11,20,14),\\
&(9,11,20,21),(10.17,2,6),(11,20,14,6),(11,20,21,5),(11,20,21,16),(11,20,21,18),\\
&(12,1,6,9),(12,9,5,2),(12,13,7,6),(12,13,20,1),(12,13,20,8),(12,13,20,14),(13,7,6,9),\\
&(13,20,1,6),(13,20,8,16),(13,20,14,6),(14,6,12,1),(15,17,2,6),(15,21,16,10),\\
&(16,6,9,5),(16,7,6,9),(16,7,6,12),(16,7,6,19),(16,19,17,2),(17,2,6,3),(17,2,6,4),\\
&(17,2,6,12),(18,2,6,12),(19,21,5,2),(19,21,16,6),(19,21,16,10),(19,21,18,2),\\
&(20,1,6,9),(20,8,16,7),(20,14,6,12),(20,21,5,2),(20,21,16,6),(20,21,16,10),(20,21,18,2),\\
&(21,5,2,6),(21,16,6,9),(21,16,10,17),(21,18,2,6)\}
\end{aligned}
\end{equation*}
Let $V= Span \, \Gamma$ and $W= Span\, \Delta$. The splitting map $S: V\to W$ is given by 
\begin{equation*}
S: \left\{ \begin{aligned} & \gamma_1 \mapsto \mu_2, \quad \gamma_2 \mapsto \mu_2 + \zeta_2, \quad \gamma_3 \mapsto \mu_1 + \zeta_1, \quad \gamma_4 \mapsto \mu_5 + \zeta_3, \quad \gamma_5 \mapsto \mu_3 + \zeta_3, \quad \gamma_6 \mapsto \gamma_4 + \zeta_3 \\
& \gamma_7 \mapsto \eta_1, \quad \gamma_8 \mapsto \mu_2 + \gamma_3 + \zeta_3, \quad \gamma_9 \mapsto \mu_8 + \zeta_3, \quad \gamma_{10} \mapsto \mu_7 + \zeta_3, \quad \gamma_{11} \mapsto \mu_4 + \zeta_1, \\
& \gamma_{12} \mapsto \mu_1 + \gamma_2 + \zeta_3, \quad \gamma_{13} \mapsto \mu_6 + \zeta_1, \quad \gamma_{14} \mapsto \mu_2 + \gamma_{12} + \zeta_2, \quad \gamma_{15} \mapsto \gamma_4 + \gamma_2 + \zeta_3, \\
&\gamma_{16} \mapsto \mu_{11} + \zeta_1,\quad \gamma_{17} \mapsto \mu_2 + \gamma_{12} + \gamma_1+ \zeta_3, \quad \gamma_{18} \mapsto \mu_{11} + \gamma_1 + \zeta_3, \quad \gamma_{19} \mapsto \mu_{10} + \gamma_2 + \zeta_3\\
&\gamma_{20} \mapsto \mu_9 + \gamma_2 + \zeta_3, \quad \gamma_{21} \mapsto \mu_2 + \gamma_{12} + \gamma_{13} + \zeta_3 \end{aligned}\right.
\end{equation*}
and the merging map $M:V \to W$ is
\begin{equation*}
M: \left\{ \begin{aligned} &\gamma_1 \mapsto \gamma_1,\quad \gamma_2 \mapsto \gamma_2, \quad \gamma_3 \mapsto \gamma_3,, \quad \gamma_4 \mapsto \gamma_4, \quad \gamma_5 \mapsto \zeta_3 + \mu_1, \quad \gamma_6 \mapsto \zeta_3 + \mu_2\\
&\gamma_7 \mapsto \zeta_3 + \mu_3, \quad \gamma_8 \mapsto \zeta_1 + \gamma_4, \quad \gamma_9 \mapsto \zeta_2 + \gamma_4, \quad \gamma_{10} \mapsto \zeta_3 + \mu_4, \quad \gamma_{11} \mapsto \zeta_3+ \mu_5,\\
&\gamma_{12} \mapsto \gamma_{12}, \quad \gamma_{13} \mapsto \gamma_{13}, \quad \gamma_{14} \mapsto \zeta_1 + \mu_7, \quad \gamma_{15} \mapsto \zeta_3+ \mu_6, \quad \gamma_{16} \mapsto \zeta_3+ \mu_8,\\
&\gamma_{17} \mapsto \zeta_1 + \mu_9, \quad \gamma_{18} \mapsto \zeta_3 + \mu_{10}, \quad \gamma_{19} \mapsto \zeta_2 + \mu_{11}, \quad \gamma_{20} \mapsto \zeta_3 + \mu_{11}, \quad \gamma_{21} \mapsto \zeta_1+ \eta_1 + \gamma_4
\end{aligned}\right.
\end{equation*}
There is a $v_\lambda \in V$ such that $Sv_\lambda = \lambda Mv_\lambda$ where $\lambda\approx 1.29349$ is the largest real root of $\chi(t) =t^{10} -t^8-t^7+t^5-t^3-t^2+1$
and 
\begin{equation*}
\begin{aligned}
 v_\lambda \approx ( &1,3.263,0.214,0.773,1.673,5.460,1.293,0.276,2.634,0.462,0.598,  \\
 &1.951,0.773,0.357,0.598,2.037,1.166,0.128,0.165,1.501,1 )\\
 \end{aligned}
 \end{equation*}

\subsection{Orbitdata $1,5,6$ with a cyclic permutation}\label{B:156}
Let $f_\mathbf{R}: X(\mathbf{R}) \to X(\mathbf{R})$ be a real diffeomorphism associated with the orbit data $1,5,6$ and a cyclic permutation. 
Let \[\Gamma = \{ \gamma_1, \dots, \gamma_{26} \},\ \Delta = \{ \gamma_2, \gamma_4,\gamma_5,\gamma_{6}, \gamma_{7},\gamma_{8}, \gamma_{15}, \gamma_{18}, \gamma_{19}, \gamma_{21},\zeta_1,\dots,\zeta_{7}, \mu_1, \dots, \mu_{8}, \eta_1,\eta_2,\eta_3\} \]
be two ordered sets of non-cyclic words in $\pi_1( X(\mathbf{R}),P_{fix})$ where
\begin{equation*}
\begin{aligned}
& \gamma_1 = a_1^{-1} \cc a_2^{-2} \cc c_1^{-2} \cc b_1^{-1} \cc c_1 \cc a_2 \cc e^{-1}, \qquad \quad \gamma_2 = a_1^{-1} \cc a_2^{-2} \cc c_1^{-2} \cc b_1^{-2} \cc a_3^{-1} \cc b_1\cc c_1\cc e^{-1},\\
&\gamma_3 = a_2^{-1} \cc c_1^{-1} \cc b_1 \cc c_1^2 \cc a_2^2 \cc a_1 \cc e \cc c_5 \cc c_1 \cc a_2 \cc e^{-1}, \qquad \gamma_4= a_2^{-1} \cc c_1^{-1} \cc b_1 \cc c_1^2 \cc a_2^2 \cc a_1 \cc e \cc c_5 \cc a_6^{-1} \cc c_5^{-2}  \cc e^{-1},\\
&\gamma_5= a_2^{-1} \cc c_1^{-1} \cc b_1 \cc c_1^2 \cc a_2^2 \cc a_1 \cc e \cc c_5^2 \cc a_6 \cc c_5^{-1} \cc e^{-1}, \qquad \gamma_6= a_2^{-1} \cc c_1^{-1} \cc b_1 \cc c_1^2 \cc a_2^2 \cc a_1 \cc e \cc c_5^2 \cc a_6 \cc c_4^{-1} \cc a_6^{-2} \cc c_5^{-2} \cc e^{-1},\\
&\gamma_7 = a_1^{-1} \cc a_2^{-2} \cc c_1^{-2} \cc b_1^{-2} \cc a_3^{-2} \cc c_2^{-1} \cc a_4^{-1} \cc c_3^{-2} \cc a_5^{-2} \cc c_4^{-2} \cc a_6^{-2} \cc c_5^{-2} \cc e^{-1},\\
&\gamma_8 = a_2^{-1} \cc c_1^{-1} \cc b_1 \cc c_1^2 \cc a_2^2 \cc a_1 \cc e \cc c_5^2 \cc a_6^2 \cc c_4 \cc a_5^{-1}  \cc c_4^{-2} \cc a_6^{-2} \cc c_5^{-2} \cc e^{-1},\\
&\gamma_9 = a_1^{-1} \cc a_2^{-2} \cc c_1^{-2} \cc b_1^{-2} \cc a_3^{-1} \cc c_2 \cc a_3^2 \cc b_1^2 \cc c_1^2 \cc a_2^2 \cc a_1^2 \cc e \cc a_2^{-1} \cc c_1^{-1} \cc e^{-1},\\
&\gamma_{10} = a_1^{-1} \cc a_2^{-2} \cc c_1^{-2} \cc b_1^{-2} \cc a_3^{-1} \cc c_2 \cc a_3^2 \cc b_1^2 \cc c_1^2 \cc a_2^2 \cc a_1^2 \cc e c_1\cc a_2 \cc e^{-1},\\
&\gamma_{11} = a_1^{-1} \cc a_2^{-2} \cc c_1^{-2} \cc b_1^{-2} \cc a_3^{-1} \cc c_2 \cc a_3^2 \cc b_1^2 \cc c_1^2 \cc a_2^2 \cc a_1^2 \cc e \cc a_2^{-1} \cc c_1^{-1} \cc c_5^{-1} \cc e^{-1},\\
&\gamma_{12} = a_1^{-1} \cc a_2^{-2} \cc c_1^{-2} \cc b_1^{-1} \cc c_1 \cc a_2 \cc e \cc c_5^2 \cc a_6^2 \cc c_4^2 \cc a_5^2 \cc c_3 \cc a_5^{-1} \cc c_4^{-2} \cc a_6^{-2} \cc c_5^{-2} \cc e^{-1},\\
&\gamma_{13} = a_1^{-1} \cc a_2^{-2} \cc c_1^{-2} \cc b_1^{-2} \cc a_3^{-1} \cc c_2 \cc a_3^2 \cc b_1^2 \cc c_1^2 \cc a_2^2 \cc a_1^2 \cc e \cc a_2^{-1} \cc c_1^{-1} \cc b_1 \cc c_1^2 \cc a_2 \cc e^{-1},\\
&\gamma_{14} = a_1^{-1} \cc a_2^{-2} \cc c_1^{-2} \cc b_1^{-2} \cc a_3^{-1} \cc c_2 \cc a_3^2 \cc b_1^2 \cc c_1^2 \cc a_2^2 \cc a_1^2 \cc e \cc a_2^{-1} \cc c_1^{-2} \cc b_1^{-1} \cc c_1 \cc a_2 \cc e^{-1},\\
&\gamma_{15} =  a_1^{-2} \cc a_2^{-2} \cc c_1^{-2} \cc b_1^{-2}\cc a_3^{-2}  \cc c_2^{-1} \cc a_3 \cc b_1^2 \cc c_1^2 \cc a_2^2 \cc a_1 \cc e \cc c_5^2 \cc a_6^2 \cc c_4\cc a_6^{-2} \cc c_5^{-2} \cc e^{-1},\\
&\gamma_{16} =  a_1^{-1} \cc a_2^{-2} \cc c_1^{-2} \cc b_1^{-1} \cc c_1 \cc a_2 \cc e\cc c_5^2 \cc a_6^2 \cc c_4^2 \cc a_5^2 \cc c_3\cc a_4^{-1} \cc c_3^{-2}  \cc a_5^{-2} \cc c_4^{-2} \cc a_6^{-2} \cc c_5^{-2} \cc e^{-1},\\
&\gamma_{17} =  a_1^{-1} \cc a_2^{-2} \cc c_1^{-2} \cc b_1^{-1} \cc c_1 \cc a_2 \cc e\cc c_5^2 \cc a_6^2 \cc c_4^2 \cc a_5^2 \cc c_3\cc a_4 \cc c_3^{-1}  \cc a_5^{-2} \cc c_4^{-2} \cc a_6^{-2} \cc c_5^{-2} \cc e^{-1},\\
&\gamma_{18}=a_1^{-2} \cc a_2^{-2} \cc c_1^{-2} \cc b_1^{-2}\cc a_3^{-2} \cc c_2^{-1} \cc a_3 \cc b_1^2 \cc c_1^2 \cc a_2^2\cc a_1 \cc e\cc c_5^2 \cc a_6^2 \cc c_4 \cc a_5^{-1} \cc c_4^{-2} \cc a_6^{-2} \cc c_5^{-2} \cc e^{-1},\\
\end{aligned}
\end{equation*}
\begin{equation*} 
\begin{aligned}
&\gamma_{19}=a_1^{-2} \cc a_2^{-2} \cc c_1^{-2} \cc b_1^{-2}\cc a_3^{-2} \cc c_2^{-1} \cc a_3 \cc b_1^2 \cc c_1^2 \cc a_2^2\cc a_1 \cc e\cc c_5^2 \cc a_6^2 \cc c_4^2 \cc a_5\cc c_4^{-1} \cc a_6^{-2} \cc c_5^{-2} \cc e^{-1},\\
&\gamma_{20}=a_1^{-1} \cc a_2^{-2} \cc c_1^{-2} \cc b_1^{-2}\cc a_3^{-1} \cc c_2 \cc a_3^2 \cc b_1^2 \cc c_1^2 \cc a_2^2\cc a_1^2 \cc e\cc a_2^{-1} \cc c_1^{-1} \cc b_1 \cc c_1^2 \cc a_2^2 \cc a_1\cc e \cc c_5 \cc a_6^{-1} \cc c_5^{-2} \cc e^{-1},\\
&\gamma_{21}=a_1^{-2} \cc a_2^{-2} \cc c_1^{-2} \cc b_1^{-2}\cc a_3^{-2} \cc c_2^{-1} \cc a_3 \cc b_1^2 \cc c_1^2 \cc a_2^2\cc a_1 \cc e\cc c_5^2 \cc a_6^2 \cc c_4^2 \cc a_5\cc c_3^{-1} \cc a_5^{-2} \cc c_4^{-2} \cc a_6^{-2} \cc c_5^{-2} \cc e^{-1},\\
&\gamma_{22}=a_1^{-1} \cc a_2^{-2} \cc c_1^{-2} \cc b_1^{-2}\cc a_3^{-2} \cc c_2^{-1} \cc a_3 \cc b_1^2 \cc c_1^2 \cc a_2^2\cc a_1 \cc e\cc c_5^2 \cc a_6^2 \cc c_4^2 \cc a_5^2 \cc c_3\cc a_4^{-1} \cc c_3^{-2} \cc a_5^{-2} \cc c_4^{-2} \cc a_6^{-2} \cc c_5^{-2} \cc e^{-1},\\
&\gamma_{23}=a_1^{-1} \cc a_2^{-2} \cc c_1^{-2} \cc b_1^{-2}\cc a_3^{-1} \cc c_2 \cc a_3^2 \cc b_1^2 \cc c_1^2 \cc a_2^2\cc a_1 \cc e\cc c_5^2 \cc a_6^2 \cc c_4^2 \cc a_5^2\cc c_3\cc a_4^{-1} \cc c_3^{-2} \cc a_5^{-2} \cc c_4^{-2} \cc a_6^{-2} \cc c_5^{-2} \cc e^{-1},\\
&\gamma_{24}=a_1^{-2} \cc a_2^{-2} \cc c_1^{-2} \cc b_1^{-2}\cc a_3^{-2} \cc c_2^{-1} \cc a_3 \cc b_1^2 \cc c_1^2 \cc a_2^2\cc a_1 \cc e\cc c_5^2 \cc a_6^2 \cc c_4^2 \cc a_5^2\cc c_3\cc a_4^{-1} \cc c_3^{-2} \cc a_5^{-2} \cc c_4^{-2} \cc a_6^{-2} \cc c_5^{-2} \cc e^{-1},\\
&\gamma_{25}=a_1^{-1} \cc a_2^{-2} \cc c_1^{-2} \cc b_1^{-2}\cc a_3^{-1} \cc c_2 \cc a_3^2 \cc b_1^2 \cc c_1^2 \cc a_2^2\cc a_1^2 \cc e \cc c_1^{-1} \cc b_1^{-1} \cc a_3\cc b_1^2 \cc c_1^2 \cc a_2^2 \cc a_1\cc e \\
&\phantom{ASDFF} \cc  c_5^2 \cc a_6^2 \cc c_4^2 \cc a_5^2\cc c_3\cc a_4^{-1} \cc c_3^{-2} \cc a_5^{-2} \cc c_4^{-2} \cc a_6^{-2} \cc c_5^{-2} \cc e^{-1},\\
&\gamma_{26}=a_1^{-1} \cc a_2^{-2} \cc c_1^{-2} \cc b_1^{-1}\cc c_1 \cc a_2 \cc e \cc  c_5^2 \cc a_6^2 \cc c_4^2 \cc a_5^2 \cc c_3^2 \cc a_4 \cc c_2 \cc a_3^2\cc b_1^2 \cc c_1^2 \cc a_2^2 \cc a_1\cc e\\
&\phantom{ASDFF}  \cc  c_5^2 \cc a_6^2 \cc c_4^2 \cc a_5^2\cc c_3\cc a_4^{-1} \cc c_3^{-2} \cc a_5^{-2} \cc c_4^{-2} \cc a_6^{-2} \cc c_5^{-2} \cc e^{-1},\\
\end{aligned}
\end{equation*}
and
\begin{equation*}
\begin{aligned}
& \zeta_1 = a_1^{-1} \cc a_2^{-2} \cc c_1^{-2} \cc b_1^{-1}, \qquad \qquad\qquad \qquad  \zeta_2 = a_2^{-1} \cc c_1^{-1} \cc b_1 \cc c_1^2 \cc a_2^2 \cc a_1 \cc e \cc c_5,\\
&\zeta_3 = a_1^{-1} \cc a_2^{-2} \cc c_1^{-2} \cc b_1^{-2} \cc a_3^{-1} \cc c_2 \cc a_3^2 \cc b_1^2 \cc c_1^2 \cc a_2^2 \cc a_1^2 \cc e, \quad \zeta_4 = a_1^{-1} \cc a_2^{-2} \cc c_1^{-2} \cc b_1^{-2} \cc a_3^{-2} \cc c_2^{-1}  \cc a_3 \cc b_1^2 \cc c_1^2 \cc a_2^2 \cc a_1 \cc a_2^{-2} \cc c_1^{-1},\\
&\zeta_5 = a_1^{-1} \cc a_2^{-2} \cc c_1^{-2} \cc b_1^{-2} \cc a_3^{-1} \cc c_2  \cc a_3^2 \cc b_1^2 \cc c_1^2 \cc a_2^2 \cc a_1 \cc a_2^{-2} \cc c_1^{-1}, \quad \zeta_6 = a_1^{-2} \cc a_2^{-2} \cc c_1^{-2} \cc b_1^{-2} \cc a_3^{-2} \cc c_2^{-1}  \cc a_3 \cc b_1^2 \cc c_1^2 \cc a_2^2 \cc a_1 \cc a_2^{-2} \cc c_1^{-1},\\
&\zeta_7= a_1^{-1} \cc a_2^{-2} \cc c_1^{-2} \cc b_1^{-2} \cc a_3^{-1} \cc c_2  \cc a_3^2 \cc b_1^2 \cc c_1^2 \cc a_2^2 \cc a_1^2\cc e \cc c_1^{-1} \cc b_1^{-1} \cc a_3 \cc b_1^2 \cc c_1^2 \cc a_2^2 \cc a_1 \cc a_2^{-2} \cc c_1^{-1},\\
&\mu_1 = c_1 \cc a_2 \cc e^{-1}, \qquad \qquad \qquad \qquad \qquad  \mu_2 = b_1^{-1} \cc a_3^{-1} \cc c_2 \cc a_3^2 \cc b_1^2 \cc c_1^2 \cc a_2^2 \cc a_1^2 \cc e\cc a_2^{-1} \cc c_1^{-1} \cc e^{-1},\\
& \mu_3 = b_1^{-1} \cc a_3^{-1} \cc c_2 \cc a_3^2 \cc b_1^2 \cc c_1^2 \cc a_2^2 \cc a_1^2 \cc e\cc a_2^{-1} \cc c_1^{-1} \cc  c_5^{-1} \cc e^{-1}, \quad \mu_4 = c_1 \cc a_2 \cc e \cc c_5^2 \cc a_6^2 \cc c_4^2 \cc a_5^2 \cc c_3 \cc a_5^{-1} \cc c_4^{-2} \cc a_6^{-2} \cc c_5^{-2} \cc e^{-1},\\
&\mu_5 =  c_1 \cc a_2 \cc e \cc c_5^2 \cc a_6^2 \cc c_4^2 \cc a_5^2 \cc c_3\cc a_4^{-1} \cc c_3^{-2}  \cc a_5^{-2} \cc c_4^{-2} \cc a_6^{-2} \cc c_5^{-2} \cc e^{-1},\\
 & \mu_6 =  c_1 \cc a_2 \cc e \cc c_5^2 \cc a_6^2 \cc c_4^2 \cc a_5^2 \cc c_3^2 \cc a_4 \cc c_3^{-1}  \cc a_5^{-2} \cc c_4^{-2} \cc a_6^{-2} \cc c_5^{-2} \cc e^{-1},\\
&\mu_7 = c_1 \cc a_2^2 \cc e \cc c_5^2 \cc a_6^2 \cc c_4^2 \cc a_5^2 \cc c_3 \cc a_4^{-1} \cc c_3^{-2}  \cc a_5^{-2} \cc c_4^{-2} \cc a_6^{-2} \cc c_5^{-2} \cc e^{-1},\\
&\mu_8 = b_1^{-1} \cc a_3^{-1} \cc c_2 \cc a_3^2 \cc b_1^2 \cc c_1^2 \cc a_2^2 \cc a_1^2 \cc e \cc a_2^{-1} \cc c_1^{-1} \cc b_1\cc c_1^2 \cc a_2^2 \cc a_1 \cc e \cc c_5 \cc a_6^{-1} \cc c_5^{-2} \cc e^{-1},\\
&\eta_1 = b_1^{-1} \cc a_3^{-1} \cc c_2 \cc a_3^2 \cc b_1^2 \cc c_1^2 \cc a_2^2 \cc a_1^2 \cc e\cc a_2^{-1} \cc c_1^{-1} \cc b_1 \cc c_1, \quad  \eta_2 = b_1^{-1} \cc a_3^{-1} \cc c_2 \cc a_3^2 \cc b_1^2 \cc c_1^2 \cc a_2^2 \cc a_1^2 \cc e\cc a_2^{-1} \cc c_1^{-2} \cc b_1^{-1},\\
&\eta_3 = c_1 \cc a_2 \cc e \cc c_5^2 \cc a_6^2 \cc c_4^2 \cc a_5^2 \cc c_3^2 \cc a_4\cc c_2 \cc a_3^2 \cc b_1^2 \cc c_1^2 \cc a_2^2 \cc a_1 \cc a_2^{-2} \cc c_1^{-1}.\\
\end{aligned}
\end{equation*}
The admissible triples are 
\begin{equation*}
\begin{aligned}
A= \{ & (1,21,8),(1,19,12),(1,1916),(1,15,17),(1,18,20),(1,19,20),(1,19,26),(2,24,5),(3,19,17),\\
&(4,17,2),(4,17,10),(4,17,14),(4,17,23),(4,17,25),(5,17,14),(5,1,15),(5,1,18),(5,1,19),(5,1,21),\\
&(6,20,17),(7,8,13),(7,8,20),(8,11,17),(8,20,17),(8,13,19),(8,14,19),(8,9,24),(8,13,24),(9,24,5),\\
&(10,19,17),(11,17,14),(12,20,17),(13,24,3),(13,24,4),(13,24,5),(13,24,6),(13,19,17),(14,19,17),\\
&(14,19,26),(15,17,14),(16,8,13),(17,22,5),(17,25,5),(17,7,8),(17,23,8),(17,25,8),(17,10,19),\\
&(17,14,19),(17,8,20),(17,2,24),(18,20,17),(19,17,2),(19,17,7),(19,16,8),(19,17,8),(19,26,8),\\
&(19,20,17),(19,12,20),(19,17,22),(20,17,2),(20,17,10),(20,17,14),(20,17,23),(20,17,25),\\
&(21,8,20),(22,5,1),(22,4,17),(22,5,17),(22,3,19),(22,6,20),(23,8,13),(24,5,1),(24,4,17),\\
&(24,5,17),(24,3,19),(24,6,20),(25,5,1),(25,8,9),(25,8,11),(25,8,13),(25,8,14),(26,8,13)\}
\end{aligned}
\end{equation*}
Let $V= Span \, \Gamma$ and $W= Span\, \Delta$. The splitting map $S: V\to W$ is given by 
\begin{equation*}
S: \left\{ \begin{aligned} & \gamma_1 \mapsto \mu_1 + \zeta_6, \quad \gamma_2 \mapsto \eta_3, \quad \gamma_3 \mapsto \mu_2 + \zeta_6, \quad \gamma_4 \mapsto \eta_2, \quad \gamma_5 \mapsto \eta_1, \quad \gamma_6 \mapsto \mu_3 + \zeta_1,\\
&\gamma_7 \mapsto \mu_4 + \zeta_1, \quad \gamma_8 \mapsto \mu_8 + \zeta_1, \quad \gamma_9 \mapsto \mu_6+ \zeta_5, \quad \gamma_{10} \mapsto \mu_6+ \zeta_4, \quad \gamma_{11} \mapsto \mu_6 + \zeta_3,\\
& \gamma_{12} \mapsto \mu_1 + \gamma_{15} + \zeta_1, \quad \gamma_{13} \mapsto \mu_6 + \zeta_7, \quad \gamma_{14} \mapsto \mu_6+ \gamma_2 + \zeta_6, \quad \gamma_{15} \mapsto \mu_7+ \zeta_2, \\
&\gamma_{16} \mapsto \mu_1 + \gamma_{18} + \zeta_1 , \quad \gamma_{17} \mapsto \mu_1 +\gamma_{19} + \zeta_1, \quad \gamma_{18} \mapsto \mu_7 + \gamma_{4} + \zeta_1, \quad \gamma_{19} \mapsto \mu_7 + \gamma_5 + \zeta_1,\\
&\gamma_{20} \mapsto \mu_6 + \zeta_1 + \eta_2, \quad \gamma_{21} \mapsto \mu_7 + \gamma_6+ \zeta_1, \quad \gamma_{22} \mapsto \mu_5 + \gamma_8 + \zeta_1, \quad \gamma_{23} \mapsto \mu_6 + \gamma_8+ \zeta_1,\\
&\gamma_{24} \mapsto \mu_7 + \gamma_8 + \zeta_1, \quad \gamma_{25} \mapsto \mu_6+ \gamma_7+ \gamma_8 + \zeta_1, \quad \gamma_{26} \mapsto \mu_1 + \gamma_{21} + \gamma_8+ \zeta_1.
 \end{aligned}\right.
\end{equation*}
and the merging map $M:V \to W$ is
\begin{equation*}
M: \left\{ \begin{aligned} &\gamma_1 \mapsto \zeta_1 + \mu_1, \quad \gamma_2 \mapsto \gamma_2, \quad \gamma_3 \mapsto \zeta_2 + \mu_1, \quad \gamma_4 \mapsto \gamma_4, \quad \gamma_5 \mapsto \gamma_5, \quad \gamma_6 \mapsto \gamma_6,\\
&\gamma_7 \mapsto \gamma_7, \quad \gamma_8 \mapsto \gamma_8, \quad \gamma_9 \mapsto \zeta_1 + \mu_2, \quad \gamma_{10} \mapsto \zeta_3 + \mu_1, \quad \gamma_{11} \mapsto \zeta_1 + \mu_3,\\
&\gamma_{12} \mapsto \zeta_1+ \mu_4, \quad \gamma_{13} \mapsto \zeta_1 + \eta_1 + \mu_1, \quad \gamma_{14} \mapsto \zeta_1+ \eta_2 + \mu_1, \quad \gamma_{15}  \mapsto \gamma_{15}, \\
&\gamma_{16} \mapsto \zeta_1 + \mu_5, \quad \gamma_{17} \mapsto \zeta_1 + \mu_6, \quad \gamma_{18} \mapsto \gamma_{18}, \quad \gamma_{19} \mapsto \gamma_{19}, \quad \gamma_{20} \mapsto \zeta_1 + \mu_8,\\
&\gamma_{21} \mapsto \gamma_{21}, \quad \gamma_{22}  \mapsto \zeta_4 + \mu_7 , \quad \gamma_{23} \mapsto \zeta_5 + \mu_7, \quad \gamma_{24} \mapsto \zeta_6 + \mu_7, \\
&\gamma_{25} \mapsto \zeta_7+ \mu_7, \quad \gamma_{26} \mapsto \zeta_1 + \eta_3 + \mu_7.
\end{aligned}\right.
\end{equation*}
There is a $v_\lambda \in V$ such that $Sv_\lambda = \lambda Mv_\lambda$ where $\lambda\approx 1.46048 $ is the largest real root of $\chi(t) =t^{12} - t^{10} -t^9-t^8+t^6-t^4-t^3-t^2+1$
and
\begin{equation*}
\begin{aligned}
 v_\lambda \approx ( &3.087,1.460,0.321,0.048,3.115,0.469,1,4.479,0.220,0.220,\\
 &0.321,0.685,2.133,2.133,0.469,0.103,6.645,0.071,4.550,3.067,\\
 &0.685,0.150,0.150,3.780,1.460,1 )\\
 \end{aligned}
 \end{equation*}

\subsection{Orbitdata $1,6,7$ with a cyclic permutation}\label{B:167}
Let $f_\mathbf{R}: X(\mathbf{R}) \to X(\mathbf{R})$ be a real diffeomorphism associated with the orbit data $1,6,7$ and a cyclic permutation. 
Let \[\Gamma = \{ \gamma_1, \dots, \gamma_{30} \},\ \Delta = \{ \gamma_2, \gamma_3,\gamma_5,\gamma_{6}, \gamma_{9},\gamma_{11},\gamma_{12},\gamma_{14}, \gamma_{15}, \gamma_{16}, \gamma_{20},\zeta_1,\dots,\zeta_{10}, \mu_1, \dots, \mu_{11}\} \]
be two ordered sets of non-cyclic words in $\pi_1( X(\mathbf{R}),P_{fix})$ where
\begin{equation*}
\begin{aligned}
&\gamma_1 = a_1^{-1} \cc a_2^{-2} \cc c_1^{-2} \cc b_1^{-1} \cc c_1 \cc a_2 \cc e^{-1}, \qquad \qquad \gamma_2 =  a_1^{-1} \cc a_2^{-2} \cc c_1^{-2} \cc b_1^{-2} \cc a_3^{-1} \cc b_1 \cc c_1 \cc e^{-1} \\
&\gamma_3 = a_1^{-1} \cc a_2^{-2} \cc c_1^{-2} \cc b_1^{-2} \cc a_3^{-1}\cc c_2 \cc a_3^2  \cc b_1^2 \cc c_1^2 \cc a_2^2 \cc a_1^2 \cc e \cc a_2^{-1} \cc c_1^{-1} \cc e^{-1},\\
 &\gamma_4 = a_1^{-1} \cc a_2^{-2} \cc c_1^{-2} \cc b_1^{-2} \cc a_3^{-1}\cc c_2 \cc a_3^2  \cc b_1^2 \cc c_1^2 \cc a_2^2 \cc a_1^2 \cc e  \cc c_1\cc a_2 \cc e^{-1},\\
 &\gamma_5 =  a_1^{-1} \cc a_2^{-2} \cc c_1^{-2} \cc b_1^{-2} \cc a_3^{-1}\cc c_2 \cc a_3^2  \cc b_1^2 \cc c_1^2 \cc a_2^2 \cc a_1^2 \cc e \cc a_2^{-1} \cc c_1^{-1} \cc c_6^{-1} \cc e^{-1},\\
 &\gamma_6 =a_1^{-2} \cc a_2^{-2} \cc c_1^{-2} \cc b_1^{-2} \cc a_3^{-2} \cc c_2^{-2} \cc a_4^{-1} \cc c_3^{-1} \cc a_5^{-2} \cc c_4^{-2} \cc a_6^{-2} \cc c_5^{-2} \cc a_7^{-2} \cc c_6^{-2} \cc e^{-1},\\
 &\gamma_7 =  a_1^{-1} \cc a_2^{-2} \cc c_1^{-2} \cc b_1^{-2} \cc a_3^{-1}\cc c_2 \cc a_3^2  \cc b_1^2 \cc c_1^2 \cc a_2^2 \cc a_1^2 \cc e \cc a_2^{-1} \cc c_1^{-1} \cc b_1 \cc c_1^2 \cc a_2 \cc e^{-1},\\
 &\gamma_8 = a_1^{-1} \cc a_2^{-2} \cc c_1^{-2} \cc b_1^{-2} \cc a_3^{-1}\cc c_2 \cc a_3^2  \cc b_1^2 \cc c_1^2 \cc a_2^2 \cc a_1^2 \cc e \cc a_2^{-1} \cc c_1^{-2} \cc b_1^{-1}  \cc c_1 \cc a_2 \cc e^{-1},\\
&\gamma_9 = a_2^{-1} \cc c_1^{-1} \cc b_1 \cc c_1^2 \cc a_2^2 \cc a_1 \cc e \cc c_6^2 \cc a_7^2 \cc c_5^2 \cc a_6^2 \cc c_4 \cc a_5^{-1} \cc c_4^{-2} \cc a_6^{-2} \cc c_5^{-2} \cc a_7^{-2} \cc c_6^{-2} \cc e^{-1},\\
&\gamma_{10} =  a_1^{-1} \cc a_2^{-2} \cc c_1^{-2} \cc b_1^{-2} \cc a_3^{-1}\cc c_2 \cc a_3^2  \cc b_1^2 \cc c_1^2 \cc a_2^2 \cc a_1^2 \cc e \cc a_2^{-1} \cc c_1^{-1} \cc b_1 \cc c_1^2 \cc a_2^2 \cc a_1 \cc e \cc c_6 \cc c_1 \cc a_2 \cc e^{-1},\\
&\gamma_{11} = a_1^{-1} \cc a_2^{-2} \cc c_1^{-2} \cc b_1^{-2} \cc a_3^{-1}\cc c_2 \cc a_3^2  \cc b_1^2 \cc c_1^2 \cc a_2^2 \cc a_1^2 \cc e \cc a_2^{-1} \cc c_1^{-1} \cc b_1 \cc c_1^2 \cc a_2^2 \cc a_1 \cc e \cc c_6 \cc a_7^{-1} \cc c_6^{-2} \cc e^{-1},\\
&\gamma_{12} = a_1^{-1} \cc a_2^{-2} \cc c_1^{-2} \cc b_1^{-2} \cc a_3^{-1}\cc c_2 \cc a_3^2  \cc b_1^2 \cc c_1^2 \cc a_2^2 \cc a_1^2 \cc e \cc a_2^{-1} \cc c_1^{-1} \cc b_1 \cc c_1^2 \cc a_2^2 \cc a_1 \cc e \cc c_6^2  \cc a_7\cc c_6^{-1}  \cc e^{-1},\\
&\gamma_{13}= a_1^{-1} \cc a_2^{-2} \cc c_1^{-2} \cc b_1^{-1}  \cc c_1 \cc a_2 \cc e \cc c_6^2 \cc a_7^2 \cc c_5^2 \cc a_6^2 \cc c_4^2 \cc a_5^2 \cc c_3 \cc a_5^{-1} \cc c_4^{-2} \cc a_6^{-2} \cc c_5^{-2} \cc a_7^{-2} \cc c_6^{-2} \cc e^{-1},\\
\end{aligned}
\end{equation*}
\begin{equation*} 
\begin{aligned}
&\gamma_{14} =a_1^{-2} \cc a_2^{-2} \cc c_1^{-2} \cc b_1^{-2} \cc a_3^{-2} \cc c_2^{-1} \cc a_3 \cc b_1^2 \cc c_1^2  \cc a_2^2 \cc a_1  \cc e \cc c_6^2 \cc a_7^2 \cc c_5^2 \cc a_6^2 \cc c_4  \cc a_6^{-1} \cc c_5^{-2} \cc a_7^{-2} \cc c_6^{-2} \cc e^{-1},\\
&\gamma_{15} = a_1^{-1} \cc a_2^{-2} \cc c_1^{-2} \cc b_1^{-2} \cc a_3^{-1}\cc c_2 \cc a_3^2  \cc b_1^2 \cc c_1^2 \cc a_2^2 \cc a_1^2 \cc e \cc a_2^{-1} \cc c_1^{-1} \cc b_1 \cc c_1^2 \cc a_2^2 \cc a_1 \cc e \cc c_6^2  \cc a_7\cc c_5^{-1} \cc a_7^{-2} \cc c_6^{-2}  \cc e^{-1},\\
&\gamma_{16} = a_1^{-2} \cc a_2^{-2} \cc c_1^{-2} \cc b_1^{-2} \cc a_3^{-2} \cc c_2^{-1} \cc a_3 \cc b_1^2 \cc c_1^2  \cc a_2^2 \cc a_1  \cc e \cc c_6^2 \cc a_7^2 \cc c_5^2 \cc a_6^2 \cc c_4\cc a_5^{-1} \cc c_4^{-2}   \cc a_6^{-2} \cc c_5^{-2} \cc a_7^{-2} \cc c_6^{-2} \cc e^{-1},\\
&\gamma_{17} = a_1^{-2} \cc a_2^{-2} \cc c_1^{-2} \cc b_1^{-2} \cc a_3^{-2} \cc c_2^{-1} \cc a_3 \cc b_1^2 \cc c_1^2  \cc a_2^2 \cc a_1  \cc e \cc c_6^2 \cc a_7^2 \cc c_5^2 \cc a_6^2 \cc c_4^2 \cc a_5 \cc c_4^{-1}   \cc a_6^{-2} \cc c_5^{-2} \cc a_7^{-2} \cc c_6^{-2} \cc e^{-1},\\
&\gamma_{18} =  a_1^{-1} \cc a_2^{-2} \cc c_1^{-2} \cc b_1^{-2} \cc a_3^{-1}\cc c_2 \cc a_3^2  \cc b_1^2 \cc c_1^2 \cc a_2^2 \cc a_1^2 \cc e \cc a_2^{-1} \cc c_1^{-1} \cc b_1 \cc c_1^2 \cc a_2^2 \cc a_1 \cc e \cc c_6^2  \cc a_7^2 \cc c_5 \cc a_6^{-1} \cc c_5^{-2} \cc a_7^{-2} \cc c_6^{-2}  \cc e^{-1},\\
&\gamma_{19} = a_1^{-2} \cc a_2^{-2} \cc c_1^{-2} \cc b_1^{-2} \cc a_3^{-2} \cc c_2^{-1} \cc a_3 \cc b_1^2 \cc c_1^2  \cc a_2^2 \cc a_1  \cc e \cc c_6^2 \cc a_7^2 \cc c_5^2 \cc a_6^2 \cc c_4^2 \cc a_5 \cc c_3^{-1} \cc a_5^{-2}  \cc c_4^{-2}   \cc a_6^{-2} \cc c_5^{-2} \cc a_7^{-2} \cc c_6^{-2} \cc e^{-1},\\
&\gamma_{20} = a_1^{-1} \cc a_2^{-2} \cc c_1^{-2} \cc b_1^{-2} \cc a_3^{-2}\cc c_2^{-1} \cc a_4 \cc c_2^2 \cc a_3^2  \cc b_1^2 \cc c_1^2 \cc a_2^2 \cc a_1^2 \cc e \cc a_2^{-1} \cc c_1^{-1} \cc b_1 \cc c_1^2 \cc a_2^2 \cc a_1 \cc e\\
&\phantom{ASDFEW}  \cc c_6^2  \cc a_7 \cc c_5^2 \cc a_6^2 \cc c_4 \cc a_5^{-1} \cc c_4^{-2} \cc a_6^{-2} \cc c_5^{-2} \cc a_7^{-2} \cc c_6^{-2}  \cc e^{-1},\\
&\gamma_{21} =a_1^{-2} \cc a_2^{-2} \cc c_1^{-2} \cc b_1^{-2} \cc a_3^{-2} \cc c_2^{-1} \cc a_3 \cc b_1^2 \cc c_1^2  \cc a_2^2 \cc a_1  \cc e \cc c_6^2 \cc a_7^2 \cc c_5^2 \cc a_6^2 \cc c_4^2 \cc a_5^2\cc c_3 \cc a_4 \cc c_2^2 \cc a_3^2 \cc b_1^2 \cc c_1^2 \cc a_2^2 \cc a_1^2 \cc e \\
&\phantom{ASDFEW}  \cc a_2^{-1} \cc c_1^{-1} \cc b_1 \cc c_1^2 \cc a_2^2 \cc a_1 \cc e \cc c_6^2 \cc a_7^2 \cc c_5 \cc a_7^{-1} \cc c_6^{-2} \cc e^{-1},\\
&\gamma_{22} = a_1^{-1} \cc a_2^{-2} \cc c_1^{-2} \cc b_1^{-1}  \cc c_1  \cc a_2  \cc e \cc c_6^2 \cc a_7^2 \cc c_5^2 \cc a_6^2 \cc c_4^2 \cc a_5^2\cc c_3 \cc a_4 \cc c_2^2 \cc a_3^2 \cc b_1^2 \cc c_1^2 \cc a_2^2 \cc a_1^2 \cc e \\
&\phantom{ASDFEW}  \cc a_2^{-1} \cc c_1^{-1} \cc b_1 \cc c_1^2 \cc a_2^2 \cc a_1 \cc e \cc c_6^2 \cc a_7^2 \cc c_5^2 \cc a_6^2 \cc c_4 \cc a_5^{-1} \cc c_4^{-2} \cc a_6^{-2} \cc c_5^{-2} \cc a_7^{-2} \cc c_6^{-2} \cc e^{-1},\\
&\gamma_{23} = a_1^{-2} \cc a_2^{-2} \cc c_1^{-2} \cc b_1^{-2} \cc a_3^{-2} \cc c_2^{-1} \cc a_3 \cc b_1^2 \cc c_1^2  \cc a_2^2 \cc a_1  \cc e \cc c_6^2 \cc a_7^2 \cc c_5^2 \cc a_6^2 \cc c_4^2 \cc a_5^2\cc c_3 \cc a_4 \cc c_2^2 \cc a_3^2 \cc b_1^2 \cc c_1^2 \cc a_2^2 \cc a_1^2 \cc e \\
&\phantom{ASDFEW}  \cc a_2^{-1} \cc c_1^{-1} \cc b_1 \cc c_1^2 \cc a_2^2 \cc a_1 \cc e \cc c_6^2 \cc a_7^2 \cc c_5 \cc a_6^{-1} \cc c_5^{-2} \cc a_7^{-2} \cc c_6^{-2} \cc e^{-1},\\
&\gamma_{24} = a_1^{-2} \cc a_2^{-2} \cc c_1^{-2} \cc b_1^{-2} \cc a_3^{-2} \cc c_2^{-1} \cc a_3 \cc b_1^2 \cc c_1^2  \cc a_2^2 \cc a_1  \cc e \cc c_6^2 \cc a_7^2 \cc c_5^2 \cc a_6^2 \cc c_4^2 \cc a_5^2\cc c_3 \cc a_4 \cc c_2^2 \cc a_3^2 \cc b_1^2 \cc c_1^2 \cc a_2^2 \cc a_1^2 \cc e \\
&\phantom{ASDFEW}  \cc a_2^{-1} \cc c_1^{-1} \cc b_1 \cc c_1^2 \cc a_2^2 \cc a_1 \cc e \cc c_6^2 \cc a_7^2 \cc c_5^2 \cc a_6 \cc c_5^{-1} \cc a_7^{-2} \cc c_6^{-2} \cc e^{-1},\\
&\gamma_{25} =  a_1^{-2} \cc a_2^{-2} \cc c_1^{-2} \cc b_1^{-2} \cc a_3^{-2} \cc c_2^{-1} \cc a_3 \cc b_1^2 \cc c_1^2  \cc a_2^2 \cc a_1  \cc e \cc c_6^2 \cc a_7^2 \cc c_5^2 \cc a_6^2 \cc c_4^2 \cc a_5^2\cc c_3 \cc a_4 \cc c_2^2 \cc a_3^2 \cc b_1^2 \cc c_1^2 \cc a_2^2 \cc a_1^2 \cc e \\
&\phantom{ASDFEW}  \cc a_2^{-1} \cc c_1^{-1} \cc b_1 \cc c_1^2 \cc a_2^2 \cc a_1 \cc e \cc c_6^2 \cc a_7^2 \cc c_5^2 \cc a_6 \cc c_4^{-1} \cc a_6^{-2} \cc c_5^{-2} \cc a_7^{-2} \cc c_6^{-2} \cc e^{-1},\\
&\gamma_{26} =  a_1^{-1} \cc a_2^{-2} \cc c_1^{-2} \cc b_1^{-2} \cc a_3^{-2} \cc c_2^{-1} \cc a_3 \cc b_1^2 \cc c_1^2  \cc a_2^2 \cc a_1  \cc e \cc c_6^2 \cc a_7^2 \cc c_5^2 \cc a_6^2 \cc c_4^2 \cc a_5^2\cc c_3 \cc a_4 \cc c_2^2 \cc a_3^2 \cc b_1^2 \cc c_1^2 \cc a_2^2 \cc a_1^2 \cc e \\
&\phantom{ASDFEW}  \cc a_2^{-1} \cc c_1^{-1} \cc b_1 \cc c_1^2 \cc a_2^2 \cc a_1 \cc e \cc c_6^2 \cc a_7^2 \cc c_5^2 \cc a_6^2 \cc c_4 \cc a_5^{-1} \cc c_4^{-2} \cc a_6^{-2} \cc c_5^{-2} \cc a_7^{-2} \cc c_6^{-2} \cc e^{-1},\\
&\gamma_{27} = a_1^{-1} \cc a_2^{-2} \cc c_1^{-2} \cc b_1^{-2} \cc a_3^{-1} \cc c_2 \cc a_3^2 \cc b_1^2 \cc c_1^2  \cc a_2^2 \cc a_1  \cc e \cc c_6^2 \cc a_7^2 \cc c_5^2 \cc a_6^2 \cc c_4^2 \cc a_5^2\cc c_3 \cc a_4 \cc c_2^2 \cc a_3^2 \cc b_1^2 \cc c_1^2 \cc a_2^2 \cc a_1^2 \cc e \\
&\phantom{ASDFEW}  \cc a_2^{-1} \cc c_1^{-1} \cc b_1 \cc c_1^2 \cc a_2^2 \cc a_1 \cc e \cc c_6^2 \cc a_7^2 \cc c_5^2 \cc a_6^2 \cc c_4 \cc a_5^{-1} \cc c_4^{-2} \cc a_6^{-2} \cc c_5^{-2} \cc a_7^{-2} \cc c_6^{-2} \cc e^{-1},\\
&\gamma_{28} =  a_1^{-2} \cc a_2^{-2} \cc c_1^{-2} \cc b_1^{-2} \cc a_3^{-2} \cc c_2^{-1}  \cc a_3 \cc b_1^2 \cc c_1^2  \cc a_2^2 \cc a_1  \cc e \cc c_6^2 \cc a_7^2 \cc c_5^2 \cc a_6^2 \cc c_4^2 \cc a_5^2\cc c_3 \cc a_4 \cc c_2^2 \cc a_3^2 \cc b_1^2 \cc c_1^2 \cc a_2^2 \cc a_1^2 \cc e \\
&\phantom{ASDFEW}  \cc a_2^{-1} \cc c_1^{-1} \cc b_1 \cc c_1^2 \cc a_2^2 \cc a_1 \cc e \cc c_6^2 \cc a_7^2 \cc c_5^2 \cc a_6^2 \cc c_4 \cc a_5^{-1} \cc c_4^{-2} \cc a_6^{-2} \cc c_5^{-2} \cc a_7^{-2} \cc c_6^{-2} \cc e^{-1},\\
&\gamma_{29} =  a_1^{-2} \cc a_2^{-2} \cc c_1^{-2} \cc b_1^{-2} \cc a_3^{-2} \cc c_2^{-2}\cc a_4^{-1} \cc c_2  \cc a_3^2  \cc b_1^2 \cc c_1^2  \cc a_2^2 \cc a_1  \cc e \cc c_6^2 \cc a_7^2 \cc c_5^2 \cc a_6^2 \cc c_4^2 \cc a_5^2\cc c_3 \cc a_4 \cc c_2^2 \cc a_3^2 \cc b_1^2 \cc c_1^2 \cc a_2^2 \cc a_1^2 \cc e \\
&\phantom{ASDFEW}  \cc a_2^{-1} \cc c_1^{-1} \cc b_1 \cc c_1^2 \cc a_2^2 \cc a_1 \cc e \cc c_6^2 \cc a_7^2 \cc c_5^2 \cc a_6^2 \cc c_4 \cc a_5^{-1} \cc c_4^{-2} \cc a_6^{-2} \cc c_5^{-2} \cc a_7^{-2} \cc c_6^{-2} \cc e^{-1},\\
&\gamma_{30} =  a_1^{-1} \cc a_2^{-2} \cc c_1^{-2} \cc b_1^{-2} \cc a_3^{-1} \cc c_2  \cc a_3^2 \cc b_1^2 \cc c_1^2  \cc a_2^2 \cc a_1^2  \cc e \cc c_1^{-1} \cc b_1^{-1} \cc a_3 \cc b_1^2 \cc c_1^2 \cc a_2^2 \cc a_1 \cc e \cc c_6^2 \cc a_7^2 \cc c_5^2 \cc a_6^2 \cc c_4^2 \cc a_5^2\cc c_3 \cc a_4 \cc c_2^2 \cc a_3^2 \cc b_1^2 \cc c_1^2 \cc a_2^2 \cc a_1^2 \cc e \\
&\phantom{ASDFEW}  \cc a_2^{-1} \cc c_1^{-1} \cc b_1 \cc c_1^2 \cc a_2^2 \cc a_1 \cc e \cc c_6^2 \cc a_7^2 \cc c_5^2 \cc a_6^2 \cc c_4 \cc a_5^{-1} \cc c_4^{-2} \cc a_6^{-2} \cc c_5^{-2} \cc a_7^{-2} \cc c_6^{-2} \cc e^{-1},\\
\end{aligned}
\end{equation*}
and 
\begin{equation*}
\begin{aligned}
&\zeta _1 = a_1^{-1} \cc a_2^{-2} \cc c_1^{-2} \cc b_1^{-1}, \qquad \qquad \qquad \zeta_2 =   a_1^{-1} \cc a_2^{-2} \cc c_1^{-2} \cc b_1^{-2} \cc a_3^{-1} \cc c_2 \cc a_3^2 \cc b_1^2 \cc c_1^2 \cc a_2^2 \cc a_1^2 \cc e,\\
&\zeta_3 = a_1^{-1} \cc a_2^{-2} \cc c_1^{-2} \cc b_1^{-2} \cc a_3^{-2} \cc c_2 ^{-1}\cc a_3 \cc b_1^2 \cc c_1^2 \cc a_2^2 \cc a_1 \cc a_2^{-2} \cc c_1^{-1},\\ 
&\zeta_4=  a_1^{-1} \cc a_2^{-2} \cc c_1^{-2} \cc b_1^{-2} \cc a_3^{-1} \cc c_2 \cc a_3^2 \cc b_1^2 \cc c_1^2 \cc a_2^2 \cc a_1 \cc a_2^{-2} \cc c_1^{-1},\\
&\zeta_5 =  a_1^{-2} \cc a_2^{-2} \cc c_1^{-2} \cc b_1^{-2} \cc a_3^{-2} \cc c_2 ^{-1}\cc a_3 \cc b_1^2 \cc c_1^2 \cc a_2^2 \cc a_1 \cc a_2^{-2} \cc c_1^{-1},\\
&\zeta_6 = a_1^{-1} \cc a_2^{-2} \cc c_1^{-2} \cc b_1^{-2} \cc a_3^{-1} \cc c_2 \cc a_3^2 \cc b_1^2 \cc c_1^2 \cc a_2^2 \cc a_1^2 \cc e  \cc a_2^{-1} \cc c_1^{-1} \cc b_1 \cc c_1,\\
&\zeta_7 = a_1^{-1} \cc a_2^{-2} \cc c_1^{-2} \cc b_1^{-2} \cc a_3^{-2} \cc c_2 \cc a_3^2 \cc b_1^2 \cc c_1^2 \cc a_2^2 \cc a_1^2 \cc e \cc a_2^{-1} \cc c_1^{-2} \cc b_1^{-1},\\
\end{aligned}
\end{equation*}
\begin{equation*} 
\begin{aligned}
&\zeta_8 = a_1^{-2} \cc a_2^{-2} \cc c_1^{-2} \cc b_1^{-2} \cc a_3^{-2} \cc c_2 ^{-2} \cc a_4^{-1} \cc c_2 \cc a_3^2 \cc b_1^2 \cc c_1^2 \cc a_2^2 \cc a_1 \cc a_2^{-2} \cc c_1^{-1},\\
&\zeta_9 = a_1^{-1} \cc a_2^{-2} \cc c_1^{-2} \cc b_1^{-2} \cc a_3^{-1} \cc c_2 \cc a_3^2 \cc b_1^2 \cc c_1^2 \cc a_2^2 \cc a_1^2 \cc e  \cc a_2^{-1} \cc c_1^{-1} \cc b_1 \cc c_1^2 \cc a_2^2 \cc a_1 \cc e \cc c_6,\\
&\zeta_{10} = a_1^{-1} \cc a_2^{-2} \cc c_1^{-2} \cc b_1^{-2} \cc a_3^{-1} \cc c_2 \cc a_3^2 \cc b_1^2 \cc c_1^2 \cc a_2^2 \cc a_1^2 \cc e \cc c_1^{-1} \cc b_1^{-1} \cc a_3 \cc b_1^2 \cc c_1^2 \cc a_2^2 \cc a_1 \cc a_2^{-2} \cc c_1^{-1},\\
&\mu_1 = c_1 \cc a_2 \cc e^{-1},\\
&\mu_2 = c_1 \cc a_2^2 \cc e \cc c_6^2 \cc a_7^2 \cc c_5^2 \cc a_6^2 \cc c_4^2 \cc a_5 \cc c_4^{-1} \cc a_6^{-2} \cc c_5^{-2} \cc a_7^{-2} \cc c_6^{-2} \cc e^{-1},\\
&\mu_3 = c_1 \cc a_2 \cc e \cc c_6^2 \cc a_7^2 \cc c_5^2 \cc a_6^2 \cc c_4^2 \cc a_5^2 \cc c_3 \cc a_5^{-1} \cc c_4^{-2} \cc a_6^{-2} \cc c_5^{-2} \cc a_7^{-2} \cc c_6^{-2} \cc e^{-1},\\
&\mu_4 = c_1 \cc a_2^2 \cc e \cc c_6^2 \cc a_7^2 \cc c_5^2 \cc a_6^2 \cc c_4^2 \cc a_5 \cc  c_3^{-1} \cc a_5^{-2} \cc c_4^{-2} \cc a_6^{-2} \cc c_5^{-2} \cc a_7^{-2} \cc c_6^{-2} \cc e^{-1},\\
&\mu_5=b_1^{-1} \cc a_3^{-1} \cc c_2 \cc a_3^2 \cc b_1^2 \cc c_1^2 \cc a_2^2\cc a_1^2 \cc e \cc a_2^{-1} \cc c_1^{-1} \cc b_1 \cc c_1^2 \cc a_2^2 \cc a_1 \cc e \cc c_6^2 \cc a_7^2 \cc c_5 \cc a_6^{-1} \cc c_5^{-2} \cc a_7^{-2} \cc c_6^{-2} \cc e^{-1},\\
&\mu_6 = c_1 \cc a_2^2 \cc e \cc c_6^2 \cc a_7^2 \cc c_5^2 \cc a_6^2 \cc c_4^2 \cc a_5^2 \cc c_3 \cc a_4 \cc c_2^2 \cc a_3^2 \cc b_1^2 \cc c_1^2 \cc a_2^2 \cc a_1^2 \cc e \cc a_2^{-1} \cc c_1^{-1} \cc b_1 \cc c_1^2 \cc a_2^2 \cc a_1 \cc e \cc c_6^2 \cc a_7^2 \cc c_5 \cc a_7^{-1} \cc c_6^{-2} \cc e^{-1},\\
&\mu_7= c_1 \cc a_2^2 \cc e \cc c_6^2 \cc a_7^2 \cc c_5^2 \cc a_6^2 \cc c_4^2 \cc a_5^2 \cc c_3 \cc a_4 \cc c_2^2 \cc a_3^2 \cc b_1^2 \cc c_1^2 \cc a_2^2 \cc a_1^2 \cc e \cc a_2^{-1} \cc c_1^{-1} \cc b_1 \cc c_1^2 \cc a_2^2 \cc a_1 \cc e \\
&\phantom{ASDEFG}  \cc c_6^2 \cc a_7^2 \cc c_5 \cc a_6^{-1} \cc c_5^{-2}  \cc a_7^{-2} \cc c_6^{-2} \cc e^{-1},\\
&\mu_8=c_1 \cc a_2^2 \cc e \cc c_6^2 \cc a_7^2 \cc c_5^2 \cc a_6^2 \cc c_4^2 \cc a_5^2 \cc c_3 \cc a_4 \cc c_2^2 \cc a_3^2 \cc b_1^2 \cc c_1^2 \cc a_2^2 \cc a_1^2 \cc e \cc a_2^{-1} \cc c_1^{-1} \cc b_1 \cc c_1^2 \cc a_2^2 \cc a_1 \cc e \\
&\phantom{ASDEFG}  \cc c_6^2 \cc a_7^2 \cc c_5^2 \cc a_6 \cc c_5^{-1}  \cc a_7^{-2} \cc c_6^{-2} \cc e^{-1},\\
&\mu_9 = c_1 \cc a_2^2 \cc e \cc c_6^2 \cc a_7^2 \cc c_5^2 \cc a_6^2 \cc c_4^2 \cc a_5^2 \cc c_3 \cc a_4 \cc c_2^2 \cc a_3^2 \cc b_1^2 \cc c_1^2 \cc a_2^2 \cc a_1^2 \cc e \cc a_2^{-1} \cc c_1^{-1} \cc b_1 \cc c_1^2 \cc a_2^2 \cc a_1 \cc e \\
&\phantom{ASDEFG}  \cc c_6^2 \cc a_7^2 \cc c_5^2 \cc a_6 \cc c_4^{-1}  \cc a_6^{-2} \cc c_5^{-2}  \cc a_7^{-2} \cc c_6^{-2} \cc e^{-1},\\
&\mu_{10} = c_1 \cc a_2 \cc e \cc c_6^2 \cc a_7^2 \cc c_5^2 \cc a_6^2 \cc c_4^2 \cc a_5^2 \cc c_3 \cc a_4 \cc c_2^2 \cc a_3^2 \cc b_1^2 \cc c_1^2 \cc a_2^2 \cc a_1^2 \cc e \cc a_2^{-1} \cc c_1^{-1} \cc b_1 \cc c_1^2 \cc a_2^2 \cc a_1 \cc e \\
&\phantom{ASDEFG}  \cc c_6^2 \cc a_7^2 \cc c_5^2 \cc a_6^2 \cc c_4 \cc a_5^{-1} \cc c_4^{-2}  \cc a_6^{-2} \cc c_5^{-2}  \cc a_7^{-2} \cc c_6^{-2} \cc e^{-1},\\
&\mu_{11} = c_1 \cc a_2^2 \cc e \cc c_6^2 \cc a_7^2 \cc c_5^2 \cc a_6^2 \cc c_4^2 \cc a_5^2 \cc c_3 \cc a_4 \cc c_2^2 \cc a_3^2 \cc b_1^2 \cc c_1^2 \cc a_2^2 \cc a_1^2 \cc e \cc a_2^{-1} \cc c_1^{-1} \cc b_1 \cc c_1^2 \cc a_2^2 \cc a_1 \cc e \\
&\phantom{ASDEFG}  \cc c_6^2 \cc a_7^2 \cc c_5^2 \cc a_6^2 \cc c_4 \cc a_5^{-1} \cc c_4^{-2}  \cc a_6^{-2} \cc c_5^{-2}  \cc a_7^{-2} \cc c_6^{-2} \cc e^{-1}.\\
\end{aligned}
\end{equation*}
The admissible triples are 
\begin{equation*}
\begin{aligned}
A= \{ & (1,6,2,28),(1,6,3,28),(1,6,4,24),(1,6,5,1),(1,6,7,24),(1,6,7,28),(1,6,8,24),(1,6,9,18),\\
&(1,6,11,1),(1,6,20,18),(1,9,26,12),(1,6,27,18),(1,14,1,6),(1,16,18,1),(1,17,1,6),(1,17,1,29),\\
&(1,17,13,18),(1,17,18,1),(1,17,22,18),(1,19,9,18),(1,21,1,17),(1,23,1,6),(1,24,1,6),\\
&(1,24,1,14),(1,24,1,16),(1,24,1,17),(1,24,1,19),(1,25,18,1),(1,29,18,1),(2,28,12,1),\\
&(3,28,12,1),(4,24,1,17),(5,1,17,1),(6,2,28,12),(6,3,28,12),(6,4,24,1),(6,5,1,17),(6,7,24,1),\\
&(6,7,28,10),(6,7,28,12),(6,7,28,15),(6,8,24,1),(6,9,18,1),(6,11,1,17),(6,20,18,1),\\
&(6,26,12,1),(6,27,18,1),(6,30,18,1),(7,24,1,17),(7,28,10,24),(7,28,11,1),(7,28,15,1),\\
&(8,24,1,17),(9,10,24,1),(9,12,1,17),(9,12,1,21),(9,12,1,23),(9,12,1,24),(9,12,1,25),\\
&(9,15,1,6),(9,18,1,6),(10,24,1,17),(11,1,17,1),(12,1,17,1),(12,1,21,1),(12,1,23,1),\\
&(12,1,24,1),(12,1,25,18),(13,18,1,6),(14,1,6,11),(15,1,6,11),(16,18,1,6),(17,1,6,2),\\
&(17,1,6,4),(17,1,6,8),(17,1,6,9),(17,1,6,20),(17,1,6,26),(17,1,6,27),(17,1,6,30),\\
&(17,1,29,18),(17,13,18,1),(17,18,1,6),(17,22,18,1),(18,1,6,3),(18,1,6,5),(18,1,6,7),\\
&(18,1,6,11),(19,9,18,1),(20,18,1,6),(21,1,17,1),(22,18,1,6),(23,1,6,11),(24,1,6,11),\\
&(24,1,14,1),(24,1,16,18),(24,1,17,1),(24,1,17,13),(24,1,17,18),(24,1,17,22),(24,1,19,9),\\
&(25,18,1,6),(26,10,24,1),(26,12,1,17),(26,12,1,21),(26,12,1,23),(26,12,1,24),\\
&(26,12,1,25),(26,15,1,6),(27,18,1,6),(28,10,24,1),(28,12,1,17),(28,12,1,21),\\
&(28,12,1,23),(28,12,1,24),(28,12,1,25),(28,15,1,6),(29,18,1,6),(30,18,1,6)\}
\end{aligned}
\end{equation*}
Let $V= Span \, \Gamma$ and $W= Span\, \Delta$. The splitting map $S: V\to W$ is given by 
\begin{equation*}
S: \left\{ \begin{aligned} & \gamma_1 \mapsto \mu_1 + \zeta_5, \quad \gamma_2 \mapsto \mu_1 + \zeta_8, \quad \gamma_3 \mapsto \mu_1 + \gamma_6 + \zeta_4, \quad \gamma_4 \mapsto \mu_1 + \gamma_6 + \zeta_3, \quad \gamma_5 \mapsto \mu_1 + \gamma_6 + \zeta_2,\\ 
&\gamma_6 \mapsto \mu_2 + \zeta_1,\quad \gamma_7 \mapsto \mu_1 + \gamma_6 + \zeta_{10}, \quad \gamma_8 \mapsto \mu_1 + \gamma_6 + \gamma_2 + \zeta_5, \quad \gamma_9 \mapsto \mu_5 + \zeta_1, \\ &\gamma_{10} \mapsto \mu_1 + \gamma_6 + \gamma_3 + \zeta_5,\quad \gamma_{11} \mapsto \mu_1 + \gamma_6 + \zeta_7, \quad \gamma_{12} \mapsto \mu_1+ \gamma_6 + \zeta_6, \quad \gamma_{13} \mapsto \mu_1 + \gamma_{14} + \zeta_1,\\
&\gamma_{14} \mapsto \mu_6 + \zeta_1, \quad \gamma_{15} \mapsto \mu_1 + \gamma_6 + \gamma_5 + \zeta_1, \quad \gamma_{16} \mapsto \mu_7 + \zeta_1, \quad \gamma_{17} \mapsto \mu_8 + \zeta_1, \\
&\gamma_{18} \mapsto \mu_1 + \gamma_6  +\gamma_{11} +\zeta_1, \quad \gamma_{19} \mapsto \mu_9+ \zeta_1, \quad \gamma_{20} \mapsto \mu_3 +  \zeta_1 + \mu_5+\zeta_1, \quad \gamma_{21} \mapsto \mu_{11} + \zeta_9,\\
&\gamma_{22} \mapsto \mu_1 + \gamma_{16} + \zeta_1 + \mu_5+\zeta_1, \quad \gamma_{23} \mapsto \mu_{11} + \gamma_{11} + \zeta_1, \quad \gamma_{24} \mapsto \mu_{11} + \gamma_{12} + \zeta_1,\\
&\gamma_{25} \mapsto \mu_{11} +\gamma_{15} + \zeta_1, \quad \gamma_{26} \mapsto \mu_{10} + \zeta_1 + \mu_5+\zeta_1, \quad \gamma_{27} \mapsto \mu_1 + \gamma_6 + \gamma_9 +  \zeta_1 + \mu_5+\zeta_1,\\
&\gamma_{28} \mapsto \mu_{11} +  \zeta_1 + \mu_5+\zeta_1, \quad \gamma_{29} \mapsto \mu_4 + \gamma_9 + \zeta_1 + \mu_5+\zeta_1, \quad \gamma_{30} \mapsto \mu_1 + \gamma_6 + \gamma_{20} +  \zeta_1 + \mu_5+\zeta_1.\\
 \end{aligned}\right.
\end{equation*}
and the merging map $M:V \to W$ is
\begin{equation*}
M: \left\{ \begin{aligned} &\gamma_1 \mapsto \zeta_1 + \mu_1, \quad \gamma_2 \mapsto \gamma_2 , \quad \gamma_3 \mapsto \gamma_3, \quad \gamma_4 \mapsto \zeta_2 + \mu_1, \quad \gamma_5 \mapsto \gamma_5, \quad \gamma_6 \mapsto \gamma_6,\quad \gamma_7 \mapsto \zeta_6+ \mu_1 \\
&\gamma_8 \mapsto \zeta_7 + \mu_1 , \quad \gamma_9 \mapsto \gamma_9, \quad \gamma_{10} \mapsto \zeta_9 + \mu_1, \quad \gamma_{11} \mapsto \gamma_{11}, \quad \gamma_{12} \mapsto \gamma_{12}, \quad \gamma_{13} \mapsto \zeta_1 + \mu_3,\\
&\gamma_{14} \mapsto \gamma_{14}, \quad \gamma_{15} \mapsto \gamma_{15}, \quad \gamma_{16} \mapsto \gamma_{16}, \quad \gamma_{17} \mapsto \zeta_5 + \mu_2, \quad \gamma_{18} \mapsto \zeta_1 + \mu_5, \quad \gamma_{19} \mapsto \zeta_5 + \mu_4,\\
&\gamma_{20} \mapsto \gamma_{20}, \quad \gamma_{21}  \mapsto \zeta_5 + \mu_6, \quad \gamma_{22} \mapsto \zeta_1 + \mu_{10}, \quad \gamma_{23} \mapsto \zeta_5 + \mu_7, \quad \gamma_{24} \mapsto \zeta_5 + \mu_8, \quad \gamma_{25} \mapsto \zeta_5 + \mu_9,\\
&\gamma_{26} \mapsto \zeta_3 + \mu_{11}, \quad \gamma_{27} \mapsto \zeta_4+ \mu_{11}, \quad \gamma_{28} \mapsto \zeta_5 + \mu_{11}, \quad \gamma_{29} \mapsto \zeta_8 + \mu_{11}, \quad \gamma_{30} \mapsto \zeta_{10} + \mu_{11}.\\
\end{aligned}\right.
\end{equation*}
There is a $v_\lambda \in V$ such that $Sv_\lambda = \lambda Mv_\lambda$ where $\lambda\approx 1.53293 $ is the largest real root of $\chi(t) =t^{10} -t^9-t^8+t^5-t^2-t+1$
and
\begin{equation*}
\begin{aligned}
 v_\lambda \approx ( &17.704,1,0.077,0.077,0.118,8.465,1.533,1.533,0.458,0.118,\\
 &2.350,2.350,0.426,0.278,0.282,0.021,5.522,3.588,0.426,0.652,0.181,0.033, \\
 &0.014,3.602,0.278,0.050,0.050,2.604,0.652,1)\\
 \end{aligned}
 \end{equation*}

\subsection{Orbitdata $2,3,5$ with a cyclic permutation}\label{B:235}
Let $f_\mathbf{R}: X(\mathbf{R}) \to X(\mathbf{R})$ be a real diffeomorphism associated with the orbit data $2,3,5$ and a cyclic permutation. 
Let \[\Gamma = \{ \gamma_1, \dots, \gamma_{35} \},\ \Delta = \{ \gamma_1, \gamma_2,\gamma_{11},\gamma_{15}, \gamma_{16},\gamma_{17},\gamma_{24},\gamma_{25}, \gamma_{28}, \zeta_1,\dots,\zeta_{4}, \mu_1, \dots, \mu_{21}, \eta_1, \eta_2 \} \]
be two ordered sets of non-cyclic words in $\pi_1( X(\mathbf{R}),P_{fix})$ where
\begin{equation*}
\begin{aligned}
& \gamma_1 = b_1^{-1} \cc c_1^{-1} \cc a_5^{-1} \cc e^{-1}, \qquad \qquad \qquad \gamma_2 = b_1^{-1} \cc c_1^{-1} \cc a_2 \cc c_1 \cc a_5^{-1} \cc e^{-1},\\
&\gamma_3 = c_1^{-1} \cc a_2^{-1} \cc c_3^{-1} \cc a_4^{-1} \cc a_2 \cc c_1 \cc a_5^{-1} \cc e^{-1}, \quad \gamma_4 = b_1^{-1} \cc c_1^{-2} \cc a_2^{-1} \cc c_1 \cc b_1 \cc e \cc a_5^2 \cc a_4^2 \cc c_3 \cc a_4^{-1} \cc a_5^{-2} \cc e^{-1},\\
&\gamma_5 = a_1^{-1} \cc b_1^{-2} \cc c_1^{-2} \cc a_2^{-2} \cc b_2^{-2} \cc c_2^{-1} \cc a_3^{-1} \cc c_3^{-2} \cc a_4^{-2} \cc a_5^{-2} \cc e^{-1}, \\ 
&\gamma_6 = a_1^{-1} \cc b_1^{-2} \cc c_1^{-2} \cc a_2^{-2} \cc b_2^{-1} \cc c_2^{-1} \cc a_3^{-2} \cc c_3^{-2} \cc a_4^{-2} \cc a_5^{-2} \cc e^{-1},\\
&\gamma_7 =a_1^{-1} \cc  b_1^{-2} \cc c_1^{-2} \cc a_2^{-1} \cc c_1 \cc b_1 \cc e \cc a_5^2 \cc a_4^2 \cc c_3 \cc a_4^{-1} \cc a_5^{-2} \cc e^{-1},\\
&\gamma_8 = b_1^{-1} \cc c_1^{-1} \cc a_2 \cc c_1^2 \cc b_1^2 \cc a_1 \cc e \cc a_5 \cc c_1^{-1} \cc a_2^{-1} \cc a_4 \cc c_3 \cc a_2 \cc c_1 \cc a_5^{-1} \cc e^{-1},\\
&\gamma_9 = b_1^{-1} \cc c_1^{-1} \cc a_2 \cc c_1^2 \cc b_1^2 \cc a_1 \cc e \cc a_5 \cc c_1^{-1} \cc a_2^{-1} \cc c_3^{-1} \cc a_4^{-2} \cc a_5^{-2} \cc e^{-1},\\
&\gamma_{10} = a_1^{-1} \cc b_1^{-2} \cc c_1^{-2} \cc a_2^{-2} \cc b_2^{-1} \cc c_2^{-1}\cc a_3^{-1} \cc c_2^{-1} \cc a_3^{-2} \cc c_3^{-2} \cc a_4^{-2} \cc a_5^{-2} \cc e^{-1},\\
&\gamma_{11} = a_1^{-1} \cc b_1^{-1} \cc e  \cc a_5^2\cc a_4^2  \cc c_3^2 \cc a_3 \cc c_2^{-1} \cc a_3^{-2} \cc c_3^{-2} \cc a_4^{-2} \cc a_5^{-2} \cc e^{-1},\\
\end{aligned}
\end{equation*}
\begin{equation*} 
\begin{aligned}
&\gamma_{12} = c_1^{-1} \cc a_2^{-1} \cc b_2 \cc a_2^2 \cc c_1^2 \cc b_1^2 \cc a_1\cc e \cc a_5^2 \cc a_4^2 \cc c_3 \cc a_2 \cc c_1 \cc a_5^{-1} \cc e^{-1},\\
&\gamma_{13} = c_1^{-1} \cc a_2^{-1} \cc b_2 \cc a_2^2 \cc c_1^2 \cc b_1^2 \cc a_1\cc e \cc a_5^2 \cc a_4 \cc c_3^{-1} \cc a_4^{-2} \cc a_5^{-2} \cc e^{-1},\\
&\gamma_{14} = c_1^{-1} \cc a_2^{-1} \cc a_3 \cc c_2 \cc b_2 \cc a_2^2 \cc c_1^2 \cc b_1^2 \cc a_1\cc e \cc a_5 \cc c_1^{-1} \cc a_2^{-1} \cc c_3^{-1} \cc a_4^{-2} \cc a_5^{-2} \cc e^{-1},\\
&\gamma_{15} = b_1^{-1} \cc c_1^{-1} \cc a_2 \cc c_1^2 \cc b_1 \cc e \cc a_5^2 \cc a_4^2 \cc c_3^2 \cc a_3 \cc c_2^{-1} \cc a_3^{-2} \cc c_3^{-2} \cc a_4^{-2} \cc a_5^{-2} \cc e^{-1},\\
&\gamma_{16} = b_1^{-1} \cc c_1^{-1} \cc a_4\cc c_3 \cc a_2 \cc c_1 \cc e \cc a_5^2 \cc a_4^2 \cc c_3^2 \cc a_3^2 \cc c_2 \cc a_3^{-1} \cc c_3^{-2} \cc a_4^{-2} \cc a_5^{-2} \cc e^{-1},\\
&\gamma_{17} = b_1 \cc a_1 \cc e \cc a_5\cc c_1 \cc b_1 \cc e \cc a_5^2 \cc a_4^2 \cc c_3^2 \cc a_3 \cc c_2^{-1} \cc a_3^{-2} \cc c_3^{-2} \cc a_4^{-2} \cc a_5^{-2} \cc e^{-1},\\
&\gamma_{18} =a_1^{-1} \cc  b_1^{-2} \cc c_1^{-2} \cc a_2^{-2}\cc b_2^{-1} \cc a_3 \cc c_2 \cc b_2 \cc a_2^2  \cc c_1^2 \cc b_1^2 \cc a_1  \cc e \cc a_5^2 \cc a_4 \cc c_3^{-1} \cc a_4^{-2} \cc a_5^{-2} \cc e^{-1},\\
&\gamma_{19} =a_1^{-1} \cc  b_1^{-2} \cc c_1^{-2} \cc a_2^{-2}\cc b_2^{-1} \cc a_3 \cc c_2 \cc b_2 \cc a_2^2  \cc c_1^2 \cc b_1^2 \cc a_1  \cc e \cc a_5 \cc c_1^{-1} \cc a_2^{-1} \cc c_3^{-1} \cc a_4^{-1} \cc a_2 \cc c_1 \cc a_5^{-1} \cc e^{-1},\\
&\gamma_{20} =a_1^{-1} \cc  b_1^{-2} \cc c_1^{-2} \cc a_2^{-2}\cc b_2^{-1} \cc a_3 \cc c_2 \cc b_2 \cc a_2^2  \cc c_1^2 \cc b_1^2 \cc a_1  \cc e \cc a_5 \cc c_1^{-1} \cc a_2^{-1} \cc c_3^{-1} \cc a_4^{-2}  \cc a_5^{-2} \cc e^{-1},\\
&\gamma_{21} =a_1^{-1} \cc  b_1^{-2} \cc c_1^{-2} \cc a_2^{-2}\cc b_2^{-1} \cc c_2^{-1} \cc a_3^{-1}  \cc b_2 \cc a_2^2  \cc c_1^2 \cc b_1^2 \cc a_1  \cc e \cc a_5^2 \cc a_4 \cc c_3^{-1} \cc a_4^{-2} \cc a_5^{-2} \cc e^{-1},\\
&\gamma_{22} =a_1^{-1} \cc  b_1^{-2} \cc c_1^{-2} \cc a_2^{-2}\cc b_2^{-1} \cc a_3 \cc c_2 \cc b_2 \cc a_2^2  \cc c_1^2 \cc b_1^2 \cc a_1  \cc e \cc a_5 \cc c_1^{-1} \cc a_2^{-1} \cc c_3^{-1} \cc a_4^{-1}  \cc c_3^{-1} \cc a_4^{-2} \cc a_5^{-2} \cc e^{-1},\\
&\gamma_{23} =a_1^{-1} \cc  b_1^{-2} \cc c_1^{-2} \cc a_2^{-2}\cc b_2^{-1} \cc c_2^{-1} \cc a_3^{-1}  \cc a_2 \cc c_1   \cc e \cc a_5^2 \cc a_4^2 \cc c_3^2 \cc a_3^2 \cc c_2 \cc a_3^{-1}  \cc c_3^{-2} \cc a_4^{-2} \cc a_5^{-2} \cc e^{-1},\\
&\gamma_{24} = a_1^{-1} \cc b_1^{-1} \cc e  \cc a_5^2\cc a_4^2  \cc c_3^2 \cc a_3^2 \cc c_2 \cc b_2 \cc a_2^2 \cc c_1^2 \cc b_1^2 \cc a_1 \cc e \cc a_5^2 \cc a_4 \cc c_3^{-1} \cc a_4^{-2} \cc a_5^{-2} \cc e^{-1},\\
&\gamma_{25} = b_1^{-1} \cc c_1^{-1} \cc a_2 \cc c_1^2 \cc b_1^2 \cc a_1 \cc e \cc a_5 \cc c_1 \cc b_1 \cc e  \cc a_5^2\cc a_4^2  \cc c_3^2 \cc a_3 \cc c_2^{-1} \cc a_3^{-2}  \cc c_3^{-2} \cc a_4^{-2} \cc a_5^{-2} \cc e^{-1},\\
&\gamma_{26} = b_1^{-1} \cc c_1^{-1} \cc a_2 \cc c_1^2 \cc b_1^2 \cc a_1 \cc e \cc a_5 \cc c_1^{-1} \cc a_2^{-1} \cc a_4 \cc c_3 \cc a_2 \cc c_1 \cc e  \cc a_5^2\cc a_4^2  \cc c_3^2 \cc a_3 \cc c_2 \cc a_3^{-1}  \cc c_3^{-2} \cc a_4^{-2} \cc a_5^{-2} \cc e^{-1},\\
&\gamma_{27} = b_1^{-1} \cc c_1^{-2} \cc a_2^{-1}  \cc c_1 \cc b_1 \cc e \cc  a_5^2 \cc a_4^2 \cc c_3^2\cc a_3 \cc c_2 \cc b_2^2 \cc a_2^2 \cc c_1^2 \cc b_1^2  \cc a_1  \cc e  \cc a_5^2\cc a_4^2  \cc c_3 \cc a_2 \cc c_1 a_5^{-1} \cc e^{-1},\\
&\gamma_{28} = b_1 \cc a_1 \cc e \cc a_5 \cc c_1 \cc b_1 \cc e \cc a_5^2 \cc a_4^2 \cc c_3^2 \cc a_3\cc c_2 \cc b_2^2 \cc a_2^2 \cc c_1^2 \cc b_1^2 \cc a_1 \cc e \cc a_5^2 \cc a_4^2 \cc c_3 \cc a_4^{-1} \cc a_5^{-2} \cc e^{-1},\\
&\gamma_{29} = c_1^{-1} \cc a_2^{-1} \cc c_3^{-1} \cc a_4^{-1} \cc c_1 \cc b_1 \cc e  \cc a_5^2 \cc a_4^2 \cc c_3^2 \cc a_3\cc c_2 \cc b_2^2 \cc a_2^2 \cc c_1^2 \cc b_1^2 \cc a_1 \cc e \cc a_5^2 \cc a_4^2 \cc c_3\cc a_2 \cc c_1\cc a_5^{-1} \cc e^{-1},\\
&\gamma_{30} = c_1^{-1} \cc a_2^{-1} \cc c_3^{-1} \cc a_4^{-1} \cc c_1 \cc b_1 \cc e  \cc a_5^2 \cc a_4^2 \cc c_3^2 \cc a_3\cc c_2 \cc b_2^2 \cc a_2^2 \cc c_1^2 \cc b_1^2 \cc a_1 \cc e \cc a_5^2 \cc a_4^2 \cc c_3\cc a_4^{-1} \cc a_5^{-2} \cc e^{-1},\\
&\gamma_{31} = c_1^{-1} \cc a_2^{-1} \cc c_3^{-1} \cc a_4^{-1} \cc c_1 \cc b_1 \cc e  \cc a_5^2 \cc a_4^2 \cc c_3^2 \cc a_3\cc c_2 \cc b_2^2 \cc a_2^2 \cc c_1^2 \cc b_1^2 \cc a_1 \cc e \cc a_5^2 \cc a_4^2 \cc c_3\cc a_4 \cc c_3 \cc a_2 \cc c_1\cc a_5^{-1} \cc e^{-1},\\
&\gamma_{32} =a_1^{-1} \cc b_1^{-2} \cc c_1^{-2} \cc a_2^{-2} \cc b_2^{-1} \cc a_2 \cc c_1  \cc e  \cc a_5^2 \cc a_4^2 \cc c_3^2 \cc a_3^2\cc c_2 \cc b_2 \cc a_2^2 \cc c_1^2 \cc b_1^2 \cc a_1 \cc e \cc a_5^2 \cc a_4 \cc c_3^{-1}\cc a_4^{-2} \cc a_5^{-2} \cc e^{-1},\\
&\gamma_{33} =a_1^{-1} \cc b_1^{-2} \cc c_1^{-2} \cc a_2^{-2} \cc b_2^{-1}\cc c_2^{-1} \cc a_3^{-1} \cc a_2 \cc c_1  \cc e  \cc a_5^2 \cc a_4^2 \cc c_3^2 \cc a_3^2\cc c_2 \cc b_2 \cc a_2^2 \cc c_1^2 \cc b_1^2 \cc a_1 \cc e \cc a_5^2 \cc a_4 \cc c_3^{-1}\cc a_4^{-2} \cc a_5^{-2} \cc e^{-1},\\
&\gamma_{34} =a_1^{-1} \cc b_1^{-2} \cc c_1^{-2} \cc a_2^{-2} \cc b_2^{-1}\cc c_2^{-1} \cc a_3^{-1} \cc a_2 \cc c_1  \cc e  \cc a_5^2 \cc a_4^2 \cc c_3^2 \cc a_3^2\cc c_2 \cc a_3 \cc c_2 \cc b_2 \cc a_2^2 \cc c_1^2 \cc b_1^2 \cc a_1 \cc e \cc a_5^2 \cc a_4 \cc c_3^{-1}\cc a_4^{-2} \cc a_5^{-2} \cc e^{-1},\\
&\gamma_{35} = b_1^{-1} \cc c_1^{-1} \cc a_2 \cc c_1^2 \cc b_1^2 \cc a_1 \cc e \cc a_5 \cc c_1^{-1} \cc a_2^{-1} \cc c_1 \cc b_1 \cc e \cc a_5^2 \cc a_4^2 \cc c_3^2 \cc a_3 \cc c_2 \cc b_2^2 \cc a_2^2 \cc c_1^2 \cc b_1^2 \cc a_1 \cc e \cc a_5^2 \cc a_4^2 \cc c_3 \cc a_2 \cc c_1 \cc a_5^{-1} \cc e^{-1},
\end{aligned}
\end{equation*}
and 
\begin{equation*}
\begin{aligned}
& \zeta_1 = a_1^{-1} \cc a_2^{-2} \cc c_1^{-2} \cc b_1^{-1}, \qquad \qquad \zeta_2 = b_1 \cc a_2^{-2} \cc c_1^{-2} \cc b_1^{-1},\\
&\zeta_3 = c_1 \cc b_1^2 \cc a_2^{-2} \cc c_1^{-2} \cc b_1^{-1}, \ \quad \zeta_4 = b_1^{-1} \cc c_1^{-1} \cc a_2 \cc c_1^2 \cc b_1^2 \cc a_1\cc e \cc a_5 \cc c_1 \cc b_1^2 \cc a_2^{-2} \cc c_1^{-2} \cc b_1^{-1},\\
&\mu_1 = b_1 \cc c_1^2 \cc a_2^2 \cc b_1^{-2} \cc c_1^{-2} \cc a_2^{-1} \cc a_4 \cc  c_3 \cc a_2 \cc c_1 \cc a_5^{-1} \cc e^{-1}
, \qquad \qquad\mu_2 = b_1 \cc c_1^2 \cc a_2^2 \cc b_1^{-2} \cc c_1^{-2} \cc a_2^{-1} \cc c_3^{-1} \cc a_4^{-1} \cc a_2 \cc c_1 \cc a_5^{-1} \cc e^{-1},\\
&\mu_3 = b_1 \cc c_1^2 \cc a_2^2 \cc b_1^{-2} \cc c_1^{-2} \cc a_2^{-1} \cc c_3^{-1} \cc a_4^{-2} \cc a_5^{-2} \cc e^{-1}
, \qquad \qquad\mu_4 = b_1 \cc c_1^2 \cc a_2^2 \cc b_1^{-2} \cc c_1^{-2} \cc a_2^{-1} \cc c_3^{-1}\cc a_4^{-1} \cc c_3^{-1} \cc a_4^{-2} \cc a_5^{-2} \cc e^{-1},\\
&\mu_5 = b_1 \cc c_1^2 \cc a_2^2 \cc b_1^{-2} \cc c_1^{-2} \cc a_2^{-2} \cc b_2^{-2} \cc c_2^{-1} \cc a_3^{-1} \cc c_3^{-2} \cc a_4^{-2} \cc a_5^{-2} \cc e^{-1},\\
&\mu_6 = b_1 \cc c_1^2 \cc a_2^2 \cc b_1^{-2} \cc c_1^{-2} \cc a_2^{-2} \cc b_2^{-1} \cc c_2^{-1} \cc a_3^{-2} \cc c_3^{-2} \cc a_4^{-2} \cc a_5^{-2} \cc e^{-1},\\
&\mu_7 = b_1 \cc c_1^2 \cc a_2^2 \cc b_1^{-2} \cc c_1^{-2} \cc a_2^{-1} \cc c_1 \cc b_1 \cc e \cc a_5^2 \cc a_4^2 \cc c_3 \cc a_4^{-1} \cc a_5^{-2} \cc e^{-1},\\
&\mu_8 = b_1 \cc c_1^2 \cc a_2^2 \cc b_1^{-2} \cc c_1^{-2} \cc a_2^{-2} \cc b_2^{-1} \cc c_2^{-1} \cc a_3^{-1} \cc c_2^{-1} \cc a_3^{-2} \cc c_3^{-2} \cc a_4^{-2} \cc a_5^{-2} \cc e^{-1},\\
&\mu_9 = b_1 \cc c_1^2 \cc a_2^2 \cc b_1^{-2} \cc c_1^{-2} \cc a_2^{-1}\cc b_2 \cc a_2^2  \cc c_1^2 \cc b_1^2 \cc a_1 \cc e \cc a_5^2 \cc a_4^2 \cc c_3\cc a_2 \cc c_1 \cc a_5^{-1} \cc e^{-1},\\
&\mu_{10} = b_1 \cc c_1^2 \cc a_2^2 \cc b_1^{-2} \cc c_1^{-2} \cc a_2^{-1}\cc b_2 \cc a_2^2  \cc c_1^2 \cc b_1^2 \cc a_1 \cc e \cc a_5^2 \cc a_4 \cc c_3^{-1}\cc a_4^{-2} \cc a_5^{-2} \cc e^{-1},\\
&\mu_{11} = b_1 \cc c_1^2 \cc a_2^2 \cc b_1^{-2} \cc c_1^{-2} \cc a_2^{-2}\cc b_2^{-1} \cc a_3 \cc c_2 \cc b_2 \cc a_2^2  \cc c_1^2 \cc b_1^2 \cc a_1 \cc e \cc a_5^2 \cc a_4 \cc c_3^{-1}\cc a_4^{-2} \cc a_5^{-2} \cc e^{-1},\\
\end{aligned}
\end{equation*}
\begin{equation*}
\begin{aligned}
&\mu_{12} = b_1 \cc c_1^2 \cc a_2^2 \cc b_1^{-2} \cc c_1^{-2} \cc a_2^{-2}\cc b_2^{-1}\cc c_2^{-1} \cc a_3^{-1}  \cc b_2 \cc a_2^2  \cc c_1^2 \cc b_1^2 \cc a_1 \cc e \cc a_5^2 \cc a_4 \cc c_3^{-1}\cc a_4^{-2} \cc a_5^{-2} \cc e^{-1},\\
&\mu_{13} = b_1 \cc c_1^2 \cc a_2^2 \cc b_1^{-2} \cc c_1^{-2} \cc a_2^{-1}\cc a_4 \cc c_3 \cc a_2 \cc c_1 \cc e \cc a_5^2 \cc a_4^2 \cc c_3^2 \cc a_3^2 \cc c_2 \cc a_3^{-1} \cc c_3^{-2}\cc a_4^{-2} \cc a_5^{-2} \cc e^{-1},\\
&\mu_{14} = b_1 \cc c_1^2 \cc a_2^2 \cc b_1^{-2} \cc c_1^{-2} \cc a_2^{-2}\cc b_2^{-1} \cc c_2^{-1} \cc a_3^{-1}  \cc a_2 \cc c_1 \cc e \cc a_5^2 \cc a_4^2 \cc c_3^2 \cc a_3^2 \cc c_2 \cc a_3^{-1} \cc c_3^{-2}\cc a_4^{-2} \cc a_5^{-2} \cc e^{-1},\\
&\mu_{15} = b_1 \cc c_1^2 \cc a_2^2 \cc b_1^{-2} \cc c_1^{-2} \cc a_2^{-1}\cc c_1 \cc b_1 \cc e \cc a_5^2 \cc a_4^2 \cc c_3^2 \cc a_3 \cc c_2 \cc b_2^2 \cc a_2^2  \cc c_1^2 \cc b_1^2 \cc a_1 \cc e \cc a_5^2 \cc a_4^2 \cc c_3\cc a_2 \cc c_1 \cc a_5^{-1} \cc e^{-1},\\
&\mu_{16} = b_1 \cc c_1^2 \cc a_2^2 \cc b_1^{-2} \cc c_1^{-2} \cc a_2^{-2}\cc b_2^{-1} \cc a_2 \cc c_1 \cc e \cc a_5^2 \cc a_4^2 \cc c_3^2 \cc a_3^2 \cc c_2 \cc b_2 \cc a_2^2  \cc c_1^2 \cc b_1^2 \cc a_1 \cc e \cc a_5^2 \cc a_4 \cc c_3^{-1}\cc a_4^{-2} \cc a_5^{-2} \cc e^{-1},\\
&\mu_{17} = b_1 \cc c_1^2 \cc a_2^2 \cc b_1^{-2} \cc c_1^{-2} \cc a_2^{-1}\cc c_3^{-1} \cc a_4^{-1} \cc c_1 \cc b_1 \cc e \cc a_5^2 \cc a_4^2 \cc c_3^2 \cc a_3 \cc c_2 \cc b_2^2 \cc a_2^2  \cc c_1^2 \cc b_1^2 \cc a_1 \cc e \cc a_5^2 \cc a_4^2 \cc c_3\cc a_2 \cc c_1 \cc a_5^{-1} \cc e^{-1},\\
&\mu_{18} = b_1 \cc c_1^2 \cc a_2^2 \cc b_1^{-2} \cc c_1^{-2} \cc a_2^{-1}\cc c_3^{-1} \cc a_4^{-1} \cc c_1 \cc b_1 \cc e \cc a_5^2 \cc a_4^2 \cc c_3^2 \cc a_3 \cc c_2 \cc b_2^2 \cc a_2^2  \cc c_1^2 \cc b_1^2 \cc a_1 \cc e \cc a_5^2 \cc a_4^2 \cc c_3\cc a_4^{-1} \cc a_5^{-2} \cc e^{-1},\\
&\mu_{19} = b_1 \cc c_1^2 \cc a_2^2 \cc b_1^{-2} \cc c_1^{-2} \cc a_2^{-2}\cc b_2^{-1}\cc c_2^{-1} \cc a_3^{-1} \cc a_2 \cc c_1 \cc e \cc a_5^2 \cc a_4^2 \cc c_3^2 \cc a_3^2 \cc c_2 \cc b_2 \cc a_2^2  \cc c_1^2 \cc b_1^2 \cc a_1 \cc e \cc a_5^2 \cc a_4 \cc c_3^{-1}\cc a_4^{-2} \cc a_5^{-2} \cc e^{-1},\\
&\mu_{20} = b_1 \cc c_1^2 \cc a_2^2 \cc b_1^{-2} \cc c_1^{-2} \cc a_2^{-1}\cc c_3^{-1} \cc a_4^{-1} \cc c_1 \cc b_1 \cc e \cc a_5^2 \cc a_4^2 \cc c_3^2 \cc a_3 \cc c_2 \cc b_2^2 \cc a_2^2  \cc c_1^2 \cc b_1^2 \cc a_1 \cc e \cc a_5^2 \cc a_4^2 \cc c_3\cc a_4 \cc c_3\cc a_2 \cc c_1 \cc a_5^{-1} \cc e^{-1},\\
&\mu_{21} = b_1 \cc c_1^2 \cc a_2^2 \cc b_1^{-2} \cc c_1^{-2} \cc a_2^{-2}\cc b_2^{-1}\cc c_2^{-1} \cc a_3^{-1} \cc a_2 \cc c_1 \cc e \cc a_5^2 \cc a_4^2 \cc c_3^2 \cc a_3^2 \cc c_2\cc a_3 \cc c_2  \cc b_2 \cc a_2^2  \cc c_1^2 \cc b_1^2 \cc a_1 \cc e \cc a_5^2 \cc a_4 \cc c_3^{-1}\cc a_4^{-2} \cc a_5^{-2} \cc e^{-1},\\
&\eta_1 = b_1 \cc c_1^2 \cc a_2^2 \cc b_1^{-2} \cc c_1^{-2} \cc a_2^{-1} \cc a_3 \cc c_2 \cc b_2 \cc a_2^{2} \cc c_1^{2} \cc b_1^{2} \cc a_1 \cc e \cc a_5 \cc c_1 \cc b_1^{2} \cc a_2^{-2} \cc c_1^{-2} \cc b_1^{-1},\\
&\eta_2 = b_1 \cc c_1^2 \cc a_2^2 \cc b_1^{-2} \cc c_1^{-2} \cc a_2^{-2}\cc b_2^{-1}  \cc a_3 \cc c_2 \cc b_2 \cc a_2^{2} \cc c_1^{2} \cc b_1^{2} \cc a_1 \cc e \cc a_5 \cc c_1 \cc b_1^{2} \cc a_2^{-2} \cc c_1^{-2} \cc b_1^{-1}.\\
\end{aligned}
\end{equation*}
The admissible $3$-tuples are 
\begin{equation*}
\begin{aligned}
A= \{ & (1,7,6),(1,11,14),(1,24,15),(1,24,25),(1,24,26),(1,24,35),(2,7,6),(3,7,6),(4,21,5),\\
&(4,10,14),(4,6,28),(5,15,3),(5,16,4),(5,2,7),(5,1,11),(5,1,24),\\
&(5,16,27),(5,15,29),(5,15,30),(5,15,31),(6,13,5),(6,28,18),(6,28,19),(6,12,21),\\
&(6,28,22),(6,12,23),(6,14,32),(6,12,33),(6,12,34),(7,6,12),(7,6,13),(7,6,28),\\
&(8,21,5),(9,5,2),(9,5,16),(10,14,32),(11,14,5),(11,14,32),(12,21,5),(12,33,5),\\
&(12,34,15),(12,23,28),(13,5,16),(14,5,2),(14,32,8),(14,5,15),(14,32,26),\\
&(15,14,15),(15,3,7),(15,29,7),(15,30,20),(15,31,21),(15,30,22),(15,30,32),\\
&(16,4,10),(16,4,21),(16,27,23),(17,3,7),(18,15,30),(19,7,6),(20,32,8),(21,5,1),\\
&(21,5,16),(22,5,16),(23,17,3),(23,4,6),(23,28,6),(23,4,10),(23,28,18),(24,26,1),\\
&(24,25,3),(24,35,7),(24,15,14),(24,35,23),(24,26,27),(25,3,7),(26,1,7),(26,1,11),\\
&(26,27,23),(27,23,17),(27,23,28),(28,22,5),(28,19,7),(28,6,13),(28,6,14),\\
&(28,18,15),(29,7,6),(30,22,5),(30,32,5),(30,32,8),(30,32,9),(30,20,32),(31,21,5),\\
&(32,5,1),(32,26,1),(32,9,5),(32,8,21),(33,5,1),(34,15,14),(34,15,30),\\
&(35,23,4),(35,7,6),(35,23,17)\}
\end{aligned}
\end{equation*}
Let $V= Span \, \Gamma$ and $W= Span\, \Delta$. The splitting map $S: V\to W$ is given by 
\begin{equation*}
S: \left\{ \begin{aligned} & \gamma_1 \mapsto \mu_6 + \zeta_3, \quad \gamma_2 \mapsto \mu_8+ \zeta_3, \quad \gamma_3 \mapsto \mu_5 + \gamma_1 + \gamma_{11} + \zeta_3, \quad \gamma_4 \mapsto \eta_2, \quad \gamma_5 \mapsto \mu_7 + \zeta_1,\\
&\gamma_6  \mapsto \mu_3 + \zeta_1, \quad \gamma_7 \mapsto \eta_1, \quad \gamma_8 \mapsto \mu_{14} +\gamma_{17} + \zeta_3, \quad \gamma_9 \mapsto \mu_{14} + \zeta_2, \quad \gamma_{10} \mapsto \mu_4 + \zeta_1,\\
&\gamma_{11} \mapsto \mu_{10} + \zeta_1, \quad \gamma_{12} \mapsto \mu_5 + \gamma_{15} + \zeta_3, \quad \gamma_{13} \mapsto \mu_5 + \gamma_2 + \zeta_1, \quad \gamma_{14} \mapsto \mu_5 + \gamma_{16} + \zeta_2,\\
&\gamma_{15} \mapsto \mu_{12} + \zeta_1, \quad \gamma_{16} \mapsto \mu_6 + \gamma_{28} + \zeta_1, \quad \gamma_{17} \mapsto \mu_{16} + \zeta_1, \quad \gamma_{18} \mapsto \mu_1 + \zeta_1, \\
& \gamma_{19} \mapsto \mu_{13} + \gamma_1 + \gamma_{11} + \zeta_3, \quad \gamma_{20} \mapsto \mu_{13} + \zeta_2, \quad \gamma_{21} \mapsto \mu_2+ \zeta_1, \quad \gamma_{22} \mapsto \mu_{13} + \gamma_1 + \zeta_1,\\
&\gamma_{23} \mapsto \mu_{18} + \zeta_1, \quad \gamma_{24} \mapsto \mu_9 + \zeta_1, \quad \gamma_{25} \mapsto \mu_{19} + \zeta_1,  \quad \gamma_{26} \mapsto \mu_{14} + \gamma_{28} + \zeta_1,\\
&\gamma_{27} \mapsto \mu_{11} + \gamma_{15} + \zeta_3, \quad \gamma_{28} \mapsto \mu_{16} + \zeta_4, \quad \gamma_{29} \mapsto \mu_5 + \gamma_1 + \gamma_{24} + \gamma_{15} + \zeta_3, \\
&\gamma_{30} \mapsto \mu_5 + \gamma_1 + \gamma_{24}+ \zeta_4, \quad \gamma_{31} \mapsto \mu_5 + \gamma_1 + \gamma_{24} + \gamma_{25} + \zeta_3, \quad \gamma_{32} \mapsto \mu_{15} + \zeta_1,\\
&\gamma_{33} \mapsto \mu_{17} + \zeta_1, \quad \gamma_{34} \mapsto \mu_{20} + \zeta_1, \quad \gamma_{35} \mapsto \mu_{21} + \gamma_{15} + \zeta_3.\\
 \end{aligned}\right.
\end{equation*}
and the merging map $M:V \to W$ is
\begin{equation*}
M: \left\{ \begin{aligned} &\gamma_1 \mapsto \gamma_1, \quad \gamma_2 \mapsto \gamma_2, \quad \gamma_3\mapsto \zeta_3 + \mu_2, \quad \gamma_4 \mapsto \zeta_2 + \mu_7, \quad \gamma_5 \mapsto \zeta_1 + \mu_5, \quad \gamma_6 \mapsto \zeta_1 + \mu_6,\\
&\gamma_7 \mapsto \zeta_1+ \mu_7, \quad \gamma_8 \mapsto \zeta_4+\mu_1, \quad \gamma_9 \mapsto \zeta_4+ \mu_3, \quad \gamma_{10} \mapsto \zeta_1 + \mu_8, \quad \gamma_{11} \mapsto \gamma_{11}, \quad \gamma_{12} \mapsto \zeta_3 + \mu_9,\\
&\gamma_{13} \mapsto \zeta_3 + \mu_{10}, \quad \gamma_{14} \mapsto \zeta_3 + \eta_1 + \mu_3, \quad \gamma_{15} \mapsto \gamma_{15}, \quad \gamma_{16} \mapsto \gamma_{16}, \quad \gamma_{17} \mapsto \gamma_{17},\\
&\gamma_{18} \mapsto \zeta_1 + \mu_{11}, \quad \gamma_{19} \mapsto \zeta_1 + \eta_2 + \mu_2, \quad \gamma_{20} \mapsto \zeta_1+ \eta_2 + \mu_3, \quad \gamma_{21} \mapsto \zeta_1 + \mu_{12},\\
&\gamma_{22} \mapsto \zeta_1 + \eta_2 + \mu_4, \quad \gamma_{23} \mapsto \zeta_1 + \mu_{14}, \quad \gamma_{24} \mapsto \gamma_{24}, \quad \gamma_{25} \mapsto \gamma_{25}, \quad \gamma_{26} \mapsto \zeta_4 + \mu_{13},\\
&\gamma_{27} \mapsto \zeta_2 + \mu_{15}, \quad \gamma_{28} \mapsto \gamma_{28}, \quad \gamma_{29} \mapsto \zeta_3 + \mu_{17}, \quad \gamma_{30} \mapsto \zeta_3 + \mu_{18}, \quad \gamma_{31} \mapsto \zeta_3 + \mu_{20},\\
&\gamma_{32} \mapsto \zeta_1 + \mu_{16}, \quad \gamma_{33} \mapsto \zeta_1 + \mu_{19}, \quad \gamma_{34} \mapsto \zeta_1 + \mu_{21}, \quad \gamma_{35} \mapsto \zeta_4 + \mu_{15}.\\
\end{aligned}\right.
\end{equation*}

There is a $v_\lambda \in V$ such that $Sv_\lambda = \lambda Mv_\lambda$ where $\lambda\approx 1.27034 $ is the largest real root of $\chi(t) =t^{14} - t^{11}-t^{10} -t^4-t^3 +1$
and
\begin{equation*}
\begin{aligned}
 v_\lambda \approx ( &4.163,1,1.258,2.031,7.949,5.339,4.226,0.792,0.688,0.787,1.614,1.623,\\
 &1.270,3.327,3.308,2.612,0.623,1.006,0.792,0.187,2.604,0.620,2.156,\\
 &2.062,0.488,1.258,1.277,3.052,0.302,1.697,0.620,2.893,0.384,0.787,1)\\
 \end{aligned}
 \end{equation*}

\subsection{Orbitdata $3,4,5$ with a cyclic permutation}\label{B:345}
Let $f_\mathbf{R}: X(\mathbf{R}) \to X(\mathbf{R})$ be a real diffeomorphism associated with the orbit data $3,4,5$ and a cyclic permutation. 
Let 
\begin{equation*}
\begin{aligned}
&\Gamma = \{ \gamma_1, \dots, \gamma_{33} \},\\
&\Delta = \{ \gamma_1, \gamma_2,\gamma_{3},\gamma_{4}, \gamma_{5},\gamma_{8},\gamma_{11},\gamma_{12}, \gamma_{16},\gamma_{17},\gamma_{18},\gamma_{21},\gamma_{27}, \zeta_1,\dots,\zeta_{8}, \mu_1, \dots, \mu_{11}, \eta_1,\dots. \eta_4 \} 
\end{aligned}
\end{equation*}
be two ordered sets of non-cyclic words in $\pi_1( X(\mathbf{R}),P_{fix})$ where
\begin{equation*}
\begin{aligned}
&\gamma_1 = a_1^{-1} \cc b_1^{-1} \cc e^{-1}, \qquad  \gamma_2= b_1^{-1} \cc c_1^{-1} \cc a_4^{-1} \cc c_4^{-1} \cc c_1 \cc b_1 \cc e^{-1}, \qquad  \gamma_3= b_1^{-1} \cc c_1^{-1} \cc c_4^{-1} \cc a_5^{-1} \cc c_1 \cc b_1 \cc e^{-1}\\
&\gamma_4 = a_1^{-2} \cc b_1^{-2} \cc a_2^{-2} \cc b_2^{-2} \cc c_2^{-1} \cc b_2 \cc a_2^2 \cc c_1^2 \cc b_1^2 \cc a_1 \cc b_1^{-1} \cc c_1^{-1} \cc a_4^{-1} \cc c_4^{-1} \cc c_1 \cc b_1 \cc e^{-1},\\ 
&\gamma_5 = a_1^{-2} \cc b_1^{-2} \cc c_1^{-2} \cc a_2^{-2} \cc b_2^{-2} \cc c_2^{-1} \cc b_2 \cc a_2 \cc b_3^{-1} \cc c_3^{-1} \cc c_1 \cc b_1 \cc a_1^{-1} \cc b_1^{-2} \cc c_1^{-2} \cc a_2^{-2} \cc b_2^{-1} \cc a_2\cc c_1 \cc e^{-1},\\
&\gamma_6 =a_1^{-2} \cc b_1^{-2} \cc c_1^{-2} \cc a_2^{-2} \cc b_2^{-2} \cc c_2^{-1} \cc b_2 \cc a_2^2 \cc c_1^2 \cc b_1^2 \cc a_1 \cc b_1^{-1} \cc c_1^{-1} \cc c_3 \cc b_3 \cc a_3^{-1} \cc b_3^{-2} \cc c_3^{-2} \cc a_4^{-2} \cc c_4^{-1} \cc c_1 \cc b_1 \cc e^{-1},\\
&\gamma_7 =a_1^{-2} \cc b_1^{-2} \cc c_1^{-2} \cc a_2^{-2} \cc b_2^{-2} \cc c_2^{-1} \cc b_2 \cc a_2^2 \cc c_1^2 \cc b_1^2 \cc a_1 \cc b_1^{-1} \cc c_1^{-1} \cc c_3 \cc b_3 \cc a_3^{-1} \cc b_3^{-2} \cc c_3^{-2} \cc a_4^{-2} \cc c_4^{-2} \cc a_5^{-2} \cc e^{-1},\\
&\gamma_8 = a_1^{-2} \cc b_1^{-2} \cc c_1^{-2} \cc a_2^{-2} \cc b_2^{-2} \cc c_2^{-1} \cc b_2 \cc a_2 \cc b_3^{-1} \cc c_3^{-1} \cc c_1 \cc b_1 \cc a_1^{-1} \cc b_1^{-2} \cc c_1^{-2} \cc a_2^{-2} \cc b_2^{-1} \cc c_2 \cc b_2^2 \cc a_2^2 \cc c_1^2 \cc b_1^2 \cc a_1^2 \cc e \cc c_1 \cc b_1 \cc e^{-1},\\
&\gamma_9 = a_1^{-2} \cc b_1^{-2} \cc c_1^{-2} \cc a_2^{-2} \cc b_2^{-2} \cc c_2^{-1} \cc b_2 \cc a_2^2 \cc c_1^2 \cc b_1^2 \cc a_1 \cc b_1^{-1} \cc c_1^{-1} \cc c_3 \cc b_3 \cc a_2^{-1} \cc b_2^{-1} \cc c_2 \cc b_2^2 \cc a_2^2 \cc c_1^2 \cc b_1^2 \cc a_1^2 \cc e \cc b_1^{-1}\cc c_1^{-1}\cc a_5^{-1} \cc e^{-1},\\
&\gamma_{10} =a_1^{-2} \cc b_1^{-2} \cc c_1^{-2} \cc a_2^{-2} \cc b_2^{-2} \cc c_2^{-1} \cc b_2 \cc a_2 \cc b_2 \cc a_2^2 \cc c_1^2 \cc b_1^2 \cc a_1 \cc b_1^{-1} \cc c_1^{-1} \cc c_3 \cc b_3 \cc a_2^{-1} \cc b_2^{-1} \cc c_2 \\
&\phantom{ASEFG} \cc b_2^2 \cc a_2^2 \cc c_1^2 \cc b_1^2 \cc a_1^2 \cc e \cc b_1^{-1}\cc c_1^{-1}\cc a_5^{-1} \cc e^{-1},\\
&\gamma_{11} = a_1^{-2} \cc b_1^{-2} \cc c_1^{-2} \cc a_2^{-2} \cc b_2^{-2} \cc c_2^{-1} \cc b_2 \cc a_2 \cc b_3^{-1} \cc c_3^{-1} \cc c_1 \cc b_1 \cc a_1^{-1} \cc b_1^{-2} \cc c_1^{-2} \cc a_2^{-2} \cc b_2^{-1}\cc a_2^{-1}  \cc b_2^{-1}  \cc c_2  \\
&\phantom{ASEFG} \cc b_2^2 \cc a_2^2 \cc c_1^2 \cc b_1^2 \cc a_1^2 \cc e \cc c_1 \cc b_1 \cc e^{-1},\\
&\gamma_{12} = b_1^{-1} \cc c_1^{-1} \cc c_4 \cc a_4^2 \cc c_3^2 \cc b_3^2 \cc a_3 \cc b_3^{-1} \cc c_3^{-1} \cc c_1 \cc b_1 \cc a_1^{-1} \cc b_1^{-2} \cc c_1^{-2} \cc a_2^{-2} \cc b_2^{-1} \cc c_2 \cc b_2^2 \cc a_2^2 \cc c_1^2 \cc b_1^2 \cc a_1^2 \cc e\\
&\phantom{ASEFG} \cc b_1 \cc a_1 \cc b_1^{-1} \cc c_1^{-1} \cc a_4^{-1} \cc c_4^{-1} \cc c_1 \cc b_1 \cc e^{-1},\\
&\gamma_{13}=a_1^{-2} \cc b_1^{-2} \cc c_1^{-2} \cc a_2^{-2} \cc b_2^{-2} \cc c_2^{-1} \cc b_2 \cc a_2 \cc b_2 \cc a_2^2 \cc c_1^2 \cc b_1^2 \cc a_1 \cc b_1^{-1} \cc c_1^{-1} \cc c_3 \cc b_3 \cc a_2^{-1} \cc b_2^{-1} \cc c_2 \\
&\phantom{ASEFG} \cc b_2^2 \cc a_2^2 \cc c_1^2 \cc b_1^2 \cc a_1^2 \cc e \cc b_1^{-1}\cc c_1^{-1}\cc c_4 \cc a_4 \cc c_1 \cc b_1 \cc a_1^{-1}\cc b_1^{-1} \cc e^{-1},\\
&\gamma_{14}=a_1^{-2} \cc b_1^{-2} \cc c_1^{-2} \cc a_2^{-2} \cc b_2^{-2} \cc c_2^{-1} \cc b_2 \cc a_2 \cc b_3\cc a_3 \cc c_2 \cc b_2^2 \cc a_2^2 \cc c_1^2 \cc b_1^2 \cc a_1 \cc b_1^{-1} \cc c_1^{-1} \cc c_3 \cc b_3 \cc a_2^{-1} \cc b_2^{-1} \cc c_2 \\
&\phantom{ASEFG} \cc b_2^2 \cc a_2^2 \cc c_1^2 \cc b_1^2 \cc a_1^2 \cc e \cc b_1^{-1}\cc c_1^{-1}\cc c_4 \cc a_4 \cc c_1 \cc b_1 \cc a_1^{-1}\cc b_1^{-1} \cc e^{-1},\\
\end{aligned}
\end{equation*}
\begin{equation*}
\begin{aligned}
&\gamma_{15} = a_1^{-2} \cc b_1^{-2} \cc c_1^{-2} \cc a_2^{-2} \cc b_2^{-2} \cc c_2^{-1} \cc b_2 \cc a_2 \cc b_3^{-1} \cc c_3^{-1} \cc c_1 \cc b_1 \cc a_1^{-1} \cc b_1^{-2} \cc c_1^{-2} \cc a_2^{-2} \cc b_2^{-2}\cc c_2^{-1}  \cc b_2  \cc a_2^2 \cc  c_1^2 \cc b_1^2 \cc a_1  \\
&\phantom{ASEFG} \cc b_1^{-1} \cc c_1^{-1} \cc c_3 \cc b_3 \cc a_2^{-1} \cc b_2^{-1} \cc c_2 \cc b_2^2 \cc a_2^2\cc c_1^2 \cc b_1^2 \cc a_1^2 \cc e \cc b_1^{-1} \cc c_1^{-1} \cc a_5^{-1} \cc e^{-1},\\
&\gamma_{16} = b_1^{-1} \cc c_1^{-1} \cc a_4 \cc c_3 \cc a_2^{-1} \cc b_2^{-1} \cc c_2 \cc b_2^2 \cc a_2^2 \cc c_1^2 \cc b_1^2 \cc a_1^2 \cc e \cc b_1^{-1} \cc c_1^{-1} \cc c_4 \cc a_4^2 \cc c_3^2 \cc b_3^2 \cc a_3 \cc b_3^{-1} \cc c_3^{-1} \cc c_1 \cc b_1 \cc a_1^{-1} \cc b_1^{-2} \cc c_1^{-2} \cc a_2^{-2} \cc b_2^{-1} \\
&\phantom{ASEFG}\cc c_2 \cc b_2^2 \cc a_2^2 \cc c_1^2 \cc b_1^2 \cc a_1^2 \cc e \cc b_1 \cc a_1 \cc e \cc a_5  \cc c_1 \cc b_1 \cc e^{-1},\\
&\gamma_{17} = b_1^{-1} \cc c_1^{-1} \cc a_4 \cc c_3 \cc a_2^{-1} \cc b_2^{-1} \cc c_2 \cc b_2^2 \cc a_2^2 \cc c_1^2 \cc b_1^2 \cc a_1^2 \cc e \cc b_1^{-1} \cc c_1^{-1} \cc c_4 \cc a_4^2 \cc c_3^2 \cc b_3^2 \cc a_3 \cc b_3^{-1} \cc c_3^{-1} \cc c_1 \cc b_1 \cc a_1^{-1} \cc b_1^{-2} \cc c_1^{-2} \cc a_2^{-2} \cc b_2^{-1} \\
&\phantom{ASEFG}\cc c_2 \cc b_2^2 \cc a_2^2 \cc c_1^2 \cc b_1^2 \cc a_1^2 \cc e \cc b_1 \cc a_1 \cc b_1^{-1} \cc c_1^{-1} \cc a_4^{-1}\cc c_4^{-1}  \cc c_1 \cc b_1 \cc e^{-1},\\
&\gamma_{18} = a_1^{-2} \cc b_1^{-2} \cc c_1^{-2} \cc a_2^{-2} \cc b_2^{-2} \cc c_2^{-1} \cc b_2 \cc a_2 \cc c_3^{-1} \cc a_4^{-1} \cc c_1 \cc b_1 \cc e \cc a_5^2 \cc c_4^2 \cc a_4^2 \cc c_3^2 \cc b_3^2 \cc a_3 \cc b_3^{-1} \cc c_3^{-1} \cc c_1 \cc b_1 \cc a_1^{-1} \cc b_1^{-2} \cc c_1^{-2} \cc a_2^{-2} \cc b_2^{-2}\\
&\phantom{ASEFG} \cc c_2 \cc b_2^2 \cc a_2^2 \cc c_1^2 \cc b_1^2 \cc a_1^2 \cc e \cc b_1 \cc a_1 \cc b_1^{-1} \cc c_1^{-1} \cc a_4^{-1} \cc c_4^{-1} \cc c_1 \cc b_1 \cc e^{-1},\\
&\gamma_{19} = a_1^{-2} \cc b_1^{-2} \cc c_1^{-2} \cc a_2^{-2} \cc b_2^{-2} \cc c_2^{-1} \cc b_2 \cc a_2\cc b_2 \cc a_2^2\cc c_1^2 \cc b_1^2 \cc a_1 \cc b_1^{-1} \cc c_1^{-1} \cc c_3 \cc b_3 \cc a_2^{-1} \cc b_2^{-1} \cc c_2 \cc b_2^2 \cc a_2^2 \cc c_1^2 \cc b_1^2 \cc a_1^2 \cc e \cc b_1^{-1} \cc c_1^{-1} \\
&\phantom{ASEFG} \cc c_4 \cc a_4 \cc c_1 \cc b_1 \cc a_1^{-1} \cc b_1^{-2} \cc c_1^{-2} \cc a_2^{-2} \cc b_2^{-1} \cc c_2 \cc b_2^2 \cc a_2^2 \cc c_1^2 \cc b_1^2 \cc a_1^2 \cc e \cc b_1^{-1} \cc c_1^{-1} \cc e^{-1},\\
&\gamma_{20} =  a_1^{-2} \cc b_1^{-2} \cc c_1^{-2} \cc a_2^{-2} \cc b_2^{-2} \cc c_2^{-1} \cc b_2 \cc a_2\cc b_2 \cc a_2^2\cc c_1^2 \cc b_1^2 \cc a_1 \cc b_1^{-1} \cc c_1^{-1} \cc c_3 \cc b_3 \cc a_2^{-1} \cc b_2^{-1} \cc c_2 \cc b_2^2 \cc a_2^2 \cc c_1^2 \cc b_1^2 \cc a_1^2 \cc e \cc b_1^{-1} \cc c_1^{-1} \\
&\phantom{ASEFG} \cc c_4 \cc a_4 \cc c_1 \cc b_1 \cc a_1^{-1} \cc b_1^{-2} \cc c_1^{-2} \cc a_2^{-2} \cc b_2^{-1} \cc c_2 \cc b_2^2 \cc a_2^2 \cc c_1^2 \cc b_1^2 \cc a_1^2 \cc e \cc b_1^{-1} \cc c_1^{-1} \cc a_5^{-1} \cc e^{-1},\\
&\gamma_{21} =  a_1^{-2} \cc b_1^{-2} \cc c_1^{-2} \cc a_2^{-2} \cc b_2^{-2} \cc c_2^{-1} \cc b_2 \cc a_2\cc b_3^{-1} \cc c_3^{-1} \cc a_2^{-1} \cc b_2^{-1} \cc c_2 \cc b_2^2 \cc a_2^2 \cc c_1^2 \cc b_1^2 \cc a_1^2 \cc e \cc b_1^{-1} \cc c_1^{-1} \cc c_4 \cc a_4^2 \cc c_3^2 \cc b_3^2 \cc a_3 \cc b_3^{-1} \cc c_3^{-1}\\
&\phantom{ASEFG} \cc c_1 \cc b_1 \cc  a_1^{-1} \cc b_1^{-2} \cc c_1^{-2} \cc a_2^{-2} \cc b_2^{-1} \cc c_2 \cc b_2^2 \cc a_2^2 \cc c_1^2 \cc b_1^2 \cc a_1^2 \cc e\cc b_1 \cc a_1 \cc b_1^{-1} \cc c_1^{-1} \cc a_4^{-1} \cc c_4^{-1} \cc c_1 \cc b_1 \cc e^{-1},\\
&\gamma_{22} =  a_1^{-2} \cc b_1^{-2} \cc c_1^{-2} \cc a_2^{-2} \cc b_2^{-2} \cc c_2^{-1} \cc b_2 \cc a_2\cc b_2 \cc a_2^2\cc c_1^2 \cc b_1^2 \cc a_1 \cc b_1^{-1} \cc c_1^{-1} \cc c_3 \cc b_3 \cc a_2^{-1} \cc b_2^{-1} \cc c_2 \cc b_2^2 \cc a_2^2 \cc c_1^2 \cc b_1^2 \cc a_1^2 \cc e \cc b_1^{-1} \cc c_1^{-1} \\
&\phantom{ASEFG} \cc c_4 \cc a_4 \cc c_1 \cc b_1 \cc a_1^{-1} \cc b_1^{-2} \cc c_1^{-2} \cc a_2^{-2} \cc b_2^{-1} \cc c_2 \cc b_2^2 \cc a_2^2 \cc c_1^2 \cc b_1^2 \cc a_1^2 \cc e \cc b_1^{-1} \cc c_1^{-1} \cc a_4^{-1}\cc c_4^{-1} \cc c_1 \cc b_1 \cc e^{-1},\\
&\gamma_{23} =  a_1^{-2} \cc b_1^{-2} \cc c_1^{-2} \cc a_2^{-2} \cc b_2^{-2} \cc c_2^{-1} \cc b_2 \cc a_2\cc b_3^{-1} \cc c_3^{-1} \cc c_1 \cc b_1 \cc a_1^{-1} \cc b_1^{-2} \cc c_1^{-2} \cc a_2^{-2} \cc b_2^{-1} \cc c_2 \cc b_2^2 \cc a_2^2 \cc c_1^2 \cc b_1^2 \cc a_1^2 \cc e\cc c_1^{-1} \cc a_2^{-1} \\
&\phantom{ASEFG} \cc b_2 \cc a_2^2 \cc c_1^2 \cc b_1^2 \cc a_1 \cc b_1^{-1} \cc c_1^{-1} \cc c_3 \cc b_3 \cc a_2^{-1} \cc b_2^{-1} \cc c_2 \cc b_2^2 \cc a_2^2 \cc c_1^2 \cc b_1^2 \cc a_1^2 \cc e \cc b_1^{-1} \cc c_1^{-1} \cc c_4 \cc a_4 \cc c_1 \cc b_1 \cc a_1^{-1} \cc b_1^{-1} \cc e^{-1},\\
&\gamma_{24} =  a_1^{-2} \cc b_1^{-2} \cc c_1^{-2} \cc a_2^{-2} \cc b_2^{-2} \cc c_2^{-1} \cc b_2 \cc a_2\cc b_3 \cc a_3 \cc b_3^{-1} \cc c_3^{-1} \cc c_1 \cc b_1 \cc a_1^{-1} \cc b_1^{-2} \cc c_1^{-2} \cc a_2^{-2} \cc b_2^{-1} \cc c_2 \cc b_2^2 \cc a_2^2 \cc c_1^2 \cc b_1^2 \cc a_1^2 \cc e\cc c_1^{-1} \cc a_2^{-1} \\
&\phantom{ASEFG} \cc b_2 \cc a_2^2 \cc c_1^2 \cc b_1^2 \cc a_1 \cc b_1^{-1} \cc c_1^{-1} \cc c_3 \cc b_3 \cc a_2^{-1} \cc b_2^{-1} \cc c_2 \cc b_2^2 \cc a_2^2 \cc c_1^2 \cc b_1^2 \cc a_1^2 \cc e \cc b_1^{-1} \cc c_1^{-1} \cc c_4 \cc a_4 \cc c_1 \cc b_1 \cc a_1^{-1} \cc b_1^{-1} \cc e^{-1},\\
&\gamma_{25} = a_1^{-2} \cc b_1^{-2} \cc c_1^{-2} \cc a_2^{-2} \cc b_2^{-2} \cc c_2^{-1} \cc b_2 \cc a_2^2 \cc c_1^2 \cc b_1^2 \cc a_1\cc  b_1^{-1} \cc c_1^{-1} \cc c_3 \cc b_3 \cc a_3^{-1} \cc b_3^{-2} \cc c_3^{-2} \cc a_4^{-2} \cc c_4^{-1} \cc c_1 \cc b_1 \cc e \cc a_5^2 \cc c_4^2 \cc a_4^2 \cc c_3^2 \cc b_3^2 \cc a_3 \\
&\phantom{ASEFG} \cc b_3^{-1} \cc c_3^{-1} \cc c_1 \cc b_1 \cc a_1^{-1} \cc b_1^{-2} \cc c_1^{-2} \cc a_2^{-2} \cc b_2^{-1} \cc c_2 \cc b_2^2 \cc a_2^2 \cc c_1^2 \cc b_1^2 \cc a_1^2 \cc e \cc b_1 \cc a_1 \cc b_1^{-1} \cc c_1^{-1} \cc a_4^{-1} \cc c_4^{-1} \cc c_1 \cc b_1 \cc e^{-1},\\
&\gamma_{26} = a_1^{-2} \cc b_1^{-2} \cc c_1^{-2} \cc a_2^{-2} \cc b_2^{-2} \cc c_2^{-1} \cc b_2 \cc a_2^2 \cc c_1^2 \cc b_1^2 \cc a_1\cc  b_1^{-1} \cc c_1^{-1} \cc c_3 \cc b_3 \cc a_3^{-1} \cc b_3^{-1}\cc a_2^{-1} \cc b_2^{-1} \cc c_2 \cc b_2^2 \cc a_2^2 \cc c_1^2 \cc b_1^2 \cc a_1^2 \cc e \cc b_1 \cc a_1\\
&\phantom{ASEFG} \cc b_1^{-1} \cc c_1^{-1} \cc c_4 \cc a_4^2 \cc c_3^2 \cc b_3^2 \cc a_3  \cc b_3^{-1} \cc c_3^{-1} \cc c_1 \cc b_1 \cc a_1^{-1} \cc b_1^{-2} \cc c_1^{-2} \cc a_2^{-2} \cc b_2^{-1} \cc c_2 \cc b_2^2 \cc a_2^2 \cc c_1^2 \cc b_1^2 \cc a_1^2 \cc e \cc b_1 \cc a_1 \cc e \cc a_5 \cc c_1 \cc b_1 \cc e^{-1},\\
&\gamma_{27} = a_1^{-2} \cc b_1^{-2} \cc c_1^{-2} \cc a_2^{-2} \cc b_2^{-2} \cc c_2^{-1} \cc b_2 \cc a_2 \cc b_3^{-1} \cc c_3^{-1} \cc c_1 \cc b_1 \cc  a_1^{-1} \cc b_1^{-2} \cc c_1^{-2} \cc a_2^{-2} \cc b_2^{-2} \cc c_2^{-1} \cc a_3^{-1} \cc b_3^{-1} \cc a_2^{-1} \cc b_2^{-1} \cc c_2 \cc b_2^2 \cc a_2^2 \\
&\phantom{ASEFG}  \cc c_1^2 \cc b_1^2 \cc a_1^2 \cc e \cc b_1^{-1} \cc c_1^{-1} \cc c_4 \cc a_4^2 \cc c_3^2 \cc b_3^2 \cc a_3 \cc b_3^{-1} \cc c_3^{-1} \cc c_1 \cc b_1 \cc a_1^{-1} \cc b_1^{-2} \cc c_1^{-2} \cc a_2^{-2} \cc b_2^{-1} \cc c_2 \cc b_2^2 \cc a_2^2 \cc c_1^2 \cc b_1^2 \cc a_1^2 \cc e\\
&\phantom{ASEFG} \cc b_1 \cc a_1 \cc e \cc a_5 \cc c_1 \cc b_1 \cc e^{-1},\\
&\gamma_{28} =  a_1^{-2} \cc b_1^{-2} \cc c_1^{-2} \cc a_2^{-2} \cc b_2^{-2} \cc c_2^{-1} \cc b_2 \cc a_2 \cc b_3 \cc a_3 \cc c_2 \cc b_2^2 \cc a_2^2 \cc c_1^2 \cc b_1^2 \cc a_1 \cc b_1^{-1} \cc c_1^{-1} \cc c_3 \cc b_3 \cc a_2^{-1} \cc b_2^{-1} \cc c_2 \cc b_2^2 \cc a_2^2 \cc c_1^2 \cc b_1^2 \\
&\bg \cc a_1^2 \cc e \cc b_1^{-1} \cc c_1^{-1} \cc c_4 \cc a_4 \cc c_1 \cc b_1 \cc e \cc a_5^2 \cc c_4^2 \cc a_4^2 \cc c_3^2 \cc b_3^2 \cc a_3 \cc b_3^{-1} \cc c_3^{-1} \cc c_1 \cc b_1 \cc a_1^{-1} \cc b_1^{-2} \cc c_1^{-2} \cc a_2^{-2} \cc b_2^{-1} \cc c_2 \cc b_2^2 \\
&\bg \cc a_2^2 \cc c_1^2 \cc b_1^2 \cc a_1^2 \cc e \cc b_1\cc a_1 \cc b_1^{-1} \cc c_1^{-1} \cc a_4^{-1} \cc c_4^{-1} \cc c_1 \cc b_1 \cc e^{-1},\\
&\gamma_{29} =  a_1^{-2} \cc b_1^{-2} \cc c_1^{-2} \cc a_2^{-2} \cc b_2^{-2} \cc c_2^{-1} \cc b_2 \cc a_2 \cc c_3 \cc b_3 \cc a_2^{-1} \cc b_2^{-1}  \cc c_2 \cc b_2^2 \cc a_2^2 \cc c_1^2 \cc b_1^2 \cc a_1^2 \cc e \cc b_1^{-1} \cc c_1^{-1} \cc c_4 \cc a_4^2 \cc c_3^2 \cc b_3^2 \cc a_3 \\
&\bg \cc b_3^{-1} \cc c_3^{-1} \cc c_1 \cc b_1 \cc a_1^{-1} \cc b_1^{-2} \cc c_1^{-2} \cc a_2^{-2} \cc b_2^{-1} \cc c_2 \cc b_2^2\cc a_2^2 \cc c_1^2 \cc b_1^2 \cc a_1^2 \cc e \cc c_1^{-1} \cc a_2^{-1} \cc b_2 \cc a_2^2 \cc c_1^2 \cc b_1^2 \cc a_1 \cc b_1^{-1} \cc c_1^{-1}\\
&\bg \cc c_3 \cc b_3 \cc a_2^{-1} \cc b_2^{-1} \cc c_2 \cc b_2^2 \cc a_2^2 \cc c_1^2 \cc b_1^2 \cc a_1^2 \cc e \cc b_1^{-1} \cc c_1^{-1} \cc c_4 \cc a_4 \cc c_1 \cc b_1 \cc a_1^{-1} \cc b_1^{-1} \cc e^{-1},\\
&\gamma_{30} =  a_1^{-2} \cc b_1^{-2} \cc c_1^{-2} \cc a_2^{-2} \cc b_2^{-2} \cc c_2^{-1} \cc b_2 \cc a_2^2 \cc c_1^2 \cc b_1^2 \cc a_1 \cc b_1^{-1} \cc c_1^{-1} \cc c_3 \cc b_3 \cc a_3^{-1} \cc b_3^{-1} \cc a_2^{-1} \cc b_2^{-1} \cc c_2 \cc b_2^2 \cc a_2^2 \cc c_1^2 \cc b_1^2 \cc a_1^2 \cc e\\
&\bg \cc b_1^{-1} \cc c_1^{-1} \cc c_4 \cc a_4^2 \cc c_3^2 \cc b_3^2 \cc a_3 \cc b_3^{-1} \cc c_3^{-1} \cc c_1 \cc b_1 \cc a_1^{-1} \cc b_1^{-2} \cc c_1^{-2} \cc a_2^{-2} \cc b_2^{-1} \cc c_2 \cc b_2^2 \cc a_2^2 \cc c_1^2 \cc b_1^2 \cc a_1^2 \cc e \cc c_1^{-1} \cc a_2^{-1} \\
&\bg \cc b_2 \cc a_2^2 \cc c_1^2 \cc b_1^2 \cc a_1 \cc b_1^{-1} \cc c_1^{-1} \cc c_3 \cc b_3\cc a_2^{-1} \cc b_2^{-1} \cc c_2 \cc b_2^2 \cc a_2^2 \cc c_1^2 \cc b_1^2 \cc a_1^2 \cc e \cc b_1^{-1} \cc c_1^{-1} \cc c_4 \cc a_4 \cc c_1 \cc b_1 \cc a_1^{-1} \cc b_1^{-1} \cc e^{-1},\\
\end{aligned}
\end{equation*}
\begin{equation*}
\begin{aligned}
&\gamma_{31} = a_1^{-2} \cc b_1^{-2} \cc c_1^{-2} \cc a_2^{-2} \cc b_2^{-2} \cc c_2^{-1} \cc b_2 \cc a_2\cc b_3 \cc a_3 \cc c_2 \cc b_2^2 \cc a_2^2  \cc c_1^2 \cc b_1^2 \cc a_1 \cc b_1^{-1} \cc c_1^{-1} \cc c_3 \cc b_3 \cc a_2^{-1} \cc b_2^{-1} \cc c_2 \cc b_2^2 \cc a_2^2 \cc c_1^2 \\
&\bg \cc b_1^2 \cc a_1^2 \cc e \cc b_1^{-1} \cc c_1^{-1} \cc c_4 \cc a_4^2 \cc c_3^2 \cc b_3^2 \cc a_3 \cc b_3^{-1} \cc c_3^{-1} \cc c_1 \cc b_1 \cc a_1^{-1} \cc b_1^{-2} \cc c_1^{-2} \cc a_2^{-2} \cc b_2^{-1} \cc c_2 \cc b_2^2 \cc a_2^2 \cc c_1^2 \cc b_1^2 \cc a_1^2 \cc e\\
&\bg \cc c_1^{-1} \cc a_2^{-1} \cc b_2 \cc a_2^2 \cc c_1^2 \cc b_1^2 \cc a_1 \cc b_1^{-1} \cc c_1^{-1} \cc c_3\cc b_3 \cc a_2^{-1} \cc b_2^{-1} \cc c_2 \cc b_2^2 \cc a_2^2 \cc c_1^2 \cc b_1^2 \cc a_1^2 \cc e \cc b_1^{-1} \cc c_1^{-1}\\
&\bg  \cc c_4 \cc a_4 \cc c_1 \cc b_1\cc a_1^{-1} \cc b_1^{-1} \cc e^{-1},\\
&\gamma_{32} = a_1^{-2} \cc b_1^{-2} \cc c_1^{-2} \cc a_2^{-2} \cc b_2^{-2} \cc c_2^{-1} \cc b_2 \cc a_2\cc \cc b_2 \cc a_2^2  \cc c_1^2 \cc b_1^2 \cc a_1 \cc b_1^{-1} \cc c_1^{-1} \cc c_3 \cc b_3 \cc a_2^{-1} \cc b_2^{-1} \cc c_2 \cc b_2^2 \cc a_2^2 \cc c_1^2\cc b_1^2 \cc a_1^2 \cc e \\
&\bg \cc b_1^{-1} \cc c_1^{-1} \cc c_4 \cc a_4 \cc c_1 \cc b_1 \cc a_1^{-1} \cc b_1^{-2} \cc c_1^{-2} \cc a_2^{-2} \cc b_2^{-1} \cc c_2 \cc b_2^2 \cc a_2^2 \cc c_1^2 \cc b_1^2 \cc a_1^2 \cc e \cc b_1^{-1} \cc c_1^{-1} \cc a_5 \cc c_4\cc c_1 \cc b_1 \cc e\cc a_5^2 \\
&\bg \cc c_4^2 \cc a_4^2 \cc c_3^2 \cc b_3^2 \cc a_3 \cc b_3^{-1} \cc c_3^{-1} \cc c_1 \cc b_1 \cc a_1^{-1} \cc b_1^{-2} \cc c_1^{-2} \cc a_2^{-1} \cc b_2^{-1} \cc c_2 \cc b_2^2 \cc a_2^2 \cc c_1^2 \cc b_1^2 \cc a_1^2 \cc e \cc b_1 \cc a_1\\
&\bg  \cc b_1^{-1} \cc c_1^{-1} \cc a_4^{-1} \cc c_4^{-1} \cc c_1 \cc b_1 \cc e^{-1},\\
&\gamma_{33} = a_1^{-2} \cc b_1^{-2} \cc c_1^{-2} \cc a_2^{-2} \cc b_2^{-2} \cc c_2^{-1} \cc b_2 \cc a_2 \cc b_3^{-1} \cc c_3^{-1} \cc c_1 \cc b_1 \cc a_1^{-1} \cc b_1^{-2} \cc c_1^{-2} \cc a_2^{-2} \cc b_2^{-1} \cc c_2 \cc b_2^2 \cc a_2^2 \cc c_1^2 \cc b_1^2 \cc a_1 \cc b_1^{-1} \cc c_1^{-1}\\
&\bg \cc c_3 \cc b_3 \cc a_2^{-1} \cc b_2^{-1} \cc c_2 \cc b_2^2 \cc a_2^2 \cc c_1^2 \cc b_1^2 \cc a_1^2 \cc e \cc b_1^{-1} \cc c_1^{-1} \cc c_4 \cc a_4^2 \cc c_3^2 \cc b_3^2 \cc a_3 \cc b_3^{-1} \cc c_3^{-1} \cc c_1 \cc b_1 \cc a_1^{-1} \cc b_1^{-2} \cc c_1^{-2} \\
&\bg \cc a_2^{-2} \cc b_2^{-1} \cc c_2 \cc b_2^2 \cc a_2^2 \cc c_1^2 \cc b_1^2 \cc a_1^2 \cc e \cc c_1^{-1} \cc a_2^{-1} \cc b_2 \cc a_2^2 \cc c_1^2 \cc b_1^2 \cc a_1 \cc b_1^{-1} \cc c_1^{-1} \cc  c_3 \cc b_3 \cc a_2^{-1} \cc b_2^{-1} \cc c_2 \cc b_2^2 \\
&\bg \cc a_2^2 \cc c_1^2 \cc b_1^2 \cc a_1^2 \cc e \cc b_1^{-1} \cc c_1^{-1} \cc c_4 \cc a_4 \cc c_1 \cc b_1 \cc a_1^{-1} \cc b_1 ^{-1} \cc e^{-1},\\
\end{aligned}
\end{equation*}
and
\begin{equation*}
\begin{aligned}
&\zeta_1= a_1^{-2} \cc b_1^{-2} \cc c_1^{-2} \cc a_2^{-2} \cc b_2^{-2} \cc c_2^{-1} \cc b_2 \cc a_2^2 \cc c_1^2 \cc b_1^2 \cc a_1 \cc b_1^{-1},\\
&\zeta_2= a_1^{-2} \cc b_1^{-2} \cc c_1^{-2} \cc a_2^{-2} \cc b_2^{-2} \cc c_2^{-1} \cc b_2 \cc a_2\cc b_2 \cc a_2^2 \cc c_1^2 \cc b_1^2 \cc a_1 \cc b_1^{-1},\\
&\zeta_3= a_1^{-2} \cc b_1^{-2} \cc c_1^{-2} \cc a_2^{-2} \cc b_2^{-2} \cc c_2^{-1} \cc b_2 \cc a_2\cc b_3 \cc a_3 \cc c_2 \cc b_2^2 \cc a_2^2 \cc c_1^2 \cc b_1^2 \cc a_1 \cc b_1^{-1},\\
&\zeta_4= a_1^{-2} \cc b_1^{-2} \cc c_1^{-2} \cc a_2^{-2} \cc b_2^{-2} \cc c_2^{-1} \cc b_2 \cc a_2 \cc b_3^{-1} \cc c_3^{-1} \cc c_1 \cc b_1 \cc a_1^{-1} \cc b_1^{-2} \cc c_1^{-2} \cc a_2^{-2} \cc b_2^{-2} \cc c_2^{-1} \cc b_2 \cc a_2^2 \cc c_1^2 \cc b_1^2 \cc a_1 \cc b^{-1},\\
&\zeta_5= a_1^{-2} \cc b_1^{-2} \cc c_1^{-2} \cc a_2^{-2} \cc b_2^{-2} \cc c_2^{-1} \cc b_2 \cc a_2 \cc b_3^{-1} \cc c_3^{-1} \cc c_1 \cc b_1 \cc a_1^{-1} \cc b_1^{-2} \cc c_1^{-2} \cc a_2^{-2} \cc b_2^{-1} \cc c_2 \cc b_2^2 \cc a_2^2 \cc c_1^2 \cc b_1^2 \cc a_1 \cc b^{-1},\\
&\zeta_6= a_1^{-2} \cc b_1^{-2} \cc c_1^{-2} \cc a_2^{-2} \cc b_2^{-2} \cc c_2^{-1} \cc b_2 \cc a_2 \cc b_3^{-1} \cc c_3^{-1} \cc c_1 \cc b_1 \cc a_1^{-1} \cc b_1^{-2} \cc c_1^{-2} \cc a_2^{-2} \cc b_2^{-1} \cc c_2 \cc b_2^2 \cc a_2^2 \cc c_1^2 \cc b_1^2 \cc a_1^2 \cc e,\\
&\zeta_7= a_1^{-2} \cc b_1^{-2} \cc c_1^{-2} \cc a_2^{-2} \cc b_2^{-2} \cc c_2^{-1} \cc b_2 \cc a_2 \cc b_3 \cc a_3 \cc b_3^{-1} \cc c_3^{-1} \cc c_1 \cc b_1 \cc a_1^{-1} \cc b_1^{-2} \cc c_1^{-2} \cc a_2^{-2} \cc b_2^{-1} \cc c_2 \cc b_2^2 \cc a_2^2 \cc c_1^2 \cc b_1^2 \cc a_1^2 \cc e,\\
&\zeta_8= a_1^{-2} \cc b_1^{-2} \cc c_1^{-2} \cc a_2^{-2} \cc b_2^{-2} \cc c_2^{-1} \cc b_2 \cc a_2\cc c_3 \cc b_3 \cc a_2^{-1} \cc b_2^{-1} \cc c_2 \c b_2^2 \cc a_2^2 \cc c_1^2 \cc b_1^2 \cc a_1^2 \cc e \cc b_1^{-1} \cc c_1^{-1} \cc c_4 \cc a_4^2 \cc c_3^2 \cc b_3^2 \cc a_3 \\
&\bg \cc b_3^{-1} \cc c_3^{-1} \cc c_1 \cc b_1 \cc a_1^{-1} \cc b_1^{-2} \cc c_1^{-2} \cc a_2^{-2} \cc b_2^{-1} \cc c_2 \cc b_2^2 \cc a_2^2 \cc c_1^2 \cc b_1^2 \cc a_1^2 \cc e,\\
&\mu_1 = c_1^{-1} \cc c_3 \cc b_3 \cc a_3^{-1} \cc b_3^{-2} \cc c_3^{-2} \cc a_4^{-2} \cc c_4^{-1} \cc c_1 \cc b_1 \cc e^{-1},\\
&\mu_2 = c_1^{-1} \cc c_3 \cc b_3 \cc a_3^{-1} \cc b_3^{-2} \cc c_3^{-2} \cc a_4^{-2} \cc c_4^{-2}\cc a_5^{-2} \cc e^{-1},\\
&\mu_3 = c_1^{-1} \cc a_2^{-2} \cc b_2^{-1} \cc c_2 \cc b_2^2 \cc a_2^2 \cc c_1^2 \cc b_1^2 \cc a_1^2 \cc e \cc b_1^{-1} \cc c_1^{-1} \cc e^{-1},\\
&\mu_4 = c_1^{-1} \cc a_2^{-2} \cc b_2^{-1} \cc c_2 \cc b_2^2 \cc a_2^2 \cc c_1^2 \cc b_1^2 \cc a_1^2 \cc e \cc b_1^{-1} \cc c_1^{-1}\cc a_5^{-1} \cc e^{-1},\\
&\mu_5 = c_1^{-1} \cc c_3 \cc b_3 \cc a_2^{-1} \cc b_2^{-1} \cc c_2 \cc b_2^2 \cc a_2^2 \cc c_1^2 \cc b_1^2 \cc a_1^2 \cc e \cc b_1^{-1} \cc c_1^{-1}\cc a_5^{-1} \cc e^{-1},\\
&\mu_6 = c_1^{-1} \cc a_2^{-2} \cc b_2^{-1} \cc c_2 \cc b_2^2 \cc a_2^2 \cc c_1^2 \cc b_1^2 \cc a_1^2 \cc e \cc b_1 \cc a_1  \cc b_1^{-1} \cc c_1^{-1}\cc a_4^{-1}\cc c_4^{-1} \cc c_1 \cc b_1 \cc e^{-1},\\
&\mu_7 = c_1^{-1} \cc c_3 \cc b_3 \cc a_2^{-1} \cc b_2^{-1} \cc c_2 \cc b_2^2 \cc a_2^2 \cc c_1^2 \cc b_1^2 \cc a_1^2 \cc e \cc b_1^{-1} \cc c_1^{-1}\cc c_4 \cc a_4 \cc c_1 \cc b_1 \cc a_1^{-1}\cc b_1^{-1} \cc e^{-1},\\
&\mu_8 = c_1^{-1} \cc c_3 \cc b_3 \cc a_3^{-1} \cc b_3^{-2} \cc c_3^{-2} \cc a_4^{-2} \cc c_4^{-1} \cc c_1 \cc b_1 \cc e \cc a_5^2 \cc c_4^2 \cc a_4^2 \cc c_3^2 \cc b_3^2 \cc a_3 \cc b_3^{-1} \cc c_3^{-1} \cc c_1 \cc b_1 \cc a_1^{-1} \cc b_1^{-2} \cc c_1^{-2} \cc a_2^{-2} \cc b_2^{-1}\\
&\bg \cc c_2 \cc b_2^2 \cc a_2^2 \cc c_1^2 \cc b_1^2 \cc a_1^2 \cc e \cc b_1 \cc a_1 \cc b_1^{-1} \cc c_1^{-1} \cc a_4^{-1} \cc c_4^{-1} \cc c_1 \cc b_1 \cc e^{-1},\\
&\mu_9 = c_1^{-1} \cc c_3 \cc b_3 \cc a_3^{-1} \cc b_3^{-1} \cc a_2^{-1} \cc b_2^{-1} \cc c_2 \cc b_2^2 \cc a_2^2  \cc c_1^2 \cc b_1^2 \cc a_1^2  \cc e\cc b_1^{-1} \cc c_1^{-1} \cc  c_4 \cc a_4^2 \cc c_3^2 \cc b_3^2 \cc a_3 \cc b_3^{-1} \cc c_3^{-1} \cc c_1 \cc b_1 \cc a_1^{-1} \\
&\bg \cc b_1^{-2} \cc c_1^{-2} \cc a_2^{-2} \cc b_2^{-1} \cc c_2 \cc b_2^2 \cc a_2^2 \cc c_1^2 \cc b_1^2 \cc a_1^2 \cc e \cc b_1 \cc a_1\cc e \cc a_5 \cc c_1 \cc b_1 \cc e^{-1},\\
&\mu_{10} = c_1^{-1} \cc a_2^{-2} \cc b_2^{-1} \cc  c_2 \cc b_2^2 \cc a_2^2  \cc c_1^2 \cc b_1^2 \cc a_1^2  \cc e\cc b_1^{-1} \cc c_1^{-1} \cc a_5 \cc c_4 \cc c_1 \cc b_1 \cc e \cc a_5^2 \cc c_4^2 \cc a_4^2 \cc c_3^2 \cc b_3^2 \cc a_3 \cc b_3^{-1} \cc c_3^{-1} \\
&\bg \cc c_1 \cc b_1 \cc a_1^{-1} \cc b_1^{-2} \cc c_1^{-2} \cc a_2^{-2} \cc b_2^{-1} \cc c_2 \cc b_2^2 \cc a_2^2 \cc c_1^2 \cc b_1^2 \cc a_1^2 \cc e \cc b_1 \cc a_1 \cc b_1^{-1} \cc c_1^{-1} \cc a_4^{-1} \cc c_4^{-1} \cc c_2 \cc b_1 \cc e^{-1},\\
\end{aligned}
\end{equation*}
\begin{equation*}
\begin{aligned}
&\mu_{11} = c_1^{-1} \cc c_3 \cc b_3 \cc a_2^{-1} \cc b_2^{-1} \cc  c_2 \cc b_2^2 \cc a_2^2  \cc c_1^2 \cc b_1^2 \cc a_1^2  \cc e\cc b_1^{-1} \cc c_1^{-1} \cc c_4\cc a_4  \cc c_1 \cc b_1 \cc e \cc a_5^2 \cc c_4^2 \cc a_4^2 \cc c_3^2 \cc b_3^2 \cc a_3 \cc b_3^{-1} \cc c_3^{-1} \\
&\bg \cc c_1 \cc b_1 \cc a_1^{-1} \cc b_1^{-2} \cc c_1^{-2} \cc a_2^{-2} \cc b_2^{-1} \cc c_2 \cc b_2^2 \cc a_2^2 \cc c_1^2 \cc b_1^2 \cc a_1^2 \cc e \cc b_1 \cc a_1 \cc b_1^{-1} \cc c_1^{-1} \cc a_4^{-1} \cc c_4^{-1} \cc c_2 \cc b_1 \cc e^{-1},\\
&\eta_1 = c_1^{-1} \cc a_2^{-1} \cc b_2 \cc a_2^2 \cc c_1^2 \cc b_1^2 \cc a_1 \cc b_1^{-1},\\
&\eta_2 = c_1^{-1} \cc c_3 \cc b_3 \cc a_2^{-1} \cc b_2^{-1} \cc c_2 \cc b_2^2 \cc a_2^2 \cc c_1^2 \cc b_1^2 \cc a_1^2 \cc e \cc b_1^{-1} \cc c_1^{-1} \cc c_4 \cc a_4 \cc c_1 \cc b_1 \cc a_1^{-1} \cc b_1^{-2} \cc c_1^{-1},\\
&\eta_3 = c_1^{-1} \cc c_3 \cc b_3 \cc a_2^{-1} \cc b_2^{-1} \cc c_2 \cc b_2^2 \cc a_2^2 \cc c_1^2 \cc b_1^2 \cc a_1^2 \cc e \cc b_1^{-1} \cc c_1^{-1} \cc c_4 \cc a_4^2 \cc c_3^2 \cc b_3^2 \cc a_3 \cc b_3^{-1} \cc c_3^{-1} \cc c_1 \cc b_1 \\
&\bg \cc a_1^{-1} \cc b_1^{-2} \cc c_1^{-2} \cc a_2^{-2} \cc b_2^{-1} \cc c_2 \cc b_2^2 \cc a_2^2 \cc c_1^2 \cc b_1^2 \cc a_1^2 \cc e,\\
&\eta_4 =  c_1^{-1} \cc c_3 \cc b_3\cc a_3^{-1} \cc b_3^{-1} \cc a_2^{-1} \cc b_2^{-1} \cc c_2 \cc b_2^2 \cc a_2^2 \cc c_1^2 \cc b_1^2 \cc a_1^2 \cc e \cc b_1^{-1} \cc c_1^{-1} \cc c_4 \cc a_4^2 \cc c_3^2 \cc b_3^2 \cc a_3 \cc b_3^{-1} \cc c_3^{-1} \cc c_1 \cc b_1\\
&\bg \cc a_1^{-1} \cc b_1^{-2} \cc c_1^{-2} \cc a_2^{-2} \cc b_2^{-1} \cc c_2 \cc b_2^2 \cc a_2^2 \cc c_1^2 \cc b_1^2 \cc a_1^2 \cc e.\\
\end{aligned}
\end{equation*}
The admissible quadruples are 
\begin{equation*}
\begin{aligned}
A= \{& (1,6,18,5),(1,6,28,5),(1,6,14,6),(1,6,14,7),(1,6,24,7),(1,6,29,7),(1,6,31,7),\\
&(1,6,14,25),(1,6,28,27),(2,27,20,1),(2,27,32,5),(2,27,19,9),(2,27,32,11),(3,4,11,1),\\
&(4,11,1,6),(5,26,10,1),(5,26,11,1),(5,6,15,1),(5,30,7,2),(5,26,8,4),(5,9,1,6),\\
&(5,26,23,7),(5,6,24,7),(5,6,33,7),(5,26,19,9),(5,26,22,11),(5,6,27,19),(5,6,21,27),\\
&(6,14,7,2),(6,24,7,2),(6,29,7,2),(6,31,7,2),(6,24,7,3),(6,15,1,6),(6,28,5,6),\\
&(6,27,19,9),(6,14,25,11),(6,24,7,16),(6,24,7,17),(6,14,6,18),(6,21,27,20),(6,28,27,20),\\
&(6,18,5,26),(6,14,6,29),(6,28,5,30),(7,16,13,4),(7,16,13,7),(7,3,4,11),(7,16,4,11),\\
&(7,2,27,19),(7,2,27,20),(7,12,27,20),(7,17,27,20),(7,2,27,32),(8,4,11,1),(9,1,6,24),\\
&(10,1,6,24),(11,1,6,14),(11,1,6,18),(11,1,6,24),(11,1,6,28),(11,1,6,31),(12,27,20,1),\\
&(13,4,11,1),(13,7,3,4),(13,7,12,27),(14,25,11,1),(14,6,18,5),(14,6,29,7),(14,7,2,27),\\
&(15,1,6,24),(16,4,11,1),(16,13,7,3),(16,13,4,11),(16,13,7,12),(17,27,20,1),(18,5,26,8),\\
&(18,5,26,23),(18,5,26,33),(19,9,1,6),(20,1,6,18),(21,27,20,1),(22,11,1,6),(23,7,3,4),\\
&(24,7,3,4),(24,7,16,4),(24,7,16,13),(24,7,2,27),(24,7,17,27),(25,11,1,6),(26,19,9,1),\\
&(26,22,11,1),(26,33,7,2),(26,33,7,3),(26,10,1,6),(26,11,1,6),(26,8,4,11),(27,19,9,1),\\
&(27,32,11,1),(27,20,1,6),(27,32,5,9),(27,32,5,26),(28,27,20,1),(28,5,30,7),\\
&(28,5,26,11),(28,5,6,15),(28,5,6,21),(28,5,6,24),(28,5,6,27),(28,5,6,33),(29,7,2,27),\\
&(30,7,2,27),(31,7,2,27),(32,5,9,1),(32,11,1,6),(32,5,26,10),(32,5,26,19),(32,5,26,22),\\
&(32,5,26,23),(33,7,2,27)\}
\end{aligned}
\end{equation*}
Let $V= Span \, \Gamma$ and $W= Span\, \Delta$. The splitting map $S: V\to W$ is given by 
\begin{equation*}
S: \left\{ \begin{aligned} & \gamma_1 \mapsto \eta_1, \quad \gamma_2 \mapsto \mu_4 + \gamma_1 + \zeta_1, \quad \gamma_3 \mapsto \mu_3 + \zeta_1, \quad \gamma_4 \mapsto \mu_5 + \gamma_1 + \zeta_1,\quad \gamma_5 \mapsto \mu_1 + \zeta_3\\
&\gamma_6 \mapsto \mu_7 + \zeta_1, \quad \gamma_7 \mapsto \eta_2 , \quad \gamma_8 \mapsto \mu_1 + \zeta_4, \quad \gamma_9 \mapsto \eta_3, \quad \gamma_{10} \mapsto \eta_4, \quad \gamma_{11} \mapsto \mu_1 + \zeta_7,\\
&\gamma_{12} \mapsto \mu_6 + \gamma_{11} + \gamma_1 + \zeta_1, \quad \gamma_{13} \mapsto \mu_9 + \zeta_2, \quad \gamma_{14} \mapsto \mu_2 +\gamma_{16} + \zeta_2, \quad \gamma_{15} \mapsto \mu_1 + \zeta_8,\\
&\gamma_{16} \mapsto \mu_{10} + \gamma_5 + \zeta_1, \quad \gamma_{17} \mapsto \mu_{10} + \gamma_{11} +\gamma_1 + \zeta_1, \quad \gamma_{18} \mapsto \mu_2 + \gamma_3 + \gamma_4 + \gamma_{11} + \gamma_1 + \zeta_1,\\
&\gamma_{19} \mapsto \mu_9 + \zeta_5, \quad \gamma_{20} \mapsto  \mu_9 + \zeta_6, \quad \gamma_{21} \mapsto \mu_8 + \gamma_{11} + \gamma_1 + \zeta_1, \quad \gamma_{22} \mapsto \mu_9 + \gamma_{11} + \gamma_1 + \zeta_1,\\
&\gamma_{23} \mapsto \mu_1 + \gamma_{27} + \zeta_2, \quad \gamma_{24} \mapsto \mu_2 + \gamma_2 + \gamma_{27} + \zeta_2, \quad \gamma_{25} \mapsto \mu_7 + \gamma_4 + \gamma_{11} + \gamma_1 + \zeta_1,\\
&\gamma_{26} \mapsto \mu_{11} + \gamma_5 + \zeta_1, \quad \gamma_{27} \mapsto \mu_1 + \gamma_{18} + \gamma_5 + \zeta_1, \quad \gamma_{28} \mapsto \mu_2 + \gamma_{16} + \gamma_4 + \gamma_{11} + \gamma_1 + \zeta_1,\\
&\gamma_{29} \mapsto \mu_2 + \gamma_{12} + \gamma_{27} + \zeta_2, \quad \gamma_{30} \mapsto \mu_{11} + \gamma_{27} + \zeta_2, \quad \gamma_{31} \mapsto \mu_2 + \gamma_{17} + \gamma_{27} + \zeta_2,\\
&\gamma_{32} \mapsto \mu_9 + \gamma_8 + \gamma_4 + \gamma_{11} + \gamma_1 + \zeta_1, \quad \gamma_{33} \mapsto \mu_1 + \gamma_{21} + \gamma_{27} + \zeta_2.\\
 \end{aligned}\right.
\end{equation*}
and the merging map $M:V \to W$ is
\begin{equation*}
M: \left\{ \begin{aligned} &\gamma_1 \mapsto \gamma_1, \quad \gamma_2 \mapsto \gamma_2, \quad \gamma_3 \mapsto \gamma_3, \quad \gamma_4 \mapsto \gamma_4, \quad \gamma_5 \mapsto \gamma_5, \quad \gamma_6 \mapsto \zeta_1 + \mu_1,\\
&\gamma_7 \mapsto \zeta_1 + \mu_2, \quad \gamma_8 \mapsto \gamma_8, \quad \gamma_9 \mapsto \zeta_1 + \mu_5, \quad \gamma_{10} \mapsto \zeta_2 + \mu_5, \quad \gamma_{11} \mapsto \gamma_{11}, \quad \gamma_{12} \mapsto \gamma_{12},\\
&\gamma_{13} \mapsto \zeta_2 + \mu_7, \quad \gamma_{14} \mapsto \zeta_3 + \mu_7, \quad \gamma_{15} \mapsto \zeta_4 + \mu_5, \quad \gamma_{16} \mapsto \gamma_{16}, \quad \gamma_{17} \mapsto \gamma_{17}, \\
&\gamma_{18} \mapsto \gamma_{18}, \quad \gamma_{19} \mapsto \zeta_2 + \eta_2 + \mu_3, \quad \gamma_{20} \mapsto \zeta_2 + \eta_2 + \mu_4, \quad \gamma_{21} \mapsto \gamma_{21},\\
&\gamma_{22} \mapsto \zeta_2 + \eta_2 + \mu_6, \quad \gamma_{23} \mapsto \zeta_6 + \eta_1 + \mu_7, \quad \gamma_{24} \mapsto \zeta_7 + \eta_1 + \mu_7, \quad \gamma_{25} \mapsto \zeta_1  +\mu_8,\\
&\gamma_{26} \mapsto \zeta_1 + \mu_9, \quad \gamma_{27} \mapsto \gamma_{27}, \quad \gamma_{28} \mapsto \zeta_3 + \mu_{11}, \quad \gamma_{29} \mapsto \zeta_8 + \eta_1 + \mu_7,\\
&\gamma_{30} \mapsto \zeta_1 + \eta_4 + \eta_1 + \mu_7, \quad \gamma_{31} \mapsto \zeta_3 + \eta_3 + \eta_1 + \mu_7, \quad \gamma_{32} \mapsto \zeta_2 + \eta_2 + \mu_{10},\\
&\gamma_{33} \mapsto \zeta_5 + \eta_3 + \eta_1 + \mu_7.\\
\end{aligned}\right.
\end{equation*}

There is a $v_\lambda \in V$ such that $Sv_\lambda = \lambda Mv_\lambda$ where $\lambda\approx 1.54325 $ is the largest real root of \[\chi(t) =t^{14} - t^{13} -t^{11} + t^{10} - 2 t^9 + t^8 - 2 t^7 + t^6- 2 t^5 + t^4 -t^3 -t+1 \]
and
\begin{equation*}
\begin{aligned}
 v_\lambda \approx ( & 13.509,3.128,2.382,6.224,8.754,12.322,9.605,1.543,2.857,0.176,7.449,\\
 &0.420,1.049,1.693,1,3.124,0.551,3.675,1.543,2.027,0.648,0.272,\\
 &1.313,4.827,0.420,4.713,5.672,3.128,0.648,0.114,0.851,2.382,1)
 \end{aligned}
 \end{equation*}

\subsection{Orbitdata $3,4,6$ with a cyclic permutation}\label{B:346}
Let $f_\mathbf{R}: X(\mathbf{R}) \to X(\mathbf{R})$ be a real diffeomorphism associated with the orbit data $3,4,6$ and a cyclic permutation. 
Let 
\begin{equation*}
\begin{aligned}
&\Gamma = \{ \gamma_1, \dots, \gamma_{42} \},\\
&\Delta = \{ \gamma_1, \gamma_2,\gamma_3, \gamma_4, \gamma_5, \gamma_7,\gamma_8,\gamma_9,\gamma_{10},\gamma_{13},\gamma_{14},\gamma_{16},\gamma_{17},\gamma_{18},\gamma_{22},\gamma_{26},\gamma_{30},\gamma_{39},\gamma_{41},\gamma_{42},\\
&\bg\bg\bg \zeta_1,\dots,\zeta_8,\mu_1,\dots,\mu_{13},\eta_1,\dots,\eta_5\} 
\end{aligned}
\end{equation*}
be two ordered sets of non-cyclic words in $\pi_1( X(\mathbf{R}),P_{fix})$ where
\begin{equation*}
\begin{aligned}
&\gamma_1 = a_1^{-1} \cc b_1^{-1} \cc e^{-1},\ \  \gamma_2 = b_1^{-1} \cc c_1^{-1} \cc a_4^{-1}\cc c_4^{-1} \cc c_1 \cc b_1\cc e^{-1},\ \ \gamma_3 = b_1^{-1} \cc c_1^{-1} \cc c_4^{-1} \cc a_5^{-1} \cc c_1 \cc b_1 \cc e^{-1},\\
&\gamma_4 = b_1^{-1} \cc c_1^{-1} \cc a_4\cc c_3 \cc a_2^{-1} \cc b_2^{-1} \cc c_2 \cc b_2^2 \cc a_2^2 \cc c_1^2 \cc b_1^2 \cc a_1^2 \cc e \cc b_1^{-1} \cc c_1^{-1} \cc c_4^{-1} \cc a_5^{-1} \cc c_1\cc b_1 \cc e^{-1},\\
&\gamma_5 = b_1^{-1} \cc c_1^{-1} \cc a_4\cc c_3 \cc a_2^{-1} \cc b_2^{-1} \cc c_2 \cc b_2^2 \cc a_2^2 \cc c_1^2 \cc b_1^2 \cc a_1^2 \cc e \cc b_1^{-1} \cc c_1^{-1} \cc c_4^{-1} \cc a_5^{-2}\cc a_6^{-2} \cc e^{-1},\\
&\gamma_6 = a_1^{-2} \cc b_1^{-2} \cc c_1^{-2} \cc a_2^{-2} \cc b_2^{-2} \cc c_2^{-1} \cc b_2 \cc a_2^2 \cc c_1^2 \cc b_1^2 \cc a_1 \cc b_1^{-1} \cc c_1^{-1} \cc c_3 \cc b_3 \cc a_2 \cc c_1^2 \cc b_1 \cc e^{-1},\\
&\gamma_7= a_1^{-2} \cc b_1^{-2} \cc c_1^{-2} \cc a_2^{-2} \cc b_2^{-2} \cc c_2^{-1} \cc b_2 \cc a_2^2 \cc c_1^2 \cc b_1^2 \cc a_1\cc e \cc a_6^2 \cc a_5^2 \cc c_4 \cc c_1 \cc b_1 \cc e^{-1},\\
&\gamma_8 = a_1^{-2} \cc b_1^{-2} \cc c_1^{-2} \cc a_2^{-2} \cc b_2^{-2} \cc c_2^{-1} \cc b_2 \cc a_2 \cc b_3^{-1} \cc c_3^{-1} \cc c_1 \cc b_1 \cc a_1^{-1} \cc b_1^{-2} \cc c_1^{-2} \cc a_2^{-2} \cc b_2^{-1}\cc a_2 \cc c_1 \cc e^{-1},\\
&\gamma_9 = a_1^{-2} \cc b_1^{-2} \cc c_1^{-2} \cc a_2^{-2} \cc b_2^{-2} \cc c_2^{-1} \cc b_2 \cc a_2 \cc b_3^{-1} \cc c_3^{-1} \cc c_1 \cc b_1 \cc a_1^{-1} \cc b_1^{-2} \cc c_1^{-2} \cc a_2^{-2} \cc b_2^{-1}\cc a_2 \cc c_1 \cc  a_6^{-1}\cc e^{-1},\\
&\gamma_{10}= a_1^{-2} \cc b_1^{-2} \cc c_1^{-2} \cc a_2^{-2} \cc b_2^{-2} \cc c_2^{-1} \cc a_2^{-1} \cc b_2^{-1}\cc c_2 \cc b_2^2 \cc a_2^2 \cc c_1^2 \cc b_1^2 \cc a_1^2\cc e\cc b_1^{-1} \cc c_1^{-1} \cc a_5^{-1} \cc a_6^{-2} \cc e^{-1},\\
&\gamma_{11} = a_1^{-2} \cc b_1^{-2} \cc c_1^{-2} \cc a_2^{-2} \cc b_2^{-2} \cc c_2^{-1} \cc b_2 \cc a_2^2 \cc c_1^2 \cc b_1^2 \cc a_1 \cc b_1^{-1} \cc c_1^{-1} \cc c_3 \cc b_3 \cc a_3^{-1} \cc b_3^{-2} \cc c_3^{-2}\cc a_4^{-2} \cc c_4^{-1}  \cc c_1 \cc b_1 \cc e^{-1},\\
&\gamma_{12} =a_1^{-2} \cc b_1^{-2} \cc c_1^{-2} \cc a_2^{-2} \cc b_2^{-2} \cc c_2^{-1} \cc b_2 \cc a_2^2 \cc c_1^2 \cc b_1^2 \cc a_1 \cc b_1^{-1} \cc c_1^{-1} \cc c_3 \cc b_3 \cc a_3^{-1} \cc b_3^{-2} \cc c_3^{-2}\cc a_4^{-2} \cc c_4^{-2}\cc a_5^{-2} \cc a_6^{-2} \cc e^{-1},\\
&\gamma_{13} = a_1^{-2} \cc b_1^{-2} \cc c_1^{-2} \cc a_2^{-2} \cc b_2^{-2} \cc c_2^{-1} \cc b_2 \cc a_2 \cc b_3^{-1} \cc c_3^{-1} \cc c_1 \cc b_1 \cc a_1^{-1} \cc b_1^{-2} \cc c_1^{-2} \cc a_2^{-2} \cc b_2^{-1}\cc c_2 \cc b_2^2 \cc a_2^2 \cc c_1^2 \cc b_1^2 \cc a_1^2 \cc e \\
&\bg \cc b_1 \cc a_1 \cc e \cc  a_6 \cc c_1\cc b_1 \cc e^{-1},\\
&\gamma_{14} =  a_1^{-2} \cc b_1^{-2} \cc c_1^{-2} \cc a_2^{-2} \cc b_2^{-2} \cc c_2^{-1} \cc b_2 \cc a_2 \cc b_3^{-1} \cc c_3^{-1} \cc c_1 \cc b_1 \cc a_1^{-1} \cc b_1^{-2} \cc c_1^{-2} \cc a_2^{-2} \cc b_2^{-1}\cc a_2^{-1} \cc b_2^{-1} \cc c_2 \cc b_2^2 \cc a_2^2 \cc c_1^2 \\
&\bg \cc b_1^2 \cc a_1^2 \cc e \cc b_1^{-1} \cc c_1^{-1} \cc a_5^{-1} \cc a_6^{-2} \cc e^{-1},\\
&\gamma_{15} = a_1^{-2} \cc b_1^{-2} \cc c_1^{-2} \cc a_2^{-2} \cc b_2^{-2} \cc c_2^{-1} \cc b_2 \cc a_2 \cc b_2 \cc a_2^2 \cc c_1^2 \cc b_1^2 \cc a_1 \cc b_1^{-1} \cc c_1^{-1} \cc c_3 \cc b_3 \cc a_2^{-1} \cc b_2^{-1} \cc c_2 \cc b2^2 \cc a_2^2 \cc c_1^2 \cc b_1^2 \\
&\bg \cc a_1^2 \cc e \cc b_1^{-1} \cc c_1^{-1} \cc c_4 \cc a_4 \cc c_1\cc b_1 \cc a_1^{-1} \cc b_1^{-1} \cc e^{-1},\\
\end{aligned}
\end{equation*}
\begin{equation*}
\begin{aligned}
&\gamma_{16} = a_1^{-2} \cc b_1^{-2} \cc c_1^{-2} \cc a_2^{-2} \cc b_2^{-2} \cc c_2^{-1} \cc b_2 \cc a_2\cc b_3 \cc a_3 \cc b_3^{-1} \cc c_3^{-1} \cc c_1 \cc b_1 \cc a_1^{-1} \cc b_1^{-2} \cc c_1^{-2} \cc a_2^{-2} \cc b_2^{-1}\cc c_2 \cc b_2^2 \cc a_2^2 \cc c_1^2 \cc b_1^2 \cc a_1^2 \cc e \\
&\bg \cc b_1 \cc a_1 \cc e \cc a_6 \cc c_1 \cc b_1^2 \cc a_1 \cc b_1^{-1} \cc c_1^{-1} \cc a_4^{-1} \cc c_4^{-1} \cc c_1 \cc b_1 \cc e^{-1},\\
&\gamma_{17}= a_1^{-2} \cc b_1^{-2} \cc c_1^{-2} \cc a_2^{-2} \cc b_2^{-2} \cc c_2^{-1} \cc b_2 \cc a_2 \cc c_3^{-1} \cc a_4^{-1} \cc c_1 \cc b_1\cc e \cc a_6^2 \cc a_5^2 \cc c_4^2 \cc a_4^2 \cc c_3 \cc c_1 \cc b_1  \cc a_1^{-1} \cc b_1^{-2} \cc c_1^{-2} \cc a_2^{-2} \cc b_2^{-1}\\
&\bg \cc c_2 \cc b_2^2 \cc a_2^2 \cc c_1^2 \cc b_1^2 \cc a_1^2 \cc e \cc b_1 \cc a_1 \cc b_1^{-1} \cc c_1^{-1} \cc a_4^{-1} \cc c_4^{-1} \cc c_1 \cc b_1 \cc e^{-1},\\
&\gamma_{18} = b_1^{-1} \cc c_1^{-1} \cc a_4 \cc c_3 \cc a_2^{-1} \cc b_2^{-1} \cc c_2 \cc b_2^2 \cc a_2^2 \cc c_1^2 \cc b_1^2 \cc a_1^2 \cc e \cc b_1^{-1} \cc c_1^{-1} \cc c_4 \cc a_4^2 \cc c_3^2 \cc b_3^2 \cc a_3 \cc b_3^{-1} \cc c_3^{-1} \cc c_1 \cc b_1 \cc a_1^{-1} \cc b_1^{-2}\\
&\bg \cc c_1^{-2} \cc a_2^{-2} \cc b_2^{-1} \cc c_2 \cc b_2^2 \cc a_2^2 \cc c_1^2 \cc b_1^2 \cc a_1^2 \cc e\cc b_1 \cc a_1 \cc b_1^{-1} \cc c_1^{-1} \cc a_4^{-1} \cc c_4^{-1} \cc c_1 \cc b_1 \cc e^{-1},\\
&\gamma_{19}=a_1^{-2} \cc b_1^{-2} \cc c_1^{-2} \cc a_2^{-2} \cc b_2^{-2} \cc c_2^{-1} \cc b_2 \cc a_2 \cc b_3 \cc a_3 \cc c_2 \cc b_2^2 \cc a_2^2 \cc c_1^2 \cc b_1^2\cc a_1^2 \cc e \cc c_1^{-1} \cc a_2^{-1} \cc b_2 \cc a_2^2 \cc c_1^2 \cc b_1^2 \cc a_1 \cc b_1^{-1} \cc c_1^{-1} \\
&\bg \cc c_3 \cc b_3 \cc a_2^{-1} \cc b_2^{-1} \cc c_2 \cc b_2^2 \cc a_2^2 \cc c_1^2 \cc b_1^2 \cc a_1^2 \cc e \cc b_1^{-1} \cc c_1^{-1} \cc c_4 \cc a_4 \cc a_1^{-1} \cc b_1^{-1} \cc e^{-1},\\
&\gamma_{20} = a_1^{-2} \cc b_1^{-2} \cc c_1^{-2} \cc a_2^{-2} \cc b_2^{-2} \cc c_2^{-1} \cc b_2 \cc a_2^2 \cc c_1^2 \cc b_1^2\cc a_1 \cc b_1^{-1} \cc c_1^{-1} \cc_3 \cc b_3 \cc a_2^{-1} \cc b_2^{-1} \cc c_2 \cc b_2^2 \cc a_2^2 \cc c_1^2 \cc b_1^2 \cc a_1^2  \cc e\cc b_1^{-1}\cc c_1^{-1}\\
&\bg \cc c_4 \cc a_4 \cc c_1 \cc b_1 \cc a_1^{-1} \cc b_1^{-2} \cc c_1^{-2} \cc a_2^{-2} \cc b_2^{-1} \cc c_2 \cc b_2^2 \cc a_2^2 \cc c_1^2 \cc b_1^2 \cc a_1^2 \cc e \cc b_1^{-1} \cc c_1^{-1} \cc a_6^{-1} \cc e^{-1},\\
&\gamma_{21} = a_1^{-2} \cc b_1^{-2} \cc c_1^{-2} \cc a_2^{-2} \cc b_2^{-2} \cc c_2^{-1} \cc b_2 \cc a_2\cc b_2 \cc a_2^2 \cc c_1^2 \cc b_1^2\cc a_1 \cc b_1^{-1} \cc c_1^{-1} \cc_3 \cc b_3 \cc a_2^{-1} \cc b_2^{-1} \cc c_2 \cc b_2^2 \cc a_2^2 \cc c_1^2 \cc b_1^2 \cc a_1^2  \cc e\\
&\bg \cc b_1^{-1}\cc c_1^{-1} \cc c_4 \cc a_4 \cc c_1 \cc b_1 \cc a_1^{-1} \cc b_1^{-2} \cc c_1^{-2} \cc a_2^{-2} \cc b_2^{-1} \cc c_2 \cc b_2^2 \cc a_2^2 \cc c_1^2 \cc b_1^2 \cc a_1^2 \cc e \cc b_1^{-1} \cc c_1^{-1} \cc a_6^{-1} \cc e^{-1},\\
&\gamma_{22} = a_1^{-2} \cc b_1^{-2} \cc c_1^{-2} \cc a_2^{-2} \cc b_2^{-2} \cc c_2^{-1} \cc a_3^{-1} \cc b_3^{-1} \cc a_2^{-1} \cc b_2^{-1} \cc c_2 \cc b_2^2 \cc a_2^2 \cc c_1^2 \cc b_1^2 \cc a_1^2 \cc e \cc b_1^{-1} \cc c_1^{-1} \cc c_4 \cc a_4^2 \cc c_3^2 \cc b_3^2 \cc a_3\\
&\bg \cc b_3^{-1} \cc c_3^{-1} \cc c_1 \cc b_1\cc \cc a_1^{-1} \cc b_1^{-2} \cc c_1^{-2} \cc a_2^{-2} \cc b_2^{-1} \cc c_2 \cc b_2^2 \cc a_2^2 \cc c_1^2 \cc b_1^2 \cc a_1^2 \cc e\cc b_1 \cc a_1 \cc e \cc a_6^2 \cc a_5 \cc c_1 \cc b_1 \cc e^{-1},\\
&\gamma_{23} = a_1^{-2} \cc b_1^{-2} \cc c_1^{-2} \cc a_2^{-2} \cc b_2^{-2} \cc c_2^{-1} \cc b_2 \cc a_2\cc b_2 \cc a_2^2 \cc c_1^2 \cc b_1^2\cc a_1 \cc b_1^{-1} \cc c_1^{-1} \cc_3 \cc b_3 \cc a_2^{-1} \cc b_2^{-1} \cc c_2 \cc b_2^2 \cc a_2^2 \cc c_1^2 \cc b_1^2 \cc a_1^2  \cc e\\
&\bg \cc b_1^{-1}\cc c_1^{-1} \cc c_4 \cc a_4 \cc c_1 \cc b_1 \cc a_1^{-1} \cc b_1^{-2} \cc c_1^{-2} \cc a_2^{-2} \cc b_2^{-1} \cc c_2 \cc b_2^2 \cc a_2^2 \cc c_1^2 \cc b_1^2 \cc a_1^2 \cc e \cc b_1^{-1} \cc c_1^{-1}\cc a_5^{-1} \cc a_6^{-2} \cc e^{-1},\\
&\gamma_{24} = a_1^{-2} \cc b_1^{-2} \cc c_1^{-2} \cc a_2^{-2} \cc b_2^{-2} \cc c_2^{-1} \cc b_2 \cc a_2\cc b_2 \cc a_2^2 \cc c_1^2 \cc b_1^2\cc a_1 \cc b_1^{-1} \cc c_1^{-1} \cc_3 \cc b_3 \cc a_2^{-1} \cc b_2^{-1} \cc c_2 \cc b_2^2 \cc a_2^2 \cc c_1^2 \cc b_1^2 \cc a_1^2  \cc e\\
&\bg \cc b_1^{-1}\cc c_1^{-1} \cc c_4 \cc a_4 \cc c_1 \cc b_1 \cc a_1^{-1} \cc b_1^{-2} \cc c_1^{-2} \cc a_2^{-2} \cc b_2^{-1} \cc c_2 \cc b_2^2 \cc a_2^2 \cc c_1^2 \cc b_1^2 \cc a_1^2 \cc e \cc b_1^{-1} \cc c_1^{-1}\cc a_5 \cc c_4 \cc c_1 \cc b_1 \cc e^{-1},\\
&\gamma_{25}=a_1^{-2} \cc b_1^{-2} \cc c_1^{-2} \cc a_2^{-2} \cc b_2^{-2} \cc c_2^{-1} \cc b_2 \cc a_2\cc b_3 \cc a_3 \cc c_2 \cc b_2^2 \cc a_2^2 \cc c_1^2 \cc b_1^2 \cc a_1 \cc b_1^{-1} \cc c_1^{-1} \cc c_3^{-1} \cc a_4^{-1} \cc c_1 \cc b_1 \cc e \cc a_6^2 \cc a_5^2 \cc c_4^2\\
&\bg. \cc a_4^2 \cc c_3 \cc c_1 \cc b_1 \cc a_1^{-1} \cc b_1^{-2} \cc c_1^{-2} \cc a_2^{-2} \cc b_2^{-1} \cc c_2 \cc b_2^2 \cc a_2^2 \cc c_1^2 \cc b_1^2 \cc a_1^2 \cc e\cc b_1 \cc a_1  \cc b_1^{-1} \cc c_1^{-1}\cc a_4^{-1} \cc c_4^{-1} \cc c_1 \cc b_1 \cc e^{-1},\\
&\gamma_{26} = b_1^{-1} \cc c_1^{-1} \cc a_4 \cc c_3 \cc c_1 \cc b_1 \cc a_1^{-1} \cc b_1^{-2} \cc c_1^{-2} \cc a_2^{-2} \cc b_2^{-2}\cc c_2^{-1} \cc a_3^{-1} \cc b_3^{-1} \cc a_2^{-1} \cc b_2^{-1} \cc c_2 \cc b_2^2 \cc a_2^2 \cc c_1^2 \cc b_1^2 \cc a_1^2 \cc e \cc b_1^{-1} \cc c_1^{-1} \\
&\bg \cc c_4 \cc a_4^2 \cc c_3^2 \cc b_3^2 \cc a_3 \cc b_3^{-1} \cc c_3^{-1} \cc c_1 \cc b_1 \cc  a_1^{-1} \cc b_1^{-2} \cc c_1^{-2} \cc a_2^{-2} \cc b_2^{-1} \cc c_2 \cc b_2^2 \cc a_2^2 \cc c_1^2 \cc b_1^2 \cc a_1^2 \cc e\cc b_1 \cc a_1\cc e \cc a_6^2 \cc a_5 \cc c_1 \cc b_1 \cc e^{-1},\\
&\gamma_{27} = a_1^{-2} \cc b_1^{-2} \cc c_1^{-2} \cc a_2^{-2} \cc b_2^{-2} \cc c_2^{-1} \cc b_2 \cc a_2 \cc b_3^{-1} \cc c_3^{-1} \cc c_1 \cc b_1 \cc a_1^{-1} \cc b_1^{-2} \cc c_1^{-2} \cc a_2^{-2} \cc b_2^{-1} \cc c_2 \cc b_2^2 \cc a_2^2 \cc c_1^2 \cc b_1^2 \cc a_1^2 \cc e \cc b_1 \cc a_1 \cc e\\
&\bg \cc a_6 \cc c_1^{-1} \cc a_2^{-1} \cc b_2 \cc a_2^2 \cc c_1^2 \cc b_1^2 \cc a_1 \cc b_1^{-1} \cc c_1^{-1} \cc c_3 \cc b_3 \cc a_2^{-1} \cc b_2^{-1} \cc c_2 \cc b_2^2 \cc a_2^2 \cc c_1^2 \cc b_1^2 \cc a_1^2 \cc e \cc b_1^{-1} \cc c_1^{-1} \cc c_4 \cc a_4 \cc c_1 \cc b_1\\
&\bg \cc a_1^{-1} \cc b_1^{-1} \cc e^{-1},\\
&\gamma_{28} =a_1^{-2} \cc b_1^{-2} \cc c_1^{-2} \cc a_2^{-2} \cc b_2^{-2} \cc c_2^{-1} \cc b_2 \cc a_2\cc b_3 \cc a_3 \cc b_3^{-1} \cc c_3^{-1} \cc c_1 \cc b_1 \cc a_1^{-1} \cc b_1^{-2} \cc c_1^{-2} \cc a_2^{-2} \cc b_2^{-1} \cc c_2 \cc b_2^2 \cc a_2^2 \cc c_1^2 \cc b_1^2 \cc a_1^2 \cc e \\
&\bg \cc b_1 \cc a_1 \cc e\cc a_6 \cc c_1^{-1} \cc a_2^{-1} \cc b_2 \cc a_2^2 \cc c_1^2 \cc b_1^2 \cc a_1 \cc b_1^{-1} \cc c_1^{-1} \cc c_3 \cc b_3 \cc a_2^{-1} \cc b_2^{-1} \cc c_2 \cc b_2^2 \cc a_2^2 \cc c_1^2 \cc b_1^2 \cc a_1^2 \cc e \cc b_1^{-1} \cc c_1^{-1}\\
&\bg \cc c_4 \cc a_4 \cc c_1 \cc b_1 \cc a_1^{-1} \cc b_1^{-1} \cc e^{-1},\\
&\gamma_{29} = a_1^{-2} \cc b_1^{-2} \cc c_1^{-2} \cc a_2^{-2} \cc b_2^{-2} \cc c_2^{-1} \cc b_2 \cc a_2^2 \cc c_1^2 \cc b_1^2 \cc a_1 \cc b_1^{-1} \cc c_1^{-1} \cc c_3 \cc b_3 \cc a_3^{-1} \cc b_3^{-1} \cc a_2^{-1} \cc b_2^{-1} \cc c_2 \cc b_2^2 \cc a_2^2 \cc c_1^2 \cc b_1^2 \cc a_1^2 \cc e \\
&\bg \cc b_1^{-1} \cc c_1^{-1} \cc c_4 \cc a_4^2 \cc c_3^2 \cc b_3^2 \cc a_3 \cc b_3^{-1} \cc c_3^{-1} \cc c_1 \cc b_1 \cc a_1^{-1} \cc b_1^{-2} \cc c_1^{-2} \cc a_2^{-2} \cc b_2^{-1} \cc c_2 \cc b_2^2 \cc a_2^2 \cc c_1^2 \cc b_1^2 \cc a_1^2 \cc e\\
&\bg \cc b_1 \cc a_1 \cc e \cc a_6^2 \cc a_5^2 \cc c_1 \cc b_1 \cc e^{-1},\\
&\gamma_{30} = a_1^{-2} \cc b_1^{-2} \cc c_1^{-2} \cc a_2^{-2} \cc b_2^{-2} \cc c_2^{-1} \cc b_2 \cc a_2\cc b_3^{-1} \cc c_3^{-1} \cc c_1 \cc b_1 \cc  a_1^{-1} \cc b_1^{-2} \cc c_1^{-2} \cc a_2^{-2} \cc b_2^{-1} \cc a_2^{-1}\cc b_2^{-1} \cc c_2 \cc b_2^2 \cc a_2^2 \cc c_1^2 \cc b_1^2 \\
&\bg \cc a_1^2 \cc e \cc b_1^{-1} \cc c_1^{-1} \cc a_5 \cc c_4^2 \cc a_4^2 \cc c_3^2 \cc b_3^2 \cc a_3 \cc b_3^{-1} \cc c_3^{-1} \cc c_1\cc b_1 \cc a_1{-1} \cc b_1^{-2} \cc c_1^{-2} \cc a_2^{-2} \cc b_2^{-1} \cc c_2 \cc b_2^2 \cc a_2^2 \cc c_1^2 \cc b_1^2 \cc a_1^2 \cc e\\
&\bg \cc b_1 \cc a_1 \cc b_1^{-1} \cc c_1^{-1} \cc a_4^{-1} \cc c_4^{-1} \cc c_1 \cc b_1 \cc e^{-1},\\
\end{aligned}
\end{equation*}
\begin{equation*}
\begin{aligned}
&\gamma_{31} = a_1^{-2} \cc b_1^{-2} \cc c_1^{-2} \cc a_2^{-2} \cc b_2^{-2} \cc c_2^{-1} \cc b_2 \cc a_2 \cc b_3 \cc a_3 \cc c_2 \cc b_2^2 \cc a_2^2 \cc c_1^2 \cc b_1^2 \cc a_1 \cc b_1^{-1} \cc c_1^{-1} \cc c_3 \cc b_3  \cc a_2^{-1} \cc b_2^{-1} \cc c_2 \cc b_2^2 \cc a_2^2 \cc c_1^2 \cc b_1^2 \cc a_1^2 \cc e \\
&\bg \cc b_1^{-1} \cc c_1^{-1} \cc c_4 \cc a_4^2 \cc c_3^2 \cc b_3^2 \cc a_3 \cc b_3^{-1} \cc c_3^{-1} \cc c_1 \cc b_1 \cc a_1^{-1} \cc b_1^{-2} \cc c_1^{-2} \cc a_2^{-2} \cc b_2^{-1} \cc c_2 \cc b_2^2 \cc a_2^2 \cc c_1^2 \cc b_1^2 \cc a_1^2 \cc e\\
&\bg \cc b_1 \cc a_1 \cc e \cc a_6^2 \cc a_5^2 \cc c_1 \cc b_1 \cc e^{-1},\\
&\gamma_{32} =  a_1^{-2} \cc b_1^{-2} \cc c_1^{-2} \cc a_2^{-2} \cc b_2^{-2} \cc c_2^{-1} \cc b_2 \cc a_2 \cc b_3 \cc a_3 \cc c_2 \cc b_2^2 \cc a_2^2 \cc c_1^2 \cc b_1^2 \cc a_1 \cc b_1^{-1} \cc c_1^{-1} \cc c_3 \cc b_3  \cc a_2^{-1} \cc b_2^{-1} \cc c_2 \cc b_2^2 \cc a_2^2 \cc c_1^2 \cc b_1^2 \cc a_1^2 \cc e \\
&\bg \cc b_1^{-1} \cc c_1^{-1} \cc c_4 \cc a_4^2 \cc c_3^2 \cc b_3^2 \cc a_3 \cc b_2 \cc a_2^2 \cc c_1^2 \cc b_1^2 \cc a_1\cc b_1^{-1} \cc c_1^{-1} \cc c_3 \cc b_3  \cc a_2^{-1} \cc b_2^{-1} \cc c_2 \cc b_2^2 \cc a_2^2 \cc c_1^2 \cc b_1^2 \cc a_1^2 \cc e\\
&\bg \cc b_1^{-1} \cc c_1^{-1} \cc c_4 \cc a_4 \cc c_1 \cc b_1 \cc a_1^{-1} \cc b_1^{-1} \cc e^{-1},\\
&\gamma_{33} =  a_1^{-2} \cc b_1^{-2} \cc c_1^{-2} \cc a_2^{-2} \cc b_2^{-2} \cc c_2^{-1} \cc b_2 \cc a_2 \cc b_3 \cc a_3 \cc c_2 \cc b_2^2 \cc a_2^2 \cc c_1^2 \cc b_1^2 \cc a_1 \cc b_1^{-1} \cc c_1^{-1} \cc c_3 \cc b_3^2 \cc a_3 \cc b_3^{-1} \cc c_3^{-1} \cc c_1 \cc b_1\\
&\bg  \cc a_1^{-1} \cc b_1^{-2} \cc c_1^{-2} \cc a_2^{-2} \cc b_2^{-1} \cc c_2 \cc b_2^2 \cc a_2^2 \cc c_1^2 \cc b_1^2 \cc a_1^2 \cc e \cc a_6 \cc c_1^{-1} \cc a_2^{-1} \cc b_2 \cc a_2^2 \cc c_1^2 \cc b_1^2 \cc a_1 \cc b_1^{-1} \cc c_1^{-1} \cc c_3 \cc b_3 \\
&\bg \cc a_2^{-1} \cc b_2^{-1} \cc c_2 \cc b_2^2 \cc a_2^2 \cc c_1^2 \cc b_1^2 \cc a_1^2 \cc e \cc b_1^{-1} \cc c_1^{-1} \cc c_4 \cc a_4 \cc c_1 \cc b_1 \cc a_1^{-1} \cc b_1^{-1} \cc e^{-1},\\
&\gamma_{34} =  a_1^{-2} \cc b_1^{-2} \cc c_1^{-2} \cc a_2^{-2} \cc b_2^{-2} \cc c_2^{-1} \cc b_2 \cc a_2 \cc b_3 \cc a_3 \cc c_2 \cc b_2^2 \cc a_2^2 \cc c_1^2 \cc b_1^2 \cc a_1^2 \cc e \cc c_1^{-1} \cc a_2^{-1} \cc  b_2 \cc a_2^2 \cc c_1^2 \cc b_1^2 \cc a_1 \cc b_1^{-1} \cc c_1^{-1} \cc c_3 \cc b_3\\
&\bg \cc a_2^{-2} \cc b_2^{-1} \cc c_2 \cc b_2^2 \cc a_2^2 \cc c_1^2 \cc b_1^2 \cc a_1^2 \cc e \cc b_1^{-1} \cc c_1^{-1} \cc c_4 \cc a_4^2 \cc c_3^2 \cc b_3^2 \cc a_3 \cc b_2 \cc a_2^2 \cc c_1^2 \cc b_1^2 \cc a_1 \cc b_1^{-1} \cc c_1^{-1} \cc c_3 \cc b_3 \cc a_2^{-1} \cc b_2^{-1}\\
&\bg \cc c_2 \cc b_2^2 \cc a_2^2 \cc c_1^2 \cc b_1^2 \cc a_1^2 \cc e \cc b_1^{-1} \cc c_1^{-1} \cc c_4 \cc a_4 \cc c_1 \cc b_1 \cc a_1^{-1} \cc b_1^{-1} \cc e^{-1},\\
&\gamma_{35}= a_1^{-2} \cc b_1^{-2} \cc c_1^{-2} \cc a_2^{-2} \cc b_2^{-2} \cc c_2^{-1} \cc b_2 \cc a_2 \cc b_3 \cc a_3 \cc c_2 \cc b_2^2 \cc a_2^2 \cc c_1^2 \cc b_1^2 \cc a_1^2 \cc e \cc c_1^{-1} \cc a_2^{-1} \cc  b_2 \cc a_2^2 \cc c_1^2 \cc b_1^2 \cc a_1 \cc b_1^{-1} \cc c_1^{-1} \cc c_3 \cc b_3\\
&\bg \cc  a_2^{-2} \cc b_2^{-1} \cc c_2 \cc b_2^2 \cc a_2^2 \cc c_1^2 \cc b_1^2 \cc a_1^2 \cc e \cc b_1^{-1} \cc c_1^{-1} \cc c_4 \cc a_4\cc c_1 \cc b_1 \cc e \cc a_6^2 \cc a_5^2 \cc c_4^2 \cc a_4^2 \cc c_3 \cc c_1 \cc b_1 \cc a_1^{-1} \cc b_1^{-2} \cc c_1^{-2} \cc a_2^{-2} \cc b_2^{-1}\\
&\bg \cc c_2 \cc b_2^2 \cc a_2^2 \cc c_1^2 \cc b_1^2 \cc a_1^2 \cc e \cc b_1 \cc a_1 \cc b_1^{-1} \cc c_1^{-1} \cc a_4^{-1} \cc c_4^{-1} \cc c_1 \cc b_1 \cc e^{-1},\\
&\gamma_{36} = a_1^{-2} \cc b_1^{-2} \cc c_1^{-2} \cc a_2^{-2} \cc b_2^{-2} \cc c_2^{-1} \cc b_2 \cc a_2 \cc b_2 \cc a_2^2 \cc c_1^2 \cc b_1^2 \cc a_1 \cc b_1^{-1} \cc c_1^{-1}  \cc c_3 \cc b_3 \cc a_2^{-1} \cc  b_2^{-1} \cc c_2 \cc b_2^2  \cc a_2^2 \cc c_1^2 \cc b_1^2 \cc a_1^2 \cc e\\
&\bg  \cc b_1^{-1} \cc c_1^{-1} \cc c_4 \cc a_4\cc c_1 \cc b_1 \cc a_1^{-1} \cc b_1^{-2} \cc c_1^{-2} \cc a_2^{-2} \cc b_2^{-1}  \cc c_2 \cc b_2^2 \cc a_2^2 \cc c_1^2 \cc b_1^2 \cc a_1^2 \cc e \cc b_1^{-1} \cc c_1^{-1} \cc a_5 \cc c_4 \cc c_1 \cc b_1 \cc e\\
&\bg \cc a_6^2 \cc a_5^2 \cc c_4^2 \cc a_4^2 \cc c_3 \cc c_1 \cc b_1 \cc a_1^{-1} \cc b_1^{-2} \cc c_1^{-2} \cc a_2^{-2} \cc b_2^{-1} \cc c_2 \cc b_2^2 \cc a_2^2 \cc c_1^2 \cc b_1^2 \cc a_1^2 \cc e \cc b_1^{-1} \cc c_1^{-1} \cc a_6^{-1} \cc e^{-1}\\
&\gamma_{37}= a_1^{-2} \cc b_1^{-2} \cc c_1^{-2} \cc a_2^{-2} \cc b_2^{-2} \cc c_2^{-1} \cc b_2 \cc a_2  \cc b_3^{-1} \cc c_3^{-1} \cc c_1 \cc b_1 \cc a_1^{-1} \cc b_1^{-2} \cc c_1^{-2} \cc a_2^{-2} \cc b_2^{-1}  \cc c_2 \cc b_2^2 \cc a_2^2 \cc c_1^2 \cc b_1^2 \cc a_1^2 \cc e\\
&\bg \cc c_1^{-1} \cc a_2^{-1} \cc b_2 \cc a_2^2 \cc c_1^2 \cc b_1^2 \cc a_1 \cc b_1^{-1} \cc c_1^{-1} \cc c_3 \cc b_3 \cc a_2^{-1} \cc b_2^{-1} \cc c_2 \cc b_2^2 \cc a_2^2 \cc c_1^2 \cc b_1^2 \cc a_1^2 \cc e\cc b_1^{-1} \cc c_1^{-1} \cc c_4 \cc a_4^2 \cc c_3^2 \cc b_3^2\\
&\bg a_3 \cc b_2 \cc a_2^2 \cc c_1^2 \cc b_1^2 \cc a_1 \cc b_1^{-1} \cc c_1^{-1} \cc c_3 \cc b_3 \cc a_2^{-1} \cc b_2^{-1} \cc c_2 \cc b_2^2 \cc a_2^2 \cc c_1^2 \cc b_1^2 \cc a_1^2 \cc e \cc b_1^{-1} \cc c_1^{-1} \cc c_4 \cc a_4 \cc c_1 \cc b_1 \cc a_1^{-1} \cc b_1^{-1} \cc e^{-1},\\
&\gamma_{38} = a_1^{-2} \cc b_1^{-2} \cc c_1^{-2} \cc a_2^{-2} \cc b_2^{-2} \cc c_2^{-1} \cc b_2 \cc a_2\cc b_2 \cc a_2^2 \cc c_1^2 \cc b_1^2 \cc a_1 \cc b_1^{-1} \cc c_1^{-1} \cc c_3 \cc b_3 \cc a_2^{-1} \cc b_2^{-1} \cc c_2 \cc b_2^2 \cc a_2^2 \cc c_1^2 \cc b_1^2 \cc a_1^2 \cc e\\
&\bg \cc b_1^{-1} \cc c_1^{-1} \cc c_4 \cc a_4 \cc c_1 \cc b_1 \cc a_1^{-1} \cc b_1^{-2} \cc c_1^{-2} \cc a_2^{-2} \cc b_2^{-1} \cc c_2 \cc b_2^2 \cc a_2^2 \cc c_1^2 \cc b_1^2 \cc a_1^2 \cc e\cc b_1^{-1} \cc c_1^{-1} \cc a_5 \cc c_4 \cc c_1 \cc b_1 \cc e\cc a_6^2 \cc a_5^2\\
&\bg  \cc c_4^2 \cc a_4^2 \cc c_3 \cc c_1 \cc b_1 \cc  a_1^{-1} \cc b_1^{-2} \cc c_1^{-2} \cc a_2^{-2} \cc b_2^{-1} \cc c_2 \cc b_2^2 \cc a_2^2 \cc c_1^2 \cc b_1^2 \cc a_1^2 \cc e \cc b_1 \cc a_1 \cc b_1^{-1} \cc c_1^{-1} \cc a_4^{-1} \cc c_4^{-1} \cc c_1 \cc b_1 \cc e^{-1},\\
&\gamma_{39} = b_1^{-1} \cc c_1^{-1} \cc a_4 \cc c_3 \cc a_2^{-1} \cc b_2^{-1} \cc c_2 \cc b_2^2 \cc a_2^2 \cc c_1^2 \cc b_1^2 \cc a_1^2 \cc e \cc b_1^{-1} \cc c_1^{-1} \cc c_4\cc a_4^2 \cc c_3^2 \cc b_3^2 \cc a_3 \cc b_3^{-1} \cc c_3^{-1} \cc c_1 \cc b_1 \\
&\bg \cc  a_1^{-1} \cc b_1^{-2} \cc c_1^{-2} \cc a_2^{-2} \cc b_2^{-1} \cc c_2 \cc b_2^2 \cc a_2^2 \cc c_1^2 \cc b_1^2 \cc a_1^2 \cc e \cc b_1 \cc a_1 \cc b_1^{-1} \cc c_1^{-1} \cc c_4 \cc a_4^2 \cc c_3^2 \cc b_3^2 \cc a_3 \cc b_3^{-1} \cc c_3^{-1} \cc c_1 \cc b_1\\
&\bg \cc  a_1^{-1} \cc b_1^{-2} \cc c_1^{-2} \cc a_2^{-2} \cc b_2^{-1} \cc c_2 \cc b_2^2 \cc a_2^2 \cc c_1^2 \cc b_1^2 \cc a_1^2 \cc e \cc b_1 \cc a_1 \cc e \cc a_6^2 \cc a_5 \cc c_1 \cc b_1 \cc e^{-1},\\
&\gamma_{40} =  a_1^{-2} \cc b_1^{-2} \cc c_1^{-2} \cc a_2^{-2} \cc b_2^{-2} \cc c_2^{-1} \cc b_2 \cc a_2\cc b_2 \cc a_2^2 \cc c_1^2 \cc b_1^2 \cc a_1 \cc b_1^{-1} \cc c_1^{-1} \cc c_3 \cc b_3 \cc a_2^{-1} \cc b_2^{-1} \cc c_2 \cc b_2^2 \cc a_2^2 \cc c_1^2 \cc b_1^2 \cc a_1^2 \cc e\\
&\bg \cc b_1^{-1} \cc c_1^{-1} \cc c_4 \cc a_4 \cc c_1 \cc b_1 \cc a_1^{-1} \cc b_1^{-2} \cc c_1^{-2} \cc a_2^{-2} \cc b_2^{-1} \cc c_2 \cc b_2^2 \cc a_2^2 \cc c_1^2 \cc b_1^2 \cc a_1^2 \cc e\cc b_1^{-1} \cc c_1^{-1} \cc a_5 \cc c_4 \cc c_1 \cc b_1 \cc e\cc a_6^2 \cc a_5^2\\
&\bg  \cc c_4^2 \cc a_4^2 \cc c_3 \cc c_1 \cc b_1 \cc  a_1^{-1} \cc b_1^{-2} \cc c_1^{-2} \cc a_2^{-2} \cc b_2^{-1} \cc c_2 \cc b_2^2 \cc a_2^2 \cc c_1^2 \cc b_1^2 \cc a_1^2 \cc e \cc b_1^{-1} \cc c_1^{-2} \cc a_2^{-2} \cc b_2^{-1} \cc c_2 \cc b_2^2 \cc a_2^2 \\
&\bg \cc c_1^2 \cc b_1^2 \cc a_1^2 \cc e \cc b_1^{-1} \cc c_1^{-1} \cc a_5^{-1} \cc a_6^{-2} \cc e^{-1},\\
&\gamma_{41} = b_1^{-1} \cc c_1^{-1} \cc a_4 \cc c_3\cc c_1 \cc b_1 \cc a_1^{-1} \cc b_1^{-2} \cc c_1^{-2} \cc a_2^{-2} \cc b_2^{-2} \cc c_2^{-1} \cc a_3^{-1} \cc b_3^{-1} \cc a_2^{-1} \cc b_2^{-1}  \cc c_2 \cc b_2^2 \cc a_2^2 \cc c_1^2 \cc b_1^2 \cc a_1^2 \cc e \cc b_1^{-1} \cc c_1^{-1}\\
&\bg  \cc c_4\cc a_4^2 \cc c_3^2 \cc b_3^2 \cc a_3 \cc b_3^{-1} \cc c_3^{-1} \cc c_1 \cc b_1 \cc  a_1^{-1} \cc b_1^{-2} \cc c_1^{-2} \cc a_2^{-2} \cc b_2^{-1} \cc c_2 \cc b_2^2 \cc a_2^2 \cc c_1^2 \cc b_1^2 \cc a_1^2 \cc e \cc b_1 \cc a_1 \cc b_1^{-1} \cc c_1^{-1}  \cc a_4 \cc c_3^2\\
&\bg \cc b_3 \cc a_2^{-1} \cc b_2^{-1} \cc c_2 \cc b_2^2 \cc a_2^2 \cc c_1^2 \cc b_1^2 \cc a_1^2 \cc e \cc b_1^{-1} \cc c_1^{-1} \cc c_4 \cc a_4^2 \cc c_3^2 \cc b_3^2 \cc a_3 \cc b_3^{-1} \cc c_3^{-1} \cc c_1 \cc b_1 \cc  a_1^{-1} \cc b_1^{-2} \cc c_1^{-2} \cc a_2^{-2} \cc b_2^{-1} \\
&\bg \cc c_2 \cc b_2^2 \cc a_2^2 \cc c_1^2 \cc b_1^2 \cc a_1^2 \cc e \cc b_1 \cc a_1 \cc e \cc a_6^2 \cc a_5 \cc c_1 \cc b_1 \cc e^{-1},\\
\end{aligned}
\end{equation*}
\begin{equation*}
\begin{aligned}
&\gamma_{42} = a_1^{-2} \cc b_1^{-2} \cc c_1^{-2} \cc a_2^{-2} \cc b_2^{-2} \cc c_2^{-1} \cc b_2 \cc a_2\cc b_3^{-1} \cc c_3^{-1} \cc c_1 \cc b_1 \cc a_1^{-1} \cc b_1^{-2} \cc c_1^{-2} \cc a_2^{-2} \cc b_2^{-2} \cc c_2^{-1} \cc a_3^{-1} \cc b_3^{-1} \cc a_2^{-1} \cc b_2^{-1}\\
&\bg  \cc c_2 \cc b_2^2 \cc a_2^2 \cc c_1^2 \cc b_1^2 \cc a_1^2 \cc e \cc b_1^{-1} \cc c_1^{-1} \cc c_4 \cc a_4^2 \cc c_3^2 \cc b_3^2 \cc a_3 \cc b_3^{-1} \cc c_3^{-1} \cc c_1 \cc b_1 \cc a_1^{-1} \cc b_1^{-2} \cc c_1^{-2} \cc a_2^{-2} \cc b_2^{-1} \cc c_2 \cc b_2^2 \\
&\bg \cc a_2^2 \cc c_1^2 \cc b_1^2 \cc a_1^2 \cc e \cc b_1 \cc a_1 \cc b_1^{-1} \cc c_1^{-1} \cc a_4 \cc c_3^2 \cc b_3 \cc a_2^{-1} \cc b_2^{-1} \cc c_2 \cc b_2^2 \cc a_2^2 \cc c_1^2 \cc b_1^2 \cc a_1^2 \cc e \cc b_1^{-1} \cc c_1^{-1} \cc c_4 \cc a_4^2 \cc c_3^2 \\
&\bg \cc b_3^2 \cc a_3 \cc b_3^{-1} \cc c_3^{-1} \cc c_1 \cc b_1 \cc a_1^{-1} \cc b_1^{-2} \cc c_1^{-2} \cc a_2^{-2} \cc b_2^{-1} \cc c_2 \cc b_2^2 \cc. a_2^2 \cc c_1^2 \cc b_1^2 \cc a_1^2 \cc e \cc b_1 \cc a_1 \cc e \cc a_6^2 \cc a_5 \cc c_1 \cc b_1 \cc e^{-1},\\
\end{aligned}
\end{equation*}
and 
\begin{equation*}
\begin{aligned}
& \zeta_1=  a_1^{-2} \cc b_1^{-2} \cc c_1^{-2} \cc a_2^{-2} \cc b_2^{-2} \cc c_2^{-1} \cc b_2 \cc a_2^2 \cc c_1^2 \cc b_1^2 \cc a_1 \cc b_1^{-1},\\
&\zeta_2 =  a_1^{-2} \cc b_1^{-2} \cc c_1^{-2} \cc a_2^{-2} \cc b_2^{-2} \cc c_2^{-1} \cc b_2 \cc a_2 \cc b_2 \cc a_2^2 \cc c_1^2 \cc b_1^2 \cc a_1 \cc b_1^{-1},\\
&\zeta_3 =  a_1^{-2} \cc b_1^{-2} \cc c_1^{-2} \cc a_2^{-2} \cc b_2^{-2} \cc c_2^{-1} \cc b_2 \cc a_2 \cc b_3 \cc a_3 \cc c_2 \cc b_2^2 \cc a_2^2 \cc c_1^2 \cc b_1^2 \cc a_1 \cc b_1^{-1},\\
&\zeta_4 =  a_1^{-2} \cc b_1^{-2} \cc c_1^{-2} \cc a_2^{-2} \cc b_2^{-2} \cc c_2^{-1} \cc b_2 \cc a_2 \cc b_3. \cc a_3 \cc c_2 \cc b_2^2 \cc a_2^2 \cc c_1^2 \cc b_1^2 \cc a_1^2 \cc e,\\
&\zeta_5 =  a_1^{-2} \cc b_1^{-2} \cc c_1^{-2} \cc a_2^{-2} \cc b_2^{-2} \cc c_2^{-1} \cc b_2 \cc a_2 \cc b_3^{-1} \cc c_3^{-1} \cc c_1 \cc b_1 \cc  a_1^{-1} \cc b_1^{-2} \cc c_1^{-2} \cc a_2^{-2} \cc b_2^{-1} \cc c_2 \cc b_2^2 \cc a_2^2 \cc c_1^2 \cc b_1^2 \cc a_1^2 \cc e,\\
&\zeta_6 =  a_1^{-2} \cc b_1^{-2} \cc c_1^{-2} \cc a_2^{-2} \cc b_2^{-2} \cc c_2^{-1} \cc b_2 \cc a_2 \cc b_3^{-1} \cc c_3^{-1} \cc c_1 \cc b_1 \cc  a_1^{-1} \cc b_1^{-2} \cc c_1^{-2} \cc a_2^{-2} \cc b_2^{-1} \cc c_2 \cc b_2^2 \cc a_2^2 \cc c_1^2 \cc b_1^2 \cc a_1^2 \cc e\\
&\bg \cc b_1 \cc a_1 \cc e \cc a_6,\\
&\zeta_7 =  a_1^{-2} \cc b_1^{-2} \cc c_1^{-2} \cc a_2^{-2} \cc b_2^{-2} \cc c_2^{-1} \cc b_2 \cc a_2\cc b_3 \cc a_3 \cc b_3^{-1} \cc c_3^{-1} \cc c_1 \cc b_1 \cc  a_1^{-1} \cc b_1^{-2} \cc c_1^{-2} \cc a_2^{-2} \cc b_2^{-1} \cc c_2 \cc b_2^2 \cc a_2^2 \cc c_1^2\\
&\bg  \cc b_1^2 \cc a_1^2 \cc e \cc b_1 \cc a_1 \cc e \cc a_6,\\
&\zeta_8 =  a_1^{-2} \cc b_1^{-2} \cc c_1^{-2} \cc a_2^{-2} \cc b_2^{-2} \cc c_2^{-1} \cc b_2 \cc a_2 \cc b_3^{-1} \cc c_3^{-1} \cc c_1 \cc b_1 \cc  a_1^{-1} \cc b_1^{-2} \cc c_1^{-2} \cc a_2^{-2} \cc b_2^{-1} \cc b_3^{-1} \cc c_3^{-1} \cc c_1 \cc b_1 \\
&\bg  \cc  a_1^{-1} \cc b_1^{-2} \cc c_1^{-2} \cc a_2^{-2} \cc b_2^{-1} \cc c_2 \cc b_2^2 \cc a_2^2 \cc c_1^2 \cc b_1^2 \cc a_1^2 \cc e \cc b_1 \cc a_1 \cc e \cc a_6,\\
&\mu_1 = c_1^{-1} \cc c_3 \cc b_3 \cc a_2 \cc c_1^2 \cc b_1 \cc e^{-1}, \quad \mu_2 = c_1^{-1} \cc c_3 \cc b_3 \cc a_3^{-1} \cc b_3^{-2} \cc c_3^{-2} \cc a_4^{-2} \cc c_4^{-1} \cc c_1 \cc b_1 \cc e^{-1},\\
&\mu_3 = c_1^{-1} \cc c_3 \cc b_3 \cc a_3^{-1} \cc b_3^{-2} \cc c_3^{-2} \cc a_4^{-2} \cc c_4^{-2} \cc a_5^{-2} \cc a_6^{-2} \cc e^{-1},\\
&\mu_4 = c_1^{-1} \cc a_2^{-2} \cc b_2^{-1} \cc c_2 \cc b_2^2 \cc a_2^2 \cc c_1^2 \cc b_1^2 \cc a_1^2 \cc e \cc b_1^{-1} \cc c_1^{-1} \cc a_6^{-1} \cc e^{-1},\\
&\mu_5 =c_1^{-1} \cc a_2^{-2} \cc b_2^{-1} \cc c_2 \cc b_2^2 \cc a_2^2 \cc c_1^2 \cc b_1^2 \cc a_1^2 \cc e \cc  b_1^{-1} \cc c_1^{-1} \cc a_5^{-1} \cc a_6^{-2} \cc e^{-1},\\ 
&\mu_6 =c_1^{-1} \cc a_2^{-2} \cc b_2^{-1} \cc c_2 \cc b_2^2 \cc a_2^2 \cc c_1^2 \cc b_1^2 \cc a_1^2 \cc e \cc  b_1^{-1} \cc c_1^{-1}\cc a_5 \cc c_4 \cc c_1 \cc b_1 \cc e^{-1},\\
&\mu_7 =c_1^{-1} \cc c_3 \cc b_3 \cc a_2^{-1} \cc b_2^{-1} \cc c_2 \cc b_2^2 \cc a_2^2 \cc c_1^2 \cc b_1^2 \cc a_1^2 \cc e \cc  b_1^{-1} \cc c_1^{-1} \cc c_4 \cc a_4  \cc c_1 \cc b_1 \cc a_1^{-1} \cc b_1^{-1} \cc e^{-1},\\ 
&\mu_8 = c_1^{-1} \cc c_3^{-1} \cc a_4^{-1} \cc c_1 \cc b_1 \cc e \cc a_6^2 \cc a_5^2 \cc c_4^2 \cc a_4^2 \cc c_3 \cc c_1 \cc b_1 \cc a_1^{-1} \cc b_1^{-2} \cc c_1^{-2} \cc a_2^{-2} \cc b_2^{-1} \cc c_2 \cc b_2^2 \cc a_2^2 \cc c_1^2 \cc b_1^2 \cc a_1^2 \cc e\\
&\bg \cc b_1 \cc a_1 \cc b_1^{-1} \cc c_1^{-1} \cc a_4^{-1} \cc c_4^{-1} \cc c_1 \cc b_1\cc e^{-1},\\
&\mu_9=  c_1^{-1} \cc a_2^{-2} \cc b_2^{-1} \cc c_2 \cc b_2^2 \cc a_2^2 \cc c_1^2 \cc b_1^2 \cc a_1^2 \cc e \cc b_1^{-1} \cc c_1^{-1} \cc a_5 \cc c_4 \cc c_1 \cc b_1 \cc e  \cc a_6^2 \cc a_5^2 \cc c_4^2 \cc a_4^2 \cc c_3 \cc c_1 \cc b_1 \cc a_1^{-1} \cc b_1^{-2} \cc c_1^{-2} \\
&\bg \cc a_2^{-2} \cc b_2^{-1} \cc c_2 \cc b_2^2 \cc a_2^2 \cc c_1^2 \cc b_1^2 \cc a_1^2 \cc e \cc b_1^{-1} \cc c_1^{-1} \cc a_6^{-1} \cc e^{-1},\\
&\mu_{10} = c_1^{-1}\cc c_3 \cc b_3  \cc a_2^{-1} \cc b_2^{-1} \cc c_2 \cc b_2^2 \cc a_2^2 \cc c_1^2 \cc b_1^2 \cc a_1^2 \cc e \cc b_1^{-1} \cc c_1^{-1} \cc c_4 \cc a_4^2 \cc c_3^2 \cc b_3^2 \cc a_3 \cc b_3^{-1} \cc c_3^{-1} \cc c_1 \cc b_1 \cc a_1^{-1} \cc b_1^{-2} \cc c_1^{-2} \\
&\bg \cc a_2^{-2} \cc b_2^{-1} \cc c_2 \cc b_2^2 \cc a_2^2 \cc c_1^2 \cc b_1^2 \cc a_1^2 \cc e \cc b_1 \cc a_1 \cc e \cc a_6^2 \cc a_5 \cc c_1 \cc b_1 \cc e^{-1},\\
&\mu_{11} = c_1^{-1}\cc c_3 \cc b_3 \cc a_3^{-1} \cc b_3^{-1}  \cc a_2^{-1} \cc b_2^{-1} \cc c_2 \cc b_2^2 \cc a_2^2 \cc c_1^2 \cc b_1^2 \cc a_1^2 \cc e \cc b_1^{-1} \cc c_1^{-1} \cc c_4 \cc a_4^2 \cc c_3^2 \cc b_3^2 \cc a_3 \cc b_3^{-1} \cc c_3^{-1} \cc c_1 \cc b_1 \cc a_1^{-1}  \\
&\bg \cc b_1^{-2} \cc c_1^{-2} \cc a_2^{-2} \cc b_2^{-1} \cc c_2 \cc b_2^2 \cc a_2^2 \cc c_1^2 \cc b_1^2 \cc a_1^2 \cc e \cc b_1 \cc a_1 \cc e \cc a_6^2 \cc a_5 \cc c_1 \cc b_1 \cc e^{-1},\\
&\mu_{12} =  c_1^{-1}\cc a_2^{-1} \cc b_2^{-1}  \cc c_2 \cc b_2^2 \cc a_2^2 \cc c_1^2 \cc b_1^2 \cc a_1^2 \cc e \cc b_1^{-1} \cc c_1^{-1} \cc a_5 \cc c_4 \cc c_1 \cc b_1 \cc e \cc a_6^2 \cc a_5^2 \cc c_4^2 \cc a_4^2 \cc c_3 \cc c_1 \cc b_1 \cc a_1^{-1}\cc b_1^{-2} \cc c_1^{-2} \cc a_2^{-2}   \\
&\bg \cc b_2^{-1} \cc c_2 \cc b_2^2 \cc a_2^2 \cc c_1^2 \cc b_1^2 \cc a_1^2 \cc e \cc b_1 \cc a_1 \cc b_1^{-1} \cc c_1^{-1} \cc a_4^{-1} \cc c_4^{-1}  \cc c_1 \cc b_1 \cc e^{-1},\\
&\mu_{13} = c_1^{-1}\cc c_3 \cc b_3  \cc a_2^{-1} \cc b_2^{-1} \cc c_2 \cc b_2^2 \cc a_2^2 \cc c_1^2 \cc b_1^2 \cc a_1^2 \cc e \cc b_1^{-1} \cc c_1^{-1} \cc c_4 \cc a_4 \cc c_1 \cc b_1 \cc e\cc a_6^2 \cc a_5^2 \cc c_4^2 \cc a_4^2 \cc c_3 \cc c_1 \cc b_1 \cc a_1^{-1} \cc b_1^{-2} \cc c_1^{-2} \\
&\bg \cc a_2^{-2} \cc b_2^{-1} \cc c_2 \cc b_2^2 \cc a_2^2 \cc c_1^2 \cc b_1^2 \cc a_1^2 \cc e \cc b_1 \cc a_1 \cc b_1^{-1} \cc c_1^{-1} \cc a_4^{-1} \cc c_4^{-1} \cc c_1 \cc b_1 \cc e^{-1},\\
&\eta_1 = c_1^{-1} \cc a_2 ^{-1} \cc b_2 \cc a_2^2 \cc c_1^2 \cc b_1^2 \cc a_1 \cc b_1^{-1},\quad \quad 
\eta_2 = c_1^{-1} \cc c_3 \cc b_3 \cc a_2^{-1} \cc b_2^{-1} \cc c_2 \cc b_2^2 \cc a_2^2 \cc c_1^2 \cc b_1^2 \cc a_1^2 \cc e \cc b_1^{-1} \cc c_1^{-1} \cc c_4 \cc a_4 \cc c_1 \cc b_1 \cc a_1^{-1} \cc b_1^{-2} \cc c_1^{-1}\\
&\eta_3 = c_1^{-1} \cc c_3 \cc b_3^2 \cc a_3 \cc b_3^{-1} \cc c_3^{-1} \cc c_1 \cc b_1 \cc a_1^{-1}\cc b_1^{-2} \cc c_1^{-2} \cc a_2^{-2}  \cc b_2^{-1} \cc c_2 \cc b_2^2 \cc a_2^2 \cc c_1^2 \cc b_1^2 \cc a_1^2 \cc e \cc b_1 \cc a_1 \cc e \cc a_6,\\
\end{aligned}
\end{equation*}
\begin{equation*}
\begin{aligned}&\eta_4 = c_1^{-1} \cc c_3 \cc b_3 \cc a_2^{-1} \cc b_2^{-1} \cc c_2 \cc b_2^2 \cc a_2^2 \cc c_1^2 \cc b_1^2 \cc a_1^2 \cc e \cc b_1^{-1} \cc c_1^{-1} \cc c_4 \cc a_4^2 \cc c_3^2 \cc b_3^2 \cc a_3 \cc b_2 \cc a_2^2. \cc c_1^2 \cc b_1^2 \cc a_1 \cc b_1^{-1},\\
&\eta_5= c_1^{-1} \cc a_2^{-2} \cc b_2^{-1} \cc c_2 \cc b_2^2 \cc a_2^2 \cc c_1^2 \cc b_1^2 \cc a_1^2 \cc e \cc b_1^{-1} \cc c_1^{-1} \cc a_5 \cc c_4 \cc c_1 \cc b_1 \cc e \cc a_6^2 \cc a_5^2 \cc c_4^2 \cc a_4^2 \cc c_3 \cc c_1 \cc b_1 \cc a_1^{-1}\cc b_1^{-2} \cc c_1^{-2}\\
&\bg  \cc a_2^{-2}  \cc b_2^{-1} \cc c_2 \cc b_2^2 \cc a_2^2 \cc c_1^2 \cc b_1^2 \cc a_1^2 \cc e \cc b_1^{-1} \cc c_1^{-1}.\\
 \end{aligned}
\end{equation*}
The admissible $4$-tuples are 
\begin{equation*}
\begin{aligned}
A&= \{ (1,6,11,28),(1,11,8,6),(1,11,8,20),(1,11,8,22),(1,11,16,14),(1,11,17,30),(1,11,19,11),\\
&(1,11,25,9),(1,11,28,12),(1,11,31,37),(1,11,32,11),(1,11,32,12),(1,11,33,12),(1,11,34,12),\\
&(1,11,35,9),(1,11,42,23),(1,12,1,11),(1,12,2,8),(1,12,3,7),(1,12,5,2),(1,12,18,9),(1,12,26,7),\\
&(1,12,26,15),(1,12,39,15),(1,12,39,23),(1,12,41,23),(1,29,13,12),(1,29,21,1),(1,29,27,12),\\
&(1,29,36,1),(1,29,37,12),(1,29,38,14),(1,29,38,30),(2,8,10,1),(2,8,22,23),(2,8,22,24),\\
&(2,9,1,6),(2,9,1,11),(2,9,1,29),(2,14,1,11),(2,30,9,1),(3,7,14,1),(4,7,14,1),(5,2,8,22),\\
&(6,11,28,12),(6,11,42,23),(7,14,1,11),(7,37,12,2),(8,6,11,28),(8,6,11,42),(8,10,1,11),\\
&(8,20,1,6),(8,22,23,1),(8,22,24,17),(8,22,36,1),(8,22,40,1),(9,1,6,11),(9,1,11,8),(9,1,11,42),\\
&(9,1,29,13),(9,1,29,21),(9,1,29,27),(9,1,29,36),(9,1,29,37),(9,1,29,38),(10,1,11,28),\\
&(11,8,6,11),(11,8,20,1),(11,8,22,36),(11,8,22,40),(11,16,14,1),(11,17,30,9),(11,19,11,17),\\
&(11,19,11,25),(11,19,11,31),(11,19,11,32),(11,25,9,1),(11,28,12,2),(11,28,12,26),\\
&(11,28,12,41),(11,31,37,12),(11,32,11,17),(11,32,12,1),(11,32,12,2),(11,33,12,2),\\
&(11,34,12,2),(11,35,9,1),(11,42,23,1),(12,1,11,25),(12,2,8,10),(12,2,8,22),(12,3,7,14),\\
&(12,4,7,14),(12,5,2,8),(12,18,9,1),(12,26,7,14),(12,26,15,12),(12,39,15,12),(12,39,23,1),\\
&(12,41,23,1),(13,7,14,1),(13,7,37,12),(13,12,3,7),(14,1,11,16),(14,1,11,19),(14,1,11,28),\\
&(14,1,11,34),(14,1,11,35),(15,12,3,7),(15,12,4,7),(15,12,18,9),(15,12,39,15),(15,12,39,23),\\
&(16,14,1,11),(17,9,1,6),(17,9,1,29),(17,30,9,1),(18,9,1,11),(19,11,17,30),(19,11,25,9),\\
&(19,11,31,37),(19,11,32,11),(19,11,32,12),(20,1,6,11),(21,1,6,11),(22,23,1,11),(22,24,17,9),\\
&(22,24,17,30),(22,36,1,6),(22,40,1,11),(23,1,11,25),(23,1,11,33),(24,17,9,1),(24,17,30,9),\\
&(25,9,1,29),(26,7,14,1),(26,15,12,3),(26,15,12,4),(26,15,12,18),(26,15,12,39),(27,12,4,7),\\
&(27,12,5,2),(28,12,2,8),(28,12,26,7),(28,12,26,15),(28,12,41,23),(29,13,7,14),(29,13,7,37),\\
&(29,13,12,3),(29,21,1,6),(29,27,12,4),(29,27,12,5),(29,36,1,6),(29,37,12,2),(29,38,14,1),\\
&(29,38,30,9),(30,9,1,29),(31,37,12,2),(32,11,17,30),(32,12,1,11),(32,12,2,8),(33,12,2,8),\\
&(34,12,2,8),(35,9,1,11),(35,9,1,29),(36,1,6,11),(37,12,2,8),(38,14,1,11),(38,30,9,1),\\
&(39,15,12,3),(39,23,1,11),(40,1,11,25),(41,23,1,11),(42,23,1,11)\}
\end{aligned}
\end{equation*}
Let $V= Span \, \Gamma$ and $W= Span\, \Delta$. The splitting map $S: V\to W$ is given by 
\begin{equation*}
S: \left\{ \begin{aligned} & \gamma_1 \mapsto  \eta_1, \quad \gamma_2 \mapsto  \mu_5 + \gamma_1 + \zeta_1, \quad \gamma_3 \mapsto \mu_4 + \gamma_1 + \zeta_1, \quad \gamma_4 \mapsto \mu_9 + \gamma_1 + \zeta_1, \quad \gamma_5 \mapsto \eta_5, \\
& \gamma_6 \mapsto \eta_4, \quad  \gamma_7 \mapsto \mu_1 + \zeta_1, \quad \gamma_8 \mapsto \mu_2 + \zeta_3, \quad \gamma_9 \mapsto \mu_2 + \zeta_4, \quad \gamma_{10} \mapsto \eta_3, \quad \gamma_{11} \mapsto \mu_7 + \zeta_1, \\
& \gamma_{12} \mapsto \eta_2,\quad \gamma_{13} \mapsto \mu_2 + \gamma_8 + \zeta_1, \quad \gamma_{14} \mapsto \mu_2 + \zeta_7, \quad \gamma_{15}  \mapsto \mu_{11} + \zeta_2, \\
&\gamma_{16} \mapsto \mu_3 + \gamma_2 + \gamma_8 + \gamma_{10} +\gamma_1 + \zeta_1, \quad \gamma_{17} \mapsto \mu_3 + \gamma_3 + \gamma_7 + \gamma_{14} + \gamma_1 + \zeta_1,\\
&\gamma_{18} \mapsto \mu_{12} + \gamma_{14} + \gamma_1 + \zeta_1, \quad \gamma_{19}  \mapsto \mu_3  + \gamma_{26} + \zeta_2, \quad \gamma_{20} \mapsto \mu_{10} + \zeta_5,\\
& \gamma_{21} \mapsto \mu_{11} + \zeta_5, \quad \gamma_{22} \mapsto \mu_8 + \gamma_9 + \gamma_1 + \zeta_1, \quad \gamma_{23}  \mapsto \mu_{11} +\zeta_6,\\
 &\gamma_{24} \mapsto \mu_{11} + \gamma_{13} + \zeta_1, \quad \gamma_{25} \mapsto \mu_3 + \gamma_4 + \gamma_7 + \gamma_ {14} + \gamma_1  + \zeta_1, \\
 &\gamma_{26} \mapsto \mu_6 + \gamma_{17} + \gamma_9 + \gamma_1 + \zeta_1, \quad \gamma_{27} \mapsto \mu_2 + \gamma_8 + \gamma_{22} + \zeta_2, \\ 
 \end{aligned}\right.
\end{equation*} 
\begin{equation*}
S: \left\{ \begin{aligned}
 &\gamma_{28} \mapsto \mu_3 + \gamma_2 + \gamma_8 + \gamma_{22} + \zeta_2, \quad \gamma_{29} \mapsto \mu_{13} + \gamma_9 + \gamma_1 + \zeta_1, \\ 
 &\gamma_{30} \mapsto \mu_2 + \gamma_{16} + \gamma_{14} + \gamma_1 + \zeta_1, \quad \gamma_{31} \mapsto \mu_3 + \gamma_{18} + \gamma_9 + \gamma_1 + \zeta_1,\\
 &\gamma_{32} \mapsto \mu_3 + \gamma_{39} + \zeta_2, \quad \gamma_{33} \mapsto \mu_3 + \gamma_5 + \gamma_2 + \gamma_8 + \gamma_{22} + \zeta_2,\\
 &\gamma_{34} \mapsto \mu_3 + \gamma_{41} + \zeta_2, \quad \gamma_{35} \mapsto \mu_3 + \gamma_{26} + \gamma_7 + \gamma_{14} + \gamma_1 + \zeta_1,\\
 &\gamma_{36} \mapsto \mu_{11} + \gamma_{13} + \gamma_7 + \zeta_5, \quad \gamma_{37} \mapsto \mu_2 + \gamma_{42} + \zeta_2,\quad \gamma_{38} \mapsto \mu_{11} + \gamma_{13} + \gamma_7 + \gamma_{14} +\gamma_1 + \zeta_1\\
 & \gamma_{39} \mapsto \mu_{12} + \gamma_{30} + \gamma_9 + \gamma_1 + \zeta_1,\quad \gamma_{40} \mapsto \mu_{11} + \gamma_{13} + \gamma_7 + \zeta_8,\\
 &\gamma_{41} \mapsto \mu_6 + \gamma_{17} + \gamma_{30} + \gamma_9 + \gamma_1 + \zeta_1, \quad \gamma_{42} \mapsto \mu_2 + \gamma_{17} + \gamma_{30} + \gamma_9 + \gamma_1 + \zeta_1.\\
 \end{aligned}\right.
\end{equation*}
and the merging map $M:V \to W$ is
\begin{equation*}
M: \left\{ \begin{aligned} &\gamma_1 \mapsto \gamma_1,\quad \gamma_2 \mapsto \gamma_2,\quad \gamma_3 \mapsto \gamma_3,\quad \gamma_4 \mapsto \gamma_4,\quad \gamma_5 \mapsto \gamma_5 ,\quad \gamma_6 \mapsto \zeta_1 + \mu_1,\\
&\gamma_7 \mapsto \gamma_7,\quad \gamma_8 \mapsto \gamma_8,\quad \gamma_9 \mapsto \gamma_9,\quad \gamma_{10} \mapsto \gamma_{10},\quad \gamma_{11} \mapsto \zeta_1 + \mu_2,\\
&\gamma_{12} \mapsto \zeta_1 + \mu_3,\quad \gamma_{13} \mapsto \gamma_{13},\quad \gamma_{14} \mapsto \gamma_{14},\quad \gamma_{15} \mapsto \zeta_2 + \mu_7,\quad \gamma_{16} \mapsto \gamma_{16},\\
&\gamma_{17} \mapsto \gamma_{17},\quad \gamma_{18} \mapsto \gamma_{18},\quad \gamma_{19} \mapsto \zeta_4 + \eta_1 + \mu_7,\quad \gamma_{20} \mapsto \zeta_1 + \eta_2 + \mu_4,\\
&\gamma_{21} \mapsto \zeta_2 + \eta_2 + \mu_4 ,\quad \gamma_{22} \mapsto \gamma_{22},\quad \gamma_{23} \mapsto \zeta_2 + \eta_2 + \mu_5,\quad \gamma_{24} \mapsto \zeta_2 + \eta_2 + \mu_6,\\
&\gamma_{25} \mapsto \zeta_2 + \mu_8,\quad \gamma_{26} \mapsto \gamma_{26},\quad \gamma_{27} \mapsto \zeta_6 + \eta_1 + \mu_7,\quad \gamma_{28} \mapsto \zeta_7+ \eta_1 + \mu_7,\\
&\gamma_{29} \mapsto \zeta_1 + \mu_{11},\quad \gamma_{30} \mapsto \gamma_{30},\quad \gamma_{31} \mapsto \zeta_3 + \mu_{10} ,\quad \gamma_{32} \mapsto \zeta_3 + \eta_4 + \mu_7,\\
&\gamma_{33} \mapsto \zeta_3 + \eta_3 + \eta_1 + \mu_7,\quad \gamma_{34} \mapsto \zeta_6 + \eta_1 + \mu_7,\quad \gamma_{35} \mapsto \zeta_4 + \eta_1 + \mu_{13},,\quad \gamma_{36} \mapsto \zeta_2+ \eta_2 + \mu_9,\\
&\gamma_{37} \mapsto \zeta_5 + \eta_1 + \eta_4 + \mu_7,\quad \gamma_{38} \mapsto \zeta_2 + \eta_2 + \mu_{12},\quad \gamma_{39} \mapsto \gamma_{39},\\ 
&\gamma_{40} \mapsto \zeta_2 + \eta_2 + \eta_5 + \mu_5,,\quad \gamma_{41} \mapsto \gamma_{41},\quad \gamma_{42} \mapsto \gamma_{42}.
\end{aligned}\right.
\end{equation*}

There is a $v_\lambda \in V$ such that $Sv_\lambda = \lambda Mv_\lambda$ where $\lambda\approx 1.55943 $ is the largest real root of 
\begin{equation*}
\begin{aligned}
\chi(t) = \ & t^{26} + t^{25} + t^{24} - t^{23} -3 t^{22} -5 t^{21} - 5 t^{20} - 4 t^{19} - 2 t^{18} - t^{17} - 3 t^{16} -5 t^{15} \\
& -9 t^{14} - 8 t^{13} - 5t^{12} + 4 t^{10} + 9 t^9+9 t^8+8 t^7 + 4 t^6 + t^5 + t^3 + t^2 + t+1 \\
 \end{aligned}
 \end{equation*}
and
\begin{equation*}
\begin{aligned}
 v_\lambda \approx ( & 22.427,3.726,2.086,1.707,0.271,4.482,6.990,6.405,9.902,0.486,\\
 &20.164,12.245,2.757,7.218,1.513,0.758,3.252,0.442,2.071,1.074,\\
 &0.264,4.150,2.215,2.611,2.661,3.443,1.420,4.628,5.317,1.182,0.689,\\
 &0.334,0.423,0.982,3.298,1.094,1.559,0.420,0.214,0.174,0.629,1)
 \end{aligned}
 \end{equation*}

\subsection{Orbitdata $3,5,5$ with a cyclic permutation}\label{B:355}
Let $f_\mathbf{R}: X(\mathbf{R}) \to X(\mathbf{R})$ be a real diffeomorphism associated with the orbit data $3,5,5$ and a cyclic permutation. Let
\begin{equation*}
\begin{aligned}
&\Gamma = \{ \gamma_1, \dots, \gamma_{34} \},\\
&\Delta = \{ \gamma_1, \gamma_2,\gamma_3, \gamma_8 ,\gamma_9,\gamma_{10},\gamma_{11},\gamma_{13},\gamma_{16},\gamma_{17},\gamma_{18},\gamma_{23},\gamma_{24},\gamma_{26},\gamma_{33},\\
&\bg\bg\bg \zeta_1,\dots,\zeta_5,\mu_1,\dots,\mu_{14},\eta_1\eta_2\} 
\end{aligned}
\end{equation*}
be two ordered sets of non-cyclic words in $\pi_1( X(\mathbf{R}),P_{fix})$ where
\begin{equation*}
\begin{aligned}
&\gamma_1 = c_1^{-1} \cc b_1^{-1} \cc a_4^{-1} \cc c_4^{-1} \cc b_1 \cc c_1 \cc e^{-1}, \quad \gamma_2 = a_1^{-2} \cc c_1^{-2} \cc b_1^{-2} \cc a_2^{-2} \cc c_2^{-2} \cc b_2^{-1} \cc c_2 \cc a_2^2 \cc b_1 \cc e^{-1},\\
&\gamma_3 =  a_1^{-2} \cc c_1^{-2} \cc b_1^{-2} \cc a_2^{-2} \cc c_2^{-2} \cc b_2^{-1} \cc c_2 \cc a_2 \cc b_3^{-1} \cc a_4^{-2} \cc c_4^{-1} \cc b_1 \cc c_1 \cc e^{-1},\\
&\gamma_4 =  a_1^{-2} \cc c_1^{-2} \cc b_1^{-2} \cc a_2^{-2} \cc c_2^{-2} \cc b_2^{-1} \cc c_2 \cc a_2^2 \cc b_1^2 \cc c_1^2 \cc a_1 \cc c_1^{-1} \cc b_1^{-1} \cc a_4^{-1} \cc c_4^{-1} \cc b_1 \cc c_1 \cc e^{-1},\\
&\gamma_5 =  a_1^{-2} \cc c_1^{-2} \cc b_1^{-2} \cc a_2^{-2} \cc c_2^{-2} \cc b_2^{-1} \cc c_2 \cc a_2^2 \cc b_1^2 \cc c_1^2 \cc a_1 \cc c_1^{-1} \cc b_1^{-1} \cc b_3 \cc c_3 \cc a_3^{-1} \cc c_3^{-2} \cc b_3^{-2} \cc a_4^{-2} \cc c_4^{-2} \cc b_1 \cc c_1 \cc e^{-1},\\
\end{aligned}
\end{equation*}
\begin{equation*} 
\begin{aligned}
&\gamma_6 =   a_1^{-2} \cc c_1^{-2} \cc b_1^{-2} \cc a_2^{-2} \cc c_2^{-2} \cc b_2^{-1} \cc c_2 \cc a_2^2 \cc b_1^2 \cc c_1^2 \cc a_1 \cc c_1^{-1} \cc b_1^{-1} \cc b_3 \cc c_3 \cc a_3^{-1} \cc c_3^{-2} \cc b_3^{-2} \cc a_4^{-2} \cc c_4^{-2} \cc a_5^{-2} \cc c_5^{-2} \cc e^{-1},\\
&\gamma_7 = a_1^{-2} \cc c_1^{-2} \cc b_1^{-2} \cc a_2^{-2} \cc c_2^{-2} \cc b_2^{-1} \cc c_2 \cc a_2^2 \cc b_1^2 \cc c_1^2 \cc a_1 \cc c_1^{-1} \cc b_1^{-1} \cc b_3 \cc c_3 \cc a_3^{-1} \cc c_3^{-2} \cc b_3^{-2} \cc a_4^{-2} \cc c_4^{-2} \cc a_5^{-2} \cc c_5^{-1}\cc b_1 \cc c_1 \cc e^{-1},\\
&\gamma_8 = c_1^{-1} \cc b_1^{-1} \cc c_4 \cc a_4^2 \cc b_3^2 \cc c_3^2 \cc a_3 \cc c_3^{-1} \cc b_3^{-1} \cc b_1 \cc c_1 \cc a_1^{-1} \cc c_1^{-2} \cc b_1^{-2} \cc a_2^{-2} \cc c_2^{-1} \cc b_2 \cc c_2^2 \cc a_2^2 \cc b_1^2 \cc c_1^2 \cc a_1^2 \cc e \cc c_1^{-1} \cc b_1^{-1} \cc c_5^{-1} \cc e^{-1},\\ 
&\gamma_9 = a_1^{-2} \cc c_1^{-2} \cc b_1^{-2} \cc a_2^{-2} \cc c_2^{-2} \cc b_2^{-1} \cc c_2 \cc a_2 \cc c_3^{-1} \cc b_3^{-1} \cc b_1 \cc c_1 \cc a_1^{-1} \cc c_1^{-2} \cc b_1^{-2} \cc a_2^{-2} \cc c_2^{-1} \cc b_2 \cc c_2^2 \cc a_2^2 \cc b_1^2 \cc c_1^2 \cc a_1^2 \cc e \cc b_1 \cc c_1 \cc e^{-1},\\
&\gamma_{10} = a_1^{-2} \cc c_1^{-2} \cc b_1^{-2} \cc a_2^{-2} \cc c_2^{-2} \cc b_2^{-1} \cc c_2 \cc a_2 \cc c_3^{-1} \cc b_3^{-1} \cc b_1 \cc c_1 \cc a_1^{-1} \cc c_1^{-2} \cc b_1^{-2} \cc a_2^{-2} \cc c_2^{-1} \cc b_2 \cc c_2^2 \cc a_2^2 \cc b_1^2 \cc c_1^2 \cc a_1^2 \cc e \cc c_1^{-1} \cc b_1^{-1} \cc e^{-1},\\
&\gamma_{11} =  a_1^{-2} \cc c_1^{-2} \cc b_1^{-2} \cc a_2^{-2} \cc c_2^{-2} \cc b_2^{-2} \cc a_3^{-1} \cc c_3^{-1} \cc b_3^{-1} \cc b_1 \cc c_1 \cc a_1^{-1} \cc c_1^{-2} \cc b_1^{-2} \cc a_2^{-1} \cc c_2^{-1} \cc b_2 \cc c_2^2 \cc a_2^2 \cc b_1^2\\
&\bg  \cc c_1^2 \cc a_1^2 \cc e \cc c_1^{-1} \cc b_1^{-1} \cc c_5^{-1} \cc e^{-1},\\
&\gamma_{12} = a_1^{-2} \cc c_1^{-2} \cc b_1^{-2} \cc a_2^{-2} \cc c_2^{-2} \cc b_2^{-1} \cc c_2 \cc a_2^2 \cc b_1^2 \cc c_1^2 \cc a_1 \cc c_1^{-1} \cc b_1^{-1} \cc b_3 \cc c_3 \cc a_2^{-1} \cc c_2^{-1} \cc b_2 \cc c_2^2 \cc a_2^2 \cc b_1^2 \cc c_1^2 \cc a_1^2 \cc e\\
&\bg  \cc c_1^{-1} \cc b_1^{-1} \cc a_5^{-1} \cc c_5^{-1} \cc b_1 \cc c_1\cc e^{-1},\\
&\gamma_{13}= a_1^{-2} \cc c_1^{-2} \cc b_1^{-2} \cc a_2^{-2} \cc c_2^{-2} \cc b_2^{-1} \cc c_2 \cc a_2 \cc b_3 \cc c_3^2 \cc a_3 \cc c_3^{-1} \cc b_3^{-1} \cc b_1 \cc c_1 \cc a_1^{-1} \cc c_1^{-2} \cc b_1^{-2} \cc a_2^{-1} \cc c_2^{-1} \cc b_2 \cc c_2^2 \cc a_2^2 \cc b_1^2\\
&\bg  \cc c_1^2 \cc a_1^2 \cc e \cc c_1^{-1} \cc b_1^{-1} \cc e^{-1},\\
&\gamma_{14} = a_1^{-2} \cc c_1^{-2} \cc b_1^{-2} \cc a_2^{-2} \cc c_2^{-2} \cc b_2^{-1} \cc c_2 \cc a_2^2 \cc b_1^2 \cc c_1^2 \cc a_1 \cc c_1^{-1} \cc b_1^{-1} \cc b_3 \cc c_3 \cc a_2^{-1} \cc c_2^{-1} \cc b_2 \cc c_2^2 \cc a_2^2 \cc b_1^2 \cc c_1^2 \cc a_1^2 \cc e\\
&\bg  \cc c_1^{-1} \cc b_1^{-1}\cc c_4 \cc a_4 \cc b_1 \cc c_1 \cc a_1^{-1} \cc c_1^{-2} \cc b_1^{-1} \cc e^{-1},\\
&\gamma_{15} = a_1^{-2} \cc c_1^{-2} \cc b_1^{-2} \cc a_2^{-2} \cc c_2^{-2} \cc b_2^{-1} \cc c_2 \cc a_2^2 \cc b_1^2 \cc c_1^2 \cc a_1 \cc c_1^{-1} \cc b_1^{-1} \cc b_3 \cc c_3 \cc a_2^{-1} \cc c_2^{-1} \cc b_2 \cc c_2^2 \cc a_2^2 \cc b_1^2 \cc c_1^2 \cc a_1^2 \cc e\\
&\bg  \cc c_1^{-1} \cc b_1^{-1}\cc c_4 \cc a_4 \cc b_1 \cc c_1 \cc e \cc c_5 \cc b_1 \cc c_1\cc e^{-1},\\
&\gamma_{16} = a_1^{-2} \cc c_1^{-2} \cc b_1^{-2} \cc a_2^{-2} \cc c_2^{-2} \cc b_2^{-1} \cc c_2 \cc a_2 \cc c_3^{-1} \cc b_3^{-1} \cc b_1 \cc c_1 \cc a_1^{-1} \cc c_1^{-2} \cc b_1^{-2} \cc a_2^{-2} \cc c_2^{-1} \cc b_2 \cc c_2^2 \cc a_2^2 \cc b_1^2 \cc c_1^2 \cc a_1^2 \cc e\\
&\bg  \cc b_1 \cc c_1^2 \cc a_1 \cc c_1^{-1} \cc b_1^{-1} \cc a_4^{-1} \cc c_4^{-1} \cc e^{-1},\\
&\gamma_{17} = a_1^{-2} \cc c_1^{-2} \cc b_1^{-2} \cc a_2^{-2} \cc c_2^{-2} \cc b_2^{-1} \cc c_2 \cc a_2 \cc c_3^{-1} \cc b_3^{-1} \cc b_1 \cc c_1 \cc a_1^{-1} \cc c_1^{-2} \cc b_1^{-2} \cc a_2^{-2} \cc c_2^{-2}\cc b_2^{-1} \cc a_2^{-1} \cc c_2^{-1} \cc b_2 \cc c_2^2 \cc a_2^2 \\
&\bg \cc b_1^2 \cc c_1^2 \cc a_1^2 \cc e \cc c_1^{-1} \cc b_1^{-1} \cc a_5^{-1} \cc c_5^{-1}\cc b_1 \cc c_1 \cc e^{-1},\\
&\gamma_{18}= a_1^{-2} \cc c_1^{-2} \cc b_1^{-2} \cc a_2^{-2} \cc c_2^{-2} \cc b_2^{-1} \cc c_2 \cc a_2 \cc c_3^{-1} \cc b_3^{-1} \cc b_1 \cc c_1 \cc a_1^{-1} \cc c_1^{-2} \cc b_1^{-2} \cc a_2^{-2} \cc c_2^{-2}\cc b_2^{-1} \cc c_3^{-1} \cc b_3^{-1} \cc b_1 \cc c_1 \cc  a_1^{-1} \\
&\bg \cc c_1^{-2} \cc b_1^{-2} \cc a_2^{-2} \cc c_2^{-1} \cc b_2 \cc c_2^2 \cc a_2^2 \cc b_1^2 \cc c_1^2 \cc a_1^2 \cc e \cc c_1^{-1} \cc b_1^{-1} \cc e^{-1},\\
&\gamma_{19} = a_1^{-2} \cc c_1^{-2} \cc b_1^{-2} \cc a_2^{-2} \cc c_2^{-2} \cc b_2^{-1} \cc c_2 \cc a_2 \cc c_3^{-1} \cc b_3^{-1} \cc b_1 \cc c_1 \cc a_1^{-1} \cc c_1^{-2} \cc b_1^{-2} \cc a_2^{-2} \cc c_2^{-1} \cc b_2 \cc c_2^2 \cc a_2^2 \cc b_1^2 \cc c_1^2 \cc a_1^2 \cc e \cc b_1^{-1} \cc a_2^{-2} \cc c_2^{-1}\\
&\bg \cc b_2  \cc c_2^2 \cc a_2^2 \cc b_1^2 \cc c_1^2 \cc a_1^2 \cc e  \cc c_1^{-1} \cc b_1^{-1} \cc a_5^{-1} \cc c_5^{-1} \cc b_1 \cc c_1 \cc e^{-1},\\
&\gamma_{20} = a_1^{-2} \cc c_1^{-2} \cc b_1^{-2} \cc a_2^{-2} \cc c_2^{-2} \cc b_2^{-1} \cc c_2 \cc a_2 \cc c_3^{-1} \cc b_3^{-1} \cc b_1 \cc c_1 \cc a_1^{-1} \cc c_1^{-2} \cc b_1^{-2} \cc a_2^{-2} \cc c_2^{-2} \cc b_2^{-1} \cc c_2 \cc a_2^2 \cc b_1^2 \cc c_1^2 \cc a_1 \cc c_1^{-1} \cc b_1^{-1} \cc b_3 \cc c_3\\
&\bg \cc a_2^{-1} \cc c_2^{-1} \cc b_2  \cc c_2^2 \cc a_2^2 \cc b_1^2 \cc c_1^2 \cc a_1^2 \cc e  \cc c_1^{-1} \cc b_1^{-1}\cc c_4 \cc a_4 \cc b_1 \cc c_1 \cc e^{-1},\\
&\gamma_{21} = a_1^{-2} \cc c_1^{-2} \cc b_1^{-2} \cc a_2^{-2} \cc c_2^{-2} \cc b_2^{-1} \cc c_2 \cc a_2 \cc c_3^{-1} \cc b_3^{-1} \cc b_1 \cc c_1 \cc a_1^{-1} \cc c_1^{-2} \cc b_1^{-2} \cc a_2^{-2} \cc c_2^{-2} \cc b_2^{-1} \cc c_2 \cc a_2^2 \cc b_1^2 \cc c_1^2 \cc a_1 \cc c_1^{-1} \cc b_1^{-1} \cc b_3 \cc c_3\\
&\bg \cc a_2^{-1} \cc c_2^{-1} \cc b_2  \cc c_2^2 \cc a_2^2 \cc b_1^2 \cc c_1^2 \cc a_1^2 \cc e  \cc c_1^{-1} \cc b_1^{-1}\cc c_4 \cc a_4 \cc b_1 \cc c_1\cc e \cc c_5 \cc b_1 \cc c_1 \cc e^{-1},\\
&\gamma_{22} =  a_1^{-2} \cc c_1^{-2} \cc b_1^{-2} \cc a_2^{-2} \cc c_2^{-2} \cc b_2^{-1} \cc c_2 \cc a_2^2 \cc b_1^2 \cc c_1^2 \cc a_1 \cc c_1^{-1} \cc b_1^{-1} \cc b_3 \cc c_3 \cc a_2^{-1} \cc c_2^{-1} \cc b_2  \cc c_2^2 \cc a_2^2 \cc b_1^2 \cc c_1^2 \cc a_1^2 \cc e  \cc c_1^{-1} \cc b_1^{-1} \\
&\bg \cc c_4 \cc a_4 \cc b_1 \cc c_1 \cc a_1^{-1} \cc c_1^{-2} \cc b_1^{-2} \cc a_2^{-2} \cc c_2^{-1} \cc b_2  \cc c_2^2 \cc a_2^2 \cc b_1^2 \cc c_1^2 \cc a_1^2 \cc e  \cc c_1^{-1} \cc b_1^{-1} \cc c_5^{-1} \cc e^{-1},\\
&\gamma_{23} = a_1^{-2} \cc c_1^{-2} \cc b_1^{-2} \cc a_2^{-2} \cc c_2^{-2} \cc b_2^{-1} \cc c_2 \cc a_2  \cc b_3 \cc c_3 \cc a_2^{-1} \cc c_2^{-1} \cc b_2 \cc c_2^2 \cc a_2^2 \cc b_1^2 \cc c_1^2 \cc a_1^2 \cc e  \cc c_1^{-1} \cc b_1^{-1} \cc c_4 \cc a_4^2 \cc b_3^2 \cc c_3^2 \cc a_3 \cc c_3^{-1} \cc b_3^{-1}\\
&\bg \cc b_1 \cc c_1 \cc a_1^{-1} \cc c_1^{-2} \cc b_1^{-2} \cc a_2^{-2} \cc c_2^{-1} \cc b_2  \cc c_2^2 \cc a_2^2 \cc b_1^2 \cc c_1^2 \cc a_1^2 \cc e  \cc c_1^{-1} \cc b_1^{-1} \cc c_5^{-1} \cc e^{-1},\\
&\gamma_{24} = a_1^{-2} \cc c_1^{-2} \cc b_1^{-2} \cc a_2^{-2} \cc c_2^{-2} \cc b_2^{-1} \cc c_2 \cc a_2  \cc c_3^{-1} \cc b_3^{-1} \cc a_2^{-1} \cc c_2^{-1} \cc b_2 \cc c_2^2 \cc a_2^2 \cc b_1^2 \cc c_1^2 \cc a_1^2 \cc e  \cc c_1^{-1} \cc b_1^{-1} \cc c_4 \cc a_4^2 \cc b_3^2 \cc c_3^2 \cc a_3 \cc c_3^{-1} \cc b_3^{-1}\\
&\bg \cc b_1 \cc c_1 \cc a_1^{-1} \cc c_1^{-2} \cc b_1^{-2} \cc a_2^{-2} \cc c_2^{-1} \cc b_2  \cc c_2^2 \cc a_2^2 \cc b_1^2 \cc c_1^2 \cc a_1^2 \cc e  \cc c_1^{-1} \cc b_1^{-1} \cc c_5^{-1} \cc e^{-1},\\
&\gamma_{25} =a_1^{-2} \cc c_1^{-2} \cc b_1^{-2} \cc a_2^{-2} \cc c_2^{-2} \cc b_2^{-1} \cc c_2 \cc a_2^2 \cc b_1^2 \cc c_1^2 \cc a_1 \cc c_1^{-1} \cc b_1^{-1} \cc b_3 \cc c_3 \cc a_2^{-1} \cc c_2^{-1} \cc b_2  \cc c_2^2 \cc a_2^2 \cc b_1^2 \cc c_1^2 \cc a_1^2 \cc e  \cc c_1^{-1} \cc b_1^{-1} \\
&\bg \cc c_4 \cc a_4 \cc b_1 \cc c_1 \cc a_1^{-1} \cc c_1^{-2} \cc b_1^{-2} \cc a_2^{-2} \cc c_2^{-1} \cc b_2  \cc c_2^2 \cc a_2^2 \cc b_1^2 \cc c_1^2 \cc a_1^2 \cc e  \cc c_1^{-1} \cc b_1^{-1}\cc a_5^{-1} \cc c_5^{-1}\cc b_1 \cc c_1 \cc e^{-1},\\
\end{aligned}
\end{equation*}
\begin{equation*} 
\begin{aligned}
&\gamma_{26} =  a_1^{-2} \cc c_1^{-2} \cc b_1^{-2} \cc a_2^{-2} \cc c_2^{-2} \cc b_2^{-1} \cc c_2 \cc a_2  \cc b_3 \cc c_3 \cc a_2^{-1} \cc c_2^{-1} \cc b_2 \cc c_2^2 \cc a_2^2 \cc b_1^2 \cc c_1^2 \cc a_1^2 \cc e  \cc c_1^{-1} \cc b_1^{-1} \cc c_4 \cc a_4^2 \cc b_3^2 \cc c_3^2 \cc a_3 \cc c_3^{-1} \cc b_3^{-1}\\
&\bg \cc b_1 \cc c_1 \cc a_1^{-1} \cc c_1^{-2} \cc b_1^{-2} \cc a_2^{-2} \cc c_2^{-1} \cc b_2  \cc c_2^2 \cc a_2^2 \cc b_1^2 \cc c_1^2 \cc a_1^2 \cc e  \cc c_1^{-1} \cc b_1^{-1} \cc c_5 \cc a_5 \cc b_1 \cc c_1 \cc e^{-1},\\
&\gamma_{27} = a_1^{-2} \cc c_1^{-2} \cc b_1^{-2} \cc a_2^{-2} \cc c_2^{-2} \cc b_2^{-1} \cc c_2 \cc a_2^2 \cc b_1^2 \cc c_1^2 \cc a_1 \cc c_1^{-1} \cc b_1^{-1} \cc b_3 \cc c_3 \cc a_3^{-1} \cc c_3^{-2} \cc b_3^{-1} \cc a_2^{-1} \cc c_2^{-1} \cc b_2  \cc c_2^2 \cc a_2^2 \cc b_1^2 \cc c_1^2 \cc a_1^2 \cc e \\
&\bg \cc c_1^{-1} \cc b_1^{-1}  \cc c_4 \cc a_4^2 \cc b_3^2 \cc c_3^2 \cc a_3 \cc c_3^{-1} \cc b_3^{-1}  \cc b_1 \cc c_1 \cc a_1^{-1} \cc c_1^{-2} \cc b_1^{-2} \cc a_2^{-2} \cc c_2^{-1} \cc b_2  \cc c_2^2 \cc a_2^2 \cc b_1^2 \cc c_1^2 \cc a_1^2 \cc e  \cc c_1^{-1} \cc b_1^{-1}\cc c_5^{-1} \cc e^{-1},\\
&\gamma_{28} = a_1^{-2} \cc c_1^{-2} \cc b_1^{-2} \cc a_2^{-2} \cc c_2^{-2} \cc b_2^{-1} \cc c_2 \cc a_2^2 \cc b_1^2 \cc c_1^2 \cc a_1 \cc c_1^{-1} \cc b_1^{-1} \cc b_3 \cc c_3 \cc a_2^{-1} \cc c_2^{-1} \cc b_2  \cc c_2^2 \cc a_2^2 \cc b_1^2 \cc c_1^2 \cc a_1^2 \cc e  \cc c_1^{-1} \cc b_1^{-1} \\
&\bg \cc c_4 \cc a_4^2 \cc b_3^2 \cc c_3^2 \cc a_3 \cc c_3^{-1} \cc b_3^{-1} \cc b_1 \cc c_1 \cc a_1^{-1} \cc c_1^{-2} \cc b_1^{-2} \cc a_2^{-2} \cc c_2^{-1} \cc b_2  \cc c_2^2 \cc a_2^2 \cc b_1^2 \cc c_1^2 \cc a_1^2 \cc e  \cc c_1^{-1} \cc b_1^{-1}\cc c_5 \cc a_5 \cc b_1 \cc c_1 \cc e^{-1},\\
&\gamma_{29} = a_1^{-2} \cc c_1^{-2} \cc b_1^{-2} \cc a_2^{-2} \cc c_2^{-2} \cc b_2^{-1} \cc c_2 \cc a_2^2 \cc b_1^2 \cc c_1^2 \cc a_1 \cc c_1^{-1} \cc b_1^{-1} \cc b_3 \cc c_3 \cc b_2 \cc c_2^2 \cc a_2^2 \cc b_1^2 \cc c_1^2 \cc a_1 \cc c_1^{-1} \cc b_1^{-1} \cc b_3 \cc c_3  \cc a_2^{-1} \cc c_2^{-1}\\
&\bg  \cc b_2  \cc c_2^2 \cc a_2^2 \cc b_1^2 \cc c_1^2 \cc a_1^2 \cc e  \cc c_1^{-1} \cc b_1^{-1}  \cc c_4 \cc a_4^2 \cc b_3^2 \cc c_3^2 \cc a_3 \cc c_3^{-1} \cc b_3^{-1} \cc b_1 \cc c_1 \cc a_1^{-1} \cc c_1^{-2} \cc b_1^{-2} \cc a_2^{-2} \cc c_2^{-1} \cc b_2  \cc c_2^2 \cc a_2^2 \cc b_1^2\\
&\bg  \cc c_1^2 \cc a_1^2 \cc e  \cc c_1^{-1} \cc b_1^{-1}\cc e^{-1},\\
&\gamma_{30} =  a_1^{-2} \cc c_1^{-2} \cc b_1^{-2} \cc a_2^{-2} \cc c_2^{-2} \cc b_2^{-1} \cc c_2 \cc a_2 \cc c_3^{-1} \cc b_3^{-1} \cc b_1 \cc c_1 \cc a_1^{-1} \cc c_1^{-2} \cc b_1^{-2} \cc a_2^{-2} \cc c_2^{-2} \cc b_2^{-1} \cc c_2 \cc a_2^2 \cc b_1^2 \cc c_1^2 \cc a_1 \cc c_1^{-1} \cc b_1^{-1}\\
&\bg  \cc b_3 \cc c_3 \cc a_2^{-1} \cc c_2^{-1}  \cc b_2  \cc c_2^2 \cc a_2^2 \cc b_1^2 \cc c_1^2 \cc a_1^2 \cc e  \cc c_1^{-1} \cc b_1^{-1}  \cc c_4 \cc a_4^2 \cc b_3^2 \cc c_3^2 \cc a_3 \cc c_3^{-1} \cc b_3^{-1} \cc b_1 \cc c_1 \cc a_1^{-1} \cc c_1^{-2} \cc b_1^{-2} \cc a_2^{-2} \cc c_2^{-1}\\
&\bg  \cc b_2  \cc c_2^2 \cc a_2^2 \cc b_1^2 \cc c_1^2 \cc a_1^2 \cc e  \cc c_1^{-1} \cc b_1^{-1}\cc e^{-1},\\
&\gamma_{31} = a_1^{-2} \cc c_1^{-2} \cc b_1^{-2} \cc a_2^{-2} \cc c_2^{-2} \cc b_2^{-1} \cc c_2 \cc a_2^2 \cc b_1^2 \cc c_1^2 \cc a_1 \cc c_1^{-1} \cc b_1^{-1} \cc b_3 \cc c_3 \cc b_2 \cc c_2^2 \cc a_2^2 \cc b_1^2 \cc c_1^2 \cc a_1 \cc c_1^{-1} \cc b_1^{-1} \cc b_3 \cc c_3  \cc a_2^{-1} \cc c_2^{-1}\\
&\bg  \cc b_2  \cc c_2^2 \cc a_2^2 \cc b_1^2 \cc c_1^2 \cc a_1^2 \cc e  \cc c_1^{-1} \cc b_1^{-1}  \cc c_4 \cc a_4^2 \cc b_3^2 \cc c_3^2 \cc a_3 \cc c_3^{-1} \cc b_3^{-1} \cc b_1 \cc c_1 \cc a_1^{-1} \cc c_1^{-2} \cc b_1^{-2} \cc a_2^{-2} \cc c_2^{-1} \cc b_2  \cc c_2^2 \cc a_2^2 \cc b_1^2\\
&\bg  \cc c_1^2 \cc a_1^2 \cc e  \cc c_1^{-1} \cc b_1^{-1}\cc c_5 \cc a_5 \cc b_1 \cc c_1 \cc e^{-1},\\
&\gamma_{32} =  a_1^{-2} \cc c_1^{-2} \cc b_1^{-2} \cc a_2^{-2} \cc c_2^{-2} \cc b_2^{-1} \cc c_2 \cc a_2 \cc c_3^{-1} \cc b_3^{-1} \cc b_1 \cc c_1 \cc a_1^{-1} \cc c_1^{-2} \cc b_1^{-2} \cc a_2^{-2} \cc c_2^{-1} \cc b_2 \cc c_2^2 \cc a_2^2 \cc b_1^2 \cc c_1^2 \cc a_1 \cc c_1^{-1} \cc b_1^{-1}\\
&\bg  \cc b_3 \cc c_3 \cc a_2^{-1} \cc c_2^{-1}  \cc b_2  \cc c_2^2 \cc a_2^2 \cc b_1^2 \cc c_1^2 \cc a_1^2 \cc e  \cc c_1^{-1} \cc b_1^{-1}  \cc c_4 \cc a_4^2 \cc b_3^2 \cc c_3^2 \cc a_3 \cc c_3^{-1} \cc b_3^{-1} \cc b_1 \cc c_1 \cc a_1^{-1} \cc c_1^{-2} \cc b_1^{-2} \cc a_2^{-2} \cc c_2^{-1}\\
&\bg  \cc b_2  \cc c_2^2 \cc a_2^2 \cc b_1^2 \cc c_1^2 \cc a_1^2 \cc e  \cc c_1^{-1} \cc b_1^{-1}\cc c_5 \cc a_5 \cc b_1 \cc c_1\cc e^{-1},\\
&\gamma_{33} =   a_1^{-2} \cc c_1^{-2} \cc b_1^{-2} \cc a_2^{-2} \cc c_2^{-2} \cc b_2^{-1} \cc c_2 \cc a_2 \cc b_3 \cc c_3 \cc a_2^{-1} \cc c_2^{-1} \cc b_2  \cc c_2^2 \cc a_2^2 \cc b_1^2 \cc c_1^2 \cc a_1^2 \cc e  \cc c_1^{-1} \cc b_1^{-1}  \cc c_4 \cc a_4^2 \cc b_3^2 \cc c_3^2 \cc a_3 \cc c_3^{-1} \cc b_3^{-1}\\
&\bg  \cc b_1 \cc c_1 \cc a_1^{-1} \cc c_1^{-2} \cc b_1^{-2} \cc a_2^{-2} \cc c_2^{-1}  \cc b_2  \cc c_2^2 \cc a_2^2 \cc b_1^2 \cc c_1^2 \cc a_1^2 \cc e  \cc c_1^{-1} \cc b_1^{-1} \cc c_5 \cc a_5^2 \cc c_4^2 \cc a_4^2 \cc b_3^2 \cc c_3^2 \cc a_3 \cc c_3^{-1} \cc b_3^{-1} \cc b_1 \cc c_1\\
&\bg  \cc a_1^{-1} \cc c_1^{-2} \cc b_1^{-2} \cc a_2^{-2} \cc c_2^{-1}\cc b_2  \cc c_2^2 \cc a_2^2 \cc b_1^2 \cc c_1^2 \cc a_1^2 \cc e  \cc c_1^{-1} \cc b_1^{-1} \cc c_5^{-1} \cc e^{-1},\\
&\gamma_{34}=  a_1^{-2} \cc c_1^{-2} \cc b_1^{-2} \cc a_2^{-2} \cc c_2^{-2} \cc b_2^{-1} \cc c_2 \cc a_2 \cc c_3^{-1} \cc b_3^{-1} \cc b_1 \cc c_1 \cc a_1^{-1} \cc c_1^{-2} \cc b_1^{-2} \cc a_2^{-2} \cc c_2^{-2} \cc b_2^{-1} \cc c_2 \cc a_2^2 \cc b_1^2 \cc c_1^2 \cc a_1 \cc c_1^{-1} \cc b_1^{-1}\\
&\bg  \cc b_3 \cc c_3 \cc a_2^{-1} \cc c_2^{-1}  \cc b_2  \cc c_2^2 \cc a_2^2 \cc b_1^2 \cc c_1^2 \cc a_1^2 \cc e  \cc c_1^{-1} \cc b_1^{-1} \cc c_4 \cc a_4^2 \cc b_3 \cc a_2^{-1} \cc c_2^{-1} \cc b_2  \cc c_2^2 \cc a_2^2 \cc b_1^2 \cc c_1^2 \cc a_1^2 \cc e  \cc c_1^{-1} \cc b_1^{-1}\\
&\bg  \cc c_4 \cc a_4^2 \cc b_3^2 \cc c_3^2 \cc a_3 \cc c_3^{-1} \cc b_3^{-1} \cc b_1 \cc c_1 \cc a_1^{-1} \cc c_1^{-2} \cc b_1^{-2} \cc a_2^{-2} \cc c_2^{-1}  \cc b_2  \cc c_2^2 \cc a_2^2 \cc b_1^2 \cc c_1^2 \cc a_1^2 \cc e  \cc c_1^{-1} \cc b_1^{-1}\cc c_5^{-1} \cc e^{-1},\\
\end{aligned}
\end{equation*}
and 
\begin{equation*}
\begin{aligned}
&\zeta_1 =  a_1^{-2} \cc c_1^{-2} \cc b_1^{-2} \cc a_2^{-2} \cc c_2^{-2} \cc b_2^{-1} \cc c_2 \cc a_2^2 \cc b_1^2 \cc c_1^2 \cc a_1 \cc c_1^{-1},\\
&\zeta_2 = a_1^{-2} \cc c_1^{-2} \cc b_1^{-2} \cc a_2^{-2} \cc c_2^{-2} \cc b_2^{-1} \cc c_2\cc a_2 \cc b_2 \cc c_2 \cc a_2^2 \cc b_1^2 \cc c_1^2 \cc a_1 \cc c_1^{-1},\\
&\zeta_3 = a_1^{-2} \cc c_1^{-2} \cc b_1^{-2} \cc a_2^{-2} \cc c_2^{-2} \cc b_2^{-1} \cc c_2\cc a_2 \cc c_3^{-1} \cc b_3^{-1} \cc b_1 \cc c_1\cc a_1^{-1} \cc c_1^{-2} \cc b_1^{-2} \cc a_2^{-2} \cc c_2^{-1} \cc b_2 \cc c_2^2 \cc a_2^2 \cc b_1^2 \cc c_1^2 \cc a_1 \cc c_1^{-1},\\ 
&\zeta_4 =  a_1^{-2} \cc c_1^{-2} \cc b_1^{-2} \cc a_2^{-2} \cc c_2^{-2} \cc b_2^{-1} \cc c_2\cc a_2 \cc c_3^{-1} \cc b_3^{-1}\cc b_1 \cc c_1  \cc a_1^{-1} \cc c_1^{-2} \cc b_1^{-2} \cc a_2^{-2} \cc c_2^{-2} \cc b_2^{-1}  \cc c_2 \cc a_2^2 \cc b_1^2 \cc c_1^2 \cc a_1 \cc c_1^{-1},\\
&\zeta_5 =  a_1^{-2} \cc c_1^{-2} \cc b_1^{-2} \cc a_2^{-2} \cc c_2^{-2} \cc b_2^{-1} \cc c_2\cc a_2 \cc c_3^{-1} \cc b_3^{-1}\cc b_1 \cc c_1  \cc a_1^{-1} \cc c_1^{-2} \cc b_1^{-2} \cc a_2^{-2} \cc c_2^{-1} \cc b_2  \cc c_2^2 \cc a_2^2 \cc b_1^2 \cc c_1^2 \cc a_1^2 \cc e,\\
&\mu_1 = b_1^{-1} \cc a_4^{-1} \cc c_4^{-1} \cc b_1 \cc c_1 \cc e^{-1}, \quad \mu_2 = b_1^{-1} \cc b_3 \cc c_3 \cc a_3^{-1} \cc c_3^{-2} \cc b_3^{-2} \cc a_4^{-2} \cc c_4^{-1} \cc b_1 \cc c_1 \cc e^{-1},\\
&\mu_3 = b_1^{-1} \cc b_3 \cc c_3 \cc a_3^{-1} \cc c_3^{-2} \cc b_3^{-2} \cc a_4^{-2} \cc c_4^{-2}\cc a_5^{-2} \cc c_5^{-2} \cc e^{-1},\\
&\mu_4=b_1^{-1} \cc b_3 \cc c_3 \cc a_3^{-1} \cc c_3^{-2} \cc b_3^{-2} \cc a_4^{-2} \cc c_4^{-2}\cc a_5^{-2} \cc c_5^{-1}\cc b_1 \cc c_1 \cc e^{-1},\\
&\mu_5 = b_1^{-1} \cc a_2^{-2} \cc c_2^{-1} \cc b_2 \cc c_2^2 \cc a_2^2 \cc b_1^2 \cc c_1^2 \cc a_1^2 \cc e \cc c_1^{-1} \cc b_1^{-1} \cc c_5^{-1} \cc e^{-1},\\
&\mu_6= b_1^{-1} \cc a_2^{-2} \cc c_2^{-1} \cc b_2 \cc c_2^2 \cc a_2^2 \cc b_1^2 \cc c_1^2 \cc a_1^2 \cc e \cc c_1^{-1} \cc b_1^{-1}\cc a_5^{-1} \cc c_5^{-1} \cc b_1 \cc c_1 \cc e^{-1},\\
&\mu_7 = b_1^{-1}\cc b_3 \cc c_3  \cc a_2^{-1} \cc c_2^{-1} \cc b_2 \cc c_2^2 \cc a_2^2 \cc b_1^2 \cc c_1^2 \cc a_1^2 \cc e \cc c_1^{-1} \cc b_1^{-1}\cc a_5^{-1} \cc c_5^{-1} \cc b_1 \cc c_1 \cc e^{-1},\\
\end{aligned}
\end{equation*}
\begin{equation*} 
\begin{aligned}
&\mu_8 = b_1^{-1}\cc b_3 \cc c_3  \cc a_2^{-1} \cc c_2^{-1} \cc b_2 \cc c_2^2 \cc a_2^2 \cc b_1^2 \cc c_1^2 \cc a_1^2 \cc e \cc c_1^{-1} \cc b_1^{-1}\cc c_4 \cc a_4 \cc b_1 \cc c_1 \cc e^{-1},\\
&\mu_9 = b_1^{-1}\cc b_3 \cc c_3  \cc a_2^{-1} \cc c_2^{-1} \cc b_2 \cc c_2^2 \cc a_2^2 \cc b_1^2 \cc c_1^2 \cc a_1^2 \cc e \cc c_1^{-1} \cc b_1^{-1}\cc c_4 \cc a_4 \cc b_1 \cc c_1 \cc a_1^{-1} \cc c_1^{-2} \cc b_1^{-1} \cc e^{-1},\\
&\mu_{10} =  b_1^{-1}\cc b_3 \cc c_3  \cc a_2^{-1} \cc c_2^{-1} \cc b_2 \cc c_2^2 \cc a_2^2 \cc b_1^2 \cc c_1^2 \cc a_1^2 \cc e \cc c_1^{-1} \cc b_1^{-1}\cc c_4 \cc a_4 \cc b_1 \cc c_1 \cc e \cc c_5 \cc b_1 \cc c_1 \cc e^{-1},\\
&\mu_{11} =  b_1^{-1}\cc b_3 \cc c_3  \cc a_2^{-1} \cc c_2^{-1} \cc b_2 \cc c_2^2 \cc a_2^2 \cc b_1^2 \cc c_1^2 \cc a_1^2 \cc e \cc c_1^{-1} \cc b_1^{-1}\cc c_4 \cc a_4^2 \cc b_3^2 \cc c_3^2 \cc a_3 \cc c_3^{-1} \cc b_3^{-1} \cc b_1 \cc c_1 \cc a_1^{-1} \cc c_1^{-2} \cc b_1^{-2}\cc a_2^{-2}  \cc c_2^{-1}\\
&\bg \cc  b_2 \cc c_2^2 \cc a_2^2 \cc b_1^2 \cc c_1^2 \cc a_1^2 \cc e \cc c_1^{-1} \cc b_1^{-1} \cc e^{-1},\\
&\mu_{12} = b_1^{-1}\cc b_3 \cc c_3  \cc a_2^{-1} \cc c_2^{-1} \cc b_2 \cc c_2^2 \cc a_2^2 \cc b_1^2 \cc c_1^2 \cc a_1^2 \cc e \cc c_1^{-1} \cc b_1^{-1}\cc c_4 \cc a_4^2 \cc b_3^2 \cc c_3^2 \cc a_3 \cc c_3^{-1} \cc b_3^{-1} \cc b_1 \cc c_1 \cc a_1^{-1} \cc c_1^{-2} \cc b_1^{-2}\cc a_2^{-2}  \cc c_2^{-1}\\
&\bg \cc  b_2 \cc c_2^2 \cc a_2^2 \cc b_1^2 \cc c_1^2 \cc a_1^2 \cc e \cc c_1^{-1} \cc b_1^{-1} \cc c_5 \cc a_5 \cc b_1 \cc c_1 \cc e^{-1},\\
&\mu_{13} = b_1^{-1}\cc b_3 \cc c_3\cc a_3^{-1} \cc c_3^{-2} \cc b_3^{-1}  \cc a_2^{-1} \cc c_2^{-1} \cc b_2 \cc c_2^2 \cc a_2^2 \cc b_1^2 \cc c_1^2 \cc a_1^2 \cc e \cc c_1^{-1} \cc b_1^{-1}\cc c_4 \cc a_4^2 \cc b_3^2 \cc c_3^2 \cc a_3 \cc c_3^{-1} \cc b_3^{-1} \cc b_1 \cc c_1\\
&\bg  \cc a_1^{-1} \cc c_1^{-2} \cc b_1^{-2}\cc a_2^{-2}  \cc c_2^{-1} \cc  b_2 \cc c_2^2 \cc a_2^2 \cc b_1^2 \cc c_1^2 \cc a_1^2 \cc e \cc c_1^{-1} \cc b_1^{-1} \cc c_5^{-1} \cc e^{-1},\\
&\mu_{14} = b_1^{-1}\cc b_3 \cc c_3  \cc a_2^{-1} \cc c_2^{-1} \cc b_2 \cc c_2^2 \cc a_2^2 \cc b_1^2 \cc c_1^2 \cc a_1^2 \cc e \cc c_1^{-1} \cc b_1^{-1}\cc c_4 \cc a_4^2 \cc b_3 \cc a_2^{-1} \cc c_2^{-1} \cc b_2 \cc c_2^2 \cc a_2^2 \cc b_1^2 \cc c_1^2 \cc a_1^2 \cc e \cc c_1^{-1} \cc b_1^{-1}\\
&\bg \cc c_4 \cc a_4^2 \cc b_3^2 \cc c_3^2 \cc a_3 \cc c_3^{-1} \cc b_3^{-1} \cc b_1 \cc c_1 \cc a_1^{-1} \cc c_1^{-2} \cc b_1^{-2}\cc a_2^{-2}  \cc c_2^{-1}  \cc  b_2 \cc c_2^2 \cc a_2^2 \cc b_1^2 \cc c_1^2 \cc a_1^2 \cc e \cc c_1^{-1} \cc b_1^{-1} \cc c_5^{-1} \cc e^{-1},\\
&\eta_1 = b_1^{-1} \cc b_3 \cc c_3 \cc b_2 \cc c_2^2 \cc a_2^2 \cc b_1^2 \cc c_1^2 \cc a_1 \cc c_1^{-1},\\
&\eta_2 = b_1^{-1} \cc b_3 \cc c_3 \cc a_2^{-1} \cc c_2^{-1} \cc b_2 \cc c_2^2 \cc a_2^2 \cc b_1^2 \cc c_1^2 \cc a_1^2 \cc e \cc c_1^{-1} \cc b_1^{-1} \cc c_4 \cc a_4 \cc b_1 \cc c_1 \cc a_1^{-1} \cc c_1^{-2} \cc b_1.\\   
\end{aligned}
\end{equation*}
The admissible $3$-tuples are 
\begin{equation*}
\begin{aligned}
A&= \{  (1,32,2),(1,9,5),(1,19,5),(1,9,12),(1,16,19),(1,9,27),(1,9,29),(1,9,31),(2,22,1),(2,15,4),\\
&(2,12,5),(2,25,5),(2,15,6),(2,15,7),(2,14,15),(3,32,2),(4,19,5),(5,11,1),(5,23,1),(5,24,1),\\
&(5,33,1),(5,34,1),(5,26,2),(5,32,2),(5,26,4),(5,17,5),(5,21,5),(5,20,11),(5,13,15),(5,18,15),\\
&(5,30,15),(5,3,32),(6,8,1),(6,1,32),(7,5,24),(8,1,9),(8,1,16),(8,1,19),(8,1,32),(9,27,1),(9,31,2),\\
&(9,12,5),(9,31,20),(9,29,15),(9,5,18),(9,31,19),(9,31,28),(9,5,32),(10,15,5),(11,1,19),\\
&(12,5,18),(13,15,5),(14,15,5),(15,6,1),(15,5,3),(15,7,5),(15,6,8),(15,5,11),(15,5,13),(15,4,19),\\
&(15,5,23),(15,5,26),(15,5,33),(16,19,5),(17,5,18),(18,15,5),(19,5,17),(19,5,18),(19,5,20),\\
&(19,5,21),(19,5,24),(19,5,30),(19,5,34),(20,11,1),(21,5,3),(22,1,19),(23,1,32),(24,1,9),\\
&(25,5,24),(26,2,15),(26,4,19),(27,1,9),(28,2,15),(29,15,5),(30,15,5),(31,28,2),(31,19,5),\\
&(31,2,12),(31,2,14),(31,2,15),(31,10,15),(31,2,22),(31,2,25),(32,2,15),(33,1,19),\\
&(34,1,9),(34,1,32)\}
\end{aligned}
\end{equation*}
Let $V= Span \, \Gamma$ and $W= Span\, \Delta$. The splitting map $S: V\to W$ is given by 
\begin{equation*}
S: \left\{ \begin{aligned} & \gamma_1 \mapsto \mu_6 + \zeta_1, \quad \gamma_2 \mapsto \eta_1 , \quad \gamma_3 \mapsto \mu_4 + \zeta_1, \quad \gamma_4 \mapsto \mu_7 + \zeta_1, \quad \gamma_5 \mapsto \mu_{10} + \zeta_1, \quad \gamma_6 \mapsto \eta_2,\\
&\gamma_7 \mapsto \mu_9 + \zeta_1, \quad \gamma_8 \mapsto \mu_5 + \gamma_1 + \zeta_5, \quad \gamma_9 \mapsto \mu_2 + \zeta_4, \quad \gamma_{10} \mapsto \mu_2 + \zeta_3, \quad \gamma_{11} \mapsto \mu_1 + \zeta_5,\\
&\gamma_{12} \mapsto \mu_{11} + \zeta_1, \quad \gamma_{13} \mapsto \mu_3 + \gamma_1 + \zeta_3, \quad \gamma_{14} \mapsto \mu_{12} +\zeta_2, \quad \gamma_{15} \mapsto \mu_{12} + \gamma_2 + \zeta_1, \\ &\gamma_{16} \mapsto \mu_2 + \gamma_{17} + \zeta_1, \quad \gamma_{17} \mapsto \mu_2 + \gamma_{13} + \zeta_1, \quad \gamma_{18} \mapsto \mu_2 +\gamma_3 +\zeta_3, \quad \gamma_{19} \mapsto \mu_2 + \gamma_{18} + \zeta_1,\\
&\gamma_{20} \mapsto \mu_2 + \gamma_{26} + \zeta_1, \quad \gamma_{21} \mapsto \mu_2 + \gamma_{26} + \gamma_2 + \zeta_1, \quad \gamma_{22}\mapsto \mu_{12} + \zeta_5, \\
&\gamma_{23} \mapsto \mu_3 + \gamma_8 + \gamma_1 + \zeta_5, \quad \gamma_{24} \mapsto \mu_2 + \gamma_{11} + \gamma_1+ \zeta_5, \quad \gamma_{25} \mapsto \mu_{12} + \gamma_{10} + \zeta_1,\\
&\gamma_{26} \mapsto \mu_3 + \gamma_8 + \gamma_1 + \gamma_9 + \zeta_1, \quad \gamma_{27} \mapsto \mu_8 + \gamma_{11} + \gamma_1 + \zeta_5, \quad \gamma_{28} \mapsto \mu_{13} + \gamma_1 + \gamma_9+ \zeta_1,\\
&\gamma_{29} \mapsto \mu_{14} +\gamma_1 + \zeta_3, \quad \gamma_{30} \mapsto \mu_2 + \gamma_{23} + \gamma_1 + \zeta_3, \quad \gamma_{31} \mapsto \mu_{14} + \gamma_1 + \gamma_9 + \zeta_1,\\
&\gamma_{32} \mapsto \mu_2 + \gamma_{24} + \gamma_1 + \gamma_9 + \zeta_1, \quad \gamma_{33} \mapsto \mu_3+ \gamma_8 + \gamma_1 + \gamma_{16} + \zeta_5, \quad \gamma_{34} \mapsto \mu_2 + \gamma_{33} + \gamma_1 + \zeta_5.\\
 \end{aligned}\right.
\end{equation*}
and the merging map $M:V \to W$ is
\begin{equation*}
M: \left\{ \begin{aligned} &\gamma_1 \mapsto \gamma_1, \quad \gamma_2 \mapsto \gamma_2, \quad \gamma_3 \mapsto \gamma_3, \quad \gamma_4 \mapsto \zeta_1 + \mu_1, \quad \gamma_5 \mapsto \zeta_1 + \mu_2, \quad \gamma_6 \mapsto \zeta_1 + \mu_3,\\
&\gamma_7 \mapsto \zeta_1 + \mu_4, \quad \gamma_8 \mapsto \gamma_8, \quad \gamma_9 \mapsto \gamma_9, \quad \gamma_{10} \mapsto \gamma_{10}, \quad \gamma_{11} \mapsto \gamma_{11}, \quad \gamma_{12} \mapsto \zeta_1 + \mu_7,\\
&\gamma_{13} \mapsto \gamma_{13}, \quad \gamma_{14} \mapsto \zeta_1 + \mu_9, \quad \gamma_{15}\mapsto \zeta_1 + \mu_{10}, \quad \gamma_{16} \mapsto \gamma_{16}, \quad \gamma_{17} \mapsto \gamma_{17}, \quad \gamma_{18} \mapsto \gamma_{18},\\
&\gamma_{19} \mapsto \zeta_5 + \mu_6, \quad \gamma_{20} \mapsto \zeta_4 + \mu_8, \quad \gamma_{21} \mapsto \zeta_4 + \mu_{10} , \quad \gamma_{22} \mapsto \zeta_1 + \eta_2 + \mu_5, \quad \gamma_{23} \mapsto \gamma_{23},\\
&\gamma_{24} \mapsto \gamma_{24}, \quad \gamma_{25} \mapsto \zeta_1 + \eta_2 + \mu_6, \quad \gamma_{26} \mapsto \gamma_{26}, \quad \gamma_{27} \mapsto \zeta_1 + \mu_{13}, \quad \gamma_{28} \mapsto \zeta_2 + \mu_{12},\\
&\gamma_{29} \mapsto \zeta_1 + \eta_1 + \mu_{11}, \quad \gamma_{30} \mapsto \zeta_4 + \mu_{11}, \quad \gamma_{31} \mapsto \zeta_1+\eta_1 + \mu_{12}, \quad \gamma_{32} \mapsto \zeta_3 + \mu_{12} ,\\
&\gamma_{33} \mapsto \gamma_{33}, \quad \gamma_{34} \mapsto \zeta_4 + \mu_{14}.\\
\end{aligned}\right.
\end{equation*}

There is a $v_\lambda \in V$ such that $Sv_\lambda = \lambda Mv_\lambda$ where $\lambda\approx 1.58235 $ is the largest real root of 
\begin{equation*}
\chi(t) =t^6-t^4 - 2 t^3 -t^2 +1 \end{equation*}
and
\begin{equation*}
\begin{aligned}
 v_\lambda \approx ( & 4.063,2.504,1,0.372,6.269,0.588,0.632,0.487,1.931,0.040,0.588,0.235,\\
 &0.160,0.399,3.868,0.399,0.252,1.582,2.504,0.101,0.094,0.308,0.016,0.771,\\
 &0.064,0.123,0.160,0.252,0.123,0.025,1.459,1.220,0.632,1)\\
 \end{aligned}
 \end{equation*}

\bibliographystyle{plain}
\bibliography{biblio}

 \end{document}